\numberwithin{equation}{section}
\theoremstyle{plain}
\newtheorem{theorem}{Theorem}[section]
\newtheorem{thm}[theorem]{Theorem}
\newtheorem{proposition}[theorem]{Proposition}
\newtheorem{prop}[theorem]{Proposition}
\newtheorem{lemma}[theorem]{Lemma}
\newtheorem{corollary}[theorem]{Corollary}
\newtheorem{lemdfn}[theorem]{Lemma and Definition}
\theoremstyle{definition}
\newtheorem{definition}[theorem]{Definition}
\theoremstyle{remark}
\newtheorem{example}[theorem]{Example}
\newtheorem{rem}[theorem]{Remark}
\newcommand{\ahha}{{\scriptscriptstyle{A}}}
\newcommand{\behhe}{{\scriptscriptstyle{B}}}
\newcommand{\cehhe}{{\scriptscriptstyle{C}}}
\newcommand{\emme}{{\scriptscriptstyle{M}}}
\newcommand{\enne}{{\scriptscriptstyle{N}}}
\newcommand{\erre}{{\scriptscriptstyle{R}}}
\newcommand{\esse}{{\scriptscriptstyle{S}}}
\newcommand{\tehhe}{{\scriptscriptstyle{T}}}
\newcommand{\pehhe}{{\scriptscriptstyle{P}}}
\newcommand{\quhhu}{{\scriptscriptstyle{Q}}}
\newcommand{\cakka}{{\scriptscriptstyle{\cH}}}
\newcommand{\ckppa}{{\scriptscriptstyle{\cK}}}
\newcommand{\ga}{\alpha} 
\newcommand{\gb}{\beta}  
\newcommand{\gd}{\delta} 
\newcommand{\gD}{\Delta}
\newcommand{\gve}{\varepsilon}
\newcommand{\go}{\omega}
\newcommand{\gs}{\sigma}
\newcommand{\gvt}{\vartheta} 
\newcommand{\cF}{{\mathcal F}}
\newcommand{\cG}{{\mathcal G}}
\newcommand{\cH}{{\mathcal H}}
\newcommand{\cI}{{\mathcal I}}
\newcommand{\cJ}{{\mathcal J}}
\newcommand{\cK}{{\mathcal K}}
\newcommand{\cM}{{\mathcal M}}
\newcommand{\id}{{\rm id}}
\newcommand{\pr}{{\rm pr} \,}
\newcommand{\coinv}{\scriptscriptstyle{{\rm coinv}}}
\newcommand{\co}{\scriptscriptstyle{{\rm co}}}
\newcommand{\due}[3]{{}_{{#2 \!}} {#1}_{{\! #3}}\,}    
\newcommand{\pl}{\partial}
\newcommand{\rmref}[1]{{\rm (}\ref{#1}{\rm )}}
\newcommand{{\Hl}}{{H^{\ell}}} 
\newcommand{{\mHop}}{{m_{H^{\rm op}}}} 
\newcommand{{\Hop}}{{H^{\rm op}}} 
\newcommand{{\mUop}}{{m_{U^{\rm op}}}} 
\newcommand{{\Uop}}{{U^{\rm op}}}
\newcommand{{\mVop}}{{m_{V^{\rm op}}}} 
\newcommand{{\Vop}}{{V^{\rm op}}}  
\newcommand{{\Ae}}{{A^{\rm e}}}
\newcommand{{\Be}}{{B^{\rm e}}}
\newcommand{{\Aop}}{{A^{\rm op}}}
\newcommand{{\Aope}}{({A^{\rm op}})^{\rm e}}
\newcommand{{\Aopl}}{{A^{\rm op}_\pl}}
\newcommand{{\Bop}}{{B^{\rm op}}}
\newcommand{{\Bope}}{({B^{\rm op}})^{\rm e}}
\newcommand{{\Bpl}}{{B_\pl}}
\newcommand{{\Reh}}{{R^{\rm e}}}
\newcommand{{\Se}}{{S^{\rm e}}}
\newcommand{{\Rop}}{{R^{o}}}
\newcommand{{\Sop}}{{S^{o}}}
\newcommand{{\Pe}}{{P^{\rm e}}}
\newcommand{{\Qe}}{{Q^{\rm e}}}
\newcommand{{\op}}{{{o}}}
\newcommand{{\coop}}{{{\rm coop}}}
\newcommand{{\sop}}{{*^{\rm op}}}
\newcommand{\lact}{{\,\raise1pt\hbox{$\scriptscriptstyle{\rhd}$} \, }}                  %
\newcommand{\ract}{{\,\raise1pt\hbox{$\scriptscriptstyle{\lhd}$} \, }}                  
\newcommand{\blact}{{\,\raise1pt\hbox{$\scriptscriptstyle{\blacktriangleright}$} \, }}  %
\newcommand{\bract}{{\,\raise1pt\hbox{$\scriptscriptstyle{\blacktriangleleft}$} \, }}   %
\newcommand{{\gog}}{{G \rightrightarrows G_0}}
\newcommand{{\rra}}{\rightrightarrows}
\newcommand{{\lra}}{ \longrightarrow  }
\newcommand{{\lla}}{ \longleftarrow }
\newcommand{{\lma}}{ \longmapsto  }
\def\kasten#1{\mathop{\mkern0.5\thinmuskip
\vbox{\hrule
      \hbox{\vrule
            \hskip#1
            \vrule height#1 width 0pt
            \vrule}%
      \hrule}%
\mkern0.5\thinmuskip}}
\newcommand{\bx}{{\kasten{6pt}}}
\newcommand{{\bull}}{{\scriptscriptstyle{\bullet}}}
\newcommand{{\qqquad}}{{\quad\quad\quad}}
\newcommand{\tensor}[1]{\otimes_{\scriptscriptstyle{#1}}}
\newcommand{\comdtensor}[1]{\otimes^{\scriptscriptstyle{#1}}}
\newcommand{\cotensor}[1]{\, \square_{\scriptscriptstyle{#1}} \,}
\newcommand{\td}[1]{\tilde{#1}}
\newcommand{\bara}[1]{\overline{#1}}
\newcommand{\fk}[1]{\mathfrak{#1}}
\newcommand{\Sf}[1]{\mathsf{#1}}
\newcommand{\B}[1]{\boldsymbol{#1}}
\newcommand{\rcomod}[1]{\mathsf{Comod}_{#1}}
\newcommand{\lcomod}[1]{{}_{#1}\mathsf{Comod}}
\newcommand{\bicomod}[2]{{}_{#1}\mathsf{Bicomod}_{#2}}
\newcommand{\rmod}[1]{\mathsf{Mod}_{#1}}
\newcommand{\lcoaction}[2]{\boldsymbol{\lambda}^{\scriptscriptstyle{#1}}_{\scriptscriptstyle{#2}}}
\newcommand{\rcoaction}[2]{\boldsymbol{\rho}^{\scriptscriptstyle{#1}}_{\scriptscriptstyle{#2}}}
\newcommand{\can}[2]{\mathsf{can}_{\scriptscriptstyle{#1},\, \scriptscriptstyle{#2}}}
\newcommand{\shopf}[1]{{}_{\scriptscriptstyle{\Sf{s}}}\mathcal{#1}}
\newcommand{\thopf}[1]{\mathcal{#1}_{\scriptscriptstyle{\Sf{t}}}}
\newcommand{\Spectre}[1]{\mathsf{Spec}(#1)}
\newcommand{\lPB}[2]{\mathsf{PB}^{\scriptscriptstyle{\ell}}(#1,#2)}
\newcommand{\rPB}[2]{\mathsf{PB}^{\scriptscriptstyle{r}}(#1,#2)}
\newcommand{\bPB}[2]{\mathsf{PB}^{\scriptscriptstyle{b}}(#1,#2)}
\newcommand{\Ff}{{\mathscr F}}
\newcommand{\Gg}{{\mathscr G}}
\newcommand{\Hh}{{\mathscr H}}
\newcommand{\Kk}{{\mathscr K}}
\newcommand{\Oo}{{\mathscr O}}
\newcommand{\Uu}{{\mathscr U}}
\newcommand{\Sscript}[1]{\scriptscriptstyle{#1}}
\newcommand{\Go}{\Gg_{\Sscript{0}}}
\newcommand{\Ga}{\Gg_{\Sscript{1}}}
\newcommand{\Ho}{\Hh_{\Sscript{0}}}
\newcommand{\Ha}{\Hh_{\Sscript{1}}}
\newcommand{\Ko}{\Kk_{\Sscript{0}}}
\newcommand{\Ka}{\Kk_{\Sscript{1}}}
\newcommand{\GSets}{\Gg\text{-}{\sf Sets}}
\newcommand{\HSets}{\Hh\text{-}{\sf Sets}}
\newcommand{\timesG}{\underset{\Sscript{\Go}}{\times}}
\newcommand{\Orb}[2]{\mathscr{O}rb_{\Sscript{\mathscr{#1}}}(#2)}
\begin{document}
\allowdisplaybreaks

\title[Morita theory for Hopf algebroids, principal bibundles, and weak equivalences.]{Morita theory for Hopf algebroids, principal bibundles, and weak equivalences} 

\author{Laiachi El Kaoutit}
\author{Niels Kowalzig}

\address{L.E.K.: 
Universidad de Granada, Departamento de \'Algebra and IEMath-Granada, Facultad de Educaci\'on, Econom\'ia  
y Tecnolog\'ia. Campus De Ceuta.
Cortadura del Valle s/n. E-51001 Ceuta, Espa\~na}
\email{kaoutit@ugr.es}

\address{N.K.: Dipartimento di Matematica, Universit\`a degli Studi di Roma La
Sapienza, P.le Aldo Moro 5, 00185 Roma, Italia}
\email{kowalzig@mat.uniroma1.it}

\thanks{L.~El Kaoutit was partially supported by grants MTM2013-41992P
from the Spanish Ministerio de Educaci\'{o}n y Ciencia and  P11-FQM-7156  from the Junta de Andaluc\'ia and acknowledges  hospitality and travel support granted by the INdAM.  The research of N.~Kowalzig was funded by an INdAM-COFUND Marie Curie grant.}

\vspace{-2cm}
\begin{abstract}
We show that  two flat commutative Hopf algebroids are Morita equivalent if and only if they are weakly equivalent and if and only if there exists a principal bibundle connecting them. This gives a positive answer to a conjecture due to Hovey and Strickland. 
We also prove that principal (left) bundles lead to a bicategory together with  a $2$-functor from flat Hopf algebroids to trivial principal bundles.  This turns out to be the universal solution for $2$-functors which send weak equivalences to invertible $1$-cells.  
Our approach can be seen as an algebraic counterpart to  Lie groupoid Morita theory. 
\end{abstract}

\subjclass[2010]{Primary 16D90, 16T15, 18D05, 18D10; secondary 14M17,  22A22, 58H05}
\keywords{Hopf algebroids, weak equivalences, Morita equivalence, principal bundles, bicategories, \\ \indent categorical groups,
orbit spaces, Lie groupoids.} 
\maketitle

\vspace{-1.2cm}

\begin{footnotesize}
\tableofcontents
\end{footnotesize}

\section{Introduction}

\subsection{Aims and objectives}

The two fundamental concepts around which this article is orbiting are those of {\em weak equivalence} and {\em Morita equivalence}. Recall from, {\em e.g.}, \cite[\S5]{MoeMrc:LGSAC} 
that two Lie groupoids $\Gg$ and $\Gg'$ are called weakly equivalent if there exist weak equivalences $\phi: \Hh \to \Gg$ and $\phi': \Hh \to \Gg'$ for
some third Lie groupoid $\Hh$ (see again {\em op.~cit.}~ for the precise definition of a weak equivalence $\phi$). For instance, the groupoids associated to two  atlases of a manifold (or two transverse atlases of a foliated manifold)  are weakly equivalent; each groupoid associated to a principal bundle of  a Lie group $G$ and base manifold $\cM$ is weakly equivalent to the unit Lie groupoid  $\Uu(\cM)$.

As a definition of Morita equivalence of two (Lie) groupoids might serve reversing the (classical) Morita theorem, that is, the requirement that their categories of representations (quasi-coherent $\Gg$-sheaves of $\Bbbk$-modules) are equivalent as symmetric monoidal categories. This leads to a quite general idea of equivalence which can be applied to any mathematical object that allows for the notion of ``representation'', or, more generally, (co)modules. 

That the two notions of weak equivalence and Morita equivalence are essentially the same and also imply the presence of a principal bibundle (in an appropriate sense) is a well-known fact for (Lie) groupoids (in fact, the terminology varies and often coincides, which adds somewhat to the confusion), see \cite{MuhRenWil:EAIFGCSA, Hae:GDHEC, Mrc:FOTBATAHSM}. Note, however, that in the first of these references the respective concept of principal bundle slightly differs from the latter two. 
Taking Lie groupoids as objects, one constructs,  together with the isomorphism classes of principal bundles (as morphisms, sometimes called {\em Hilsum-Skandalis maps}) and equipped with the tensor product, a category,  sometimes called the {\em Morita category}. Moreover, there is a functor from the category of Lie groupoids to this Morita category which transforms weak equivalences to isomorphisms  that establishes a universal solution for functors having this property.

Roughly speaking, \emph{commutative} Hopf algebroids can be seen as  presheaves of groupoids on affine schemes: the datum of a {\em flat} Hopf algebroid is equivalent to the datum of a certain stack with a specific presentation \cite{Naumann:07, FalCha:DOAV}. 
In this perspective, one can establish an equivalence between (right) comodules over a Hopf algebroid and quasi-coherent sheaves with a groupoid action \cite[Thm.~2.2]{Hovey:02}.

Hopf algebroids
were introduced 
in algebraic topology (see, {\em e.g.}, \cite{Rav:CCASHGOS}) as a {\em cogroupoid} kind of object,
which motivates the following definitions taken from \cite[Def.\ 6.1]{HovStr:CALEHT} resp.\ \cite{Hovey:02}.
For the necessary ingredients and notation used therein we refer to the main text.

\begin{definition}
\label{quellala1}
\label{quellala2} Let $(A,\cH)$ and $(B,\cK)$ be two flat Hopf algebroids.
\begin{enumerate}
\item
A morphism $(A,\cH) \to (B,\cK)$ 
 is said to be a \emph{weak equivalence} if and only if
the respective induction functor
$\rcomod{\cH} \to \rcomod{\cK}$ establishes an equivalence of categories. The Hopf algebroids  $(A,\cH)$ and $(B,\cK)$ are said to be \emph{weakly equivalent} if there is a diagram 
\vspace{-.1cm}
$$
\xymatrix@R=8pt{ &   (C,\cJ)  & \\ (A,\cH) \ar@{->}
[ur] &  & \ar@{->}[ul] (B,\cK) }
\vspace{-.1cm}
$$ 
of weak equivalences of Hopf algebroids. 
\item
Two flat Hopf algebroids are said to be {\em Morita equivalent} if their categories of (right) comodules are equivalent as symmetric monoidal categories.
\end{enumerate}
\end{definition}
For instance, the existence of  a weak equivalence implies Morita equivalence since induction functors are always symmetric monoidal functors.

In the context of Hopf {\em algebras}, the second part in the above definition appeared in \cite[Def.~3.2.3]{Schau:HGABGE} baptised {\em monoidal Morita-Takeuchi} equivalence therein but also before in \cite[Def.~5.6]{Schau:HBGE}, where such a property was called {\em monoidal co-Morita} equivalence.   
Let us also mention that a Morita theory for certain cocommutative Hopf algebroids  (so-called {\em \'etale Hopf algebroids}) was developped in \cite{Mrc:THAOFOEGATPME} using a different notion of bundles (called {\em principal bimodules}). Furthermore, the idea of describing Morita theory in the language of bicategories was explained, for example, in \cite{Lan:BOOAAPM} for various contexts, such as rings, $C^*$-algebras, von Neumann algebras, Lie groupoids, symplectic groupoids, and Poisson manifolds.


\subsection{Main results}
Transferring the above statements from Lie groupoids to the case of commutative Hopf algebroids will be the main task (and result)
of this article, summarised as follows:

{\renewcommand{\thetheorem}{{A}}

\begin{thm}
\label{processocivile}
Let $(A,\cH)$ and $(B,\cK)$ be two flat Hopf algebroids. The following are equivalent:
\begin{enumerate}[(1)]
\item $(A,\cH)$ and $(B,\cK)$ are Morita equivalent.
\item There is a principal bibundle connecting $(A,\cH)$ and $(B,\cK)$.
\item $(A,\cH)$ and $(B,\cK)$ are weakly equivalent.
\end{enumerate}
\end{thm}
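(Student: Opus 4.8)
The plan is to prove the cycle $(3)\Rightarrow(1)\Rightarrow(2)\Rightarrow(3)$, translating throughout between the categorical datum of a symmetric monoidal equivalence of comodule categories and the algebraic datum of a bicomodule satisfying a principality condition. First I would dispatch $(3)\Rightarrow(1)$, which is essentially the observation already recorded in the text: a span $(A,\cH)\leftarrow(C,\cJ)\rightarrow(B,\cK)$ of weak equivalences yields by definition an equivalence of comodule categories along each leg, and since induction functors are symmetric monoidal, composing the equivalence attached to one leg with a quasi-inverse of the other produces a symmetric monoidal equivalence $\rcomod{\cH}\simeq\rcomod{\cK}$. This is the routine direction.

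The technical heart is $(1)\Rightarrow(2)$, which I would treat as a Tannaka-type reconstruction. Given a symmetric monoidal equivalence $F\colon\rcomod{\cH}\to\rcomod{\cK}$, write $\omega_\cH\colon\rcomod{\cH}\to\rmod{A}$ and $\omega_\cK\colon\rcomod{\cK}\to\rmod{B}$ for the underlying (fibre) functors, and set $\omega':=\omega_\cK\circ F$. Then $\omega_\cH$ and $\omega'$ form a pair of fibre functors on the single category $\rcomod{\cH}$, and I would manufacture the candidate bibundle $P$ as the object comparing them — schematically a coend $\int^{X}\omega_\cH(X)^{\vee}\otimes\omega'(X)$, or equivalently the value of $\omega'$ on the regular comodule $\cH$ — which naturally carries an $(A,B)$-bimodule structure together with commuting left $\cH$- and right $\cK$-coactions, i.e.\ makes $P$ an $(\cH,\cK)$-bicomodule. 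The crux is then to verify that $P$ is a genuine principal bibundle: that it is faithfully flat on either side and that the canonical maps encoding principality (the algebraic shadow of freeness and transitivity of the two coactions) are isomorphisms. Here the symmetric monoidal structure of $F$, and not merely its being an equivalence, is indispensable, since it is precisely what forces $F$ to respect units and internal structure maps, whence principality can be extracted. I expect this verification — and in particular deducing faithful flatness of $P$ from the exactness and faithfulness of $F$ — to be the main obstacle.

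For the remaining implication $(2)\Rightarrow(3)$ I would carry out the algebraic counterpart of the gauge-groupoid construction. From a principal bibundle $P$ connecting $(A,\cH)$ and $(B,\cK)$ one assembles an intermediate flat Hopf algebroid $(C,\cJ)$ built out of $P$, together with two structural morphisms $(A,\cH)\leftarrow(C,\cJ)\rightarrow(B,\cK)$. Principality of $P$ is exactly what guarantees that each of these morphisms is a weak equivalence: cotensoring with $P$, respectively with its dual, supplies a quasi-inverse to the relevant induction functor, and the principality isomorphisms are what render the unit and counit of the resulting adjunction invertible. In the bicategorical language developed earlier this amounts to saying that the $1$-cell attached to $P$ is invertible, and invertible $1$-cells are precisely those arising from spans of weak equivalences. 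Assembling the three implications closes the cycle and establishes the theorem.
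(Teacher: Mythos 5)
Your proposal is correct and follows essentially the same route as the paper: the same cycle $(1)\Rightarrow(2)\Rightarrow(3)\Rightarrow(1)$, with the bibundle in $(1)\Rightarrow(2)$ obtained exactly as in the paper by evaluating the symmetric monoidal equivalence on the regular comodule (the paper's $\mathcal{F}(\cH)$ of Proposition \ref{prop:llegaras}, whose principality is verified through Propositions \ref{prop:FHcomdalg}--\ref{prop:FH} and Lemma \ref{lemma:fplano}), and the span in $(2)\Rightarrow(3)$ built from the two-sided translation Hopf algebroid $(P,\cH\lJoin P\rJoin\cK)$ as in Proposition \ref{prop:Cielitomissyoutoo}. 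The only difference is presentational (Tannakian coend language and the bicategorical reformulation via invertible $1$-cells), and the steps you flag as the main obstacles -- bijectivity of the canonical map and faithful flatness of $\mathcal{F}(\cH)$ -- are precisely where the paper invests its technical work.
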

}
 
One might be tempted to think that these results can be obtained by simply dualising the usual techniques in the groupoid case (which we recall in \S\ref{sec:PSets}, Theorem \ref{thm:AG}) but things turn out to be more intricate: one of the main obstacles in mimicking the groupoid case is the construction of orbit spaces which correspond to quotients of affine schemes, which is a subtle concept with its own challenges.  
In contrast to that, our arguments make large use of cotensor products of comodule algebras in correspondence to these quotients of affine schemes, which might seem technical at first sight but proves useful in this context.

The subsequent picture shows all implications between $(1),  (2)$, and $(3)$ in the above theorem that we will explore in the main text: 
\begin{small}
\begin{equation*}
\xymatrix{ &  & (1)  \ar@/_0.8pc/@{=>}_-{{\scriptstyle{\text{Proposition } \ref{prop:llegaras}}}}[lldd]   & &  \\ & & & &  \\ (2) \ar@/_0.8pc/@{=>}_-{\scriptstyle{\text{Theorem } \ref{aromanflower}}}[rruu] \ar@{<=>}_-{\scriptstyle{\text{Proposition } \ref{prop:PHPKP}}}[rrrr]  & & &  & (3)
\ar@/_1pc/@{=>}_-{{\scriptstyle{\text{trivial }}}}[lluu] }
\end{equation*}
\begin{center}
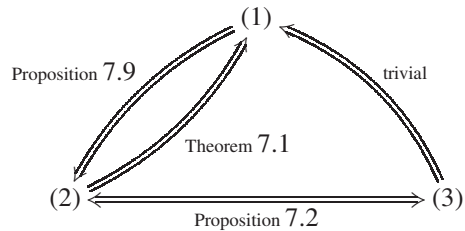

\vspace{0.05cm}
\captionof{figure}{\footnotesize Paths in the proof of Theorem \ref{processocivile}}\label{ahaaha}
\end{center}
\end{small}
\vspace{-.5cm}

In particular, the step $(1) \Rightarrow (3)$ in the above Theorem \ref{processocivile} was conjectured in \cite[Conj.~6.3]{HovStr:CALEHT}: more precisely, Hovey and Strickland conjectured that in case the category of $\cH$-comodules is equivalent to the one of comodules over $\cK$, then the two Hopf algebroids $(A,\cH)$ and $(B, \cK)$ 
are connected by a chain of weak equivalences, and we show that this chain can be taken to be of length $2$.

By a  chain of weak equivalences of length $n \geq 2$ we mean a zig-zag of weak equivalences in the sense of \cite[Def.~7.9.1]{Hirschhorn}, up to the equivalence transformations  given in \cite[\S14.4]{Hirschhorn}. The key here is Proposition \ref{prop:1out2}, which shows  that any  zig-zag of weak equivalences of the form $\xymatrix{ \bullet & \ar@{->}^-{}[r] \bullet \ar@{->}^-{}[l] & \bullet }$
can be completed to a diagram of weak equivalences  having the form 
\begin{small}
$$
\xymatrix@R=15pt@C=15pt{  & \circ & \\ \bullet \ar@{-->}^-{}[ru]   & & \ar@{-->}^-{}[lu] \bullet \\  & \ar@{->}^-{}[lu] \bullet \ar@{->}^-{}[ru] & 
}
$$
\end{small}
which is commutative up to a $2$-isomorphism (a property dual to condition (BF3) in \cite[p.~254]{Pro:EASABOF}).

In this way, any chain of weak equivalences (in the above sense) between two flat Hopf algebroids $(A,\cH)$ and $(B,\cK)$ can be transformed
to one of the form 
\begin{footnotesize}
$$
\mathfrak{Z}_{\scriptstyle{k+2}}:\;\;
\xymatrix@R=8pt@C=8pt{ 
& \scriptstyle{(D_1,\,\cI_1)}   &  &  \scriptstyle{(D_2,\,\cI_2)} & & \scriptscriptstyle{\cdots\cdots\cdots\cdots\cdots\cdots} & &   \scriptstyle{(D_{k},\,\cI_{k})}  & &\scriptstyle{(D_{k+1},\,\cI_{k+1})}  & \\
 \scriptstyle{(A,\cH)} \ar@{->}^-{}[ru] & & \ar@{->}^-{}[ru]  \scriptstyle{(C_1,\,\cJ_1)} \ar@{->}^-{}[lu] &  & \ar@{->}^-{}[lu] \scriptstyle{(C_{2},\,\cJ_{2})}   & \scriptscriptstyle{\cdots\cdots\cdots\cdots\cdots\cdots} &  \scriptstyle{(C_{k-1},\,\cJ_{k-1})} \ar@{->}^-{}[ru] & & \ar@{->}^-{}[ru] \scriptstyle{(C_{k},\,\cJ_{k})}  \ar@{->}^-{}[lu]& &  \ar@{->}^-{}[lu] \scriptstyle{(B,\cK)}
}
$$
\end{footnotesize}
of length $2(k+1)$, which, in turn, can be completed to the following isosceles triangle 
\begin{small}
$$
\xymatrix@R=10pt@C=12pt{  & & & & & \scriptstyle{(C_{k1},\,\cJ_{k1})} & & & & &   \\ 
& & & & \ar@{->}^-{}[ru]  \scriptstyle{(C_{(k-1)1},\,\cJ_{(k-1)1})} & & \ar@{->}^-{}[lu]  \scriptstyle{(C_{(k-1)2},\,\cJ_{(k-1)2})} & & & & \\ 
& & &   \ar@{.}^-{}[ru] \scriptstyle{(C_{11},\,\cJ_{11})} & & \ar@{.}^-{}[ru] \scriptscriptstyle{\!...\!} \ar@{.}^-{}[lu] & &  \ar@{.}^-{}[lu] \scriptstyle{(C_{1k},\,\cJ_{1k})}  & & & \\
& & \scriptstyle{(D_1,\,\cI_1)} \ar@{->}^-{}[ru] &  &  \ar@{->}^-{}[lu]\scriptstyle{(D_2,\,\cI_2)} \ar@{.}^-{}[ru] &  & \ar@{.}^-{}[lu] \scriptstyle{(D_{k},\,\cI_{k})}  \ar@{->}^-{}[ru]  & & \scriptstyle{(D_{k+1},\,\cI_{k+1})}  \ar@{->}^-{}[lu] & & \\
& \scriptstyle{(A,\,\cH)}  \ar@{->}^-{}[ru]  &  &   \ar@{->}^-{}[lu] \scriptstyle{(C_1,\,\cJ_1)} \ar@{->}^-{}[ru]    &   &  \ar@{.}^-{}[lu] \scriptscriptstyle{\!...\!} \ar@{.}^-{}[ru] & &  \ar@{->}^-{}[lu] \scriptstyle{(C_k,\,\cJ_k)} \ar@{->}^-{}[ru] & &\scriptstyle{(B,\,\cK)} \ar@{->}^-{}[lu] & 
}
$$
\end{small}
of $(k+2)$ vertices on each side. Such a triangle is obtained by constructing ${k(k+1)}/{2}$ new flat Hopf algebroids being essentially two-sided translation Hopf algebroids built  from trivial principal bundles.

The notion of {\em (quantum) principal bundle} that appears as a crucial ingredient  in Theorem \ref{processocivile} is a relatively straightforward extension of the corresponding concept for Hopf algebras as introduced in \cite{BrzMaj:QGGTOQS}, see also \cite{Brz:TMIQPP}. In \cite[\S3.2.4]{Schau:HGABGE}, again in the realm of Hopf algebras, these objects were called {\em bi-Galois objects} and the corresponding implications $(1) \Leftrightarrow (2)$ of Theorem \ref{processocivile} were shown. 
As a matter of fact, in many examples constructing bi-Galois objects or principal bundles has turned out to be a practicable way to establish monoidal equivalences between comodule categories; as a concrete illustration, see, for example, \cite{Mas:CDAGOFSCHAOFD, Bic:TRCOTQGOANDBF}.  Analogous objects in sheaf theory 
are known under the name of \emph{(bi)torsors}, see \cite{DemGab:GATIGAGGC}.

In fact, we gather flat Hopf algebroids and principal bundles along with their morphisms in a bicategory. More precisely, in Proposition \ref{piazzadellorologio} we prove that the data given by
\begin{center}
\begin{enumerate}[\quad \raisebox{1pt}{$\scriptstyle{\bullet}$}]
\item
flat Hopf algebroids (as $0$-cells),
\item
left principal bundles (as $1$-cells), 
\item
as well as morphisms of left principal bundles (as $2$-cells)
\end{enumerate}
\end{center}
define a bicategory, denoted by $\mathsf{PB}^{\scriptscriptstyle{\ell}}$. 
The bicategories of analogously constructed right resp.\ two-sided principal bundles (or bibundles) are denoted by $\mathsf{PB}^{\scriptscriptstyle{r}}$ and $\mathsf{PB}^{\scriptscriptstyle{b}}$, respectively.  As in classical situations, 
for two $0$-cells $(A,\cH)$ and $(B,\cK)$, the category $\lPB{\cH}{\cK}$ turns out to be a groupoid. 
This leads to the structure of a \emph{bigroupoid} on the bicategory $\mathsf{PB}^{\scriptscriptstyle{b}}$, and hence to a \emph{categorical group} (or \emph{bigroup}) structure on each category $\bPB{\cH}{\cH}$, see, for instance, \cite{Noo:NOTGTGACM}. 

Applying Theorem \ref{processocivile} above to a single flat Hopf algebroid yields the following result:  

{\renewcommand{\thetheorem}{{B}}
\begin{theorem}
\label{thm:B}
Let $(A,\cH)$ be a flat Hopf algebroid and denote by $\mathscr{U}(\cH)$ its associated  principal unit bibundle. 
Then the category $\big(\Sf{Aut}^{\scriptscriptstyle{\otimes}}(A,\cH), \B{\circ},\id_{\scriptscriptstyle{\rcomod{\cH}}}\big)$  of symmetric monoidal auto-equivalences of right $\cH$-comodules with morphisms given by natural tensor transformations forms a categorical group, and the functors 
\begin{equation*}
\begin{array}{rclrcl}
\big(\Sf{Aut}^{\scriptscriptstyle{\otimes}}(A,\cH), \B{\circ},\id_{\scriptscriptstyle{\rcomod{\cH}}}\big) 
&\longrightarrow& 
\big(\bPB{\cH}{\cH},\cotensor{\cH}, \mathscr{U}(\cH) \big), &
 \Ff &\longmapsto& \Ff(\cH) 
\\  
\big(\bPB{\cH}{\cH},\cotensor{\cH}, \mathscr{U}(\cH) \big)   &\longrightarrow& 
\big(\Sf{Aut}^{\scriptscriptstyle{\otimes}}(A,\cH), \B{\circ},\id_{\scriptscriptstyle{\rcomod{\cH}}}\big), & (P,\alpha,\beta) &\longmapsto& -\cotensor{\cH}P   
\end{array}
\end{equation*}
establish a monoidal equivalence of categorical groups.  
\end{theorem}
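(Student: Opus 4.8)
The plan is to establish the two functors as mutually quasi-inverse monoidal equivalences, using Theorem~\ref{processocivile} as the structural backbone. First I would verify that the assignments are well-defined on objects and morphisms. For the functor $\Ff \longmapsto \Ff(\cH)$, the key point is that a symmetric monoidal auto-equivalence of $\rcomod{\cH}$ must send the regular comodule $\cH$ to an object carrying the structure of a principal $\cH$-bibundle; the left coaction comes from $\Ff$ being $\cH$-linear, while the right $\cH$-comodule algebra structure (and hence the principality witnessed by the data $(\alpha,\beta)$) is extracted from the monoidal constraints of $\Ff$ applied to the multiplication $\cH \cotensor{\cH} \cH \to \cH$. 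Conversely, for $(P,\alpha,\beta) \longmapsto -\cotensor{\cH} P$, one checks that cotensoring with a principal bibundle yields a symmetric monoidal auto-equivalence of $\rcomod{\cH}$; that it is an equivalence is precisely where Theorem~\ref{processocivile}, specialised to $(A,\cH) = (B,\cK)$, applies, since a principal bibundle connecting $(A,\cH)$ to itself gives a Morita self-equivalence.

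Next I would exhibit the natural isomorphisms witnessing that the two composites are isomorphic to the respective identity functors. For the composite $\Ff \longmapsto \Ff(\cH) \longmapsto -\cotensor{\cH} \Ff(\cH)$, one constructs a natural tensor transformation from $-\cotensor{\cH}\Ff(\cH)$ to $\Ff$ using the coaction of an arbitrary comodule $M$ together with the monoidal structure of $\Ff$; naturality and compatibility with the monoidal constraints reduce to coassociativity and counitality diagrams. For the other composite $(P,\alpha,\beta) \longmapsto -\cotensor{\cH}P \longmapsto (-\cotensor{\cH}P)(\cH) = \cH \cotensor{\cH}P$, the counit isomorphism $\cH \cotensor{\cH} P \cong P$ of the cotensor product supplies the required isomorphism of principal bibundles, and one checks it respects $\alpha$ and $\beta$.

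Finally I would promote this equivalence of underlying categories to a \emph{monoidal} equivalence of categorical groups. This requires producing the monoidal structure morphisms on the two functors: for $\Ff \longmapsto \Ff(\cH)$ one needs a coherent natural isomorphism $\Ff(\cH) \cotensor{\cH} \Gg(\cH) \cong (\Ff \circ \Gg)(\cH)$ relating the cotensor product of bibundles to composition of functors, together with an isomorphism $\id_{\scriptscriptstyle{\rcomod{\cH}}}(\cH) \cong \mathscr{U}(\cH)$ identifying the image of the identity with the unit bibundle $\mathscr{U}(\cH)$. The first of these is built from the monoidal constraints of $\Gg$ evaluated at $\cH$ combined with the coaction, and the hexagon/triangle coherence axioms must be checked against the associativity and unit constraints of the bicategory $\mathsf{PB}^{\scriptscriptstyle{b}}$ established in Proposition~\ref{piazzadellorologio}.

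The hard part will be verifying that $\Ff \longmapsto \Ff(\cH)$ genuinely lands in \emph{principal} bibundles rather than merely in comodule algebras, and that the principality data $(\alpha,\beta)$ are correctly recovered from the monoidal structure of $\Ff$; this is where one must argue that the canonical maps governing the Galois/principality condition are isomorphisms, ultimately leaning on the equivalence $\Ff$ being monoidal and an auto-equivalence. I expect the coherence verifications for the monoidal structure morphisms to be lengthy but routine once the object-level assignment is pinned down, whereas the conceptual obstacle is ensuring that all structures transported across the equivalence are compatible with the symmetric monoidal framework, so that the result is an equivalence of categorical groups and not merely of monoidal categories.
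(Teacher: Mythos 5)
Your overall route is the same as the paper's: the paper proves this theorem by assembling Proposition \ref{prop:pbmaps}, Theorem \ref{thm:MAIN} (whose hard implication, Proposition \ref{prop:llegaras} via Propositions \ref{prop:FHcomdalg} and \ref{prop:FH}, is exactly your ``hard part'' of showing that $\Ff(\cH)$ is a principal bibundle), Theorem \ref{aromanflower}, and Corollary \ref{coro:twoJapanese}, together with the natural isomorphisms \eqref{Eq:FG} --- precisely the ingredients you re-derive. However, two steps in your plan are genuinely off. First, the monoidal comparison morphism you posit, $\Ff(\cH)\cotensor{\cH}\Gg(\cH) \cong (\Ff\circ\Gg)(\cH)$, does not exist in that order: applying $\Ff \cong -\cotensor{\cH}\Ff(\cH)$ to the comodule $\Gg(\cH)$ gives $(\Ff\circ\Gg)(\cH)=\Ff(\Gg(\cH)) \cong \Gg(\cH)\cotensor{\cH}\Ff(\cH)$, and in the other direction $-\cotensor{\cH}\big(P\cotensor{\cH}Q\big) \cong \big(-\cotensor{\cH}Q\big)\circ\big(-\cotensor{\cH}P\big)$; since the cotensor product on $\bPB{\cH}{\cH}$ is not symmetric, the two orders cannot simply be exchanged. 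Both functors are therefore contravariant with respect to the multiplications --- exactly the phenomenon the paper records for $\mathscr{P}$ in Proposition \ref{prop:P} and for $\td{\mathscr{F}}$ in Theorem \ref{thm:elevenyearsandsevenmonths} --- so your coherence verifications must be set up against the reversed monoidal structure (equivalently, after composing with the inversion $P \mapsto P^{\co}$); as literally stated, that step fails.

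Second, you never address the first assertion of the theorem, namely that $\Sf{Aut}^{\scriptscriptstyle{\otimes}}(A,\cH)$ is a categorical group, i.e.\ that every natural tensor transformation $\Theta: \Ff \to \Ff'$ is invertible. In the paper this is Proposition \ref{prop:pbmaps}: $\Theta_{\cH}$ is a morphism of principal bibundles, hence an isomorphism by Lemma \ref{lemma:ISO}, and then $\Theta$ itself is invertible because it is identified with $-\cotensor{\cH}\Theta_{\cH}$ under \eqref{Eq:FG}. You could instead deduce this from your equivalence together with the fact that $\bPB{\cH}{\cH}$ is a groupoid, but only after proving that $\Theta \mapsto \Theta_{\cH}$ is fully faithful, which amounts to the same computation; your plan files the morphism level under ``well-definedness'' and skips it. Two smaller slips point the same way: the monoid structure on $\cH$ lives in $\big(\rcomod{\cH},\comdtensor{A}\big)$, so the relevant multiplication is $\cH\comdtensor{A}\cH \to \cH$ rather than $\cH\cotensor{\cH}\cH \to \cH$; and the left $\cH$-coaction on $\Ff(\cH)$ is not automatic from any ``$\cH$-linearity'' of $\Ff$ but is $\Upsilon\circ\Ff(\Delta)$ as in Eq.~\eqref{Eq:lcoact}, whose construction needs the Eilenberg--Watts type isomorphism $\Upsilon$ of Eq.~\eqref{Eq:Upsilon} and hence that $\Ff$ commutes with inductive limits.
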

}

Moreover, it turns out that there is a $2$-functor 
$$
\mathscr{P}: 2\text{-}\mathsf{HAlgd} \longrightarrow \mathsf{PB}^{\ell \, \co}
$$
from the $2$-category of flat Hopf algebroids to the conjugate of $\mathsf{PB}^{\scriptscriptstyle{\ell}}$, which sends any $1$-cell  $\B{\phi}: (A,\cH) \to (B, \cK)$ to its associated trivial left principal bundle  $\mathscr{P}(\B{\phi})=\cH\tensor{\scriptscriptstyle{\phi}}B$, that is, the pull-back of the unit bundle $\mathscr{U}(\cH)$. 
A $1$-cell $\B{\phi}$ in $2\text{-}\mathsf{HAlgd}$ is a weak equivalence if and only if $\mathscr{P}(\B{\phi})$ is an invertible $1$-cell in $\mathsf{PB}^{\ell \, \co}$, \emph{i.e.},  is part of an internal equivalence. We then present the pair $(\mathsf{PB}^{\scriptscriptstyle{\ell}}, \mathscr{P})$ as the universal solution with respect to this property:

{\renewcommand{\thetheorem}{{C}}

\begin{thm}\label{thm:Ramadan}
Let $\mathscr{F}: 2\text{-}\Sf{HAlgd} \to \mathscr{B}$ be a $2$-functor which sends weak equivalences to invertible $1$-cells. Then,  up to isomorphism (of $2$-functors), there is a unique $2$-functor $\td{\mathscr{F}}$ such that the diagram 
\begin{equation*}
\xymatrix@R=20pt{ 2\text{-}\Sf{HAlgd} \ar@{->}_-{\mathscr{F}}[rrd]   \ar@{->}^-{\mathscr{P}}[rr] & & \mathsf{PB}^{\ell \, \co} \ar@{.>}^-{\td{\mathscr{F}}}[d]  \\ & & \mathscr{B} }
\end{equation*}
commutes up to an isomorphism of $2$-functors. 
\end{thm}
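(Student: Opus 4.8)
The plan is to recognise the pair $(\mathsf{PB}^{\ell\,\co}, \mathscr{P})$ as the bicategory of fractions of $2\text{-}\Sf{HAlgd}$ at the class $W$ of weak equivalences, in the sense of Pronk \cite{Pro:EASABOF}, and then to deduce Theorem \ref{thm:Ramadan} as an instance of the universal property carried by any such localisation. Thus I would split the argument into a \emph{recognition} part, showing that $\mathscr{P}$ genuinely exhibits $\mathsf{PB}^{\ell\,\co}$ as $2\text{-}\Sf{HAlgd}[W^{-1}]$, and a purely formal \emph{universality} part.

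First I would verify that $W$ admits a bicalculus of fractions, i.e.\ the analogues of Pronk's conditions (BF1)--(BF5). That $W$ contains all internal equivalences and is closed under composition and under whiskering by $2$-isomorphisms follows directly from Definition \ref{quellala1}(i), since induction functors compose and an isomorphism of Hopf algebroids induces an equivalence on comodule categories. The one non-formal axiom is the completion property dual to (BF3), and this is exactly what Proposition \ref{prop:1out2} supplies: it fills a span of weak equivalences into a diamond commuting up to a $2$-isomorphism. The remaining axioms concerning $2$-cells reduce to the fact, recorded after Proposition \ref{piazzadellorologio}, that each $\lPB{\cH}{\cK}$ is a groupoid and that $\mathsf{PB}^{\scriptscriptstyle{b}}$ carries a bigroupoid structure.

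Next I would run Pronk's recognition criterion for $\mathscr{P}: 2\text{-}\Sf{HAlgd} \to \mathsf{PB}^{\ell\,\co}$. Since $\mathscr{P}$ is the identity on $0$-cells it is essentially surjective, and by the hypothesis recalled before the statement it sends weak equivalences to invertible $1$-cells. The crucial point is that every $1$-cell of $\mathsf{PB}^{\ell\,\co}$, i.e.\ every left principal bundle $P$ from $(A,\cH)$ to $(B,\cK)$, is equivalent to a fraction $\mathscr{P}(\B{\phi}) \cotensor{\cJ} \mathscr{P}(\B{\psi})^{-1}$ with $\B{\psi}$ a weak equivalence: using the equivalence $(2)\Leftrightarrow(3)$ of Theorem \ref{processocivile} and the two-sided translation Hopf algebroid attached to $P$, one represents $P$ by a span $(A,\cH)\xleftarrow{\B{\psi}}(C,\cJ)\xrightarrow{\B{\phi}}(B,\cK)$ whose left leg is a weak equivalence, so that $P$ becomes the composite of the trivial bundle $\mathscr{P}(\B{\phi})=\cH\tensor{\scriptscriptstyle{\phi}}B$ with the inverse of the invertible bundle $\mathscr{P}(\B{\psi})$. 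Finally, local full faithfulness of $\mathscr{P}$ on $2$-cells, after identifying bundle morphisms with morphisms of fractions, again follows from the groupoid structure of $\lPB{\cH}{\cK}$. Together these establish the recognition.

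Granting this, Theorem \ref{thm:Ramadan} is the formal output of the localisation. Given $\mathscr{F}$ inverting $W$, I would set $\td{\mathscr{F}}$ to agree with $\mathscr{F}$ on $0$-cells and, on a $1$-cell presented as $\mathscr{P}(\B{\phi})\cotensor{\cJ}\mathscr{P}(\B{\psi})^{-1}$, to be $\mathscr{F}(\B{\phi})\circ\mathscr{F}(\B{\psi})^{-1}$, where $\mathscr{F}(\B{\psi})^{-1}$ is a chosen quasi-inverse, available because $\B{\psi}\in W$; on $2$-cells $\td{\mathscr{F}}$ is defined through the corresponding morphism of fractions. The commutativity $\td{\mathscr{F}}\circ\mathscr{P}\cong\mathscr{F}$ and the uniqueness of $\td{\mathscr{F}}$ up to isomorphism of $2$-functors are then precisely the universal property of \cite{Pro:EASABOF}. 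The main obstacle in the whole scheme is \emph{well-definedness} of $\td{\mathscr{F}}$, namely its independence of the chosen fraction presentation: two presentations of the same principal bundle must be linked through the completion diamond of Proposition \ref{prop:1out2}, and one has to check that $\mathscr{F}$, applied to that diamond, yields a coherent isomorphism between the two candidate values. This is where the inversion hypothesis on $\mathscr{F}$ and the calculus-of-fractions axioms are genuinely used, and where the coherence bookkeeping is heaviest.
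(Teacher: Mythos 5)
Your route --- exhibiting $(\mathsf{PB}^{\ell\,\co},\mathscr{P})$ as a dual bicategory of fractions of $2\text{-}\Sf{HAlgd}$ at the weak equivalences and then quoting the universal property of localisation --- is genuinely different from the paper's proof, which constructs $\td{\mathscr{F}}$ by hand: it sets $\td{\mathscr{F}}(P,\alpha,\beta):=\mathscr{F}(\B{\beta})^{-1}\circ\mathscr{F}(\B{\alpha})$ using the canonical co-span through the two-sided translation Hopf algebroid $(P,\cH\lJoin P\rJoin\cK)$, proves compatibility with the cotensor composition via the auxiliary bundle $P\tensor{B}\cK\tensor{B}Q$, and gets uniqueness from Lemma \ref{lemma:paciencia} and the commutativity of the triangle from Lemma \ref{lemma:volveranotro}. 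However, your recognition step has a genuine gap. The completion axiom you must verify (the dual of Pronk's (BF3)) concerns a span $(C_1,\cJ_1)\xleftarrow{\B{w}}(B,\cK)\xrightarrow{\B{f}}(C_2,\cJ_2)$ in which \emph{only} $\B{w}$ is a weak equivalence and $\B{f}$ is arbitrary: this mixed case is precisely what is needed to define the composition of two fractions, and without it the bicategory of fractions does not even exist. Proposition \ref{prop:1out2}, which you claim is ``exactly'' this axiom, only treats the case where \emph{both} legs are weak equivalences. The mixed statement is plausibly provable with the paper's tools (the trivial bundle of $\B{w}$ is a bibundle by Proposition \ref{lemma:trivial bibundle}, its opposite can be cotensored against the trivial left bundle of $\B{f}$ by Lemma \ref{lemma:cotensorpb}, and Proposition \ref{prop:Cielitomissyoutoo} {\em (i)} applied to the resulting two-sided translation Hopf algebroid yields the required parallel weak equivalence, with $2$-commutativity along the lines of Lemma \ref{lema:1out2}), but this argument appears neither in the paper nor in your proposal, and it must be supplied.

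The second gap concerns $2$-cells. The remaining fractions axioms (the dual of (BF4): cancelling a weak equivalence from an equation of $2$-cells at the price of composing with a further weak equivalence) and the local full faithfulness demanded by Pronk's recognition criterion are statements about $2$-cells of $2\text{-}\Sf{HAlgd}$, i.e.\ about algebra maps $\fk{c}:\cH\to B$ subject to \eqref{Eq:2cells} and about how $\mathscr{P}$ acts on them (Proposition \ref{prop:P}). The facts you invoke instead --- that each $\lPB{\cH}{\cK}$ is a groupoid and that $\mathsf{PB}^{\scriptscriptstyle{b}}$ is a bigroupoid --- live entirely in the \emph{target} bicategory and imply neither point, so this part of the recognition is unsupported. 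Note also that the paper presents the ``calculus of fractions dual to Pronk'' as a \emph{consequence} of Theorem \ref{thm:Ramadan}, not as an input; your plan reverses the logical order and therefore cannot quote any of these axioms, but has to establish them from scratch. By contrast, the universality half of your scheme is sound once recognition is granted: your formula $\mathscr{F}(\B{\phi})\circ\mathscr{F}(\B{\psi})^{-1}$ on a fraction agrees with the paper's explicit definition of $\td{\mathscr{F}}$, and your fraction presentation of a bundle is exactly Lemma \ref{lemma:paciencia}.
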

}

We finally want to mention that this universality leads to a kind of calculus of fractions in 
the $2$-category $2\text{-}\Sf{HAlgd}$ with respect to weak equivalences in a  sense  ``dual"  to the approach in \cite{Pro:EASABOF}.

\bigskip

\noindent \textbf{Acknowledgements.}
It is a pleasure to thank  Alessandro Ardizzoni, Federica Galluzzi, and Fabio Gavarini for stimulating discussions and useful comments. We are also grateful to the referee for careful reading and useful comments.

\section{Abstract groupoids and principal bisets revisited}
\label{sec:PSets}
In this section we expose some basic results on abstract groupoids which are going to serve as a sort of motivation for the forthcoming sections dealing with flat Hopf algebroids. The exposition we follow here is parallel to \cite{MoeMrc:LGSAC} dealing with Lie 
groupoids, as well as to  \cite{Kaoutit:2015}.

\subsection{Principal bisets and orbit sets}\label{ssec:PB}
A \emph{groupoid} (or \emph{abstract groupoid}) is a small category where each morphism is an isomorphism. That is, a pair of sets $\mathscr{G}:=(\Gg_{\Sscript{1}}, \Gg_{\Sscript{0}})$ 
with a diagram 
$\xymatrix@C=35pt{\Gg_{\Sscript{1}}\ar@<0.70ex>@{->}|-{\scriptscriptstyle{\sf{s}}}[r] \ar@<-0.70ex>@{->}|-{\scriptscriptstyle{\sf{t}}}[r] & \ar@{->}|-{ \scriptscriptstyle{\iota}}[l] \Gg_{\Sscript{0}}}$,
where $\Sf{s}$ and $\Sf{t}$ are the source resp.\ the target of a given arrow, and where $\iota$ assigns to each object its identity arrow; together with an associative and unital multiplication  $\Gg_{\Sscript{2}}:= \Gg_{\Sscript{1}}\, \due  \times {\Sscript{\Sf{s}}} {\, \Sscript{\Sf{t}}} \, \Gg_{\Sscript{1}} \to \Gg_{\Sscript{1}}$  as well as a map $\Gg_{\Sscript{1}} \to \Gg_{\Sscript{1}}$, which associates to each arrow its inverse.

Recall that for a groupoid $\Gg$ one can define its set of orbits as follows: for any $a \in \Gg_{0}$, one considers either the set
$$
\Oo_{\scriptscriptstyle{a}} \,= \, \Sf{t}\big( \Sf{s}^{-1}(a)\big),
$$ or $\Oo_{\scriptscriptstyle{a}} = \Sf{s}\big( \Sf{t}^{-1}(a)\big)$. 
An equivalence relation on $\Gg_{0}$ is now defined by setting $a \sim b $ if and only if $\Oo_{\scriptscriptstyle{a}}=\Oo_{\scriptscriptstyle{b}}$. The \emph{set of orbits} of $\Gg$ is the quotient set $\Gg_{\Sscript{0}}/\sim$, which is often denoted by $\Gg_{\Sscript{0}}/\Gg$. In other words this is the set of all connected components of $\Gg$.

A more general situation arises when a groupoid acts on a set, which we will refer to as groupoid-set. Specifically, 
recall that a  \emph{left $\Gg$-action} of a groupoid $\Gg$ on a set $X$ consists of two maps $\alpha: X \to \Gg_{\Sscript{0}}$ (\emph{the structure map}) and   $\lambda: \Gg_{\Sscript{1}}\, \due \times {\scriptscriptstyle{\Sf{s}}} {\, \scriptscriptstyle{\alpha}} X \to X, \ (g,x) \mapsto gx$ (\emph{the action map}), satisfying
$$ 
\alpha(gx)\,=\, \Sf{t}(g),\quad  \iota_{\scriptscriptstyle{\alpha(x)}} x\,=\, x,\quad g'(gx)\,=\, (g'g)x. 
$$
The pair $(X,\alpha)$ is called \emph{a left $\Gg$-set}. In this way, one can define the \emph{left translation groupoid} $\Gg \lJoin X$ with $\Gg_{\Sscript{1}}\, \due \times {\scriptscriptstyle{\Sf{s}}} {\, \scriptscriptstyle{\alpha}} X$ as set of arrows and $X$ as set of objects. This is  the so-called \emph{semi-direct product groupoid}, see \cite[p.~163]{MoeMrc:LGSAC}. The \emph{orbit set} $X/\Gg$ of the left $\Gg$-set $(X,\alpha)$ is by definition the orbit set of the translation groupoid  $\Gg \lJoin X$.   For a given object $x \in X$, the equivalence class, that is, the orbit of $x$, will be denoted by $\mathscr{O}rb_{\Sscript{\Gg}}(x)$.

Morphisms between left $\Gg$-sets (or \emph{$\Gg$-equivariant maps}) are defined in the obvious way, and the category so-obtained is denoted by $\GSets$ and called {\em left groupoid-sets}. The category ${\sf Sets}\text{-}\Gg$ of right groupoid-sets is similarly defined. These categories are in fact symmetric monoidal categories, and one can observe that $\GSets$ is isomorphic to ${\sf Sets}\text{-}\Gg$. Explicitly, the \emph{tensor product} of two objects $(X,\alpha)$ and $(X', \alpha')$ in $\GSets$ is given by the object 
$$
(X,\alpha) \timesG (X',\alpha'): =\big(X \, \due \times {\scriptscriptstyle{\alpha}} {\, \scriptscriptstyle{\alpha'}} X', \alpha\alpha'\big), 
$$
where $\alpha\alpha': X \, \due \times {\scriptscriptstyle{\alpha}} {\, \scriptscriptstyle{\alpha'}} X' \to \Go$, $(x,x') \mapsto  \alpha(x)=\alpha'(x')$. The identity object is the left $\Gg$-set $(\Go,1_{\Sscript{\Go}})$ with the action $\Gg_{\Sscript{1}}\, \due \times {\scriptscriptstyle{\Sf{s}}} {\, \scriptscriptstyle{\alpha}} \Go \to \Go$, $(g,a) \mapsto g.\, a=t(g)$. The isomorphism of categories between left $\Gg$-sets and right $\Gg$-sets is obviously constructed by using the inverse map $\Ga \to \Ga$, $g \mapsto g^{-1}$.
Moreover, the forgetful functor $\Oo:  \GSets \to {\sf Sets}_{\Sscript{/\Go}}$, where the latter denotes the category of objects over $\Go$ (the comma category), admits a left adjoint functor $\Ga \, \due \times {\scriptscriptstyle{\Sf{s}}} {\, \scriptscriptstyle{\bullet}} -: {\sf Sets}_{\Sscript{/\Go}} \to \GSets$, which is defined on objects as follows. If $(M,\gamma)$ is an object in ${\sf Sets}_{\Sscript{/\Go}}$, then $(\Ga \, \due \times {\scriptscriptstyle{\Sf{s}}} {\, \scriptscriptstyle{\gamma}} M, {\sf{t}} \circ pr_{\Sscript{1}})$ is a left $\Gg$-set with action given by the multiplication on the first component. 


Consider a left $\Gg$-set $(X,\alpha)$ and let $x \in X$. Then clearly the pair $(\Orb{G}{x}, \alpha_{\Sscript{x}})$, where $\alpha_{\Sscript{x}}$ is the restriction of $\alpha$, inherits from $(X,\alpha)$ the structure of a left $\Gg$-set with $\Gg$-equivariant monomorphism  $\tau_{\Sscript{x}}: (\Orb{G}{x}, \alpha_{\Sscript{x}}) \hookrightarrow (X,\alpha)$, the canonical injection. It turns out that the disjoint union 
\begin{equation}\label{Eq:union}
(X,\alpha)\, =\, \biguplus_{x \, \in \, {\rm rep}(X/\Gg)} (\Orb{G}{x}, \alpha_{\Sscript{x}}),
\end{equation}
where ${\rm rep}(X/\Gg)$ is a set of representatives of the equivalence classes, coincides with the coproduct  of the discrete system $\{(\Orb{G}{x}, \alpha_{\Sscript{x}}), \tau_{\Sscript{x}})\}_{\Sscript{x \, \in \, {\rm rep}(X/\Gg)}}$ in the category of left $\Gg$-sets.

Let $\Gg$ and $\Hh$ be two groupoids and $(X,\alpha, \beta)$ a triple consisting of a set $X$ and two maps $\alpha : X \to \Gg_{\Sscript{0}}$ and $\beta: X \to \Hh_{\Sscript{0}}$.  The following definitions are abstract formulations of those given in \cite{MoeMrc:LGSAC} for topological  and Lie groupoids.

\begin{definition}\label{def:biset}
The triple $(X,\alpha, \beta)$ is said to be an \emph{$(\Gg,\Hh)$-biset} if there is a left $\Gg$-action $\lambda: \Gg_{\Sscript{1}}\, \due \times {\Sscript{\Sf{s}}} {\, \Sscript{\alpha}} \,  X \to X$ and right $\Hh$-action $\rho: X\, \due \times {\Sscript{\beta}} {\, \Sscript{\Sf{t}}} \,  \Hh_{\Sscript{1}} \to X$ such that
\begin{enumerate} 
\item 
For any $x \in X$, $h \in \Hh_{\Sscript{1}}$, and $g \in \Gg_{\Sscript{1}}$ with $\alpha(x)=\Sf{s}(g)$ as well as $\beta(x)=\Sf{t}(h)$, we have
$$ \beta(gx) =\beta(x)\; \text{ and }\; \alpha(xh)=\alpha(x).$$
\item 
For any $ x \in X$, $h \in \Hh_{\Sscript{1}}$, and $ g \in \Gg_{\Sscript{1}}$ with  $\alpha(x)=\Sf{s}(g)$ as well as $\beta(x)=\Sf{t}(h)$, we have 
$g(xh)\,=\, (gx)h$.
\end{enumerate}
\end{definition}
Given a $(\Gg,\Hh)$-biset $(X,\alpha,\beta)$, we denote by $(X^{\Sscript{op}},\beta,\alpha)$ the so-called \emph{opposite biset} of $(X,\alpha,\beta)$, that is, the $(\Hh,\Gg)$-biset whose underlying set is $X$ and whose actions are interchanged: 
$h x^{\Sscript{op}} = (x h^{-1})^{\Sscript{op}}$ and $x^{\Sscript{op}} g=(g^{-1} x)^{\Sscript{op}}$,  whenever the action between parentheses is permitted. 

\begin{rem}\label{rem:tensorG}
For a left resp.\ right $\Gg$-set $(X,\alpha)$ and $(Y, \vartheta)$ over the same groupoid $\Gg$, the fibred product $Y \, \due \times {\scriptscriptstyle{\vartheta}} {\, \scriptscriptstyle{\alpha}} X$ 
carries a left $\Gg$-action given by  $g  (x,y):= (xg^{-1}, gy)$, and one can consider its orbit space, {\em i.e.}, the orbit of the left translation groupoid $\Gg \lJoin \big(Y \, \due \times {\scriptscriptstyle{\vartheta}} {\, \scriptscriptstyle{\alpha}} X\big)$, denoted by $Y\tensor{\Gg}X$ in \cite[p.\ 166]{MoeMrc:LGSAC}. This product can be termed as the \emph{tensor product over the groupoid $\Gg$}. The universal property of this  tensor product is summarised in the following coequaliser: 
\begin{equation}\label{Eq:CoEq}
\xymatrix{ Y \, \due \times {\scriptscriptstyle{\vartheta}} {\, \scriptscriptstyle{\sf{t}}} \Ga \, \due \times {\scriptscriptstyle{\sf{s}}} {\, \scriptscriptstyle{\alpha}}  X   \ar@<0.70ex>@{->}^-{\rho \times 1_{\Sscript{X}}}[rr] \ar@<-0.70ex>@{->}_-{1_{\Sscript{Y}} \times \lambda }[rr]  & & Y \, \due \times {\scriptscriptstyle{\vartheta}} {\, \scriptscriptstyle{\alpha}} X      \ar@{->>}^-{}[rr]  &&  Y\tensor{\Gg}X.    }
\end{equation}
Obviously, there are natural isomorphisms $\Gg \tensor{\Gg} X \cong X$ and $Y\tensor{\Gg}\Gg \cong Y$ in the categories of left $\Gg$-sets and that of right $\Gg$-sets, respectively. 
Moreover, taking another two groupoids $\Hh$ and $\Kk$ and assuming $Y$ to be (the underlying set) of an $(\Hh,\Gg)$-biset along $\varsigma: Y \to \Ho$, and $X$ that of a $(\Gg,\Kk)$-biset along $\beta:X \to \Ko$. Then  $Y\tensor{\Gg} X$ inherits, in a canonical way, the structure of an $(\Hh,\Kk)$-biset along the maps $\bara{\varsigma}: Y\tensor{\Gg} X \to \Ho$, $y\tensor{\Gg}x \mapsto \varsigma(y)$ and $\bara{\beta}: Y\tensor{\Gg} X \to \Ko$, $y\tensor{\Gg}x \mapsto \beta(x)$. 
\end{rem}

The \emph{two-sided translation groupoid} associated to a given $(\Gg, \Hh)$-biset $(X,\alpha, \beta)$ is defined to be the groupoid $\Gg \lJoin X \rJoin \Hh$ whose set of objects is $X$ and whose set of arrows is given by 
$$
\Gg_{\Sscript{1}}\, \due \times {\Sscript{\Sf{s}}} {\, \Sscript{\alpha}} \, X \, \due \times {\Sscript{\beta}}{\, \Sscript{\Sf{s}}} \, \Hh_{\Sscript{1}}\,=\, \big\{ (g,x,h) \, \in \,  \Gg_{\Sscript{1}}\times X \times \Hh_{\Sscript{1}} \mid \,\, \Sf{s}(h)= \beta(x),\, \Sf{s}(g)=\alpha(x) \big\}.
$$
Its structure maps are as follows. Source and target read as  
$$
\Sf{s}(g,x,h)=x,\quad \Sf{t}(g,x,h)=gxh^{-1}\;\;  \text{ and }\; \iota_{\Sscript{x}}=(\iota_{\Sscript{\alpha(x)}}, x,  \iota_{\Sscript{\beta(x)}}), 
$$
whereas multiplication and inverse are given by 
$$
(g,x,h) (g',x',h')\,=\,(gg',x',hh'),\quad  (g,x,h)^{-1}=(g^{-1}, gxh^{-1}, h^{-1}).
$$

Associated to a given $(\Gg, \Hh)$-biset $(X,\alpha,\beta)$, there are two canonical morphisms of groupoids: 
\begin{eqnarray}
\Sigma: \Gg \lJoin X \rJoin \Hh \longrightarrow \Hh, & & \big((g,x,h),  y \big) \longmapsto \big( h,\beta(y) \big),         \label{Eq:t}  \\
\Theta: \Gg \lJoin X \rJoin \Hh \longrightarrow \Gg, & &   \big( (g,x,h),  y \big) \longmapsto \big( g,\alpha(y) \big). \label{Eq:s}
\end{eqnarray}

The following concept (and its analogue notion of principal bibundles for flat Hopf algebroids in Definition \ref{def:PB}) will be the crucial ingredient when it comes to defining equivalences:

\begin{definition}\label{def:pbset}
Let  $(X,\alpha,\beta)$ be a $(\Gg,\Hh)$-biset. We say that $(X,\alpha,\beta)$ is a \emph{left principal $(\Gg,\Hh)$-biset} (or \emph{left principal $(\Gg,\Hh)$-bundle}) if it satisfies the following conditions:
\begin{enumerate}[(P-1)]
\item $\beta:  X \to \Hh_{\Sscript{0}}$ is surjective;
\item the canonical map 
\begin{equation}\label{Eq:can}
\nabla^{\Sscript{l}}: \Gg_{\Sscript{1}}\, \due \times {\Sscript{\Sf{s}}} {\, \Sscript{\alpha}} \, X \longrightarrow X\,  \due \times {\Sscript{\beta}} {\, \Sscript{\beta}} \, X  , \quad (g,x) \longmapsto (gx,x) 
\end{equation}
is bijective. 
\end{enumerate}
\end{definition}
Condition (P-2) allows us to define $\delta^{\Sscript{l}}: = pr_{\Sscript{1}} \circ (\nabla^{\Sscript{l}})^{-1}:  X\,  \due \times {\Sscript{\beta}} {\, \Sscript{\beta}} \, X \to \Gg_{\Sscript{1}}$. This map clearly satisfies:  
\begin{eqnarray}
 \Sf{s}\big(\delta^{\Sscript{l}}(x,x')\big) &=& \alpha(x') \label{Eq:d1} \\
\delta^{\Sscript{l}}(x,x')x'&=& x,\quad \text{ for any} \,  x,x' \in X\, \text{ with }\, \beta(x)=\beta(x'); \label{Eq:d2} \\ 
\delta^{\Sscript{l}}(gx,x) &=& g, \quad \text{for } g \in \Gg_{\Sscript{1}},\, x \in X\,\, \text{ with } \Sf{s}(g)=\alpha(x). \label{Eq:d3}
\end{eqnarray}
Equation \eqref{Eq:d3} shows that the action is in fact free, that is, $g x=x$ only when $g=\iota_{\Sscript{\alpha(x)}}$. Left principal bisets can now be characterised as follows: a $(\Gg,\Hh)$-biset is left principal if and only if $\Ho$ is, up to a bijection, the left orbit set $X/\Gg$ and the left action is free. 

Right principal bisets are defined in an obvious manner and the corresponding map from above will be denoted by $\delta^{\Sscript{r}}$. The following result will turn out to be useful in the sequel.

\begin{lemma}
\label{lema:NatIso}
Let $(Y,\varsigma, \vartheta)$ be a right principal $(\Hh,\Gg)$-biset and let $(X,\alpha)$ be any left $\Gg$-set. Then there is a natural isomorphism 
$$
Y\, \due \times {\Sscript{\varsigma}} {\, \Sscript{\bara{\varsigma}}} \, \big(  Y \tensor{\Gg}X \big) \longrightarrow  Y\, \due \times {\Sscript{\vartheta}} {\, \Sscript{\alpha}} \, X, \quad (y,y'\tensor{\Gg}x) \longmapsto \big(y,\delta^{\Sscript{r}}(y,y')x\big) 
$$
whose inverse is 
$$
Y\, \due \times {\Sscript{\vartheta}} {\, \Sscript{\alpha}} \, X \longrightarrow
Y\, \due \times {\Sscript{\varsigma}} {\, \Sscript{\bara{\varsigma}}} \, \big(  Y \tensor{\Gg}X \big) , \quad (y,x) \longmapsto (y,y\tensor{\Gg}x).
$$
\end{lemma}
\begin{proof}
Straightforward. 
\end{proof}

A $(\Gg,\Hh)$-biset $(X,\alpha,\beta)$ is said to be a \emph{principal biset} (or \emph{principal $(\Gg,\Hh)$-bibundle}) if it is simultaneously a left and a right principal biset. Thus both $\alpha$ and $\beta$ are surjective and the canonical maps 
\begin{equation}
\label{Eq:nablas}
\nabla^{\Sscript{l}}: \Gg_{\Sscript{1}}\, \due \times {\Sscript{\Sf{s}}} {\, \Sscript{\alpha}} \, X \longrightarrow X\,  \due \times {\Sscript{\beta}} {\, \Sscript{\beta}} \, X, \, (g,x) \longmapsto (gx,x),  \quad
\nabla^{\Sscript{r}}: X\, \due \times {\Sscript{\beta}} {\, \Sscript{\Sf{t}}} \, \Hh_{\Sscript{1}} \longrightarrow X\,  \due \times {\Sscript{\alpha}} {\, \Sscript{\alpha}} \, X,\, (x,h) \longmapsto (x,xh)
\end{equation}
are both bijective. It is clear that $(\Gg_{\Sscript{1}},\Sf{t}, \Sf{s})$ with the canonical action is a principal $(\Gg, \Gg)$-set, and that the pull-back of any principal groupoid-set is also a principal groupoid-set. 

\subsection{Natural isomorphisms and functors between groupoid-sets}
\label{ssec:GS}
Let $(X,\alpha, \beta)$ be a triple consisting of a left $\Gg$-set $(X,\alpha)$ and a map $\beta: X \to \Ko$ such that $\beta(gx)=\beta(x)$, for every $(g,x) \in \Ga \, \, \due \times {\Sscript{\Sf{s}}} {\, \Sscript{\alpha}} \, X$. Triples like that form a category (of \emph{left $\Gg$-sets over $\Ko$}), which we denote by $\GSets_{/\Sscript{\Ko}}$. Clearly, when $\Ko$ is the object set of a groupoid $\Kk$, then the category of $(\Gg,\Kk)$-bisets is a full subcategory of $\GSets_{/\Sscript{\Ko}}$. In particular, if $\Kk=(\Ko,\Ko)$ is a trivial groupoid, then both categories coincide. 

For a functor $\Phi: \GSets \to \HSets$ (which we always assume to transform the empty set to the empty set and which most of the times we just denote by $\Phi(X)$ for the image of a left $\Gg$-set $(X, \alpha)$), we want to next discuss conditions under which $\Phi$ descends to a functor from $\GSets_{/\Sscript{\Ko}}$ to $\HSets_{/\Sscript{\Ko}}$. 

\begin{lemma}\label{lema:Phi}
Let $\Phi$ and $(X,\alpha, \beta)$ be as above. 
\begin{enumerate}
\item 
Assume that $\Phi$ preserves monomorphisms and coproducts. Then there is a functor $\Phi'$ which makes the following diagram commutative:
$$
\xymatrix@R=15pt{ \GSets  \ar@{->}^-{\Phi}[rr] & & \HSets  \\  \GSets_{/\Sscript{\Ko}}  \ar@{-->}^-{\Phi'}[rr]  \ar@{->}^-{}[u]& & \ar@{->}^-{}[u] \HSets_{/\Sscript{\Ko}}, }
$$
where the vertical functors are the forgetful ones. 
\item 
Assume that $\Phi(\Go)=\Ho$. Then, for any left $\Gg$-set $(X,\alpha)$, the structure map of the left $\Hh$-set $\Phi(X) = \Phi(X, \alpha)$ is given by $\Phi(\alpha)$.
\end{enumerate}
\end{lemma}
\begin{proof}
Part $(i)$: for an object $(X,\alpha,\beta) \in \GSets_{/\Sscript{\Ko}}$, using the decomposition (or stratification) of equation \eqref{Eq:union}, we obtain a map:
\begin{equation}\label{Eq:betaphi}
\xymatrix{ \beta^{\Sscript{\Phi}}: \Phi(X) \,=\, \underset{x \, \in \, rep(X/\Gg)}{\biguplus} 
\Phi\big(\Orb{G}{x}\big) 
\ar@{->}^-{}[r] &   X \ar@{->}^-{\beta}[r]   & \Ko. }
\end{equation}
The triple $(X^{\Sscript{\Phi}}, \alpha^{\Sscript{\Phi}}, \beta^{\Sscript{\Phi}})$, where $\Phi(X,\alpha):=(X^{\Sscript{\Phi}}, \alpha^{\Sscript{\Phi}})$,  is easily shown to be an object in the category $\HSets_{ \Sscript{/\Ko}}$ since $\Phi$ preserves  monomorphisms. This gives the construction of $\Phi'$ on the objects class; the compatibility of $\Phi'$ with the arrows of $\GSets_{ \Sscript{/\Ko}}$ is immediate.  

Part $(ii)$: we set as before $\Phi(X,\alpha)=(X^{\Sscript{\Phi}}, \alpha^{\Sscript{\Phi}})$, the associated left $\Hh$-set. Since the map $\alpha: (X,\alpha) \to (\Go,1_{\Go})$ is a left $\Gg$-equivariant, its image $\Phi(\alpha)$ gives the structure map of the left $\Hh$-set $(X^{\Sscript{\Phi}}, \alpha^{\Sscript{\Phi}})$, that is, we have $\alpha^{\Sscript{\Phi}} = \Phi(\alpha)$. 
\end{proof}

Consider now an object $(X,\alpha, \beta) $ in $\GSets_{ \Sscript{/\Ko}}$ and a functor as in Lemma \ref{lema:Phi}. We then get two functors: the first one is $\Phi \circ (X\, \due \times {\Sscript{\beta}} {\, \Sscript{\bullet}} \, -): {\sf Sets}_{\Sscript{/\Ko}} \to \HSets_{ \Sscript{/\Ko}}$ and the second $\Phi(X,\alpha) \, \due \times {\Sscript{\beta^{\Phi}}} {\, \Sscript{\bullet}} \, -: {\sf Sets}_{\Sscript{/\Ko}} \to \HSets_{ \Sscript{/\Ko}}$. The subsequent technical lemma shows a natural isomorphism between these two functors. 

\begin{lemma}\label{lema:Upsilon}
Let $\Phi: \GSets \to \HSets$ be as in Lemma \ref{lema:Phi}. Then, for any object $(X,\alpha,\beta) $ in the category $\GSets_{\Sscript{/\Ko}}$,  there is a natural isomorphism 
$$
\Upsilon: \Phi\big(X\, \due \times {\Sscript{\beta}} {\, \Sscript{\gamma}} \, M, \alpha \circ pr_{\Sscript{1}}\big)  \, \cong \, \big(\Phi(X) \, \due \times {\Sscript{\beta^{\Phi}}} {\, \Sscript{\gamma}} \, M, \alpha^{\Sscript{\Phi}} \circ pr_{\Sscript{1}}\big)
$$
for every $(M,\gamma)$ in ${\sf Sets}_{\Sscript{/\Ko}}$. Furthermore if there is a morphism $f: (X,\alpha,\beta) \to (X',\alpha',\beta')$ in the category $\GSets_{\Sscript{/\Ko}}$, then there is a commutative diagram:
$$
\xymatrix{  \Phi\big(X\, \due \times {\Sscript{\beta}} {\, \Sscript{\gamma}} \, M, \alpha \circ pr_{\Sscript{1}}\big)   \ar@{->}^-{\Upsilon}[rr]  \ar@{->}_-{\Phi(f \times 1_M)}[d] &  &  \big(\Phi(X) \, \due \times {\Sscript{\beta^{\Phi}}} {\, \Sscript{\gamma}} \, M, \alpha^{\Sscript{\Phi}} \circ pr_{\Sscript{1}}\big) \ar@{->}^-{\Phi(f) \times 1_M}[d]
 \\ \Phi\big(X'\, \due \times {\Sscript{\beta'}} {\, \Sscript{\gamma}} \, M, \alpha '\circ pr_{\Sscript{1}}\big) \ar@{->}^-{\Upsilon'}[rr] & & \big( \Phi(X') \, \due \times {\Sscript{\beta'{}^{\Phi}}} {\, \Sscript{\gamma}} \, M, \alpha'{}^{\Sscript{\Phi}} \circ pr_{\Sscript{1}}\big).
 }
$$
\end{lemma}

An important consequence of the previous lemma is: 

\begin{proposition}\label{prop:Upsilon}
Let $\Phi: \GSets \to \HSets$ be an equivalence of categories. Then we have
\begin{enumerate}
\item 
For any $(\Gg,\Kk)$-biset $(X,\alpha,\beta)$ the triple $(X^{\Sscript{\Phi}}, \alpha^{\Sscript{\Phi}}, \beta^{\Sscript{\Phi}})$ is an $(\Hh,\Kk)$-biset, where $X^{\Sscript{\Phi}}$ denotes the underlying set of $\Phi(X)$. 
\item 
There is a natural isomorphism $\Phi \,\cong\, \Phi(\Ga) \tensor{\Gg}-: \GSets \to \HSets$. 
\end{enumerate} 
\end{proposition}

\begin{proof}
Part $(i)$: let  $(X,\alpha,\beta)$ be a $(\Gg,\Kk)$-biset. 
Using Lemma \ref{lema:Upsilon}, we have a commutative diagram
$$
\xymatrix@R=15pt{ \big( \Phi(X) \, \due \times {\Sscript{\beta^{\Phi}}} {\, \Sscript{\sf{t}}} \, \Ka, \alpha^{\Sscript{\Phi}} \circ pr_{\Sscript{1}}\big) \ar@{->}^-{\Upsilon^{-1}}[rd]   \ar@{-->}^-{}[rr] &  & \Phi(X)  \\  & \Phi\big(  X\, \due \times {\Sscript{\beta}} {\, \Sscript{\sf{t}}} \, \Ka, \alpha \circ \pr_{\Sscript{1}} \big)  \ar@{->}^-{\Phi(\varrho)}[ru] &   }
$$
The horizontal map leads to a well-defined right $\Kk$-action on the set $(X^{\Sscript{\Phi}}, \beta^{\Sscript{\Phi}})$. Moreover, since each stratum in the stratification \eqref{Eq:union} of the left $\Gg$-set $(X,\alpha)$ is invariant under the right $\Kk$-action, the triple $(X^{\Sscript{\Phi}}, \alpha^{\Sscript{\Phi}}, \beta^{\Sscript{\Phi}})$ fulfils the conditions of Definition \ref{def:biset} for the groupoids $\Hh$ and $\Kk$. Thus, $(X^{\Sscript{\Phi}}, \alpha^{\Sscript{\Phi}}, \beta^{\Sscript{\Phi}})$ is actually an $(\Hh,\Kk)$-biset. 

Part $(ii)$: by the previous part, the image of $(\Ga,\Sf{t})$ under $\Phi$ is an $(\Hh, \Gg)$-biset since $(\Ga,\Sf{t},\Sf{s})$ is a $(\Gg,\Gg)$-biset. Now, using Remark \ref{rem:tensorG}, we know that the functor $\Phi(\Ga) \tensor{\Gg}-: \GSets \to \HSets$ is well-defined. The claimed natural isomorphism is then derived from the commutative diagram 
$$
\xymatrix{    
\Phi(\Ga) \, \due \times {\scriptscriptstyle{\sf{s}^{\Phi}}} {\, \scriptscriptstyle{\sf{t}}} \Ga \, \due \times {\scriptscriptstyle{\sf{s}}} {\, \scriptscriptstyle{\alpha}}  X   \ar@<0.70ex>@{->}^-{}[rr] \ar@<-0.70ex>@{->}_-{}[rr] \ar@{->}_-{\Upsilon^{-1} }[d]  & & \Phi(\Ga) \, \due \times {\scriptscriptstyle{\sf{s}^{\Phi}}} {\, \scriptscriptstyle{\alpha}} X      \ar@{->>}^-{}[rr]  \ar@{->}_-{\Upsilon^{-1} }[d] &&   \Phi(\Ga) \tensor{\Gg}X \ar@{->}_-{\cong}[d]  \\ 
\Phi\big( \Ga \, \due \times {\scriptscriptstyle{\sf{s}}} {\, \scriptscriptstyle{\sf{t}}} \Ga  \, \due \times {\scriptscriptstyle{\sf{s}}} {\, \scriptscriptstyle{\alpha}}  X \big)   \ar@<0.70ex>@{->}^-{}[rr] \ar@<-0.70ex>@{->}_-{}[rr]  & & \Phi\big(  \Ga\, \due \times {\scriptscriptstyle{\sf{s}}} {\, \scriptscriptstyle{\alpha}} X \big)      \ar@{->>}^-{}[rr]  &&  \Phi\big(  \Gg\tensor{\Gg}X \big) \,\cong \, \Phi(X)
}
$$
as $\Phi$ preserves coequalisers. 
\end{proof}

\subsection{Monoidal equivalence between groupoid-sets versus principal bisets}
\label{ssec:MPB}
Let $\phi: \Hh \to \Gg$ be a morphism of groupoids. Then the induced morphism $\phi^*:   \GSets \to  \HSets $ which sends any left $\Gg$-set $(X,\alpha)$ to the left $\Hh$-set $$
\phi^*(X,\alpha):=( \Hh_{\Sscript{0}}\, \due \times {\Sscript{\phi_{\Sscript{0}}}} {\, \Sscript{\alpha}} \, X ,  \alpha \circ \pr_{\Sscript{2}}= \phi_{\Sscript{0}} \circ pr_{\Sscript{1}})
$$ 
with action $hx=\phi_{\Sscript{1}}(h)x$, is clearly a symmetric monoidal functor. The morphism $\phi$ is said to be a \emph{weak equivalence} if the functor between the underlying categories induces an equivalence of categories, {\em i.e.}, if $\phi$ is a full, faithful, and essentially surjective functor.  
In this way, it is clear that any weak equivalence induces an equivalence of categories between the categories of left groupoid-sets. \

Next, we want to discuss the converse, meaning that any monoidal symmetric equivalence between $\GSets$ and $\HSets$ can be reconstructed (although in a noncanonical way) from some weak equivalence.  

Recall that two groupoids $\Gg$ and $\Hh$ are said to be \emph{weakly equivalent} if there is a third groupoid $\Kk$ and a diagram 
$$
\xymatrix@R=7pt{ & \ar@{->}_-{\Sscript{}}[ld] \Kk \ar@{->}^-{\Sscript{}}[rd] &  \\  \Hh  & &   \Gg}
$$ 
of weak equivalences. One can choose an inverse of one of the morphisms in this diagram in order to construct a weak equivalence connecting $\Hh$ and $\Gg$. This is almost impossible in the case of topological and/or Lie groupoids and also for flat Hopf algebroids as we will see in the forthcoming sections. However, we have the following lemma analogous to the case of Lie groupoids \cite{MoeMrc:LGSAC}, and we will later show in \S\ref{ssec:lpb-we} its analogue for flat Hopf algebroids.

\begin{lemma}\cite[Proposition 2.13]{Kaoutit:2015}
\label{lema:WE}
Let $\Gg$ and $\Hh$ be two groupoids and let $(X,\alpha, \beta)$ be a principal $(\Gg, \Hh)$-biset. 
Then the canonical morphisms of groupoids 
$$
\xymatrix@R=7pt{ & \ar@{->}_-{\Sscript{\Theta}}[ld] \Gg \lJoin X \rJoin \Hh \ar@{->}^-{\Sscript{\Sigma}}[rd] &  \\  \Gg  & &   \Hh}
$$
are weak equivalences, where $\Theta$ and $\Sigma$ are as in \eqref{Eq:t} resp.\ \eqref{Eq:s}. In particular, $\Gg$ and $\Hh$ are weakly equivalent. 
\end{lemma}

The main motivation behind Theorem \ref{processocivile} in the Introduction is the following characterisation of weak equivalences between groupoids and principal bisets (see \cite[Corollary 3.11]{MoeMrc:LGSAC} for the implication $(iii) \Rightarrow (ii)$, where groupoid-sets are replaced by sheaves of \'etale spaces).

\begin{theorem}
\label{thm:AG}
Let $ \Gg$ and $\Hh$ be  two groupoids. Then the following are equivalent:
\begin{enumerate}
\item $\Gg$ and $\Hh$ are weakly equivalent.
\item  There is a symmetric monoidal equivalence of the categories $\GSets$ and $\HSets$.
\item There is a principal $(\Hh,\Gg)$-biset. 
\end{enumerate}
\end{theorem}
\begin{proof}
The proof of $(i) \Rightarrow (ii)$ is immediate. The implication $(iii) \Rightarrow (i)$ follows from Lemma \ref{lema:WE}. 

As for the implication $(ii) \Rightarrow (iii)$, let $\Phi: \GSets \to \HSets$ be such an equivalence of categories and denote by $\Psi$ its inverse functor. We set $(P,\varsigma,\vartheta)$ as the image of the principal $(\Gg,\Gg)$-biset $(\Ga, \sf{t}, \sf{s})$ by the functor $\Phi$ from which we know by Proposition \ref{prop:Upsilon}(i) that it is an $(\Hh,\Gg)$-biset. 
Now using the monoidal properties of $\Phi$, we have from one hand that $\varsigma=\Phi(t)$ by Lemma \ref{lema:Phi}(i), which is a surjective map, and from the other hand we have a chain of isomorphisms
$$
P\,\due \times {\Sscript{\vartheta}} {\, \Sscript{\sf{t}}} \,  \Ga \,\cong \, \Phi\big(  \Ga\,\due \times {\Sscript{\sf{s}}} {\, \Sscript{\sf{t}}} \,  \Ga \big) \longrightarrow \Phi\big(  \Ga\,\due \times {\Sscript{\sf{t}}} {\, \Sscript{\sf{t}}} \,  \Ga \big) \,\cong \, P  \, \due \times {\Sscript{\varsigma}} {\, \Sscript{\varsigma}} \, P,
$$
which turns out to be the canonical map $\nabla^{\Sscript{r}}$ for $P$. Therefore, $(P,\varsigma,\vartheta)$ is a right principal $(\Hh,\Gg)$-biset. 

Similarly, if we denote by $(Q,\mu,\nu)$ the image of the principal $(\Hh,\Hh)$-biset $(\Ha, \sf{t},\sf{s})$ under the functor $\Psi$, we get a right principal $(\Gg,\Hh)$-biset. 
 To conclude, one needs to check that there is an isomorphism $(P^{\Sscript{op}},\vartheta,\varsigma)  \to (Q,\mu,\nu)$ of $(\Gg,\Hh)$-bisets, where $(P^{\Sscript{op}},\vartheta,\varsigma)$ is the biset opposite to $(P,\varsigma, \vartheta)$.

To this end, we first apply Lemma \ref{lema:NatIso} to $(P,\varsigma,\vartheta)$ and $(Q,\mu)$ in order to obtain the isomorphism 
$$
\gamma: P\, \due \times {\Sscript{\varsigma}} {\, \Sscript{\bara{\varsigma}}} \, \big(  P \tensor{\Gg}Q \big) \longrightarrow  P\, \due \times {\Sscript{\vartheta}} {\, \Sscript{\mu}} \, Q, \quad (p,p'\tensor{\Gg}q) \longmapsto \big(p,\delta^{\Sscript{r}}(p,p')q\big).
$$ 
Second, we use the isomorphism  $\chi: \Ha \to P \tensor{\Gg}Q$ of $(\Hh,\Hh)$-bisets given by the natural isomorphism of Proposition  \ref{prop:Upsilon}(ii) applied to $\Phi$, in order to construct the desired isomorphism 
$$
P^{\Sscript{op}} \longrightarrow Q, \quad p \longmapsto pr_{\Sscript{2}}\big( \gamma(p,\chi(\iota_{\Sscript{\varsigma(p)}}) \big)
$$
of $(\Gg,\Hh)$-bisets.
\end{proof}

\section{Hopf algebroids and comodule algebras}\label{sec:HalgdComodalg}
All algebras are considered to be commutative $\Bbbk$-algebras, where $\Bbbk$ is a commutative ground ring.  The $\Bbbk$-module of all algebra maps from $R$ to $C$ will be denoted by $R(C):={\rm Alg}_{\Bbbk}\big(R,C\big)$.

\subsection{Hopf algebroids}
\label{ssec:halgd}
Recall from, {\em e.g.}, \cite{Rav:CCASHGOS} 
that a {\em commutative} Hopf algebroid is a pair $(A,\cH)$ of two commutative $\Bbbk$-algebras together with a diagram 
$
\xymatrix@C=25pt{A\ar@<1ex>@{->}|-{\scriptscriptstyle{\sf{s}}}[r] \ar@<-1ex>@{->}|-{\scriptscriptstyle{\sf{t}}}[r] & \ar@{->}|-{ \scriptscriptstyle{\varepsilon}}[l]\cH}
$ 
of algebra maps, 
a structure  $({}_{\scriptscriptstyle{\Sf{s}}}\cH_{\scriptscriptstyle{\Sf{t}}}, \Delta, \varepsilon)$ of an $A$-coring with underlying $A$-bimodule  ${}_{\scriptscriptstyle{A}}\cH_{\scriptscriptstyle{A}}=
{}_{\scriptscriptstyle{\Sf{s}}}\cH_{\scriptscriptstyle{\Sf{t}}}$, along with an isomorphism 
$\mathscr{S}: {}_{\scriptscriptstyle{\Sf{s}}}H_{\scriptscriptstyle{\Sf{t}}} \to {}_{\scriptscriptstyle{\Sf{t}}}H_{\scriptscriptstyle{\Sf{s}}}$ of $A$-corings that fulfils $\mathscr{S}^2 = \id$, 
where the codomain is the opposite $A$-coring of   ${}_{\scriptscriptstyle{\Sf{s}}}H_{\scriptscriptstyle{\Sf{t}}}$. The map $\mathscr{S}$ is  called the \emph{antipode} of $\cH$. All the previous maps are asked to be compatible in the following way:
\begin{eqnarray}
\varepsilon \circ \Sf{s} \,=\, \id_\ahha,  & &   \varepsilon \circ \Sf{t}\,=\, \id_\ahha,  \\
\Delta(1_\cakka)\,=\, 1_{\cakka}\tensor{A} 1_{\cakka}, & &  \varepsilon(1_{\cakka})\,=\, 1_\ahha, 
\label{Eq:es}\\ 
\Delta(uv)\,=\, u_{(1)}v_{(1)}\tensor{A} u_{(2)}v_{(2)}, & & \varepsilon(uv)\,=\, \varepsilon(u)\varepsilon(v),\label{Eq:demul}\\ 
\label{maxxi}
\Sf{t}(\varepsilon(u))\,=\, \mathscr{S}(u_{(1)})u_{(2)}, & & \Sf{s}(\varepsilon(u))\,=\, u_{(1)}\mathscr{S}(u_{(2)}),\label{Eq:penultima}\\
\mathscr{S}(uv)\,=\, \mathscr{S}(u) \mathscr{S}(v),  & & \mathscr{S}(1_{\cakka})\,=\, 1_{\cakka},\label{Eq:ultima}
\end{eqnarray}
for every $a \in A$, $u, v \in \cH$, where we used Sweedler's notation for the comultiplication. 

As all Hopf algebroids in this article are commutative and flat over the base ring, they are also faithfully flat since both the source and target are (left) split morphisms of modules over the base ring. 

\smallskip

A \emph{morphism} 
$\B{\phi}: (A,\cH) \to (B,\cK)$ {\em of Hopf algebroids} 
consists of a pair $\B{\phi}=(\phi_{\scriptscriptstyle{0}},\phi_{\scriptscriptstyle{1}})$ 
of algebra maps  
$\phi_{\scriptscriptstyle{0}}: A \to B$ and $\phi_{\scriptscriptstyle{1}}: \cH \to \cK$ that are
compatible 
with the structure maps of both $\cH$ and $\cK$ in a canonical way. 
That is, the equalities 
\begin{eqnarray}
\phi_{\scriptscriptstyle{1}} \circ \Sf{s} \,\,=\,\,  \Sf{s} \circ \phi_{\scriptscriptstyle{0}}, & &
\phi_{\scriptscriptstyle{1}} \circ \Sf{t} \,\,=\,\,  \Sf{t} \circ \phi_{\scriptscriptstyle{0}},   \\
\Delta \circ \phi_{\scriptscriptstyle{1}} \,\,=\,\,  \chi \circ (\phi_{\scriptscriptstyle{1}} \tensor{A}\phi_{\scriptscriptstyle{1}})  \circ \Delta, && \varepsilon \circ \phi_{\scriptscriptstyle{1}}\,\,=\,\, \phi_{\scriptscriptstyle{0}} \circ \varepsilon, \\ 
\mathscr{S} \circ \phi_{\scriptscriptstyle{1}} \,\,=\,\, \phi_{\scriptscriptstyle{1}} \circ \mathscr{S}, 
\end{eqnarray}
hold, where $\chi$ is the obvious map $\chi: \cK\tensor{A}\cK \to \cK\tensor{B}\cK$,   and  where no distinction between the structure maps of $\cH$ and $\cK$ was made. 

\begin{example}[{\em Scalar extension Hopf algebroid}]
For a Hopf algebroid $(A,\cH)$ and an algebra map $\phi_{\scriptscriptstyle{0}}: A \to B$, we can consider the so-called \emph{scalar extension}  Hopf algebroid $(B, B\tensor{A}\cH\tensor{A}B)$  in a canonical way such that 
$(\phi_{\scriptscriptstyle{0}}, \phi_{\scriptscriptstyle{1}}): (A,\cH) \to (B, B\tensor{A}\cH\tensor{A}B)$, where $\phi_{\scriptscriptstyle{1}}(u)= 1_\behhe \tensor{A} u \tensor{A} 1_\behhe$, becomes a morphism of Hopf algebroids. In this way,  any morphism 
$\B{\phi}: (A,\cH) \to (B,\cK)$ of Hopf algebroids factors through the following morphism 
\begin{equation}\label{Eq:PHI}
{\Phi}: (B, B\tensor{A}\cH\tensor{A}B) \to (B,\cK), \quad b\tensor{A}u\tensor{A}b' \mapsto \Sf{s}(b)\phi_{\scriptscriptstyle{1}}(u)\Sf{t}(b')
\end{equation}
of Hopf algebroids.
\end{example}
\begin{rem}
Notice that the scalar extension Hopf algebroid $(B, B\tensor{A}\cH\tensor{A}B)$ is not necessarily flat. This happens, for instance, if $\phi_{\scriptscriptstyle{0}}$ is a flat extension or if $B$ is {\em Landweber exact} over $(A,\cH)$ in the sense of \cite[Def.\ 2.1, Corollary 2.3]{HovStr:CALEHT}, which means that either the extension $A \to \cH\tensor{A}B$, $a \mapsto \Sf{s}(a)\tensor{A}1_{\scriptscriptstyle{B}}$ or $A \to B\tensor{A} \cH$, $a \mapsto 1_{\scriptscriptstyle{B}} \tensor{A} \Sf{t}(a)$ is flat, see also Remark \ref{rem:HS}. Another important situation is when $\cH$ is assumed to be flat as an $A\tensor{}A$-module ({\em i.e.}, the extension $\Sf{s}\tensor{}\Sf{t}$ is flat). This happens, for instance, when $\cH$ is geometrically transitive Hopf algebroid in the sense of Deligne and Brugui\`eres \cite{Deligne:1990, Bruguieres:1994}, see also \cite{Kaoutit:2015}.
\end{rem}
\subsection{Comodules, bicomodules and cotensor product}
\label{ssec:bicomod}
This section gathers some standard material on comodules over commutative Hopf algebroids which will be needed in the sequel, see, {\em e.g.}, again \cite{Rav:CCASHGOS} for more information.

A right $\cH$-comodule over a Hopf algebroid $(A, \cH)$ is a pair $(M,\rcoaction{\cH}{M})$, where $M$ is an $A$-module and $\rcoaction{\cH}{M}: M \to M\tensor{A} \shopf{H}$, $m \mapsto m_{(0)} \otimes_\ahha m_{(1)}$ is an $A$-linear map, written in the usual Sweedler notation,
and which satisfies the usual coassociativity and counitary properties. Here, the $A$-module structure on $M\tensor{A} \shopf{H}$ with respect to which the coaction is $A$-linear is defined by $(m\tensor{A}u) \bract a :=  m\tensor{A} u\Sf{t}(a)$. 
When the context is clear, we shall also drop sub- and superscripts on $\rcoaction{\cH}{M}$ that are sometimes needed to distinguish various coactions.

{\em  Morphisms of right $\cH$-comodules} are defined in an obvious way, and the category of right $\cH$-comodules will be denoted by $\rcomod{\cH}$, whereas a morphism between two right $\cH$-comodules $M$ and $N$ will be denoted as  $\rcomod{\cH}(M,N)$.
The category $\rcomod{\cH}$ is symmetric monoidal, where the coaction on the tensor product is given by the codiagonal coaction, that is, 
\begin{equation}
\label{Eq:tensor}
\rcoaction{\cH}{M \tensor{A}N}: M\tensor{A}N \to (M\tensor{A}N)\tensor{A}\shopf{H},\quad m\tensor{A}n \mapsto (m_{(0)}\tensor{A}n_{(0)})\tensor{A} m_{(1)}n_{(1)}.
\end{equation}
The identity object is given by $(A,\Sf{t})$ and the symmetry is given by the natural transformation obtained from the tensor flip. 

\begin{rem}
\label{mondragone}
There are situations where the  tensor product $M\tensor{A}N$ of the underlying modules of two right $\cH$-comodules can be endowed with more than one comodule structure. For distinction, we will from now on denote by 
$M\otimes^\ahha N$ the tensor product in $\rcomod{\cH}$ endowed then with the coaction of equation \eqref{Eq:tensor}.
\end{rem}

To each right $\cH$-comodule $(M,\rcoaction{}{})$ one can define the $\Bbbk$-vector space of \emph{coinvariants}:
$$
M^{\coinv_{\cH}}=\big\{ m \in M \mid \rcoaction{}{}(m)=m\tensor{A}1_{\scriptscriptstyle{\cH}} \big\}.
$$ 
This, in fact, establishes a functor which is naturally isomorphic to the functor $\rcomod{\cH}\big(A,- \big)$, that is, we have a natural isomorphism of $\Bbbk$-vector spaces:
$$
\rcomod{\cH}\big(A,M \big) \,\cong\,  M^{\coinv_{\cH}}.
$$

 Analogously, one can define the category $\lcomod{\cH}$ of left comodules, and both categories are isomorphic via the antipode. Explicitly, one can endow a left $\cH$-comodule $(M,\lcoaction{\cH}{M})$ with a right $\cH$-comodule structure, denoted by $M^{\op}$, 
\begin{equation}
\label{Eq:leftright}
\rcoaction{\cH}{M^{\op}}: M^{\op} \to M^{\op}\tensor{A}\shopf{H},\quad m \mapsto m_{(0)}\tensor{A}\mathscr{S}(m_{(-1)}),
\end{equation}
and referred to as the \emph{opposite comodule} of $M$.
Since we always have $\mathscr{S}^2=id$ for commutative Hopf algebroids, this correspondence obviously establishes  an isomorphism of symmetric monoidal categories. 

For an arbitrary algebra $R$ and a right comodule $(N,\rcoaction{}{})$ whose underlying module is also an $(A,R)$-bimodule such that $\rcoaction{}{}$ is left $R$-linear, {\em i.e.}, $\rcoaction{\cH}{M}(rn)=rn_ {(0)} \tensor{A} n_{(1)}$, for $r \in R$, $n \in N$, one can define a functor 
\begin{equation}
\label{Eq:RM}
- \tensor{R}N : \rmod{R} \to  \rcomod{\cH}, \quad X \mapsto (X\tensor{R}N, X\tensor{R}\rcoaction{}{}). 
\end{equation}

For two Hopf algebroids $(A,\cH)$ and $(B,\cK)$, the \emph{category of $(\cH, \cK)$-bicomodules} has 
triples $(P, \lcoaction{\cH}{P}, \rcoaction{\cK}{P})$ as objects, where $P = \due P \ahha \behhe$ is an $(A,B)$-bimodule 
such that $(P , \lcoaction{\cH}{P})$ is a left comodule with a right $B$-linear coaction $\lcoaction{\cH}{P}$, while $(P, \rcoaction{\cK}{P})$ is right comodule with a left $A$-linear coaction $\rcoaction{\cK}{P}$, and both coactions are compatible in the sense that
\begin{equation}\label{Eq:bicomod}
(\cH\tensor{A}\rcoaction{\cK}{P}) \circ \lcoaction{\cH}{P} \,\,=\,\,  
(\lcoaction{\cH}{P}\tensor{B}\cK) \circ \rcoaction{\cK}{P}.
\end{equation}
In other words, $\lcoaction{\cH}{P}$ is a morphism of right $\cK$-comodules, and $\rcoaction{\cK}{P}$ of left $\cH$-comodules, where the codomains of both maps are comodules according to the functor of equation \eqref{Eq:RM}.  {\em Morphisms} of bicomodules are defined in a canonical way; denote by  $\bicomod{\cH}{\cK}$ the category of $(\cH,\cK)$-bicomodules.

Next, we recall the definition of the cotensor product. Let $(M, \rcoaction{}{})$ be a right $\cH$-comodule  and $(N,\lcoaction{}{})$ a  left $\cH$-comodule. The \emph{cotensor product  bifunctor} is defined as the equaliser 
$$
\xymatrix{ 0 \ar@{->}[r]  & M\cotensor{\cH}N \ar@{->}^-{}[r] & M\tensor{A}N 
\ar@<0.5ex>@{->}^-{\scriptstyle{\rcoaction{}{}\tensor{A}N}}[rr] \ar@<-0.5ex>@{->}_-{\scriptstyle{M\tensor{A}\lcoaction{}{}}}[rr]  & & M\tensor{A}\cH\tensor{A} N,   }
$$
which is a bifunctor from the product category $\rcomod{\cH}\times \lcomod{\cH}$  to $\rmod{A}$. If we further assume that $(N,\rcoaction{}{},\lcoaction{}{})$ is also an $(\cH, \cK)$-bicomodule, 
the  cotensor product lands in the category of right $\cK$-comodules since our Hopf algebroids are flat. 
This way, it is possible to define the bifunctor
\begin{equation}
\label{Eq:bicotensor}
-\cotensor{\cH}-: \bicomod{\cJ}{\cH}\times \bicomod{\cH}{\cK} \to \bicomod{\cJ}{\cK}.
\end{equation}

One easily checks that $\cH\cotensor{\cH}N \cong N$ and $A\cotensor{\cH}N \cong N^{\scriptscriptstyle{\coinv_{\cH}}} $ for every right $\cH$-comodule $N$. 

The associativity of the cotensor products is not always guaranteed unless one makes more assumptions on the comodules involved. For example, since all our Hopf algebroids are assumed to be flat, if $M$ is a flat $A$-module  along with a flat $B$-module $N'$, one has
$$
M \cotensor{\cH} (N \cotensor{\cK} N') \simeq (M \cotensor{\cH} N) \cotensor{\cK} N'.
$$
Compare, for example, \cite[\S\S22.5--22.6]{BrzWis:CAC} 
for more situations in which this associativity holds true.

\smallskip

Given a morphism 
$\B{\phi}=(\phi_{\scriptscriptstyle{0}},\phi_{{1}}) :(A,\cH) \to (B,\cK)$ of Hopf algebroids, there is a functor 
\begin{equation}\label{Eq:indfunct}
\B{\phi}_{*}:= - \tensor{\scriptscriptstyle{\phi}}B: \rcomod{\cH} \longrightarrow \rcomod{\cK},
\end{equation} 
called \emph{the induction functor}, which is defined on objects by sending any right comodule $(M,\rcoaction{\cH}{M})$ to a right comodule $(M\tensor{\scriptscriptstyle{\phi}}B, \rcoaction{\cK}{M\tensor{\scriptscriptstyle{\phi}}B})$ with underlying $B$-module $M\tensor{A}B$ and coaction 
$$
\rcoaction{\cK}{M\tensor{\scriptscriptstyle{\phi}}B}: M\tensor{\scriptscriptstyle{\phi}}B \to (M\tensor{\scriptscriptstyle{\phi}}B)\tensor{B} \cK, \quad m\tensor{A}b \mapsto (m_{(0)}\tensor{A}1_{\scriptscriptstyle{B}}) \tensor{B} \phi_{\scriptscriptstyle{1}}(m_{(1)}) \Sf{t}(b).
$$
The image of $\cH$ with the induction functor is, in fact, an $(\cH,\cK)$-bicomodule. 
In a similar way, we have the induction functor 
$$
{}_{*}\B{\phi}:=B\tensor{\scriptscriptstyle{\phi}}-: \lcomod{\cH} \to \lcomod{\cK},
$$
between left comodules,
and $B\tensor{\scriptscriptstyle{\phi}}\cH$ is now an $(\cK,\cH)$-bicomodule. 
The induction functor has a right adjoint
given by 
\begin{equation}
\label{Eq:coind}
-\cotensor{\cK}(B\tensor{\scriptscriptstyle{\phi}}\cH): \rcomod{\cK} \to \rcomod{\cH},
\end{equation}
called the \emph{coinduction functor}.

\subsection{Comodule algebras}
\label{ssec:comoalg}

Parallel to subsection \ref{ssec:PB}, 
we next want to give the analogue notion of groupoid-sets in the Hopf algebroids context. To this end, recall first that a \emph{left $\cH$-comodule algebra} for a Hopf algebroid $(A,\cH)$ is a commutative  monoid in the symmetric monoidal category $\lcomod{\cH}$. That is, a pair $(R,\sigma)$ consisting of a commutative  $A$-algebra $\sigma: A\to R$ which is also a left $\cH$-comodule with coaction $\lcoaction{\cH}{R}: R \to  \cH\tensor{A}R$, 
satisfying for all $x, y \in R$
\begin{equation}\label{Eq:comodalg}
\lcoaction{\cH}{R}(xy) \,=\, x_{(-1)}y_{(-1)} \tensor{A} x_{(0)}y_{(0)} 
\quad \mbox{and} \quad \lcoaction{\cH}{R}(1_\erre)\,=\, 1_{\cakka}\tensor{A}1_{\erre}.
\end{equation} 
In others words, the coaction $\lcoaction{\cH}{R}$ is an $A$-algebra map, where $\cH\tensor{A}R$ is seen as an $A$-algebra via $A \to \cH \otimes_\ahha R,\ a  \mapsto \Sf{s}(a)\tensor{A} 1_\erre$. 
A \emph{morphism of left $\cH$-comodule algebras} is an $A$-algebra map which is also a left $\cH$-comodule morphism. 
{\em Right} $\cH$-comodule algebras are analogously defined. 

Note that for a left $\cH$-comodule algebra $(R,\sigma)$ the $\Bbbk$-vector subspace 
$$
R^{\coinv_{\cH}}=\{x \in R \mid  \lcoaction{\cH}{R}(x)=1_{\cakka}\tensor{A}x \}
$$ 
of $\cH$-coinvariant elements 
is a $\Bbbk$-subalgebra of $R$ that does not necessarily contain the image $\sigma(A)$, unless one makes more assumptions; for instance, if the source and the target maps are equal.  
A trivial example of a comodule algebra is the base algebra $A$ of a Hopf algebroid $(A, \cH)$ itself.

Assume now that $\gamma: B \to R$ is another algebra map such that $\lcoaction{\cH}{R}$ is right  $B$-linear, that is,  
$$
\lcoaction{\cH}{R}(x\,\gamma(b))\,=\, x_{(-1)}\tensor{A}x_{(0)} \gamma(b), 
$$ 
for every $x \in R$ and $b \in B$. One can easily see that $\gamma(B) \subseteq R^{\coinv_{\cH}}$. 
In
this situation, the  \emph{canonical map} 
\begin{equation}
\label{Eq:canhr}
\can{\cH}{R}: R\tensor{B}R \to \cH\tensor{A}R, \quad x\tensor{B}y \mapsto x_{(-1)}\tensor{A}x_{(0)} y
\end{equation} 
is a $B$-algebra map, where $\cH\tensor{A}R$ is a $B$-algebra via $\gamma$ in the second factor. 
The canonical map is also left $\cH$-colinear, when $R\tensor{B}R$ is seen as a left comodule via the coaction $\lcoaction{\cH}{R}\tensor{B}R$. 

We have the following well-known properties:
\begin{lemma}
\label{lemma:comdalg}
Assume that $R$ carries a left $\cH$-comodule algebra structure with underlying algebra map $\gs: A \to R$ and that $\gamma: B \to R$ is a morphism of algebras. 
\begin{enumerate}
\item The pair $(R, \cH\tensor{A}R)$ is a Hopf algebroid with the following structure maps:
\begin{equation*}
\begin{array}{rclrcl}
\mathsf{s} &:=&  \lcoaction{\cH}{R}, & \mathsf{t} &:=& 1_{\cakka}\tensor{A} -, \\
\varepsilon(u \otimes_\ahha r) &:=& \varepsilon_\cakka(u)r, & \Delta(u \otimes_\ahha r) &:=& (u_{(1)} \otimes_\ahha 1_\erre) \otimes_\erre (u_{(2)} \otimes_\ahha r), \\
&&& \mathscr{S} (u \otimes_\ahha r) &:=& \mathscr{S}_\cakka(u)r_{(-1)}\tensor{A}r_{(0)}.
\end{array}
\end{equation*}
\item The map 
$
(\sigma, -\tensor{A}1_{\scriptscriptstyle{R}}):  (A,\cH) \to (R,\cH\tensor{A}R)
$
is a morphism of Hopf algebroids.
\item If $\lcoaction{\cH}{R}$ is right $B$-linear, where $R$ is seen as an $(A,B)$-bimodule, then the canonical map of Eq.~\eqref{Eq:canhr} is a morphism of Hopf algebroids as well as a morphism of left $\cH$-comodules.
\item If $R$ is an $(\cH,\cK)$-bicomodule, then the canonical map 
$$
\can{\cH}{R}: (R {\otimes^{\scriptscriptstyle{B}}} R,\rcoaction{\cK}{R{\otimes^{\scriptscriptstyle{B}}}R}) \to (\cH\tensor{A}R,\cH\tensor{A}\rcoaction{\cK}{R})
$$
is also a morphism of right $\cK$-comodules. 
\end{enumerate}
\end{lemma}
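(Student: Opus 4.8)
The plan is to establish all four items by a direct verification of the defining axioms, organising the computation so that the genuinely non-formal inputs are isolated. For item (i) I would first record that $\cH\tensor{A}R$ is a commutative $\Bbbk$-algebra, being a tensor product of commutative algebras over $A$, and that the proposed source $\Sf{s}=\lcoaction{\cH}{R}$ is an algebra map precisely because of the comodule-algebra axioms \eqref{Eq:comodalg}, while the target $\Sf{t}=1_{\cakka}\tensor{A}-$ is patently one. The coring data $(\Delta,\varepsilon)$ is well defined over the balanced tensor products once one fixes the bookkeeping that the left $A$-action on $R$ is via $\sigma$ and the right $A$-action on $\cH$ entering $\cH\tensor{A}R$ is via $\Sf{t}$; with this, coassociativity and counitality of $\Delta$ reduce termwise to coassociativity and counitality of the coaction of $R$. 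The only substantial point is the pair of antipode relations of type \eqref{maxxi} for $(R,\cH\tensor{A}R)$: evaluating $\mathscr{S}(w_{(1)})w_{(2)}$ on $w=u\otimes_\ahha r$ and using $\lcoaction{\cH}{R}(1_\erre)=1_{\cakka}\tensor{A}1_\erre$ collapses the $R$-factor and leaves $\mathscr{S}_{\cakka}(u_{(1)})u_{(2)}\otimes_\ahha r$, which by \eqref{maxxi} for $\cH$ equals $\Sf{t}(\varepsilon_\cakka(u))\otimes_\ahha r=1_{\cakka}\tensor{A}\varepsilon_\cakka(u)r$, i.e.\ the new target applied to the new counit of $w$; the mirror relation and $\mathscr{S}^2=\id$ follow the same way from $\mathscr{S}_\cakka^2=\id$ and coassociativity.

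For item (ii) I would simply check the five compatibility conditions for $(\sigma,-\tensor{A}1_{\scriptscriptstyle{R}})$. Compatibility with the source and the counit is immediate, compatibility with the comultiplication is read off its definition, and the compatibilities with the target and the antipode are exactly where one uses $\lcoaction{\cH}{R}(1_\erre)=1_{\cakka}\tensor{A}1_\erre$ together with the balancing identity $\Sf{t}(a)\tensor{A}1_\erre=1_{\cakka}\tensor{A}\sigma(a)$.

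For item (iii) the first step is to recognise $R\tensor{B}R$ as the trivial (``pair'') Hopf algebroid over $R$, with source $x\mapsto x\tensor{B}1_\erre$, target $x\mapsto 1_\erre\tensor{B}x$, counit the multiplication, comultiplication $x\tensor{B}y\mapsto(x\tensor{B}1_\erre)\otimes_\erre(1_\erre\tensor{B}y)$, and antipode the flip. Right $B$-linearity of $\lcoaction{\cH}{R}$ is precisely what makes the canonical map \eqref{Eq:canhr} descend to $R\tensor{B}R$. Over the identity on $R$, compatibility of $\can{\cH}{R}$ with source and target is then direct, compatibility with the counit is the counit axiom of the comodule $R$, and compatibility with the comultiplication and the antipode reduces, after a short Sweedler computation, to coassociativity of the coaction together with \eqref{maxxi} and \eqref{Eq:comodalg}; the same coassociativity shows that $\can{\cH}{R}$ is left $\cH$-colinear with respect to $\lcoaction{\cH}{R}\tensor{B}R$ and the left coaction $\Delta_\cakka\tensor{A}R$ on $\cH\tensor{A}R$.

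Finally, for item (iv) the right $\cK$-colinearity of $\can{\cH}{R}$ from the codiagonal coaction on $R{\otimes^{\scriptscriptstyle{B}}}R$ (in the sense of Remark \ref{mondragone}) to the coaction $\cH\tensor{A}\rcoaction{\cK}{R}$ on $\cH\tensor{A}R$ is essentially a restatement of the bicomodule compatibility \eqref{Eq:bicomod}: applying both candidate coactions to $x\tensor{B}y$ and comparing, the two expressions coincide once one substitutes the identity $(\cH\tensor{A}\rcoaction{\cK}{R})\circ\lcoaction{\cH}{R}=(\lcoaction{\cH}{R}\tensor{B}\cK)\circ\rcoaction{\cK}{R}$, with the second tensor factor $y$ merely carried along. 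The main obstacle throughout is not any single deep step but the disciplined handling of the various $A$-, $B$-, and $R$-module structures that make the tensor products and Sweedler manipulations legitimate; once those balancings are fixed, the only genuinely non-formal ingredients are the antipode relations \eqref{maxxi}, the comodule-algebra axioms \eqref{Eq:comodalg}, and the bicomodule compatibility \eqref{Eq:bicomod}.
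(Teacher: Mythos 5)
Your strategy of direct, axiom-by-axiom verification is precisely what the paper does: its entire proof is the single sentence ``These are routine computations,'' so there is no alternative argument to compare against. Your treatment of items (i)--(iii) is correct, and the structural identifications you make (the coaction as the source map, the pair Hopf algebroid $(R,R\tensor{B}R)$ as the domain in (iii), the morphism being over $\id_R$) are the right ones. One minor misattribution in (i): coassociativity and counitality of the new $\Delta$ come from the coring structure of $\cH$ itself, since $\Delta$ and $\varepsilon$ only touch the $\cH$-factor; what genuinely uses coassociativity and counitality of the coaction of $R$ is the $R$-\emph{bilinearity} of $\Delta$ and $\varepsilon$ (the left $R$-action being via $\mathsf{s}=\lcoaction{\cH}{R}$). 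This does not affect the validity of your plan.

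Item (iv), however, has a genuine gap as you describe it. Composing $\can{\cH}{R}$ with $\cH\tensor{A}\rcoaction{\cK}{R}$ gives $x_{(-1)}\tensor{A}(x_{(0)}y)_{(0)}\tensor{B}(x_{(0)}y)_{(1)}$, where the right $\cK$-coaction is evaluated on the \emph{product} $x_{(0)}y$, while the other composite gives $x_{(0)(-1)}\tensor{A}x_{(0)(0)}y_{(0)}\tensor{B}x_{(1)}y_{(1)}$, where the coactions hit $x$ and $y$ separately. The compatibility \eqref{Eq:bicomod} rewrites the latter as $x_{(-1)}\tensor{A}x_{(0)(0)}y_{(0)}\tensor{B}x_{(0)(1)}y_{(1)}$, but to identify this with the former you still need $(x_{(0)}y)_{(0)}\tensor{B}(x_{(0)}y)_{(1)}=x_{(0)(0)}y_{(0)}\tensor{B}x_{(0)(1)}y_{(1)}$, i.e.\ multiplicativity of $\rcoaction{\cK}{R}$. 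So $y$ is not ``merely carried along'': item (iv) needs $R$ to be an $(\cH,\cK)$-bicomodule \emph{algebra} (both coactions algebra maps), not merely a bicomodule, and this must be added to your list of non-formal ingredients. That the hypothesis is unavoidable is seen by taking $\cH=A$ to be trivial: then $\can{\cH}{R}$ is the multiplication $R\tensor{B}R\to R$, and its right $\cK$-colinearity with respect to the codiagonal coaction is exactly multiplicativity of $\rcoaction{\cK}{R}$, which fails for a general comodule whose underlying module happens to be an algebra. With this hypothesis made explicit (it holds in every application in the paper, e.g.\ for principal bundles, where $P$ is a bicomodule algebra), your computation closes.
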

\begin{proof}
These are routine computations.  
\end{proof}

In analogy to groupoid terminology as in \S\ref{ssec:PB}, the Hopf algebroid $(R,\cH\tensor{A}R)$ of Lemma \ref{lemma:comdalg} is termed the \emph{left translation Hopf algebroid of $(A,\cH)$ along $\sigma$}. Symmetrically, one can define a \emph{right translation Hopf algebroid of $(A,\cH)$} by employing {\em right} comodule algebras. 

\begin{rem}
\label{remark:Orbits}
In subsection \ref{ssec:PB}, we discussed the notion of {\em orbit set} of a given left $\Gg$-set over a groupoid $\Gg$.  In the Hopf algebroid context, the analogous notion is given as follows: 
%
for a Hopf algebroid $(A,\cH)$ and any commutative algebra $C$, one can consider its underlying presheaf of groupoids,
 canonically defined by $C \to (\Hh(C), A(C))=({ \rm Alg}_{\Bbbk}(\cH,C), {\rm Alg}_{\Bbbk}(A,C))$ is the  groupoid
 $\xymatrix@C=25pt{\cH(C)\ar@<0.5ex>@{->}[r] \ar@<-0.5ex>@{->}[r] & \ar@{->}[l] A(C)}$ defined by reversing the structure maps of $(A,\cH)$. 
This leads then to the \emph{orbit presheaf} $C \mapsto \Oo(C):=A(C)/\Hh(C)$. 
Clearly, there is a morphism  $\Oo \to {\rm Alg}_{\scriptscriptstyle{\Bbbk}}(A^{{\coinv_{\cH}}},-)$ of presheaves, 
where $A^{{\coinv_{\cH}}}$ is the coinvariant subalgebra of $A$, that is, the set of elements $a \in A$ such that $\Sf{s}(a)=\Sf{t}(a)$. Thus, $A^{\coinv_{\cH}}$ can be thought of as the coordinate ring of the orbit space. 
In case of a general  left $\cH$-comodule algebra $(R,\alpha)$ and for any commutative algebra $C$,
the groupoid $\Hh(C)$ acts on $R(C)$ via $(g, x) \mapsto gx$ given by the algebra map
$$
gx: R \to C, \quad r \mapsto g(r_{(-1)})x(r_{(0)}).
$$
This determines the presheaf $\Oo_{\scriptscriptstyle{R}}: C \mapsto R(C)/\Hh(C)$ of orbits together with a morphism of presheaves $\Oo_{\scriptscriptstyle{R}} \to {\rm Alg}_{\scriptscriptstyle{\Bbbk}}\big(R^{\scriptscriptstyle{\coinv_{\cH}}}, - \big)$. So as before, $R^{\scriptscriptstyle{\coinv_{\cH}}}$ is the coordinate ring of the orbit space.  On the other hand, one can easily check that $R^{{\coinv_{\cH}}} =  R^{{\coinv_{(\cH\tensor{A}R)}}}$, where $(R,\cH\tensor{A}R)$ is the left translation Hopf algebroid as above.
\end{rem}

\subsection{The coinvariant subalgebra for the tensor product of comodule algebras}
\label{ssec:pcomalg}
For any two left $\cH$-comodule algebras $(R,\alpha)$ and $(S,\sigma)$, the comodule tensor product 
$S{\tensor{A}}R$ is an $A$-algebra by means of the algebra map 
$A \to S\tensor{A}R, \ a \mapsto \sigma(a)\tensor{A}1_{\scriptscriptstyle{R}}= 1_{\scriptscriptstyle{R}}\tensor{A} \alpha(a)$. 
This algebra clearly admits the structure of a left $\cH$-comodule algebra the coinvariant subalgebra of it can be described as follows:
\begin{lemma}
\label{lemma:prod}
For any two left $\cH$-comodule algebras $(R,\alpha)$ and $(S,\sigma)$, we have an isomorphism 
$$ 
(S{\tensor{A}}R)^{{\coinv_{\cH}}} \,\cong\, S^{\scriptscriptstyle{o}}\cotensor{\cH}R
$$ 
of algebras, 
where $(S^{\scriptscriptstyle{o}},\sigma)$ is the opposite right $\cH$-comodule algebra of $(S,\sigma)$. 
\end{lemma}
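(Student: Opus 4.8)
For two left $\cH$-comodule algebras $(R,\alpha)$ and $(S,\sigma)$, we must identify the coinvariant subalgebra $(S\tensor{A}R)^{\coinv_\cH}$ with the cotensor product $S^{\op}\cotensor{\cH}R$, where $S^{\op}$ carries the opposite right $\cH$-comodule structure coming from the antipode via equation \eqref{Eq:leftright}.

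The plan is to unwind both sides as explicit equalisers and match the defining conditions. First I would write down the coaction on the comodule tensor product $S\tensor{A}R$: by the codiagonal formula, $\lcoaction{\cH}{S\tensor{A}R}(y\tensor{A}x)=y_{(-1)}x_{(-1)}\tensor{A}(y_{(0)}\tensor{A}x_{(0)})$. Hence an element $\sum_i y_i\tensor{A}x_i$ lies in the coinvariant subalgebra precisely when $\sum_i y_{i(-1)}x_{i(-1)}\tensor{A}y_{i(0)}\tensor{A}x_{i(0)}=\sum_i 1_\cakka\tensor{A}y_i\tensor{A}x_i$ inside $\cH\tensor{A}S\tensor{A}R$. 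On the other side, the cotensor product $S^{\op}\cotensor{\cH}R$ is by definition the equaliser of $\rcoaction{\cH}{S^{\op}}\tensor{A}R$ and $S^{\op}\tensor{A}\lcoaction{\cH}{R}$; using \eqref{Eq:leftright}, the right coaction on $S^{\op}$ sends $y\mapsto y_{(0)}\tensor{A}\mathscr{S}(y_{(-1)})$, so an element of the equaliser satisfies $\sum_i y_{i(0)}\tensor{A}\mathscr{S}(y_{i(-1)})\tensor{A}x_i=\sum_i y_i\tensor{A}x_{i(-1)}\tensor{A}x_{i(0)}$ in $S\tensor{A}\cH\tensor{A}R$.

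The core of the argument is to show these two equaliser conditions are equivalent; this is where the antipode identities \eqref{maxxi} and $\mathscr{S}^2=\id$ do the work. The natural strategy is to produce mutually inverse $A$-linear maps witnessing the isomorphism by translating one condition into the other. I would apply $\mathscr{S}\tensor{A}\id$ (or multiply the relevant tensor factors through the antipode) to the coinvariance equation and use the counit/antipode relations $\mathscr{S}(u_{(1)})u_{(2)}=\Sf{t}(\varepsilon(u))$ to collapse the $\cH$-factor, thereby deriving the cotensor condition, and conversely reconstruct the coinvariance identity by reinserting the antipode and using $\mathscr{S}^2=\id$. Both directions are bookkeeping with Sweedler notation once the dictionary between $\mathscr{S}(y_{(-1)})$ on the $S^{\op}$-side and $y_{(-1)}$ on the coinvariant side is set up; commutativity of all algebras keeps the multiplicative structure transparent, so that the resulting bijection is automatically an algebra map.

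The main obstacle I anticipate is purely notational: one must keep straight three different tensor positions and the flip between left and right coactions induced by the antipode, since the coinvariance condition lives in $\cH\tensor{A}S\tensor{A}R$ while the cotensor condition lives in $S\tensor{A}\cH\tensor{A}R$. The genuinely substantive point is verifying that passing between these via $\mathscr{S}$ is reversible, which rests entirely on $\mathscr{S}^2=\id$ and the antipode axioms in \eqref{maxxi}; flatness of $\cH$ guarantees the equalisers behave well and that no completion issues arise. Beyond this the verification that the correspondence respects multiplication is immediate from \eqref{Eq:comodalg} and commutativity, so I would state it briefly rather than belabour it.
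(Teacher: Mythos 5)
Your proposal is correct and takes essentially the same route as the paper: the paper also realises both sides as subspaces of $S\tensor{A}R$ cut out by the two equaliser conditions, and passes from the coinvariance equation to the cotensor condition by applying the composite $(\id\otimes m_{\cH}\otimes\id)\circ\tau_{12}\circ(\mathscr{S}\otimes\id\otimes\lcoaction{\cH}{R})$ (antipode, re-coaction on $R$, flip, multiply) and then collapsing via $\mathscr{S}(u_{(1)})u_{(2)}=\Sf{t}(\varepsilon(u))$ and counitality, exactly the mechanism you describe. The converse direction, which the paper leaves as ``similarly deduced,'' is indeed the symmetric computation in which $\mathscr{S}^{2}=\id$ enters, as you anticipated.
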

\begin{proof}
For an element $s \tensor{A}r \in (S{\tensor{A}}R)^{\scriptscriptstyle{\coinv_{\cH}}}$, the equality
\begin{equation}
\label{Eq:sr}
1_{\scriptscriptstyle{\cH}}\tensor{A}s\tensor{A}r\,\,=\,\, s_{(-1)}r_{(-1)}\tensor{A}s_{(0)}\tensor{A}
r_{(0)}
\end{equation}
holds in $\cH\tensor{A}S\tensor{A}R$. Applying $(\id_\cakka \otimes m_\cakka \otimes \id_\erre) \circ \tau_{12} \circ (\mathscr{S} \otimes \id_\esse \otimes \lcoaction{R}{\cH})$ to both sides, where $\tau_{12}$ denotes the tensor flip and $m_\cakka$ the multiplication in $\cH$, we obtain
\begin{equation*}
\begin{split}
s\tensor{A}r_{(-1)}\tensor{A}r_{(0)} &= s_{(0)} \tensor{A}\mathscr{S}(s_{(-1)}r_{{(-2)}})
r_{(-1)}\tensor{A}r_{(0)} \\ 
&= s_{(0)} \tensor{A}\mathscr{S}\big(s_{(-1)}\big) \Sf{t}\big(\varepsilon(r_{(-1)})\big)\tensor{A}r_{(0)}
\\  
&= 
s_{(0)} \tensor{A}\mathscr{S}\big(s_{(-1)}\big) \tensor{A}r,
\end{split}
\end{equation*}
which shows that $s\tensor{A}r \in   S^{\scriptscriptstyle{o}}\cotensor{\cH}R$.  The converse is similarly deduced. 
\end{proof}

\begin{rem}
\label{rem:ortitsCotensor}
Taking Remarks \ref{rem:tensorG}  and \ref{remark:Orbits} into account, Lemma \ref{lemma:prod} describes the analogue of the tensor product over groupoids in the Hopf algebroid context. That is, the cotensor product of (left and right) $\cH$-comodule algebras should be thought of as the orbit space of their tensor product as comodule algebras. 
\end{rem}

\subsection{Bicomodule algebras and two-sided translation Hopf algebroids}
\label{ssec:bicomodalg}

In what follows, we give the construction for Hopf algebroids analogous to the two-sided translation groupoid as expounded in \S\ref{sec:PSets}, and show some corresponding results.

For two Hopf algebroids $(A,\cH)$ and $(B,\cK)$, consider an $(\cH,\cK)$-bicomodule $P$ such that $(P,\alpha)$ is a left $\cH$-comodule algebra and $(P,\beta)$ is a right $\cK$-comodule algebra.  We then say that the triple $(P,\alpha,\beta)$ is an \emph{$(\cH,\cK)$-bicomodule algebra}. A \emph{morphism of $(\cH,\cK)$-bicomodule algebras} is a map which is simultaneously  a morphism of left $\cH$-comodule algebras  and right $\cK$-comodule algebras. 

\begin{lemdfn}
\label{lemma:bisemidirect}
Let $(P, \alpha, \beta)$ be an $(\cH,\cK)$-bicomodule algebra. Then $(P, \cH\tensor{A}P\tensor{B}\cK)$ with tensor product 
defined by $\cH \lJoin P \rJoin \cK:=\shopf{\cH}\tensor{A}P\tensor{B} \shopf{\cK}$ carries a canonical structure of a flat Hopf algebroid the structure maps of which are given by:
\begin{enumerate}
\item the source and target are given by 
\begin{equation}
\label{jugendohnegott} 
\Sf{s}(p)\,:=\, 1_{\scriptscriptstyle{\cH}} \tensor{A}p\tensor{B}1_{\scriptscriptstyle{\cK}}, \quad \Sf{t}(p)\,:=\, \mathscr{S}(p_{(-1)}) \tensor{A}p_{(0)}\tensor{B}p_{(1)}; 
\end{equation}
\item the comultiplication and counit are as follows:
$$
\Delta(u\tensor{A}p\tensor{B}w) \,:= \, \big( u_{(1)}\tensor{A}p\tensor{B}w_{(1)}\big)\tensor{P}\big( u_{(2)}\tensor{A}1_{\scriptscriptstyle{P}}\tensor{B}w_{(2)}\big) ,\quad \varepsilon(u\tensor{A}p\tensor{B}w)\,:=\, \alpha\big( \varepsilon(u)\big)p \beta\big( \varepsilon(w)\big);
$$
\item whereas the antipode is defined as:  
$$ 
\mathscr{S}\big( u\tensor{A}p\tensor{B}w\big)\,:=\, \mathscr{S}(u p_{(-1)})\tensor{A}p_{(0)}
\tensor{B}p_{(1)}\mathscr{S}(w).$$
\end{enumerate}
Furthermore, there is a diagram
$$
\xymatrix@R=15pt{ & (P,\cH \lJoin P \rJoin \cK) & \\ (A,\cH) \ar@{->}^-{{\B{\alpha}=(\alpha, \,\alpha_1)}}[ur] & & \ar@{->}_-{{\B{\beta}=(\beta, \,\beta_1)}}[ul] (B,\cK) }
$$ 
of Hopf algebroids,
where $\alpha_{1}$ and $\gb_1$ are the maps $h \mapsto h \otimes_\ahha 1_\pehhe \otimes_\behhe 1_{\scriptscriptstyle{\cK}}$ and  
$k \mapsto 1_{\scriptscriptstyle{\cH}} \otimes_\ahha 1_\pehhe \otimes_\behhe k$, respectively.
This Hopf algebroid will be termed \emph{two-sided translation Hopf algebroid}.
\end{lemdfn}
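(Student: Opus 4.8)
The plan is to present the two-sided translation Hopf algebroid as a fibre product, over the common base $P$, of two one-sided translation Hopf algebroids, thereby reducing almost all of the verification to Lemma \ref{lemma:comdalg} and its right-handed mirror. Concretely, since $(P,\alpha)$ is a left $\cH$-comodule algebra, Lemma \ref{lemma:comdalg} supplies the left translation Hopf algebroid $(P,\cH\tensor{A}P)$, and symmetrically $(P,\beta)$ yields the right translation Hopf algebroid $(P,P\tensor{B}\cK)$. As an algebra, $\cH\tensor{A}P\tensor{B}\cK$ is the pushout of these two $P$-algebras, that is, $(\cH\tensor{A}P)\tensor{P}(P\tensor{B}\cK)$, identifying the two middle copies of $P$; this is the algebraic incarnation of the fact that the arrows of the two-sided translation groupoid form the fibre product, over the object set $N$, of the arrows of the two one-sided ones. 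The source, target, comultiplication, and counit listed in the statement are exactly those obtained by combining the corresponding structure maps of the two translation Hopf algebroids, so their well-definedness, coassociativity, counitality, and multiplicativity will follow from Lemma \ref{lemma:comdalg} together with the fact that the left $\cH$-coaction and the right $\cK$-coaction on $P$ commute, which is precisely the bicomodule compatibility \eqref{Eq:bicomod}.

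First I would record that the multiplication on $\cH\tensor{A}P\tensor{B}\cK$ is the componentwise product $(u\tensor{A}p\tensor{B}w)(u'\tensor{A}p'\tensor{B}w')=uu'\tensor{A}pp'\tensor{B}ww'$, and check that $\Sf{s}$ and $\Sf{t}$ are algebra maps into it; multiplicativity of $\Sf{t}$ uses that $\lcoaction{\cH}{P}$ and $\rcoaction{\cK}{P}$ are algebra maps (equation \eqref{Eq:comodalg} and its right analogue) together with the multiplicativity of $\mathscr{S}_{\cH}$ from \eqref{Eq:ultima}. Next I would verify that $\Delta$ and $\varepsilon$ are algebra maps and that $(\cH\tensor{A}P\tensor{B}\cK,\Delta,\varepsilon)$ is a $P$-coring; as noted above these are inherited directly from the two one-sided constructions, and the counit identities $\varepsilon\circ\Sf{s}=\varepsilon\circ\Sf{t}=\id$ are a short computation using the counit relations of $\cH$ and $\cK$ and the counitality of the two coactions.

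The genuinely new content, and the step I expect to be the main obstacle, is the antipode
$$
\mathscr{S}(u\tensor{A}p\tensor{B}w)=\mathscr{S}(up_{(-1)})\tensor{A}p_{(0)}\tensor{B}p_{(1)}\mathscr{S}(w),
$$
since, unlike the other structure maps, it couples the three tensor factors through both coactions simultaneously. Three points need checking. Well-definedness (respecting the $A$- and $B$-balancing) rests on \eqref{Eq:bicomod}, which guarantees that $p\mapsto p_{(-1)}\tensor{A}p_{(0)}\tensor{B}p_{(1)}$ is an unambiguous element of $\cH\tensor{A}P\tensor{B}\cK$. The involutivity $\mathscr{S}^2=\id$ follows by applying the formula twice, collapsing the iterated coactions by coassociativity and counitality and invoking $\mathscr{S}_{\cH}^2=\id$ and $\mathscr{S}_{\cK}^2=\id$. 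Finally, the two antipode identities \eqref{Eq:penultima} are obtained by forming $\mathscr{S}(x_{(1)})x_{(2)}$ and $x_{(1)}\mathscr{S}(x_{(2)})$ for $x=u\tensor{A}p\tensor{B}w$, using that the multiplication is componentwise, that $\mathscr{S}(u_{(1)})u_{(2)}=\Sf{t}(\varepsilon(u))$ in $\cH$ and the analogous identity in $\cK$, and then the comodule-algebra relations \eqref{Eq:comodalg} to recognise the outcome as $\Sf{t}(\varepsilon(x))$ resp.\ $\Sf{s}(\varepsilon(x))$. Keeping the two families of Sweedler indices separate while commuting the $\cH$- and $\cK$-coactions past one another is the only delicate bookkeeping.

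Finally I would address flatness and the two morphisms. Flatness of $\cH\tensor{A}P\tensor{B}\cK$ over $P$ is immediate from the flatness of $\cH$ over $A$ and of $\cK$ over $B$ by base change, exactly as for the one-sided translation Hopf algebroids. For the diagram, I would check that $\B{\alpha}=(\alpha,\alpha_1)$ and $\B{\beta}=(\beta,\beta_1)$, with $\alpha_{1}(h)=h\tensor{A}1_{\scriptscriptstyle{P}}\tensor{B}1_{\scriptscriptstyle{\cK}}$ and $\beta_{1}(k)=1_{\scriptscriptstyle{\cH}}\tensor{A}1_{\scriptscriptstyle{P}}\tensor{B}k$, satisfy the defining relations of a morphism of Hopf algebroids; compatibility with source, target, comultiplication, and counit is a direct substitution, while compatibility with the antipode reduces to evaluating $\mathscr{S}$ on elements of the above two forms, where the coaction terms collapse via $\lcoaction{\cH}{P}(1_{\scriptscriptstyle{P}})=1_{\scriptscriptstyle{\cH}}\tensor{A}1_{\scriptscriptstyle{P}}$ and its right counterpart.
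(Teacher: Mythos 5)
Your reduction strategy---presenting $\cH\tensor{A}P\tensor{B}\cK$ as the fibre product $(\cH\tensor{A}P)\tensor{P}(P\tensor{B}\cK)$ of the two one-sided translation Hopf algebroids and inheriting the coring axioms from Lemma \ref{lemma:comdalg}---rests on a premise that is false, and this is where the gap lies. The structure maps in the statement are \emph{not} componentwise combinations of the one-sided ones. In Lemma \ref{lemma:comdalg} the left translation Hopf algebroid has source $p \mapsto p_{(-1)}\tensor{A}p_{(0)}$ and target $p \mapsto 1_{\scriptscriptstyle{\cH}}\tensor{A}p$, whereas here the source is the trivial map and the target $\Sf{t}(p)=\mathscr{S}(p_{(-1)})\tensor{A}p_{(0)}\tensor{B}p_{(1)}$ carries an antipode twist on the $\cH$-leg only; no choice of conventions for a mirror-image right translation algebroid produces this map by ``combining''. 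The comultiplications clash as well: Lemma \ref{lemma:comdalg} places the element of $P$ in the \emph{second} tensor factor, the statement places it in the \emph{first}, and over $\tensor{P}$ --- where the right $P$-action is via the twisted $\Sf{t}$ --- these placements are genuinely different elements, so ``the combined $\Delta$'' is not even a well-defined notion. This is not cosmetic: dually, composable pairs of arrows of two groupoids over the same object set do not form a groupoid; one needs a matched-pair/distributive-law datum (here coming from the commuting of the coactions, Eq.~\eqref{Eq:bicomod}), and verifying that datum is exactly as much work as the direct check. Consequently everything you declare ``inherited''---well-definedness of $\Delta$ as a map into the tensor product over $P$, the $P$-bilinearity of $\Delta$ and $\varepsilon$, coassociativity, counitality---remains unproven. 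In particular, the right $P$-linearity of $\varepsilon$ and $\Delta$ with respect to $\Sf{t}$, which couples both coactions through \eqref{Eq:penultima} and \eqref{Eq:bicomod}, is precisely the computation the paper's proof writes out in full, and it cannot be extracted from Lemma \ref{lemma:comdalg}.

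The parts of your proposal that do not invoke the fibre-product reduction are sound and agree with the paper's proof: flatness of $\Sf{s}$ follows by base change from the flatness of $\cH$ over $A$ and of $\cK$ over $B$; the antipode requires direct work (well-definedness via \eqref{Eq:bicomod}, involutivity, and the identities \eqref{Eq:penultima}, the latter being the one axiom the paper verifies explicitly, in a computation matching your outline); and the check that $\B{\alpha}$ and $\B{\beta}$ are morphisms of Hopf algebroids is routine. To repair the argument, drop the inheritance claim and verify directly that $\varepsilon$ and $\Delta$ are right $P$-linear for the twisted target $\Sf{t}$ (as in the paper), then confirm the remaining axioms \eqref{Eq:es}--\eqref{Eq:ultima} by hand.
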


\begin{proof}
The fact that $\Sf{s}: P \to \shopf{\cH}\tensor{A}P\tensor{B} \shopf{\cK}$ is a flat extension is clear since $\shopf{\cH}$ and $\shopf{\cK}$ are flat; hence $\shopf{\cH}\tensor{A}P\tensor{B} \shopf{\cK}$ will give a flat Hopf algebroid over $P$. 
Using the source map \rmref{jugendohnegott}, the comultiplication $\gD$ and the counit $\varepsilon$ are obviously left $P$-linear; the
right $P$-linearity follows from
$$
\varepsilon\big( (u\tensor{A}p'\tensor{B}w)  \Sf{t}(p)\big)\,=\, \varepsilon\big(u\mathscr{S}(p_{(-1)})\tensor{A}p'p_{(0)} 
\tensor{B}p_{(1)}w\big) \,\overset{\eqref{Eq:penultima}}{=} \, \varepsilon(u\tensor{A}p' \tensor{B}w) p
$$
as well as
\begin{eqnarray*}
\Delta\big( (u\tensor{A}p'\tensor{B}w) \, \Sf{t}(p)\big) &=& \Delta\big(u\mathscr{S}(p_{(-1)})\tensor{A}p'p_{(0)} 
\tensor{B}p_{(1)}w\big)
\\ &=&  \big( u_{(1)}\mathscr{S}(p_{(-1)})\tensor{A}p'p_{(0)}\tensor{B}w_{(1)}p_{(1)}\big)\tensor{P}\big( u_{(2)}\mathscr{S}(p_{(-2)})\tensor{A}1_{\scriptscriptstyle{P}}\tensor{B}w_{(2)}p_{(2)}\big) 
\\ &=& (u_{(1)}\tensor{A}p'\tensor{B}w_{(1)}) \, \Sf{t}(p_{(0)}) \tensor{P}\big( u_{(2)}\mathscr{S}(p_{(-1)})\tensor{A}1_{\scriptscriptstyle{P}}\tensor{B}w_{(2)}p_{(1)}\big) 
\\ &=& \big( u_{(1)}\tensor{A}p'\tensor{B}w_{(1)}\big)\tensor{P}\big( u_{(2)}\mathscr{S}(p_{(-1)})\tensor{A}p_{(0)}\tensor{B}w_{(2)}p_{(1)}\big) 
\\ &=& \big( u_{(1)}\tensor{A}p'\tensor{B}w_{(1)}\big)\tensor{P} (u_{(2)}\tensor{A}1_{\scriptscriptstyle{P}}\tensor{B}w_{(2)}) \, \Sf{t}(p).
\end{eqnarray*}
In order to define a Hopf algebroid, we need these maps to satisfy Eqs.~\eqref{Eq:es}--\eqref{Eq:ultima}, which are either clear from definitions or follow by computations similar to the subsequent one proving  \eqref{Eq:penultima}: 
we have 
\begin{eqnarray*}
\mathscr{S} (u_{(1)}\tensor{A}p\tensor{B}w_{(1)}) (  u_{(2)}\tensor{A} 1_{\scriptscriptstyle{P}}\tensor{B}w_{(2)}) 
&=& \mathscr{S}(u_{(1)})\mathscr{S}(p_{(-1)}) u_{(2)} \tensor{A}p_{(0)}\tensor{B}p_{(1)}\mathscr{S}(w_{(1)}) w_{(2)}
\\ &=& \Sf{t}(\varepsilon(u))\mathscr{S}(p_{(-1)})  \tensor{A}p_{(0)}\tensor{B}p_{(1)}\Sf{t}(\varepsilon(w))
\\ &=& \mathscr{S}\big( \Sf{s}(\varepsilon(u)) p_{(-1)} \big)  \tensor{A}p_{(0)}\tensor{B}p_{(1)} \Sf{t}(b)
\\ &=& \Sf{t}\big( \alpha(\varepsilon(u)) \, p \, \beta(\varepsilon(w))\big)
= \Sf{t}\big(\varepsilon( u\tensor{A}p\tensor{B}w) \big). 
\end{eqnarray*}
The last statement is easily checked as well. 
\end{proof}

Finally note that
for
a morphism 
$\mathfrak{f}: (P,\alpha,\beta) \to (P',\alpha',\beta')$ 
of 
 $(\cH,\cK)$-bicomodule algebras, Lemma \ref{lemma:bisemidirect} leads to a commutative diagram 
\begin{equation}\label{Eq:qseramana}
\xymatrix@R=20pt@C=30pt{ & (P,\cH \lJoin P \rJoin \cK)  \ar@{->}|-{{(\fk{f},\, \cH\tensor{A}\fk{f}\tensor{B}\cK)}}[dd]  & \\ (A,\cH) \ar@{->}|-{{\B{\alpha}}}[ur] \ar@{->}|-{{\B{\alpha '}}}[dr] & & \ar@{->}|-{{\B{\beta}}}[ul] (B,\cK) \ar@{->}|-{{\B{\beta '}}}[dl] \\   & (P',\cH \lJoin P' \rJoin \cK) &  }
\end{equation}
of flat Hopf algebroids.

\begin{example}\label{exm:bhb}
Let $(A,\cH)$ be a Hopf algebroid, $C$ any algebra, and  $h: \cH \to C$ an algebra morphism. Using $\phiup := h \circ \Sf{s}: A \to C$ and $\psiup := h \circ \Sf{t}: A \to C$, construct the scalar extension Hopf algebroids $(C,\cH_{\scriptscriptstyle{\phiup}}:=C\tensor{\phiup}\cH\tensor{\phiup}C)$ resp.\ $(C,\cH_{\scriptscriptstyle{\psiup}}:=C\tensor{\psiup}\cH\tensor{\psiup}C)$, 
where we used the notation $\tensor{\phiup}$ resp.\ $\tensor{\psiup}$  to distinguish between the two $A$-module structures on $C$ given by either $\phiup$ or $\psiup$. From \cite[Lemma 6.4]{HovStr:CALEHT} we deduce that $(C,\cH_{\scriptscriptstyle{\phiup}}) \cong (C,\cH_{\scriptscriptstyle{\psiup}})$ as Hopf algebroids; indeed, this isomorphism is explicitly given by:
$$
C\tensor{\phiup}\cH\tensor{\phiup}C \to C\tensor{\psiup}\cH\tensor{\psiup}C, \quad c\tensor{\phiup}u\tensor{\phiup}c' \mapsto  c\, h(u_{(1)})\tensor{\psiup}u_{(2)}\tensor{\psiup}h\big(\mathscr{S}(u_{(3)})\big)c',
$$ 
with inverse 
$d \tensor{\psiup} v \tensor{\psiup} d' \mapsto  d \, h\big(\mathscr{S}(v_{(1)})\big) \tensor{\phiup} v_{(2)} \tensor{\phiup} h(v_{(3)}) d'$. 

Now, assume that $C$ is of the form $C := B\tensor{\phi}\cH\tensor{\psi}B'$ for some extensions $\xymatrix@C=15pt{B &\ar@{->}_-{\scriptscriptstyle{\phi}}[l] A \ar@{->}^-{\scriptscriptstyle{\psi}}[r] & B'}$ along with the obvious algebra map $h: \cH \to C$ as well as $\phiup: A \to C$ and $\psiup:A \to C$. We can consider 
$(C,\phiup,\psiup)$ as an $(\cH_{\scriptscriptstyle{\phi}},\cH_{\scriptscriptstyle{\psi}})$-bicomodule algebra in a canonical way; this, in fact, is the bicomodule algebra arising from the cotensor product algebra $P^{\co}\cotensor{\cH}P$ by considering, respectively,  $P:=\cH\tensor{\phi}B$ and $P':=\cH\tensor{\psi}B'$ as $(\cH,\cH_{\scriptscriptstyle{\phi}})$- and $(\cH,\cH_{\scriptscriptstyle{\psi}})$-bicomodule algebras with obvious coactions.  

Let $(C,\cH_{\scriptscriptstyle{\phi}} \lJoin C \rJoin \cH_{\scriptscriptstyle{\psi}})$ be the associated two-sided translation Hopf algebroid. Then one can show that there is an isomorphism 
$$
(C, \cH_{\scriptscriptstyle{\phiup}}) \, \cong\, (C,\cH_{\scriptscriptstyle{\phi}} \lJoin C \rJoin \cH_{\scriptscriptstyle{\psi}}) \, \cong\, (C, \cH_{\scriptscriptstyle{\psiup}})
$$
of Hopf algebroids as can be seen by adapting the proof of Proposition \ref{prop:Cielitomissyoutoo} below.
\end{example}

\section{Principal bibundles in the Hopf algebroid context}
\label{sec:pb}

\subsection{General definitions}
\label{ssec:gpb}
In this section, we will introduce one of the main notions in this article. Similar concepts in the framework of Hopf algebras appeared under the name {\em quantum principal bundle} in \cite{BrzMaj:QGGTOQS, Brz:TMIQPP} or {\em bi-Galois extension} in \cite{Schau:HBGE, Schau:HGABGE}. In analogy to Definition \ref{def:pbset}, we define principal bundles in the Hopf algebroid context as follows. 
\begin{definition}
\label{def:PB}
A \emph{left principal $(\cH, \cK)$-bundle} $(P, \ga, \gb)$ 
for two Hopf algebroids $(A, \cH)$ and $(B, \cK)$ 
is an $(\cH,\cK)$-bicomodule algebra as in \S\ref{ssec:comoalg}, that is,  $P$  is equipped with a left $\cH$-comodule algebra and a right $\cK$-comodule algebra structures with respect to the algebra maps $\alpha: A \to P$ resp.\ $\beta: B \to P$ such that  
\begin{enumerate}
\item $\beta$ is a faithfully flat extension; 
\item the canonical map
$$
\can{\cH}{P}: P \otimes_\behhe P \to \cH\tensor{A}P, \quad p\tensor{B}p' \mapsto p_{(-1)} \tensor{A} p_{(0)} p'
$$ 
is bijective.
\end{enumerate}
\end{definition}

At times, when the context is clear and hence (we think that) no confusion can arise, 
the subscripts in the notation $\mathsf{can}$ of the canonical map are dropped.

Maps between principal bundles are defined as follows:

\begin{definition}
A \emph{morphism of left principal $(\cH, \cK)$-bundles}  $(P, \ga, \gb)$ and $(P', \ga', \gb')$ is a map  $\fk{f}:P \to P'$ that is 
a morphism of $(\cH,\cK)$-bicomodule algebras, {\em i.e.}, simultaneously a morphism of $A$-algebras, $B$-algebras, and a morphism of $(\cH, \cK)$-bicomodules. 
We will also call such a morphism an \emph{equivariant morphism}.  
An \emph{isomorphism of left principal bundles} is a bijective morphism of left principal bundles.
The category of left  principal $(\cH,\cK)$-bundles will be denoted by $\lPB{\cH}{\cK}$.
\end{definition}

Let us denote the inverse of $\can{\cH}{P}$ by a sort of Sweedler type notation,
$$
\can{\cH}{P}^{-1}: \cH\tensor{A}P \to P \tensor{B} P, \quad u \tensor{A} p \mapsto u_+\tensor{B} u_- p.
$$ 
where
\begin{equation}
\label{plusminus}
\tauup_\pehhe:= \can{\cH}{P}^{-1}(- \otimes_\ahha 1_\pehhe): \cH \to P\tensor{B}P, \quad u \mapsto u_+ \tensor{B} u_-
\end{equation}
denotes the \emph{translation map}. 
The following lemma summarises the properties of this map and its compatibility with the Hopf algebroid structure:

\begin{lemma}
Let  $(P, \alpha,\beta)$ be a left principal $(\cH,\cK)$-bundle.
One has for all $a, a' \in A$, $u, v \in \cH$, and $p \in P$: 
\begin{eqnarray}
\label{ceuta}
(uv)_+\tensor{B} (uv)_- &=& u_+ v_+ \tensor{B} v_- u_-, \\ 
\label{japan} 
u_{+(-1)} \tensor{A} u_{+(0)} \tensor{B} u_{-} &=& u_{(1)} \tensor{A} u_{(2)+} \tensor{B} u_{(2)-}, 
\\ 
\label{Eq:eps+-} 
u_+u_-&=& \alpha(\varepsilon(u)), 
\\
\label{Eq:p-+} 
p_{(-1)+}  \tensor{B} p_{(-1)-} p_{(0)} &=& p \tensor{B} 1_{\scriptscriptstyle{P}}, 
\\
\label{Eq:u-+} 
u_{+(-1)} \tensor{A} u_{+(0)} u_- &=& u \tensor{A} 1_{\scriptscriptstyle{P}}, 
\\
\label{bolognacentrale}
(\mathsf{s}(a)\mathsf{t}(a'))_+ \otimes_\behhe (\mathsf{s}(a)\mathsf{t}(a'))_- &=& \ga(a) \otimes_\behhe \ga(a').
\end{eqnarray}
Furthermore,
\begin{eqnarray}
\label{casadiaugusto1}
u_{+(0)} \otimes_\behhe u_{-(0)} \otimes_\behhe  u_{+(1)} u_{-(1)} &=& u_{+} \otimes_\behhe  u_{-} \otimes_\behhe 1_\cK \quad \in P \otimes_\behhe P \otimes_\behhe \cK, \\
\label{Eq:Su}
\mathscr{S}(u) \otimes_\ahha 1_\pehhe &=& u_{{-(-1)}} \otimes_\ahha u_{{-(0)}} u_{{+}}, \\
\label{zucchero}
\mathscr{S}(u)_+ \tensor{B} \mathscr{S}(u)_- &=& u_- \tensor{B} u_+, \\
\label{Eq:S+-}
u_{{(1)+}} \otimes_\behhe u_{{(1)-}} \tensor{A} \mathscr{S}(u_{{(2)}}) &=& u_+ \otimes_\behhe u_{{-(0)}} \tensor{A} u_{{-(-1)}}.
\end{eqnarray}
\end{lemma}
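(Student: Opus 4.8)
The plan is to extract every identity from three structural properties of the canonical map established in Lemma \ref{lemma:comdalg}: that $\can{\cH}{P}$ is simultaneously an algebra map, a morphism of left $\cH$-comodules, and (since $P$ is an $(\cH,\cK)$-bicomodule) a morphism of right $\cK$-comodules. As $\can{\cH}{P}$ is bijective by Definition \ref{def:PB}, its inverse---and hence the translation map of \eqref{plusminus}---inherits all three properties. First I would record the two tautological identities coming from $\can{\cH}{P}\circ\can{\cH}{P}^{-1}=\id$ and $\can{\cH}{P}^{-1}\circ\can{\cH}{P}=\id$: evaluating the former on $u\tensor{A}1_\pehhe$ gives \eqref{Eq:u-+}, while evaluating the latter on $p\tensor{B}1_\pehhe$ (after noting $\can{\cH}{P}(p\tensor{B}1_\pehhe)=p_{(-1)}\tensor{A}p_{(0)}$) gives \eqref{Eq:p-+}. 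Applying the counit of the comodule $P$ to \eqref{Eq:u-+} then collapses it to \eqref{Eq:eps+-}.

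Next come the identities expressing compatibility with the algebra and coalgebra structures. Identity \eqref{ceuta} is just multiplicativity of $\can{\cH}{P}^{-1}$: from $(u\tensor{A}1_\pehhe)(v\tensor{A}1_\pehhe)=uv\tensor{A}1_\pehhe$ one gets $u_+v_+\tensor{B}u_-v_-$, which equals the right-hand side because $P$ is commutative. For \eqref{bolognacentrale} I would apply the injective map $\can{\cH}{P}$ to the candidate right-hand side $\ga(a)\tensor{B}\ga(a')$ and use that the coaction on $\ga(a)$ is $\mathsf{s}(a)\tensor{A}1_\pehhe$ (the comodule-algebra axiom \eqref{Eq:comodalg} read on $\alpha$), recovering $\mathsf{s}(a)\mathsf{t}(a')\tensor{A}1_\pehhe$. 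Identities \eqref{japan} and \eqref{casadiaugusto1} are precisely the left $\cH$-colinearity and right $\cK$-colinearity of $\can{\cH}{P}^{-1}$ evaluated on $u\tensor{A}1_\pehhe$: for \eqref{japan} the coaction on the codomain $\cH\tensor{A}P$ is comultiplication in the $\cH$-leg, while for \eqref{casadiaugusto1} it is the given coaction in the $P$-leg, together with $1_\pehhe$ being $\cK$-coinvariant.

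The genuinely non-formal step, and the main obstacle, is the antipode identity \eqref{Eq:Su}; the remaining two, \eqref{zucchero} and \eqref{Eq:S+-}, follow from it. I would prove \eqref{Eq:Su} by applying the left $\cH$-coaction of $P$ to the $P$-leg of \eqref{Eq:u-+}, obtaining in $\cH\tensor{A}\cH\tensor{A}P$, after invoking coassociativity and that the coaction is an algebra map, the identity $u_{+(-2)}\tensor{A}u_{+(-1)}u_{-(-1)}\tensor{A}u_{+(0)}u_{-(0)}=u\tensor{A}1_\cakka\tensor{A}1_\pehhe$. Feeding this into the map $h\tensor{A}k\tensor{A}p\mapsto\mathscr{S}(h)k\tensor{A}p$ and using the antipode relation $\mathscr{S}(h_{(1)})h_{(2)}=\mathsf{t}(\varepsilon(h))$ from \eqref{maxxi} together with the counit of $P$ collapses the left-hand side to $u_{-(-1)}\tensor{A}u_{-(0)}u_+$, while the right-hand side becomes $\mathscr{S}(u)\tensor{A}1_\pehhe$, which is \eqref{Eq:Su} after using commutativity of $P$. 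Then \eqref{zucchero} follows by applying the injective $\can{\cH}{P}$ to both sides and comparing with \eqref{Eq:Su}, and \eqref{Eq:S+-} follows by applying the map $\can{\cH}{P}\tensor{A}\id_\cakka$ (injective since $\cH$ is flat) and reducing, via \eqref{japan} and \eqref{Eq:Su}, to an identity in $\cH\tensor{A}P\tensor{A}\cH$. The delicate points throughout are the bookkeeping of the source/target $A$-module structures on $\cH$ in each cotensor leg and the repeated use of commutativity to swap the plus- and minus-legs in $P$.
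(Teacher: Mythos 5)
Your proof is correct, and the delicate points check out: the map $h\tensor{A}k\tensor{A}p\mapsto \mathscr{S}(h)k\tensor{A}p$ in your argument for \eqref{Eq:Su} is well defined over the relevant $A$-module structures precisely because $\mathscr{S}\circ\mathsf{t}=\mathsf{s}$, and every multiplicativity/colinearity property you invoke for $\can{\cH}{P}$ and its inverse is available from Lemma \ref{lemma:comdalg}. Your route differs from the paper's mainly in self-containedness and in orientation. For the first six identities the paper gives no argument at all: it cites Schauenburg's Prop.~3.7 (the case $P=\cH$) and asserts that the general case is proved ``along the lines'' of it; your derivation of \eqref{Eq:u-+}, \eqref{Eq:p-+}, \eqref{Eq:eps+-}, \eqref{ceuta}, \eqref{japan}, \eqref{bolognacentrale} from the structural properties of the canonical map (bijective algebra map, morphism of left $\cH$-comodules) is precisely that missing argument written out, which is what your version buys in completeness and the citation buys the paper in brevity. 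Identity \eqref{casadiaugusto1} is handled the same way in both proofs (right $\cK$-colinearity of the inverse). For \eqref{Eq:Su} the paper computes in the opposite direction: it expands $\mathscr{S}(u)\tensor{A}1_{P}=\mathscr{S}(u_{(1)})\mathsf{s}(\varepsilon(u_{(2)}))\tensor{A}1_{P}$ and contracts using \eqref{Eq:eps+-}, \eqref{japan}, \eqref{maxxi}, and \eqref{bolognacentrale}, whereas you apply the coaction to \eqref{Eq:u-+} and contract with $\mathscr{S}$ via \eqref{maxxi}; the two computations are of comparable length and rest on the same axioms. For \eqref{zucchero} and \eqref{Eq:S+-} your arguments are mirror images of the paper's: you apply the injective maps $\can{\cH}{P}$ resp.\ $\can{\cH}{P}\tensor{A}\cH$ to the claimed identities and reduce to \eqref{Eq:Su} and \eqref{japan}, while the paper applies $\can{\cH}{P}^{-1}$ to \eqref{Eq:Su} (using \eqref{Eq:p-+}) and applies \eqref{japan} to the element $\mathscr{S}(u)$ (using \eqref{zucchero} and the fact that $\mathscr{S}$ is an anti-coring morphism). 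One inessential slip in your writeup: the bijectivity of $\can{\cH}{P}\tensor{A}\id_{\cH}$ does not need flatness of $\cH$ --- tensoring a bijection with an identity map is automatically bijective, with inverse $\can{\cH}{P}^{-1}\tensor{A}\id_{\cH}$.
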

\begin{proof}
The first six equations are proved along the lines of the proof of \cite[Prop.~3.7]{Schau:DADOQGHA}, where the special case in which $P:=\cH$ is treated. Eq.~\rmref{casadiaugusto1} is obtained by the fact that the canonical map (and hence its inverse) is a morphism of right $\cK$-comodules, as follows from Lemma \ref{lemma:comdalg} {\em (iv)}.
Eq.~\rmref{Eq:Su} is proven as follows: since $P$ is a left $\cH$-comodule algebra and the coaction is $A$-linear, one has 
\begin{equation*}
\begin{split}
\mathscr{S}(u) \otimes_\ahha 1_\pehhe &= \mathscr{S}(u_{(1)})\mathsf{s}(\gve(u_{(2)})) \otimes_\ahha 1_\pehhe 
= \mathscr{S}(u_{(1)})\big(\ga(\gve(u_{(2)}))\big)_{(-1)} 
\otimes_\ahha \big(\ga(\gve(u_{(2)}))\big)_{(0)} \\
& \!\! \overset{\rmref{Eq:eps+-}}{=} 
 \mathscr{S}(u_{(1)})(u_{(2)+}u_{(2)-})_{(-1)} 
\otimes_\ahha (u_{(2)+}u_{(2)-})_{(0)} \overset{\rmref{japan}}{=} 
 \mathscr{S}(u_{(1)}) u_{(2)}u_{(3)-(-1)} 
\otimes_\ahha u_{(3)+}u_{(3)-(0)} \\
& \! \! \overset{\rmref{maxxi}}{=} 
\mathsf{t}(\gve(u_{(1)})) u_{(2)-(-1)} 
\otimes_\ahha u_{(2)+}u_{(2)-(0)} 
 \overset{\rmref{bolognacentrale}}{=} 
u_{-(-1)} 
\otimes_\ahha u_{-(0)} u_+. 
\end{split}
\end{equation*}
Eq.~\rmref{zucchero} now follows by simply applying the inverse of the canonical map to both sides, using \rmref{Eq:p-+}.
Finally, Eq.~\rmref{Eq:S+-} is seen by applying \rmref{japan} to the element $\mathscr{S}(u)$, using \rmref{zucchero} and the fact that the antipode is an anti-coring morphism.
\end{proof}

Right  principal bundles use the  right $\cK$-comodule algebra structure of $P$ and the canonical map:
$$
\can{P}{\cK}: P \tensor{A} P \to P\tensor{B}\cK, \quad p'\tensor{A}p \mapsto p'p_{(0)}\tensor{B}p_{(1)}.
$$ 
In this way,  $P$ is said to be a \emph{right principal $(\cH, \cK)$-bundle} if $\alpha$ is a faithfully flat extension and the canonical map $\can{P}{\cK}$ is bijective. The triple $(P,\alpha,\beta)$ is said to \emph{principal $(\cH, \cK)$-bibundle} provided $P$ is both left and right principal.

Since we will explicitly use principal bibundles, we also need the notation and the properties for the right translation map.  The  inverse of $\can{P}{\cK}$ is  denoted by
$$
P \tensor{B} \cK \to P \otimes_\ahha P, \quad p \otimes_\behhe v \mapsto pv^- \otimes_\ahha v^+,
$$
which fulfils the relations
\begin{eqnarray}
(vw)^+ \otimes_\ahha (vw)^- &=& v^+ w^+ \otimes_\ahha w^- v^-, \\ 
v^- v^+ &=& \gb(\varepsilon(v)), \\
p_{(0)} {p_{(1)}}^- \otimes_\ahha {p_{(1)}}^+ &=& 1_\pehhe \otimes_\ahha p, \label{Eq:colosseo} \\
v^- {v^{+}}_{(0)} \otimes_\behhe {v^{+}}_{(1)} &=& 1_\pehhe \otimes_\behhe v, \label{Eq:DavidagainstGoliat} \\
v^- \otimes_\ahha {v^+}_{(0)} \otimes_\ahha  {v^+}_{(1)} &=& {v_{(1)}}^- \otimes_\ahha {v_{(1)}}^+ \otimes_\ahha  v_{(2)}, \\
(\mathsf{s}(b)\mathsf{t}(b'))^- \otimes_\ahha (\mathsf{s}(b)\mathsf{t}(b'))^+ &=& \gb(b) \otimes_\ahha \gb(b').
\end{eqnarray}
With a similar argumentation that lead to \rmref{casadiaugusto1}, we have the identity
\begin{equation}
\label{casadiaugusto2}
{v^-}_{(-1)} {v^+}_{(-1)} \otimes_\ahha {v^-}_{(0)} \otimes_\ahha {v^+}_{(0)} = 1_\cH \otimes_\ahha {v^-} \otimes_\ahha  {v^+} \quad \in \cH \otimes_\ahha P \otimes_\ahha P. 
\end{equation}
Analogously, one obtains
\begin{eqnarray*}
\mathscr{S}(v)^- \otimes_\ahha \mathscr{S}(v)^+ &=& v^+ \otimes_\ahha v^-, \\
{v_{(2)}}^- \otimes_\ahha {v_{(2)}}^+ \otimes_\behhe \mathscr{S}(v_{(1)}) &=&
v^- \otimes_\ahha {v^+}_{(0)} \otimes_\behhe {v^+}_{(1)} \quad \in P \otimes_\ahha P \otimes_\behhe \cK, \\
1_\pehhe \otimes_\behhe \mathscr{S}(v) &=&  v^+ {v^-}_{(0)} \otimes_\behhe {v^-}_{(1)}.
\end{eqnarray*}
In a similar way, one can define a morphism between right principal $(\cH, \cK)$-bundles. The obtained category  will be denoted by $\rPB{\cH}{\cK}$.
Morphisms of principal bibundles are simultaneously morphisms of left and right principal bundles. The category obtained this way will be denoted by $\bPB{\cH}{\cK}$.

\begin{rem}
\label{rema:PB} \noindent
\begin{enumerate}
\item For a morphism $\fk{f}:(P, \alpha, \gb) \to (P', \alpha', \gb')$ in $\lPB{\cH}{\cK}$, we have a  commutative diagram:
\begin{equation}\label{Eq:barril}
\xymatrix@R=20pt{ \cH \ar@{->}^-{\tauup_P}[rr] \ar@{->}_-{\tauup_{P'}}[rrd]  & & P\tensor{B}P \ar@{->}^-{\fk{f}\tensor{B}\fk{f}}[d] \\ & & P'\tensor{B}P', }
\end{equation}
where $\tauup$ is the corresponding translation map. 
\item 
The definition above is left-right symmetric: if ${}_{\cakka}P_{\ckppa}$ is a left  principal $(\cH, \cK)$-bundle, then the opposite bicomodule ${}_{\ckppa}P^{\co}{}_{\cakka}$ is a right principal $(\cK, \cH)$-bundle with respect to the canonical map
$$
P^{\co} \otimes_\behhe P^{\co} \to P^{\co} \otimes_\ahha \cH, \quad p' \otimes_\behhe p \mapsto p'p_{(0)} \otimes_\ahha \mathscr{S}(p_{(-1)}).
$$ 
Using \rmref{zucchero}, one immediately verifies that 
$$
P^{\co} \otimes_\ahha \cH \to P^{\co} \otimes_\behhe P^{\co}, \quad p \otimes_\ahha h \mapsto ph_+ \otimes_\behhe h_-
$$
defines the inverse of this map. If we denote by $\alpha^{{\co}}: A \to P^{\co}$ and $\beta^{{\co}}: B \to P^{{\co}}$, respectively, the corresponding algebra maps, then the correspondence $(P,\alpha,\beta) \to (P^{{\co}}, \beta^{{\co}}, \alpha^{{\co}})$ establishes an isomorphism of categories between $\lPB{\cH}{\cK}$ and $\rPB{\cK}{\cH}$.  The bundle $(P^{{\co}}, \beta^{{\co}}, \alpha^{{\co}})$ so constructed is called the \emph{opposite bundle} of $(P,\alpha, \beta)$.
\item 
Since $P_\behhe$ is  faithfully flat, we know 
by the faithfully flat descent theory (see, for instance \cite[Theorem 3.10]{Kaoutit/Gomez:2003a}) 
that the  subalgebra of $\cH$-coinvariants is  $P^{{\coinv_{\cH}}}= \beta(B)$ as $\beta$ is injective.  
Moreover, since $\alpha: A\to P$ is a right $\cH$-colinear map, we have the following commutative diagram 
\vspace*{-.5cm}
$$
\xymatrix{ A^{\scriptscriptstyle{\coinv_{\cH}}} \ar@{->}^-{\alpha^{\scriptscriptstyle{\coinv_{\cH}}}}[rr]   \ar@{_{(}->}^-{}[d] & & P^{\scriptscriptstyle{\coinv_{\cH}}} \cong B  \ar@{^{(}->}^-{\beta}[d] \\ A \ar@{->}^-{\alpha}[rr] && P}
$$
of algebras.
On the other hand, the category of relative left comodules, that is, the category of left $(\cH\tensor{A}P)$-comodule is (monoidally) 
equivalent to the category of $B$-modules, where $(P,\cH\tensor{A}P)$ is the translation Hopf algebroid along $\alpha$. 
Conversely, given an $(\cH,\cK)$-bicomodule algebra $(P, \alpha, \beta)$ such that the functor $-\tensor{B}P: \rmod{B} \to \rcomod{\cH\tensor{A}P}$ establishes an equivalence of categories, $(P, \alpha, \beta)$ carries the structure of a left principal $(\cH, \cK)$-bundle.  
\item 
For the trivial Hopf algebroid $(B,\cK):=(B,B)$, a left 
principal $(\cH, B)$-bundle is a left $\cH$-comodule algebra $(P,\alpha)$ with a faithfully flat extension $\beta: B \to P$  
whose $\cH$-coaction is a $B$-linear map and where $\can{\cH}{P}: P\tensor{B} P \to \cH\tensor{A}P$ is bijective. 
\end{enumerate}
\end{rem}

\begin{example}[{\em Unit bundles}]
\label{Exam:H}
The underlying $\cH$-bicomodule of any flat Hopf algebroid $(A,\cH)$ is a left  
principal 
$(\cH, \cH)$-bundle. More precisely, $\cH$ is an $\cH$-bicomodule via the algebra maps $\Sf{s}, \Sf{t}: A \to \cH$ and both ring extensions are faithfully flat by assumption. 
So, we only need to check {\em (ii)} in Definition \ref{def:PB}. In this case we have
$$
\can{\cH}{\cH}: \cH \tensor{A} \cH \to \cH\tensor{A}\cH, \quad u\tensor{A}v \mapsto u_{(1)}\tensor{A} u_{(2)}v,
$$
where the domain tensor product is defined by $\thopf{\cH}$ in both factors, while the codomain tensor product is the standard one from the coproduct of ${\cH}$.
The inverse of $\can{\cH}{\cH}$ is, as for Hopf algebras,  
$$
\can{\cH}{\cH}^{-1}: \cH\tensor{A}\cH \to \cH \tensor{A} \cH, \quad u\tensor{A}v \mapsto u_{(1)}\tensor{A} \mathscr{S}(u_{(2)})v.
$$ 
This bundle is refereed to as the \emph{unit principal bundle} and will be denoted by $\mathscr{U}(\cH)$. Note that $\mathscr{U}(\cH)$ is both a left and a right 
principal $(\cH, \cH)$-bundle, and therefore a principal bibundle. 
\end{example}

\begin{example}[{\em Induced} or {\em pull-back bundles}]
\label{Exam:Ind}
For a morphism $\boldsymbol{\psi}=(\psi_{\scriptscriptstyle{0}},\psi_{\scriptscriptstyle{1}}): (B,\cK)\to (C, \cJ)$ of Hopf algebroids and a left  principal $(\cH, \cK)$-bundle $(P,\alpha,\beta)$, 
consider $P\tensor{B}C$ with the obvious algebra extensions $\td{\alpha}: A \to  P\tensor{B}C$ and $\td{\beta}: C \to P\tensor{B}C$.  It is clear that $\td{\beta}$ is a faithfully flat  extension and that $P\tensor{B}C$ is an $(\cH,\cJ)$-bicomodule: 
its left coaction is $\lcoaction{\cH}{P\tensor{B}C}:=\lcoaction{\cH}{P}\tensor{B}C$ and its right coaction is defined by the composition
$$
\xymatrix@C=40pt{\rcoaction{\cJ}{P\tensor{B}C}: P\tensor{B}C\ar@{->}^-{\scriptscriptstyle{\rcoaction{\cK}{P}\tensor{A}B}}[r] &  P\tensor{B}\cK\tensor{B}C \ar@{->}^-{\scriptscriptstyle{P\tensor{B}\psi_{\scriptscriptstyle{1}}\tensor{B}C}}[r] & P\tensor{B}\cJ\tensor{B}C \ar@{->}^-{\scriptscriptstyle{P\tensor{B}\xiup_{\cJ}}}[r] & (P\tensor{B}C)\tensor{C}\cJ,} 
$$
where $\xiup_{\cJ}: \cJ\tensor{B}C \to C\tensor{C}\cJ, \ w\tensor{B}c \mapsto 1_C\tensor{C}w\Sf{t}(c)$. Explicitly, one obtains 
$$
\rcoaction{\cJ}{P\tensor{B}C}(p\tensor{B}c)\,=\, (p_{(0)}\tensor{B}1_C)\tensor{C} \psi_{\scriptscriptstyle{1}}(p_{(1)})\Sf{t}(c),
$$ 
and both coactions are algebra maps. Thus, $P\tensor{B}C$ is both a left $\cH$-comodule algebra and a right $\cJ$-comodule algebra.
The canonical map $\can{\cH}{P\tensor{B}C}$ is bijective since, up to canonical isomorphisms, it is of the form $\can{\cH}{P}\tensor{B}C$.
Hence, $(P\tensor{B}C, \td{\alpha},\td{\beta})$ is a left 
principal 
$(\cH, \cJ)$-bundle, called the \emph{induced bundle} of $P$ or \emph{pull-back bundle} of $P$, and denoted $\psi^*(P)$ or $\psi^*\big((P,\alpha,\beta)\big)$. 
Of course, this establishes a functor  $\lPB{\cH}{\cK} \to  \lPB{\cH}{\cJ}$.
\end{example}

\begin{example}[{\em Restricted principal bundles}]
\label{exam:ResPB}
For a left 
principal $(\cH, \cK)$-bundle $(P,\alpha, \beta)$ and an algebra map $\tauup: B \to R$, consider the scalar extension Hopf algebroid $(R,\cK_{\scriptscriptstyle{R}}) :=(R,R\tensor{B}\cK\tensor{B}R)$, along with the obvious algebra maps
$\alpha_{\scriptscriptstyle{R}}: A \to P \to P_{\scriptscriptstyle{R}}$ and $\beta_{\scriptscriptstyle{R}}: R \to P_{\scriptscriptstyle{R}}$, where $P_{\scriptscriptstyle{R}}:=P\tensor{B}R$. 
  It is clear that $P_{\scriptscriptstyle{R}}$ admits the structure of an $(\cH,\cK_{\scriptscriptstyle{R}})$-bicomodule with coactions, up to natural isomorphisms, defined by $\lcoaction{\cH}{P_{\scriptscriptstyle{R}}} := \lcoaction{\cH}{P}\tensor{B}R$ and 
$\rcoaction{\cK_{\scriptscriptstyle{R}}}{P_{\scriptscriptstyle{R}}} := \rcoaction{\cK}{P}\tensor{B}R$. These are clearly algebra maps which convert $(P_{\scriptscriptstyle{R}}, \lcoaction{\cH}{P_{\scriptscriptstyle{R}}})$ and  $(P_{\scriptscriptstyle{R}}, \rcoaction{\cK_{\scriptscriptstyle{R}}}{P_{\scriptscriptstyle{R}}})$ into comodule algebras.  The canonical maps are, up to natural isomorphism, given by 
$$
\can{\cH}{P_{\scriptscriptstyle{R}}}\, := \, \can{\cH}{P}\tensor{B}R,\qquad  
\can{P_{\scriptscriptstyle{R}}}{\cK_{\scriptscriptstyle{R}}}\, := \, R\tensor{B}\can{P}{\cK}\tensor{B}R.
$$
Obviously, $\beta_{\scriptscriptstyle{R}}$ is a faithfully flat extension, hence $(P_{\scriptscriptstyle{R}},\alpha_{\scriptscriptstyle{R}}, \beta_{\scriptscriptstyle{R}})$ is  again a left 
principal $(\cH,\cK_{\scriptscriptstyle{R}})$-bundle, and we have that $(P_{\scriptscriptstyle{R}})^{\scriptscriptstyle{\coinv_{\cH}}} \simeq R$. We refer to this construction as the \emph{restricted} principal bundle of $(P,\alpha,\beta)$ with respect to $\tauup$.  
Again, this yields a functor
 $\lPB{\cH}{\cK} \to  \lPB{\cH}{\cK_\erre}$.
\end{example}

\begin{rem}
\label{rem:ir}  \noindent
\begin{enumerate}
\item 
If we assume that $(P, \alpha,\beta)$ in Example \ref{exam:ResPB} is only an $(\cH, \cK)$-bicomodule algebra, then 
\pagebreak
it is possible to compute 
the coinvariant subalgebra $(P_{\scriptscriptstyle{R}})^{\scriptscriptstyle{\coinv_{\cH}}}$ of the restricted $(\cH, \cK_{\scriptscriptstyle{R}})$-bicomodule algebra  $(P_{\scriptscriptstyle{R}}, \alpha_{\scriptscriptstyle{R}}, \beta_{\scriptscriptstyle{R}})$ by means of the coinvariant subalgebra $P^{\scriptscriptstyle{\coinv_{\cH}}}$ provided that $\tauup$ is a flat extension. One then has the following chain of algebra isomorphisms:
$$
(P_{\scriptscriptstyle{R}})^{\scriptscriptstyle{\coinv_{\cH}}} \,\,\cong \,\, A\cotensor{\cH}( P\tensor{B}R)\, \, \cong \,\, ( A\cotensor{\cH}P) \tensor{B}R \,\, \cong \,\,  P^{\scriptscriptstyle{\coinv_{\cH}}} \tensor{B}R.
$$
\item  
For a left principal $(\cH, \cK)$-bundle $(P,\alpha,\beta)$ and a morphism $\B{\psi}=( \psi_{\scriptscriptstyle{0}},\psi_{{1}}): (B,\cK) \to (C,\cJ)$ 
of Hopf algebroids, one can consider the induced left 
principal $(\cH,\cJ)$-bundle $\psi^*((P,\alpha,\beta))$ on the one hand, and the restricted left 
principal $(C,\cK_{\scriptscriptstyle{C}})$-bundle $(P_{\scriptscriptstyle{C}}, \alpha_{\scriptscriptstyle{C}}, \beta_{\scriptscriptstyle{C}})$ on the other hand. 
However,  using the canonical morphism $\B{\Psi}$ of Hopf algebroids associated to $\B{\psi}$ as defined in Eq.~\eqref{Eq:PHI},  the bundle $(P_{\scriptscriptstyle{C}}, \alpha_{\scriptscriptstyle{C}}, \beta_{\scriptscriptstyle{C}})$ induced by $\B{\Psi}$ coincides with $\psi^*(P)$, {\em i.e.},
$$
\psi^*\big((P,\alpha,\beta)\big)\,\,=\,\, \B{\Psi}^*\big((P_{\scriptscriptstyle{C}}, \alpha_{\scriptscriptstyle{C}}, \beta_{\scriptscriptstyle{C}}) \big).
$$
\end{enumerate}
\end{rem}

\begin{example}[{\em Trivial Bundles}]
\label{exam:HopfMorph}
An example of an induced principal bundle is the following, which although rather basic will reveal important in subsequent sections;  \emph{cf}.~ also Example \ref{exm:bhb}. For any morphism $(\phi_{\scriptscriptstyle{0}}, \phi_{\scriptscriptstyle{1}}): (A,\cH) \to (B,\cK)$ of Hopf algebroids, consider 
\begin{equation}
\label{klaipeda}
P := \cH\tensor{\phi}B := \cH \tensor{A} B = \cH \tensor{\Bbbk} B/{\rm span}\{\mathsf{t}(a) u \otimes b - u \otimes \phi_{\scriptscriptstyle{0}}(a)b \mid u \in \cH, b \in B, a \in A \},
\end{equation}
as a left principal $(\cH, \cK)$-bundle  by pulling back the unit bundle $\mathscr{U}(\cH)$. More precisely, consider the following algebra maps:
$$ 
\ga: A \to P, \quad a \mapsto \mathsf{s}(a) \otimes_\ahha 1_\behhe, 
\qquad \mbox{and} \qquad
\gb: B \to P, \quad  b \mapsto 1_{\scriptscriptstyle{\cH}} \otimes_\ahha b.
$$
Obviously, $P_\behhe$ is a faithfully flat module, that is, $\beta$ is a faithfully flat extension.  The algebra $P$ is  an $(\cH,\cK)$-bicomodule with left coaction  $\lcoaction{\cH}{P}:=\Delta_{\cH}\tensor{A}B$ along with the right coaction 
$$
\rcoaction{\cK}{P}: P \to P\tensor{B}\cK,
\quad u\tensor{A}b \mapsto (u_{(1)}\tensor{A}1_B)\tensor{B} \phi_{\scriptscriptstyle{1}}(u_{(2)})\Sf{t}(b).
$$
Both left and right coactions are easily seen to be morphisms of algebras.
The canonical map is defined as 
$$
\can{\cH}{P}: P\tensor{B}P \to \cH\tensor{A}P, \quad (u\tensor{A}b)\tensor{B}(v\tensor{A}b') \mapsto u_{(1)}\tensor{A}(u_{(2)}v\tensor{A}bb'),
$$
which by Example \ref{Exam:H} is clearly bijective, and the corresponding translation map reads:
$$
\tauup_{P}: H \to P\tensor{B}P,\qquad u \mapsto (u_{\scriptscriptstyle{(1)}} \tensor{A}1_{\scriptscriptstyle{B}}) \tensor{B} (\mathscr{S}(u_{\scriptscriptstyle{(2)}}) \tensor{A}1_{\scriptscriptstyle{B}}).
$$
The fact that  the subalgebra of $\cH$-coinvariant elements is isomorphic to $B$, see Remark \ref{rema:PB} {\em (ii)}, can be deduced directly in this case: from the isomorphisms 
$$
A\cotensor{\cH}(\cH\tensor{A}B) \cong (A\cotensor{\cH}\cH)\tensor{A}B\cong B
$$ 
one obtains that $P^{\scriptscriptstyle{\coinv_{\cH}}}\cong A\bx_{\cH}P \cong B$ via $\beta$. 
The second canonical map is in this case given by 
\begin{equation}
\label{Eq:canK}
\can{P}{\cK}: P\tensor{A}P \to P\tensor{B}\cK, \quad (u\tensor{A}b)\tensor{A}(v\tensor{A}b') \mapsto (uv_{(1)}\tensor{A}b)\tensor{B} \phi_{\scriptscriptstyle{1}}(v_{(2)})\Sf{t}(b').
\end{equation}
\end{example}

This example motivates the following definition.

\begin{definition}\label{def:bibundle}
We say that  a left  principal $(\cH, \cK)$-bundle $P$  is \emph{trivial} if it is isomorphic to  an induced bundle of the  unit bundle $\mathscr{U}(\cH)$ as defined in Example \ref{Exam:H}, {\em i.e.}, if there is an isomorphism 
$$
P \,\, \cong\,\, \phi^*(\mathscr{U}(\cH)):=\cH\tensor{\phi}B
$$ 
of principal bundles with respect to some Hopf algebroid morphism $\B{\phi}: (A, \cH) \to (B, \cK)$.
\end{definition}

Sufficient  and necessary conditions under which a left principal bundle is trivial are given in the subsequent proposition.

\begin{proposition}
\label{prop:indPB}
Let $(P,\alpha,\beta)$ be a left principal $(\cH, \cK)$-bundle. The following are equivalent:
\begin{enumerate} 
\item $(P,\alpha, \beta)$ is a  trivial  principal bundle;
\item  $\beta$ splits as an algebra map, that is, there is an algebra map  $\gamma: P \to B$ such that  $\gamma \circ \beta =\id_\behhe$.
\end{enumerate}
\end{proposition}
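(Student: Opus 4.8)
The plan is to prove both implications by explicit constructions built from the counit of $\cH$ and the splitting $\gamma$. For (i) $\Rightarrow$ (ii), suppose $(P,\alpha,\beta)$ is trivial and fix an isomorphism $P \cong \phi^{*}(\mathscr{U}(\cH)) = \cH\tensor{\phi}B$ of principal bundles attached to a Hopf algebroid morphism $\B{\phi}=(\phi_{0},\phi_{1})$. On the model $\cH\tensor{\phi}B$ I would define the candidate retraction
$$ \gamma_{0}: \cH\tensor{\phi}B \to B, \qquad u\tensor{A}b \mapsto \phi_{0}\big(\varepsilon(u)\big)\,b . $$
Using $\varepsilon\circ\Sf{t}=\id_{A}$ this respects the defining relation \eqref{klaipeda}, so $\gamma_{0}$ is well defined; it is multiplicative because $\varepsilon$ and $\phi_{0}$ are, and $\gamma_{0}\circ\beta=\id_{B}$ because $\varepsilon(1_{\cH})=1_{A}$ by \eqref{Eq:es}. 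Transporting $\gamma_{0}$ through the chosen isomorphism yields the desired algebra splitting $\gamma$ of $\beta$.

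For (ii) $\Rightarrow$ (i), given an algebra map $\gamma:P\to B$ with $\gamma\circ\beta=\id_{B}$, I would set $\phi_{0}:=\gamma\circ\alpha:A\to B$ and regard $B$ as an $A$-algebra through $\phi_{0}$, so that $\cH\tensor{A}B$ (with $B$ via $\phi_{0}$) and its left coaction $\Delta\tensor{A}B$ are defined. Introduce
$$ \Theta: P \to \cH\tensor{A}B, \qquad p \mapsto p_{(-1)}\tensor{A}\gamma(p_{(0)}), $$
$$ \Theta': \cH\tensor{A}B \to P, \qquad u\tensor{A}b \mapsto u_{+}\,\beta\big(\gamma(u_{-})\,b\big), $$
where $u_{+}\tensor{B}u_{-}$ is the translation map of \eqref{plusminus}. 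Here $\Theta=(\cH\tensor{A}\gamma)\circ\lcoaction{\cH}{P}$ is well defined since $\gamma$ is left $A$-linear ($\gamma\circ\alpha=\phi_{0}$), it is an algebra map (as $\lcoaction{\cH}{P}$ and $\gamma$ are, and $P$ is commutative) and left $\cH$-colinear by coassociativity of $\lcoaction{\cH}{P}$; the map $\Theta'$ is well defined thanks to \eqref{ceuta} and \eqref{bolognacentrale}. I would then check that the two are mutually inverse: $\Theta'\circ\Theta=\id_{P}$ follows from \eqref{Eq:p-+} together with $\gamma\circ\beta=\id_{B}$, while $\Theta\circ\Theta'=\id$ follows from \eqref{japan} and \eqref{Eq:eps+-}, collapsing $u_{(1)}\tensor{A}\phi_{0}(\varepsilon(u_{(2)}))b=u\tensor{A}b$ via \eqref{klaipeda} and the counit axiom. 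Thus $\Theta$ is an isomorphism of left $\cH$-comodule algebras.

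It remains to produce the second component $\phi_{1}:\cH\to\cK$ and to match the right coactions. Transporting the right $\cK$-comodule algebra structure of $P$ along the algebra isomorphism $\Theta$ gives a right coaction $\bar{\rho}$ on $\cH\tensor{A}B$ which, by the bicomodule compatibility \eqref{Eq:bicomod}, is at once an algebra map and left $\cH$-colinear. Since $P^{\coinv_{\cH}}=\beta(B)$ by Remark \ref{rema:PB}, and $\cH\tensor{A}B$ is generated as an algebra by $\cH\tensor{A}1_{B}$ together with its $\cH$-coinvariants $1_{\cH}\tensor{A}B$, left colinearity forces $\bar{\rho}$ into the trivial-bundle form
$$ \bar{\rho}(u\tensor{A}b) = \big(u_{(1)}\tensor{A}1_{B}\big)\tensor{B}\phi_{1}(u_{(2)})\Sf{t}(b) $$
of Example \ref{exam:HopfMorph}, where $\phi_{1}(u):=(\gamma\tensor{B}\cK)\big(\rcoaction{\cK}{P}(u_{+}\beta(\gamma(u_{-})))\big)$ under $B\tensor{B}\cK\cong\cK$. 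Coassociativity and counitality of $\rcoaction{\cK}{P}$ then become $\Delta\circ\phi_{1}=\chi\circ(\phi_{1}\tensor{A}\phi_{1})\circ\Delta$ and $\varepsilon\circ\phi_{1}=\phi_{0}\circ\varepsilon$; the identities $\phi_{1}\circ\Sf{s}=\Sf{s}\circ\phi_{0}$ and $\phi_{1}\circ\Sf{t}=\Sf{t}\circ\phi_{0}$ follow from \eqref{bolognacentrale}, and $\mathscr{S}\circ\phi_{1}=\phi_{1}\circ\mathscr{S}$ is automatic, since a morphism of the underlying commutative bialgebroids always intertwines the (unique) antipodes. Comparing with Example \ref{exam:HopfMorph}, $\Theta$ is then an isomorphism $P\cong\cH\tensor{\phi}B=\phi^{*}(\mathscr{U}(\cH))$ of $(\cH,\cK)$-bicomodule algebras, so $P$ is trivial in the sense of Definition \ref{def:bibundle}.

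The main obstacle will be this penultimate step: verifying that the transported coaction $\bar{\rho}$ genuinely has the displayed normal form with first leg $u_{(1)}\tensor{A}1_{B}$, and that the resulting $\phi_{1}$ is a bona fide Hopf algebroid morphism rather than merely a left $\cH$-colinear algebra map. This is exactly the point where faithful flatness of $\beta$ (through $P^{\coinv_{\cH}}=\beta(B)$), the bicomodule compatibility \eqref{Eq:bicomod}, and the translation-map identities must be used in concert; once the normal form is secured, the remaining checks are routine.
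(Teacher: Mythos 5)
Your proposal is correct, and its computational core coincides with the paper's proof: for $(ii)\Rightarrow(i)$ you take $\phi_{0}=\gamma\circ\alpha$ and use exactly the paper's pair of mutually inverse maps (your $\Theta$ and $\Theta'$ are the paper's $g$ and $f$), verified with the same translation-map identities \eqref{Eq:p-+}, \eqref{japan}, \eqref{Eq:eps+-} (the paper's \eqref{Eq:u-+} is just \eqref{japan} combined with \eqref{Eq:eps+-}); and your explicit retraction $\gamma_{0}(u\tensor{A}b)=\phi_{0}(\varepsilon(u))b$ for $(i)\Rightarrow(ii)$ is precisely what the paper means by ``immediate from the definitions'' (the same map is written out later, in the proof of Proposition \ref{prop:PQ}). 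Where you genuinely diverge is in producing $\phi_{1}$ and verifying that $\boldsymbol{\phi}=(\phi_{0},\phi_{1})$ is a morphism of Hopf algebroids. The paper writes down $\phi_{1}(u)=\Sf{s}(\gamma(u_{+(0)}))\,u_{+(1)}\,\Sf{t}(\gamma(u_{-}))$ at the outset, declares the morphism axioms a routine computation, and only then checks that $f,g$ are inverse bundle maps; you instead first establish $\Theta$ as an isomorphism of left $\cH$-comodule algebras, transport the right $\cK$-coaction along it, and extract $\phi_{1}$ from the normal form of the transported coaction $\bar{\rho}$, so that comultiplicativity and counitality of $\phi_{1}$ fall out of coassociativity and counitality of $\bar{\rho}$, source/target compatibility from \eqref{bolognacentrale}, and antipode compatibility from uniqueness of antipodes on commutative Hopf algebroids (a Yoneda/groupoid argument, which is indeed valid). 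Note that your $\phi_{1}$, unwound, equals the paper's formula. Your route buys a structural explanation of the one step the paper leaves unproved, at the cost of the normal-form claim you rightly flag as the main obstacle; that claim does hold and is not hard: multiplicativity and $\bar{\rho}(1_{\cH}\tensor{A}b)=(1_{\cH}\tensor{A}1_{\scriptscriptstyle{B}})\tensor{B}\Sf{t}(b)$ reduce everything to $\bar{\rho}(u\tensor{A}1_{\scriptscriptstyle{B}})$, and applying left $\cH$-colinearity to $\bar{\rho}(u\tensor{A}1_{\scriptscriptstyle{B}})$ and then collapsing the middle leg with $(v\tensor{A}b)\tensor{B}k\mapsto\Sf{s}(\phi_{0}(\varepsilon(v))b)k$ yields $\bar{\rho}(u\tensor{A}1_{\scriptscriptstyle{B}})=(u_{(1)}\tensor{A}1_{\scriptscriptstyle{B}})\tensor{B}\phi_{1}(u_{(2)})$, so there is no gap, only a computation to be written out.
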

\begin{proof}
Proving $(i) \Rightarrow (ii)$ is immediate from the definitions. 
To prove $(ii) \Rightarrow (i)$, we first need 
to construct a Hopf algebroid morphism $(\phi_{\scriptscriptstyle{1}}, \phi_{\scriptscriptstyle{0}}):(A,\cH) \to (B, \cK)$. 
Here, the algebra map $\phi_{\scriptscriptstyle{0}}: A \to B$ will be defined as the composition  $\phi_{\scriptscriptstyle{0}}= \gamma \circ \alpha$, whereas $\phi_{\scriptscriptstyle{1}}$ is given by 
$$
\phi_{\scriptscriptstyle{1}}: \cH \to \cK, \quad u \mapsto \Sf{s}(\gamma(u_{+(0)})) u_{+(1)} \Sf{t}(\gamma(u_-)),
$$ 
using the notation in \rmref{plusminus} for the translation map;
a routine computation shows that $\boldsymbol{\phi}=(\phi_{\scriptscriptstyle{0}}, \phi_{\scriptscriptstyle{1}})$ is a morphism of Hopf algebroids, indeed.
Consider then the trivial left principal $(\cH, \cK)$-bundle $\cH\tensor{\scriptscriptstyle{\phi}}B = \cH \tensor{A} B$ as in \rmref{klaipeda}. 
Let us check that 
$$
f: \cH\tensor{A}B \to P, \quad u\tensor{A}b \mapsto u_+ \beta(\gamma(u_-)) \beta(b),
$$
is a bijection 
whose inverse will be  
$$
g:  P \to \cH\tensor{A}B, \quad p \mapsto p_{(-1)} \tensor{A} \gamma(p_{(0)}).
$$
For any $p \in P$, we have
\begin{eqnarray*}
f(g(p)) &=& f\big(p_{(-1)} \tensor{A}\gamma(p_{(0)})\big) \\ 
&=&  p_{(-1)+} \beta(\gamma(p_{(-1)-})) \beta(\gamma(p_{(0)}))\\ 
&=&  p_{(-1)+} \beta\big(\gamma(p_{(-1)-} p_{(0)})\big) \\ &\overset{\scriptscriptstyle{\eqref{Eq:p-+}}}{=}& p \beta(\gamma(1_{\scriptscriptstyle{P}})) \,\,=\,\, p.
\end{eqnarray*}
On the other hand, for any $u\tensor{A}b \in \cH\tensor{A}B$, one computes 
\begin{eqnarray*}
g(f(u\tensor{A}b)) &=& g\big(u_+ \beta\big(\gamma(u_-)\beta(b)\big) \\ 
&=&  u_{+(-1)} \tensor{A} \gamma(u_{+(0)}) \gamma(u_-) b \\ 
&=&  u_{+(-1)} \tensor{A}\gamma\big(u_{+(0)} u_- \big)b  \\ &\overset{\scriptscriptstyle{\eqref{Eq:u-+}}}{=}& u \tensor{A} \gamma(1_{\scriptscriptstyle{P}})b \,\,=\,\, u\tensor{A}b.
\end{eqnarray*} Thus, $f$ and $g$ are mutually inverse. It is also clear that $g$ is both an $A$-algebra and $B$-algebra map, as well as an $(\cH,\cK)$-bicomodule map. Therefore, $g$ is an isomorphism of left principal $(\cH, \cK)$-bundles. 
\end{proof}

The following lemma is an analogue of the respective statement for Lie groupoids in \cite[p.~165]{MoeMrc:LGSAC}. However, the proof given in this context here is direct and does not rely on local triviality of bundles.

\begin{lemma}
\label{lemma:ISO}
Any morphism between  left principal $(\cH, \cK)$-bundles is an isomorphism. In particular, the category of left principal bundles $\lPB{\cH}{\cK}$ is a groupoid.
\end{lemma}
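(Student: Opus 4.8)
The plan is to exploit the characterisation of triviality given in Proposition~\ref{prop:indPB}: a left principal bundle is trivial if and only if its structural extension $\beta$ splits as an algebra map. The key observation is that a morphism $\fk{f}:(P,\alpha,\beta) \to (P',\alpha',\beta')$ in $\lPB{\cH}{\cK}$ can be analysed by passing to the translation Hopf algebroid $(P,\cH\tensor{A}P)$ along $\alpha$, described in Lemma~\ref{lemma:comdalg}~(i). Concretely, since $\fk{f}$ is a morphism of $(\cH,\cK)$-bicomodule algebras, I would first reduce the problem of inverting $\fk{f}$ to checking bijectivity of the underlying $\Bbbk$-algebra map, because an inverse map that is automatically equivariant and bialgebra-compatible will then be a morphism of principal bundles by the same routine arguments used at the end of the proof of Proposition~\ref{prop:indPB}.

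First I would construct an explicit candidate inverse using the translation map $\tauup_P$ of \rmref{plusminus} together with the compatibility diagram \rmref{Eq:barril}, which states that $(\fk{f}\tensor{B}\fk{f}) \circ \tauup_P = \tauup_{P'}$. The natural guess, in analogy to the formula $g(p) = p_{(-1)}\tensor{A}\gamma(p_{(0)})$ appearing in Proposition~\ref{prop:indPB}, is to write an inverse in terms of the coactions and the plus/minus notation. Specifically, for $p' \in P'$ one wants a preimage built from $p'_{(-1)}$ acted on via the translation data of $P$; the relevant identities are \rmref{Eq:p-+} and \rmref{Eq:u-+}, which are precisely the identities that made the two composites in Proposition~\ref{prop:indPB} collapse to the identity. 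The formula I would test is
$$
\fk{g}: P' \to P, \qquad p' \mapsto {p'}_{(-1)+}\, \beta\big(\gamma'({p'}_{(-1)-}\, p'_{(0)})\big),
$$
where $\gamma'$ is a section obtained after first treating the trivial case; more robustly, one reduces to triviality as follows.

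The cleanest route, which I would pursue, is to reduce to Proposition~\ref{prop:indPB} by a base-change trick. Since $\beta: B \to P$ is faithfully flat, it suffices to verify that $\fk{f}$ becomes an isomorphism after a faithfully flat extension, and the canonical candidate is to extend scalars along $\beta$ itself. Over $P$, the bundle $P$ acquires a canonical section (the multiplication $P\tensor{B}P \to P$ together with the isomorphism furnished by $\can{\cH}{P}$ restricts to a splitting), so both $P$ and $P'$ become trivial principal bundles over the translation Hopf algebroid, hence isomorphic to pulled-back unit bundles by Proposition~\ref{prop:indPB}. A morphism between trivial bundles $\cH\tensor{\phi}B$ and $\cH\tensor{\phi'}B$ is then forced to be an isomorphism because both are determined by the same Hopf algebroid morphism data (the coinvariants are $B$ on both sides via Remark~\ref{rema:PB}~(iii), and an equivariant algebra map respecting these must be invertible). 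Faithfully flat descent, as invoked in Remark~\ref{rema:PB}~(iii) via \cite[Theorem 3.10]{Kaoutit/Gomez:2003a}, then transports the bijectivity of $\fk{f}\tensor{B}P$ back down to bijectivity of $\fk{f}$ itself.

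I expect the main obstacle to be verifying that the inverse map $\fk{g}$ is well-defined and equivariant without circular reasoning — in particular, confirming that the formula descends correctly through the tensor products and that $\fk{g}$ respects both the left $\cH$-coaction and the right $\cK$-coaction. The equivariance for the right $\cK$-coaction is the delicate point, since the translation map $\tauup_P$ governs only the left $\cH$-structure; here the identity \rmref{casadiaugusto1}, expressing that $\can{\cH}{P}^{-1}$ is right $\cK$-colinear, is exactly what is needed and should be invoked. Once $\fk{g}$ is shown to be a two-sided inverse that is simultaneously an algebra map and a bicomodule map, the conclusion that $\lPB{\cH}{\cK}$ is a groupoid is immediate, since every morphism is then invertible and composition of isomorphisms is an isomorphism.
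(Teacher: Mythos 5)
Your overall strategy (trivialize after a faithfully flat base change, then descend) is sound and close in spirit to what the paper does, but your ``cleanest route'' base-changes in the \emph{wrong direction}, and this is a genuine gap. Extending scalars along $\beta:B\to P$ does trivialize the source: the structural map $P \to P\tensor{B}P$, $p\mapsto 1_{\Sscript{P}}\tensor{B}p$, splits via the multiplication of $P$. But it does \emph{not} trivialize the target: by Proposition \ref{prop:indPB}, triviality of the restricted bundle $P'\tensor{B}P$ over $P$ amounts to an algebra splitting of $P \to P'\tensor{B}P$, $p\mapsto 1_{\Sscript{P'}}\tensor{B}p$, and such a splitting is precisely the same datum as a $B$-algebra map $P'\to P$ --- which is essentially what the lemma asks you to produce, so invoking it here is circular. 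Hence the claim that ``both $P$ and $P'$ become trivial principal bundles'' fails, and with it the bijectivity of $\fk{f}\tensor{B}P$ that you propose to descend. The repair is to base-change along $\beta':B\to P'$ instead: then the target $P'\tensor{B}P'$ splits via multiplication, and the source $P\tensor{B}P'$ splits via the $B$-algebra map $p\tensor{B}p'\mapsto \fk{f}(p)\,p'$ --- this is the only place where the given datum $\fk{f}$ can enter --- after which descent along the faithfully flat $\beta'$ applies to $\fk{f}\tensor{B}P'$. (Your first, explicit formula for $\fk{g}$ is also ill-defined, since it multiplies ${p'}_{(-1)-}\in P$ by $p'_{(0)}\in P'$ and invokes a section $\gamma'$ of $\beta'$ that need not exist, but you flag this yourself.)

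Even with the corrected base change, a second step is missing: you assert that a morphism between trivial bundles ``must be invertible'' because both have coinvariants $B$, which is not an argument. It can be closed as follows: if the target is trivial with splitting $\gamma_2$, then $\gamma_1:=\gamma_2\circ\fk{g}$ splits the source, and the explicit trivialization isomorphisms $f_1,f_2$ constructed in the proof of Proposition \ref{prop:indPB} satisfy $\fk{g}\circ f_1=f_2$ by the compatibility \rmref{Eq:barril} of translation maps, whence $\fk{g}=f_2\circ f_1^{-1}$ is invertible. For comparison, the paper's proof avoids triviality and Proposition \ref{prop:indPB} altogether: it exhibits $\fk{f}\tensor{B}P'$ directly as the composite of isomorphisms
$$
P\tensor{B}P'\;\cong\;(P\tensor{B}P)\tensor{P}P'\;\xrightarrow{\;\can{\cH}{P}\tensor{P}P'\;}\;(\cH\tensor{A}P)\tensor{P}P'\;\cong\;\cH\tensor{A}P'\;\xrightarrow{\;\can{\cH}{P'}^{-1}\;}\;P'\tensor{B}P',
$$
using the right $P$-linearity of $\can{\cH}{P}$, the colinearity of $\fk{f}$, and \rmref{Eq:p-+}, and then descends along the faithfully flat $\beta'$; your corrected argument would essentially reproduce this with an extra intermediary.
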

\begin{proof}
Let $\fk{f}:(P,\alpha,\beta) \to (P',\alpha',\beta')$ be a morphism between two left principal $(\cH, \cK)$-bundles. By definition both $\beta$ and $\beta'$ are faithfully flat extensions; hence, it suffices to check that either $\fk{f}\tensor{B}P'$ or $P\tensor{B}\fk{f}$ is an isomorphism as $\fk{f}$ is  an $A$-algebra and $B$-algebra map. To this end, consider the following chain 
$$
\xymatrix@C=30pt{P\tensor{B}P' \ar@{->}^-{\cong}[r] & (P\tensor{B}P)\tensor{P} P' \ar@{->}^-{{\sf{can}}\tensor{P}P'}[r] & (\cH\tensor{A}P)\tensor{P}P' \ar@{->}^-{\cong}[r] &  \cH\tensor{A}P' \ar@{->}^-{\sf{can}^{-1}}[r] & P'\tensor{B}P'} 
$$
of isomorphisms,
where we have used the fact that $\can{\cH}{P}$ is right $P$-linear, is explicitly given by 
\begin{equation*}
p\tensor{B}p' \longmapsto (p\tensor{B}1)\tensor{P}p' \longmapsto p_{(-1)} \tensor{A} p_{(0)} \tensor{P} p' \longmapsto   p_{(-1)} \tensor{A} \fk{f}(p_{(0)})  p' \longmapsto  p_{(-1) +} \tensor{B} p_{(-1) -} \fk{f}(p_{(0)})  p'
\end{equation*}
which by equation \eqref {Eq:p-+} is exactly  the map $p\tensor{B}p' \mapsto \fk{f}(p)\tensor{B}p'$ as $\fk{f}$ is a comodule morphism. Therefore, $\fk{f}\tensor{B}P'$ is an isomorphism and so is $\fk{f}$.
\end{proof}

\subsection{Comments on local triviality of principal bundles}
\label{ssec:ltpb}
In  the Lie groupoid context, it is well-known that any left principal bundle is locally trivial  \cite[p.~165]{MoeMrc:LGSAC}. 
Thus, the study of principal bundles in this context can be done locally. 
In the Hopf algebroid framework, the notion of ``local triviality'' is not so clear. 
The perhaps right way to treat local triviality in this context might be to consider the site of all affine schemes over $\Spectre{\Bbbk}$ with a certain Grothendieck topology $\tau$, and say that a left principal bundle $(P,\alpha,\beta)$ is locally trivial if there is  a $\tau$-cover $\Spectre{B'} \to \Spectre{B}$ such that the pull-back bundle $P\tensor{B}B'$ is a trivial left principal $(\cH,\cK_{\scriptscriptstyle{B'}})$-bundle. However, as we will see below, when $\tau$ is the Zariski topology,  any locally trivial left principal bundle is also globally trivial. 
Also, the local triviality for the \emph{fpqc} (faithfully flat quasi-compact) 
topology is tautologically true since for any left principal bundle $(P,\alpha,\beta)$, the map $\beta: B \to P$ is by definition a faithfully flat extension.

Moreover, the naive approach to  local triviality  by localisation apparently does  not yield anything new:
let $(P,\alpha,\beta)$ be a left principal $(\cH,\cK)$-bundle. Denote by $\mathscr{Y}:=\Spectre{B}$ the underlying topological space of the locally ringed space associated to $B$, and by $\Omega(B)$ its subspace of maximal ideals.
Take a prime ideal $y \in \mathscr{Y}$ and consider the localisation $B_{\scriptscriptstyle{y}}$ at this point (the stalk) with $\tauup_{\scriptscriptstyle{y}}: B \to B_{\scriptscriptstyle{y}}$ as the canonical localisation algebra map. Using the notation  $\beta_{\scriptscriptstyle{y}}: B_{\scriptscriptstyle{y}} \to P_{\scriptscriptstyle{y}} := P\tensor{B}B_{\scriptscriptstyle{y}}$ and $\alpha_{\scriptscriptstyle{y}}: A \to P \to P_{\scriptscriptstyle{y}}$, we obtain the restricted left 
principal 
$(\cH,\cK_{\scriptscriptstyle{y}})$-bundle $(P_{\scriptscriptstyle{y}},\alpha_{\scriptscriptstyle{y}},\beta_{\scriptscriptstyle{y}})$ with respect to $\tauup_{\scriptscriptstyle{y}}$ as defined in Example \ref{exam:ResPB}.  
In this way, any left  principal $(\cH, \cK)$-bundle  $(P,\alpha, \beta)$ can be  restricted to a ``local principal bundle''  $(P_{\scriptscriptstyle{y}},\alpha_{\scriptscriptstyle{y}},\beta_{\scriptscriptstyle{y}})$ for every $y \in \mathscr{Y}$. One can say that $(P, \alpha,\beta)$ is locally trivial if and only if $(P_{\scriptscriptstyle{y}},\alpha_{\scriptscriptstyle{y}},\beta_{\scriptscriptstyle{y}})$ is trivial for every $y \in \mathscr{Y}$. Hence, by Proposition  \ref{prop:indPB}, this happens  if and only if  $\beta_{\scriptscriptstyle{y}}: B_{\scriptscriptstyle{y}} \to P_{\scriptscriptstyle{y}}$ splits as an  algebra map for every $y \in \mathscr{Y}$; if and only if  $\beta_{\scriptscriptstyle{\fk{m}}}: B_{\scriptscriptstyle{\fk{m}}} \to P_{\scriptscriptstyle{\fk{m}}}$ splits as an  algebra map for every $\fk{m} \in \Omega(B)$; if and only if $\beta: B \to P$ splits as an algebra map,  see \cite[p.~111{\it f}.]{Bou:AC12}.  In this sense, $P$ would be locally trivial if and only if it is globally so. 

In a different direction, assume that there exists for  any  $y \in \mathscr{Y}$ 
an element $f \notin y$ such that $\beta_{\scriptscriptstyle{f}}: B_{\scriptscriptstyle{f}} \to P_{\scriptscriptstyle{f}}$ splits as an algebra map, which by Proposition \ref{prop:indPB}  means  that the restricted left principal bundle $(P_{\scriptscriptstyle{f}},\alpha_{\scriptscriptstyle{f}},\beta_{\scriptscriptstyle{f}})$ is trivial on the  open neighbourhood  $\mathscr{Y}_{\scriptscriptstyle{f}}:=\Spectre{B_{\scriptscriptstyle{f}}}$  of $y$ in $\mathscr{Y}$: there is a section $\sigma_{\scriptscriptstyle{f}}: \mathscr{Y}_{\scriptscriptstyle{f}} \to \Spectre{P_{\scriptscriptstyle{f}}} \to \Spectre{P}$, that is, ${}^a\beta_{\scriptscriptstyle{f}} \circ \sigma_{\scriptscriptstyle{f}} =\id_{\mathscr{Y}_{\scriptscriptstyle{f}}}$, where ${}^a\beta_{\scriptscriptstyle{f}}: \Spectre{P_{\scriptscriptstyle{f}}} \to \mathscr{Y}_{\scriptscriptstyle{f}}$ is the associate continuous  map of $\beta_{\scriptscriptstyle{f}}: B_{\scriptscriptstyle{f}} \to P_{\scriptscriptstyle{f}}$. 
Again, 
one sees that a left bundle $(P,\alpha,\beta)$ with this assumption is in fact a (globally) trivial bundle. Indeed, take a maximal ideal $\fk{m} \in \Omega(B)$:  under the assumptions made, there is an $h \notin \fk{m}$ such that $\beta_{\scriptscriptstyle{h}}: B_{\scriptscriptstyle{h}} \to P_{\scriptscriptstyle{h}}$ splits as an algebra map; write $\sigma_{\scriptscriptstyle{h}}:  P_{\scriptscriptstyle{h}} \to  B_{\scriptscriptstyle{h}}$ for this splitting. Then one can easily check that  $$P_{\scriptscriptstyle{\fk{m}}}=P\tensor{B}B_{\scriptscriptstyle{\fk{m}}} \,\cong\, P\tensor{B}B_{\scriptscriptstyle{h}} \tensor{B_{\scriptscriptstyle{h}}}B_{\scriptscriptstyle{\fk{m}}}\,=\, \xymatrix@C=20pt{P_{\scriptscriptstyle{h}}\tensor{B_{\scriptscriptstyle{h}}} B_{\scriptscriptstyle{\fk{m}}} \ar@{->}^-{\sigma_{\scriptscriptstyle{h}}\tensor{B}B_{\scriptscriptstyle{h}}}[rr]& &  B_{\scriptscriptstyle{h}}\tensor{B_{\scriptscriptstyle{h}}}B_{\scriptscriptstyle{\fk{m}}}\, \cong \,  B_{\scriptscriptstyle{\fk{m}}} }
$$
is an algebra map which splits $\beta_{\scriptscriptstyle{\fk{m}}}$. Thus, $\beta_{\scriptscriptstyle{\fk{m}}}$ splits for every $\fk{m} \in \Omega(B)$, and so does $\beta$. Therefore, $(P,\alpha,\beta)$ is a trivial bundle.

Now assume that the topology $\tau$ is the Zariski one. Then, for a locally trivial left principal bundle $(P,\alpha,\beta)$ there exists an extension $B \to B'\,:=\, \prod_{\scriptstyle{1\leq i \leq n}} B_{\scriptscriptstyle{f_i}}$ for some set $\{f_i\}_{\scriptstyle{1\leq i \leq n}}$ of elements in $B$ such that $B=\sum_{\scriptstyle{1\leq i \leq n}} Bf_i$ and such that $P\tensor{B}B'$ is a trivial bundle. For any maximal ideal $\fk{m} \in \Omega(B)$, there must be some $f_j \notin \fk{m}$ for which the bundle $(P_{\scriptscriptstyle{f_j}},\alpha_{\scriptscriptstyle{f_j}},\beta_{\scriptscriptstyle{f_j}})$ is trivial. We then conclude, as above, that $(P,\alpha,\beta)$ must be also trivial.

On the other hand, it seems that the local triviality property of a given left principal $(\cH,\cK)$-bundle $(P,\alpha,\beta)$ is already contained in our condition of faithfully flatness of $\beta$.  More specifically, since $\beta$ is a flat extension, $\beta_{\scriptscriptstyle{y}}$ is also a flat extension for every $y \in \mathscr{Y}$. Therefore, also $B_{\scriptscriptstyle{y}} \to P_{\scriptscriptstyle{z}}$ is a flat extension
for every $y \in \mathscr{Y}$ and $ z \in ({}^{\scriptscriptstyle{a}}\beta)^{-1}(y)$,  
where ${}^{\scriptscriptstyle{a}}\beta: \Spectre{P} =: \mathscr{X} \to \Spectre{B}=\mathscr{Y}$ is the associated continuous  map of $\beta$. 
In other words, $\mathscr{Y}$ is flat over $\mathscr{X}$ \cite[p.~254]{Hartshorne}; hence, as mentioned in \cite[Def.\ 1.2]{Pfl:ADTATWQORM}, this appears to be  a good substitute for local triviality, see \cite[Sec.\ 3]{Palamodov} for a deeper discussion of this point.


\subsection{Natural comodule transformations}
In this subsection, we explore the Hopf algebroid analogue of natural transformations for groupoid-sets 
as in Lemma \ref{lema:NatIso}.

Let $(P, \ga, \gb)$ be a left principal $(\cH, \cK)$-bundle. As mentioned before,  one can define a functor 
$$
- \cotensor{\cH} P: \rcomod{\cH} \to \rcomod{\cK}
$$ 
since our Hopf algebroids are all assumed to be flat.
We will give some natural transformations involving  this functor, which will be useful in the sequel.

\begin{lemma}
\label{levissima}
One has the following natural transformations:
\begin{enumerate}
\item
for any right $\cH$-comodule $M$, the map
\begin{equation}
\label{giotto1}
\zeta_\emme: (M \cotensor{\cH} P) \otimes_\behhe P \to M \otimes_\ahha P, 
\quad (m \cotensor{\cH} p) \otimes_\behhe p' \mapsto m \otimes_\ahha pp'
\end{equation}
is an isomorphism of right $\cK$-comodules, where the coaction of the left hand side is the codiagonal one.  The inverse of $\zeta_\emme$ is given by
$$
\bar\zeta_\emme: m \otimes_\ahha p \mapsto (m_{\scriptscriptstyle{(0)}} \cotensor{\cH} m_{\scriptscriptstyle{(1)+}}) \otimes_\behhe m_{\scriptscriptstyle{(1)-}}p; 
$$
\item
for any right $\cH$-comodule $M$, the map
\begin{equation}
\label{giotto2}
\eta_\emme: M \to (M \cotensor{\cH} P) \cotensor{\cK} P^{\co},
\quad m \mapsto (m_{\scriptscriptstyle{(0)}} \cotensor{\cH} m_{\scriptscriptstyle{(1)+}}) \cotensor{\cK} m_{\scriptscriptstyle{(1)-}}
\end{equation}
defines a morphism of right $\cH$-comodules.
\end{enumerate}
\end{lemma}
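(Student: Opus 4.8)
The plan is to handle $\zeta_\emme$ and $\eta_\emme$ separately, in each case obtaining well-definedness from the principal-bundle structure and then reading off the comodule compatibilities directly from the translation-map identities of the preceding lemma. The genuinely delicate issue in both parts is showing that the proposed elements actually lie in the relevant (co)tensor products; the colinearity statements will be short consequences of the identities \eqref{japan}, \eqref{casadiaugusto1}, \eqref{Eq:S+-}, together with $\mathscr{S}^2=\id$ and the counit axioms.

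For part (i), note first that $\zeta_\emme$ is merely the restriction to $(M\cotensor{\cH}P)\tensor{B}P$ of the multiplication map $M\tensor{A}P\tensor{B}P\to M\tensor{A}P$, $m\tensor{A}p\tensor{B}p'\mapsto m\tensor{A}pp'$, hence well defined. To prove that it is bijective with the stated inverse, I would realise it as a composite of canonical isomorphisms: since $P_\behhe$ is faithfully flat, the functor $-\tensor{B}P$ commutes with the equaliser defining the cotensor product, and since $\can{\cH}{P}$ is a morphism of left $\cH$-comodules by Lemma \ref{lemma:comdalg}~(iii), one gets a chain
\begin{equation*}
(M\cotensor{\cH}P)\tensor{B}P \,\cong\, M\cotensor{\cH}(P\tensor{B}P) \xrightarrow{\ M\cotensor{\cH}\can{\cH}{P}\ } M\cotensor{\cH}(\cH\tensor{A}P) \,\cong\, M\tensor{A}P,
\end{equation*}
where $\cH\tensor{A}P$ carries the left coaction $\Delta\tensor{A}P$ and the last isomorphism sends $m\tensor{A}(h\tensor{A}p)\mapsto m\,\varepsilon(h)\tensor{A}p$ (one checks this is bijective with inverse $m\tensor{A}p\mapsto m_{\scriptscriptstyle{(0)}}\tensor{A}(m_{\scriptscriptstyle{(1)}}\tensor{A}p)$ using only coassociativity and the counit). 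Tracing an element through the chain recovers $\zeta_\emme$, while running it backwards through $\can{\cH}{P}^{-1}$ yields $\bar\zeta_\emme$; crucially, this also shows for free that $m_{\scriptscriptstyle{(0)}}\cotensor{\cH}m_{\scriptscriptstyle{(1)+}}$ is a bona fide element of $M\cotensor{\cH}P$. Alternatively, one may verify directly that $\zeta_\emme\bar\zeta_\emme=\id$ via \eqref{Eq:eps+-} and that $\bar\zeta_\emme\zeta_\emme=\id$ via \eqref{Eq:p-+} and the cotensor relation. Right $\cK$-colinearity of $\zeta_\emme$ is then immediate, either because every arrow in the chain is $\cK$-colinear (here using that $\can{\cH}{P}$ is a morphism of right $\cK$-comodules, Lemma \ref{lemma:comdalg}~(iv)) or by a one-line computation resting on the fact that the $\cK$-coaction on $P$ is multiplicative.

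For part (ii), that $m_{\scriptscriptstyle{(0)}}\cotensor{\cH}m_{\scriptscriptstyle{(1)+}}\in M\cotensor{\cH}P$ is already granted by part (i); what remains for well-definedness is the outer equaliser condition over $\cK$ guaranteeing that $(m_{\scriptscriptstyle{(0)}}\cotensor{\cH}m_{\scriptscriptstyle{(1)+}})\cotensor{\cK}m_{\scriptscriptstyle{(1)-}}$ lands in $(M\cotensor{\cH}P)\cotensor{\cK}P^{\co}$. At the level of the translation map this reduces to the identity
\begin{equation*}
u_{+(0)}\tensor{B}u_{+(1)}\tensor{B}u_- \,=\, u_+\tensor{B}\mathscr{S}(u_{-(1)})\tensor{B}u_{-(0)} \qquad \text{in } P\tensor{B}\cK\tensor{B}P,
\end{equation*}
which I would deduce from \eqref{casadiaugusto1} by coacting once more on its middle tensor factor, then applying $\mathscr{S}$ to one of the two resulting $\cK$-components and multiplying it into the other, and finally collapsing the product by means of \eqref{maxxi} and the counit (using commutativity of $\cK$). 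I expect this to be the main obstacle of the whole lemma, being the only place where the translation-map identities must be combined in a non-formal manner rather than merely substituted.

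Finally, $\cH$-colinearity of $\eta_\emme$ follows formally once well-definedness is in place: expanding the right $\cH$-coaction on the target through the definition of the opposite bundle $P^{\co}$ in \eqref{Eq:leftright}, and applying coassociativity on $M$, reduces the claim to
\begin{equation*}
u_+\tensor{B}u_{-(0)}\tensor{A}\mathscr{S}(u_{-(-1)}) \,=\, u_{(1)+}\tensor{B}u_{(1)-}\tensor{A}u_{(2)},
\end{equation*}
which is precisely \eqref{Eq:S+-} after applying $\mathscr{S}$ to its last tensor factor and invoking $\mathscr{S}^2=\id$. This settles both natural transformations; naturality in $M$ is clear in each case, since all maps involved are assembled from the structure maps of $M$ and $P$ and hence commute with morphisms of right $\cH$-comodules.
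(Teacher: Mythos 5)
Your proof is correct, but it takes a partially different route from the paper's, and in part (ii) it is in fact more explicit than the paper's own argument. For part (i), the paper argues purely by element computation: colinearity of $\zeta_\emme$ from $P$ being a comodule algebra, well-definedness of $\bar\zeta_\emme$ from \eqref{japan}, and the two composites checked via \eqref{Eq:eps+-} and \eqref{Eq:p-+} together with the cotensor relation --- exactly the ``alternative'' route you sketch at the end. Your primary route instead factors $\zeta_\emme$ through the chain $(M\cotensor{\cH}P)\tensor{B}P \cong M\cotensor{\cH}(P\tensor{B}P) \to M\cotensor{\cH}(\cH\tensor{A}P)\cong M\tensor{A}P$, which buys bijectivity, well-definedness of $\bar\zeta_\emme$, and $\cK$-colinearity all at once from Lemma \ref{lemma:comdalg} (note that only flatness of $P_\behhe$, not faithful flatness, is what this uses). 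For part (ii), the paper describes $(M\cotensor{\cH}P)\tensor{B}P$ inside $M\tensor{A}P\tensor{B}P$ as the kernel of a single four-term map concatenating both equaliser conditions, and then cites only \eqref{japan}; since that four-term map factors through the map whose kernel is $(M\cotensor{\cH}P)\tensor{B}P$, its vanishing on $\eta_\emme(m)$ follows from \eqref{japan} alone but does not by itself place the element in the \emph{outer} $\cK$-cotensor product, so the paper's well-definedness argument is imprecise on exactly this point. Your proof supplies the missing ingredient explicitly: the identity $u_{+(0)}\tensor{B}u_{+(1)}\tensor{B}u_- = u_+\tensor{B}\mathscr{S}(u_{-(1)})\tensor{B}u_{-(0)}$ in $P\tensor{B}\cK\tensor{B}P$, which you correctly derive from \eqref{casadiaugusto1}; your derivation is in substance the same computation by which the paper later proves Proposition \ref{fororomano}~(i) (that the translation map corestricts to $P\cotensor{\cK}P^{\co}$), so you are anticipating a result the paper only establishes afterwards. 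Your colinearity argument for $\eta_\emme$ via \eqref{Eq:S+-} and $\mathscr{S}^2=\id$ coincides with the paper's citation of \eqref{Eq:S+-}.
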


\begin{proof}
To prove {\em (i)}, we proceed as follows:
that \rmref{giotto1} is a morphism of comodules follows from the fact that $P$ is a comodule algebra. Moreover, from \rmref{japan} one deduces that the inverse is well-defined and using the flatness of $P$ over $B$ along with \rmref{Eq:eps+-} and \rmref{Eq:p-+}, one checks that the given maps are mutually inverse: 
for example, 
$$
\bar\zeta_\emme \circ \zeta_\emme\big((m \cotensor{\cH}p) \otimes_\behhe p'\big)
= (m_{\scriptscriptstyle{(0)}} \cotensor{\cH} m_{\scriptscriptstyle{(1)+}}) \otimes_\behhe m_{\scriptscriptstyle{(1)-}}pp' 
=  (m \cotensor{\cH} p_{\scriptscriptstyle{(-1)+}}) \otimes_\behhe p_{\scriptscriptstyle{(-1)-}}p_{\scriptscriptstyle{(0)}} p'
\overset{\scriptscriptstyle{\rmref{Eq:p-+}}}{=} (m \cotensor{\cH}p) \otimes_\behhe p',
$$
where in the second step we used that $m \cotensor{\cH}p$ lies in $M \cotensor{\cH} P$.

As for {\em (ii)},
since $P$ is flat over $B$, the inclusion $(M \cotensor{\cH} P) \otimes_\behhe P \hookrightarrow M \otimes_\ahha P \otimes_\behhe P$ is the kernel of the map $M \otimes_\ahha P \otimes_\behhe P \to M \otimes_\ahha \cH \otimes_\ahha P \otimes_\behhe \cK \otimes_\behhe P$ given by
\begin{eqnarray*}
m \otimes_\ahha p \otimes_\behhe q &\longmapsto& m_{(0)} \otimes_\ahha m_{(1)} \otimes_\ahha p_{(0)} \otimes_\behhe p_{(1)} \otimes_\behhe q -  m_{(0)} \otimes_\ahha m_{(1)} \otimes_\ahha p \otimes_\behhe \mathscr{S}(q_{(1)}) \otimes_\behhe q_{(0)} \\
&& - m \otimes_\ahha p_{(-1)} \otimes_\ahha p_{(0)} \otimes_\behhe p_{(1)} \otimes_\behhe q 
+ m \otimes_\ahha p_{(-1)} \otimes_\ahha p_{(0)} \otimes_\behhe \mathscr{S}(q_{(1)}) \otimes_\behhe q_{(0)}.
\end{eqnarray*}
Composing this map with
$
M \to M \otimes_\ahha P \otimes_\behhe P, \ m \mapsto m_{(0)} \otimes_\ahha m_{(1)+} \otimes_\behhe m_{(1)-},
$
and applying \rmref{japan} shows that \rmref{giotto2} is well-defined on the given cotensor products; that it is 
also a morphism of comodules follows from \rmref{Eq:S+-}.
\end{proof}

\section{Principal bibundles versus weak equivalences}\label{sec:pbwe}

Parallel to Lemma \ref{lema:WE}, we will investigate in this subsection how weak equivalences arise from principal bundles. We first analyse the particular case of trivial bundles and then the general case. 

As recalled in Definition \ref{quellala1}, 
a morphism $\B{\phi}: (A,\cH) \to (B,\cK)$ of flat Hopf algebroids  is said to be a \emph{weak equivalence} if and only if the induced functor $\B{\phi}_{*}: \rcomod{\cH} \to \rcomod{\cK}$ of Eq.~\eqref{Eq:indfunct} establishes an equivalence of categories (which is, in fact, a monoidal symmetric equivalence). 

Let us consider the  trivial bundle $P=\cH\tensor{\scriptscriptstyle{\phi}}B$  associated to a given morphism $\B{\phi}$. 
One can easily check that the opposite bundle is $P^{\co} = B \tensor{\scriptscriptstyle{\phi}} \cH$ as defined in Remark \ref{rema:PB} {\em (ii)}. The associated functors are,  up to natural isomorphisms, 
$$
\B{\phi}_{*} \cong -\cotensor{\cH}P\quad \text{ and }\quad {}_{*}\B{\phi} \cong - \cotensor{\cK} P^{\co}.
$$ 
Moreover, as mentioned before, $- \cotensor{\cK} P^{\co}$ is a right adjoint to $-\cotensor{\cH}P$.

\subsection{The case of trivial principal bibundles}
\label{ssec:tpb-we}
Part of the following proposition was shown in \cite[Theorem 6.2]{HovStr:CALEHT} by using a different approach,
 see also \cite[Theorem D \& 5.5]{Hovey:02}. In Theorem \ref{aromanflower} below we give a more general result. 

\begin{proposition}
\label{lemma:trivial bibundle}
Let $\boldsymbol{\phi}=(\phi_{\scriptscriptstyle{0}}, \phi_{\scriptscriptstyle{1}}): (A,\cH) \to (B,\cK)$ be a morphism of flat Hopf algebroids, and consider the associated trivial bundle $P=\cH\tensor{\phi}B$.  The following are equivalent:
\begin{enumerate}
\item $P$ is a  principal $(\cH, \cK)$-bibundle.
\item The canonical morphism
$$
\Phi: B\tensor{A}\cH\tensor{A}B \to \cK, \quad b\tensor{A}u\tensor{A}b' \mapsto \Sf{s}(b) \phi_{\scriptscriptstyle{1}}(u)\Sf{t}(b')
$$ 
 of Hopf $B$-algebroids 
is an isomorphism, and $\alpha$ is a faithfully flat extension.
\item The morphism $\boldsymbol{\phi}$ is a weak equivalence.
\end{enumerate}
\end{proposition}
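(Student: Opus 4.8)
Since $P=\cH\tensor{\phi}B$ is, by the construction in Example \ref{exam:HopfMorph}, always a \emph{left} principal $(\cH,\cK)$-bundle, condition $(i)$ is equivalent to the single extra requirement that $P$ be moreover \emph{right} principal, i.e.\ that $\alpha$ be faithfully flat and that the second canonical map $\can{P}{\cK}$ of \rmref{Eq:canK} be bijective. As faithful flatness of $\alpha$ appears verbatim in both $(i)$ and $(ii)$, the plan is to prove $(i)\Leftrightarrow(ii)$ by showing, independently of that hypothesis, that $\can{P}{\cK}$ is bijective if and only if $\Phi$ is an isomorphism; then to prove $(ii)\Rightarrow(iii)$ by reducing $\B{\phi}$ to a scalar extension; and finally, using $(i)\Leftrightarrow(ii)$, to close the cycle with $(iii)\Rightarrow(i)$.

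\textbf{The computational equivalence $(i)\Leftrightarrow(ii)$.} First I would identify the target of $\can{P}{\cK}$ through the canonical isomorphism $P\tensor{B}\cK=(\cH\tensor{A}B)\tensor{B}\cK\cong\cH\tensor{A}\cK$. Under this identification the map factors as
$$
\can{P}{\cK}\;=\;\big(\cH\tensor{A}\Phi\big)\circ\Theta ,
$$
where $\Theta\colon P\tensor{A}P\to\cH\tensor{A}(B\tensor{A}\cH\tensor{A}B)$ is given on generators by $(u\tensor{A}b)\tensor{A}(v\tensor{A}b')\mapsto uv_{(1)}\tensor{A}(b\tensor{A}v_{(2)}\tensor{A}b')$; a direct check against $\Phi(b\tensor{A}w\tensor{A}b')=\Sf{s}(b)\phi_{1}(w)\Sf{t}(b')$ verifies the factorisation. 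The map $\Theta$ is a comultiplication twist of exactly the same shape as the canonical map $\can{\cH}{\cH}$ of the unit bundle (Example \ref{Exam:H}): as there, its inverse is produced by the antipode using the counit relations \rmref{maxxi}, so $\Theta$ is always an isomorphism. Consequently $\can{P}{\cK}$ is bijective iff $\cH\tensor{A}\Phi$ is, and since $\cH$ is faithfully flat over $A$, this holds iff $\Phi$ is bijective. Together with the shared hypothesis on $\alpha$ this yields $(i)\Leftrightarrow(ii)$.

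\textbf{The equivalence $(ii)\Leftrightarrow(iii)$.} Here I would use that $\B{\phi}$ factors through the scalar extension as $(A,\cH)\to(B,B\tensor{A}\cH\tensor{A}B)\xrightarrow{\Phi}(B,\cK)$, see \rmref{Eq:PHI}, so that the induction functor decomposes as $\B{\phi}_{*}=\Phi_{*}\circ(-\tensor{A}B)$. For $(ii)\Rightarrow(iii)$: if $\Phi$ is an isomorphism then $\Phi_{*}$ is an isomorphism of categories, so it suffices to treat the scalar extension, whose induction functor is the base change $-\tensor{A}B\colon\rcomod{\cH}\to\rcomod{B\tensor{A}\cH\tensor{A}B}$. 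Faithful flatness of $\alpha\colon A\to\cH\tensor{A}B$ is precisely the (faithful) Landweber-exactness condition of \cite[Def.~2.1, Cor.~2.3]{HovStr:CALEHT}, and faithfully flat descent for comodules over a flat Hopf algebroid (in the spirit of \cite[Thm.~3.10]{Kaoutit/Gomez:2003a} and \cite{Hovey:02}) then makes this base change an equivalence; hence $\B{\phi}$ is a weak equivalence.

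\textbf{The converse and the main obstacle.} By $(i)\Leftrightarrow(ii)$ it remains to establish $(iii)\Rightarrow(i)$, i.e.\ to recover faithful flatness of $\alpha$ and bijectivity of $\can{P}{\cK}$ from the sole assumption that $\B{\phi}_{*}\cong-\cotensor{\cH}P$ is an equivalence. I would exploit the adjunction with the coinduction functor $-\cotensor{\cK}P^{\co}$, whose unit is exactly the natural transformation $\eta$ of Lemma \ref{levissima}(ii); an equivalence forces $\eta$ and the corresponding counit to be natural isomorphisms. Rewriting these with the isomorphism $\zeta$ of Lemma \ref{levissima}(i), evaluating on cofree comodules, and using the identification $P^{\coinv_{\cH}}=\beta(B)$ from Remark \ref{rema:PB}(iii) together with \rmref{Eq:p-+} and \rmref{Eq:u-+}, one reconstructs the splitting and descent data that force $\alpha$ to be faithfully flat and $\can{P}{\cK}$ to be invertible. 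I expect the technical heart to lie precisely in translating ``$\eta$ and the counit are isomorphisms'' back into these two Hopf--Galois conditions on $P$; this is the point where the more flexible machinery of Theorem \ref{aromanflower} becomes relevant, while in the present, rigid, trivial-bundle situation the computation of $(i)\Leftrightarrow(ii)$ keeps the argument self-contained.
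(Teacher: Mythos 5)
Your factorisation argument for $(i)\Leftrightarrow(ii)$ is correct and is a genuinely different (and tidier) route than the paper's. Under the identification $P\tensor{B}\cK\cong\cH\tensor{A}\cK$ the canonical map \rmref{Eq:canK} does factor as $(\cH\tensor{A}\Phi)\circ\Theta$, and your twist $\Theta$ is indeed invertible via the antipode, with inverse $w\tensor{A}(b\tensor{A}v\tensor{A}b')\mapsto (w\mathscr{S}(v_{(1)})\tensor{A}b)\tensor{A}(v_{(2)}\tensor{A}b')$, exactly as for $\can{\cH}{\cH}$ in Example \ref{Exam:H}; faithful flatness of $\cH$ over $A$ then transfers bijectivity between $\can{P}{\cK}$ and $\Phi$ in both directions at once. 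The paper never proves this two-sided statement directly: it shows only $(i)\Rightarrow(ii)$, by building an inverse $\Lambda$ of $\Phi$ from the translation map of $\can{P}{\cK}$, and recovers the converse by closing the cycle through $(iii)$. So this piece of your proposal is a real, self-contained improvement.

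The remaining two implications, however, have genuine gaps. For $(ii)\Rightarrow(iii)$, the reduction along \eqref{Eq:PHI} to the scalar extension is fine, but the concluding appeal to ``faithfully flat descent'' is not: what \cite[Thm.~3.10]{Kaoutit/Gomez:2003a} gives (as used in Remark \ref{rema:PB}) is the equivalence of \emph{relative} comodules $\rcomod{\cH\tensor{A}P}\simeq\rmod{B}$, not the equivalence $\rcomod{\cH}\simeq\rcomod{B\tensor{A}\cH\tensor{A}B}$; bridging these two is exactly the content of the statement being proved (it is \cite[Thm.~6.2]{HovStr:CALEHT}, which this proposition reproves by independent means), so as written this step either begs the question or silently cites the theorem itself. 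The paper instead does the work explicitly: it uses that $\Phi$ is an isomorphism of $\cK$-bicomodules to obtain $(N\cotensor{\cK}P^{\co})\cotensor{\cH}P\cong N\cotensor{\cK}(P^{\co}\cotensor{\cH}P)\cong N$, where the required cotensor associativity rests on the $B$-bimodule splitting of $P^{\co}\cotensor{\cH}P\hookrightarrow P^{\co}\tensor{A}P$ (a point your sketch never addresses), and then shows the map $\theta_M$ built from \eqref{giotto2} becomes invertible after $-\tensor{A}P$, using $\zeta$ of Lemma \ref{levissima} and the faithful flatness of $\alpha$. For $(iii)\Rightarrow(i)$ your text is a plan rather than a proof: the sentence asserting that one ``reconstructs the splitting and descent data that force $\alpha$ to be faithfully flat and $\can{P}{\cK}$ to be invertible'' is precisely the missing argument, as you yourself concede. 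In the paper, faithful flatness of $\alpha$ follows from the natural isomorphism $-\tensor{A}\B{\phi}_{*}(\cH)\cong\B{\phi}_{*}\circ(-\tensor{A}\cH)$ enjoyed by any equivalence, and bijectivity of $\can{P}{\cK}$ is proved by exhibiting an explicit inverse $\Psi$ written in terms of the inverse of the adjunction counit evaluated at $\cK$, followed by a nontrivial computation; neither step follows formally from the mere statement that unit and counit of the adjunction are isomorphisms.
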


\begin{proof}
To prove $(i) \Rightarrow (ii)$, we only need to check that $\Phi$ is bijective. 
By assumption, $\can{P}{\cK}$ is bijective, and denote the translation map here as
$$
\tau: \cK \to P\tensor{A}P, \quad k \mapsto (u^k\tensor{\phi}b^k)\tensor{A}(v^k\tensor{\phi}c^k), 
$$ 
which means that for every $k \in \cK$
$$
1_\pehhe \tensor{B}k \, = \, (1_{\scriptscriptstyle{\cH}}\tensor{\phi}1_\behhe)\tensor{A}k\,=\, \big(u^kv^k_{(1)}\tensor{A}b^k\big)\tensor{B}\phi_{\scriptscriptstyle{1}}(v^k_{(2)})\Sf{t}(c^k), 
$$ 
Applying the counit of $\cH$ we obtain 
$$
k \,=\,  \Sf{s}(b^k)\phi_{\scriptscriptstyle{1}}\big(\Sf{s}(\varepsilon(u^k))v^k\big)\Sf{t}(c^k).
$$
Define now the map 
$$  
\Lambda: \cK \to B \tensor{A} \cH \tensor{A} B, \quad k \mapsto \phi_{\scriptscriptstyle{0}}(\varepsilon(u^k))b^k \tensor{A}v^k\tensor{A}c^k.
$$ 
Using the previous equality, we easily get that $\Phi\circ \Lambda = \id$. In the opposite direction, we have 
$$
\Lambda \circ \Phi(b\tensor{A}u\tensor{A}b') \,=\,  b\tensor{A}u\tensor{A}b'
$$ 
since $k=\Sf{s}(b)\phi_{\scriptscriptstyle{1}}(u)\Sf{t}(b')$ is uniquely determined by the equation 
$$ 
1_\pehhe \tensor{B}k\,=\, \big(1_{\scriptscriptstyle{\cH}}\tensor{\phi}b\big) \tensor{B} \phi_{\scriptscriptstyle{1}}(u)\Sf{t}(b').
$$

In order to prove $(ii) \Rightarrow (iii)$, we already know by definition that $\B{\phi}_{*}=-\cotensor{\cH}P$ is a symmetric monoidal functor.  We need to establish natural isomorphisms 
\begin{equation}\label{Eq:natisos}
(-\cotensor{\cH}P) \circ(-\cotensor{\cK}P^{\co}) \cong \id_{\rcomod{\cK}}, \quad (-\cotensor{\cK}P^{\co}) \circ (-\cotensor{\cH}P) \cong \id_{\rcomod{\cH}}.
\end{equation}
First recall that we have a commutative diagram 
$$
\xymatrix@C=50pt@R=15pt{ 0 \ar@{->}^-{}[r] & \ar@{->}^-{\cong}[d]  P^{\co} \cotensor{\cH} P  \ar@{->}^-{}[r] & P^{\co} \tensor{A} P  \ar@{=}^-{}[d] \\    0 \ar@{->}^-{}[r] &  B\tensor{A}\cH\tensor{A}B  \ar@{->}^-{B\tensor{A}\Delta\tensor{A}B}[r]   &  B\tensor{A}\cH\tensor{A}\cH\tensor{A}B. }
$$ 
Hence, the canonical injection $ P^{\co} \cotensor{\cH} P  \hookrightarrow P^{\co} \tensor{A} P$ splits in the category of $B$-bimodules.  
For a right  $\cK$-comodule $N$, we then have a chain of  isomorphisms 
$$
\big(N\cotensor{\cK} P^{\co} \big) \cotensor{\cH}P \,\cong \,\, N\cotensor{\cK}\big(P^{\co} \cotensor{\cH}P\big)  \,\, \cong \,\, N \cotensor{\cK}\cK \,\, \cong \,\, N
$$ 
of right $\cK$-comodules, 
where we used the fact that $\Phi$ is an isomorphism of $\cK$-bicomodules. 
Clearly, the resulting  isomorphism is natural and this gives the first natural isomorphism in \eqref{Eq:natisos}. To establish the second one, we will use the faithfully flatness of $P_\ahha$, that is, of $\alpha$. 
For a right $\cH$-comodule $M$ define by means of Eq.~\eqref{giotto2} 
the following morphism 
$$
\theta_{M}: M \to \big( M\cotensor{\cH}P\big)\cotensor{\cK}P^{\co}, \quad m \mapsto \big(m_{\scriptscriptstyle{(0)}}\cotensor{\cH}(m_{\scriptscriptstyle{(1)}}\tensor{\phi}1_{\scriptscriptstyle{B}}) \big) \cotensor{\cK}  (1_{\scriptscriptstyle{B}} \tensor{\phi} \mathscr{S}(m_{\scriptscriptstyle{(2)}}))
$$
of right $\cH$-comodules.
Using the natural isomorphisms $\zeta$ of \eqref{giotto1}, one can show that  $\theta_{M}\tensor{A}P$ is an isomorphism, and hence that $\theta$ is a natural isomorphism. Therefore, $\B{\phi}_{*}$ is an equivalence of categories. 

The step $(iii) \Rightarrow (i)$ is seen as follows: by Example \ref{exam:HopfMorph}, $P$ is a left principal $(\cH, \cK)$-bundle. To check that $P$ is also a right principal $(\cH, \cK)$-bundle, we  need to verify that the canonical map $\can{P}{\cK}$ of Eq.~\eqref{Eq:canK} is bijective as well as  that $\alpha$ is a faithfully flat extension.
Since $\boldsymbol{\phi}_{*}$ is an equivalence of categories, there is a natural isomorphism 
$$
-\tensor{A} \boldsymbol{\phi}_{*}(\cH) \,\cong \, \boldsymbol{\phi}_{*}\circ ( -\tensor{A}\cH),
$$
where $-\tensor{A}\cH: \rcomod{\cH} \to \rcomod{\cH}$ is the composition of the forgetful functor with the functor defined as in \eqref{Eq:RM}, and where $P=\boldsymbol{\phi}_{*}(\cH)$ is an $A$-module via the algebra map 
$\alpha: A \to P, \ a \mapsto \Sf{s}(a)\tensor{A}1_{\scriptscriptstyle{B}}$. Hence, such a natural isomorphism directly implies 
that  $\alpha$ is a faithfully flat extension.

Let us then prove that $\can{P}{\cK}$ is bijective. Since the counit of the adjunction $\phi_{*} \dashv \big(-\cotensor{\cK}\,{}_{*}\phi(\cH)\big)$ is a natural isomorphism (see \S\ref{ssec:bicomod}), we denote by  
$$
\B{\xiup}_{\scriptscriptstyle{\cK}}: \cK \to B\tensor{A}\cH\tensor{A}B, \quad k  \mapsto b^k\tensor{A}u^k\tensor{A}c^k
$$  
its inverse at $\cK$, with the help of which we can write
$$
k=\Sf{s}(b^k) \phi_{\scriptscriptstyle{1}}(u^k)\Sf{t}(c^k)
$$ 
for every $k \in \cK$. Define moreover 
$$
\Psi: P\tensor{B}\cK \cong \cH\tensor{A}\cK \to P\tensor{A}P, \quad u\tensor{A}k \mapsto \big(u\mathscr{S}(v^k_{(1)})\tensor{\phi}b^k\big)\tensor{A}\big(v^k_{(2)}\tensor{\phi}c^k\big)
$$ 
and compute
\begin{eqnarray*}
\Psi\circ \can{P}{\cK}\big( (u\tensor{A}b)\tensor{A}(v\tensor{A}b')\big) 
&=& \Psi\big((uv_{(1)}\tensor{A}b)\tensor{B} \phi_{\scriptscriptstyle{1}}(v_{(2)})\Sf{t}(b')\big)\\
&=& \Psi\big(uv_{(1)}\tensor{A}\mathsf{s}(b) \phi_{\scriptscriptstyle{1}}(v_{(2)})\mathsf{t}(b')\big)\\ 
&=& \Psi\big(uv_{(1)}\tensor{A}\Phi\big(b \tensor{A}v_{(2)}\tensor{A}b'\big)\big) \\ 
&=& uv_{(1)}\mathscr{S}(v_{(2)})\tensor{A}b \tensor{A}v_{(3)}\tensor{A}b' \\ 
&=& \big(u\Sf{s}(\varepsilon(v_{(1)}))\tensor{A}b\big) \tensor{A}\big(v_{(2)}\tensor{A}b'\big) \\ 
&=& \big(u\tensor{A}b\big) \tensor{A}\big(v\tensor{A}b'\big),
\end{eqnarray*}
which shows that $\Psi \circ \can{P}{\cK}=\id$. The opposite direction is verified as follows:
\begin{eqnarray*}
\can{P}{\cK}\circ \Psi(u\tensor{A}k) &=& \can{P}{\cK}\Big(\big(u\mathscr{S}(v^k_{(1)})\tensor{A}b^k\big)\tensor{A}\big( v^k_{(2)}\tensor{A}c^k\big)\Big)\\
&=& \big(u\mathscr{S}(v^k_{(1)})v^k_{(2)}\tensor{A}b^k\big)\tensor{B}\big(\phi_{\scriptscriptstyle{1}}(v^k_{(3)})\Sf{t}(c^k)\big) \\ 
&=& \big(u\tensor{A}\phi_{\scriptscriptstyle{0}}(\varepsilon(v^k_{(1)})b^k\big)\tensor{B}\big( \phi_{\scriptscriptstyle{1}}(v^k_{(2)})\Sf{t}(c^k)\big) \\ 
&=& u\tensor{A}\big(\Sf{s}\big(\phi_{\scriptscriptstyle{0}}(\varepsilon(v^k_{(1)})b^k\big) \phi_{\scriptscriptstyle{1}}(v^k_{(2)})\Sf{t}(c^k)\big)  \\ 
&=&  u\tensor{A}\big(\Sf{s}(b^k) \phi_{\scriptscriptstyle{1}}(v^k)\Sf{t}(c^k)\big) \\
& =& u\tensor{A}k,
\end{eqnarray*} 
which gives the desired equality.
\end{proof}

\begin{rem}
\label{rem:HS}
The statement that $\alpha$ is a flat extension is equivalent to saying that $B$ is Landweber exact over $(A,\cH)$ in the sense of \cite[Def.\ 2.1]{HovStr:CALEHT}, see Lemma 2.2 in {\em op.~cit}. This, as mentioned before, implies in particular that $(B,B\tensor{A}\cH\tensor{A}B)$ is a flat Hopf algebroid. 
\end{rem}

\subsection{The case of general principal bibundles}
\label{ssec:lpb-we}

Let now $(P, \alpha, \beta)$ be an $(\cH,\cK)$-bicomodule algebra. Consider the two-sided translation  Hopf algebroid $(P,\cH\lJoin P\rJoin \cK)$ as in Lemma \ref{lemma:bisemidirect}. 
Recall that the tensor product $\cH\tensor{A}P\tensor{B}\cK$ is defined by using the module structures  $\shopf{\cH}$, $\due P \ahha \behhe$, and $\shopf{\cK}$, and also that there is a diagram of Hopf algebroids 
 $$
\xymatrix@R=15pt{ 
 & (P ,\cH\lJoin P\rJoin \cK)& \\ \ar@{->}^-{\B{\alpha}}[ru] (A,\cH)  & & (B,\cK) \ar@{->}_-{\B{\beta}}[ul]  
}
$$ 
where $\B{\beta}$ and $\B{\alpha}$ are the maps as in Lemma \ref{lemma:bisemidirect}.
On the other hand, one can consider the extended Hopf algebroids 
$(P, P\tensor{A}\cH\tensor{A}P)$ and $(P, P\tensor{A}\cK\tensor{A}P)$, together with the morphisms of Hopf algebroids:
\begin{small}
\begin{equation}
\label{Eq:MSB}
P\tensor{B}\cK\tensor{B}P \to \cH\lJoin P\rJoin \cK,\;\; p'\tensor{B}w\tensor{B}p \mapsto \Sf{s}(p')\beta_{{1}}(w) \Sf{t}(p) = \mathscr{S}\big(p_{{(-1)}} \big) \tensor{A}p_{{(0)}}p'\tensor{B}p_{{(1)}}w,
\end{equation}
\begin{equation}
\label{Eq:LSB}
P\tensor{A}\cH\tensor{A}P \to \cH\lJoin P\rJoin \cK,\;\; p'\tensor{A}u\tensor{A}p \mapsto \Sf{s}(p')\alpha_{{1}}(u) \Sf{t}(p) = u\mathscr{S}\big(p_{{(-1)}} \big) \tensor{A}p_{{(0)}}p'\tensor{B}p_{{(1)}},
\end{equation}
\end{small}
where $\sf{s}$ and $\sf{t}$ are the source and the target maps of $\cH\lJoin P\rJoin \cK$ as given in Lemma \ref{lemma:bisemidirect}.

The following proposition shows that principal bundles lead to  weak equivalences.
\begin{prop}
\label{prop:Cielitomissyoutoo}
We have the following implications:
\begin{enumerate}
\item If $(P,\alpha,\beta)$ is a left principal $(\cH,\cK)$-bundle, then $\B{\beta}$ is a weak equivalence.
\item If $(P,\alpha,\beta)$ is a right principal $(\cH,\cK)$-bundle, then $\B{\alpha}$ is a weak equivalence.
\item If $(P,\alpha,\beta)$ is a principal $(\cH,\cK)$-bibundle, then $\B{\beta}$ and $\B{\alpha}$ are weak equivalences. In this case, $(A, \cH)$ and $(B,\cK)$ are weakly equivalent, see Definition \ref{quellala1}.
\end{enumerate} 
\end{prop}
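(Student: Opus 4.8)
The plan is to derive all three statements from the trivial-bibundle criterion of Proposition \ref{lemma:trivial bibundle}, applied to the two structure morphisms $\B{\beta}=(\beta,\beta_1)$ and $\B{\alpha}=(\alpha,\alpha_1)$ of the two-sided translation Hopf algebroid $(P,\cH\lJoin P\rJoin \cK)$ constructed in Lemma \ref{lemma:bisemidirect}. The crucial bookkeeping observation is that, for $\B{\beta}:(B,\cK)\to (P,\cH\lJoin P\rJoin\cK)$, the canonical morphism of Hopf $P$-algebroids attached to the associated trivial bundle $\cK\tensor{\beta}P$ is exactly the map \eqref{Eq:MSB}, and likewise for $\B{\alpha}$ it is the map \eqref{Eq:LSB}. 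Hence, by the equivalence $(ii)\Leftrightarrow (iii)$ of Proposition \ref{lemma:trivial bibundle}, proving that $\B{\beta}$ (resp.\ $\B{\alpha}$) is a weak equivalence amounts to two verifications: that \eqref{Eq:MSB} (resp.\ \eqref{Eq:LSB}) is bijective, and that the corresponding source map $B\to \cK\tensor{\beta}P$, $b\mapsto \Sf{s}(b)\tensor{B}1_{\scriptscriptstyle{P}}$ (resp.\ $A\to \cH\tensor{\alpha}P$) is a faithfully flat extension.

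For statement $(i)$ the faithful flatness is the easy half: since $\Sf{s}:B\to \cK$ is a split, hence faithfully flat, extension for every flat Hopf algebroid, and $\beta:B\to P$ is faithfully flat by the very definition of a left principal bundle, the composite $B\to \cK\to \cK\tensor{B}P$ is faithfully flat as a composite of faithfully flat maps. The substantial point is the bijectivity of \eqref{Eq:MSB}, and this is precisely where left principality of $P$ enters through the invertibility of $\can{\cH}{P}$. I would produce the inverse from the translation map $\tauup_{\scriptscriptstyle{P}}:\cH\to P\tensor{B}P$ of \eqref{plusminus} together with the identities \eqref{ceuta}--\eqref{Eq:S+-}. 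Concretely, I would factor \eqref{Eq:MSB} as a composite $(c\tensor{B}\cK)\circ \nu$, where $\nu:P\tensor{B}\cK\tensor{B}P\to P\tensor{B}\cK\tensor{B}P$, $p'\tensor{B}w\tensor{B}p\mapsto p'\tensor{B}p_{(1)}w\tensor{B}p_{(0)}$, is the ``untwisting'' isomorphism pulling the right $\cK$-coaction of the last copy of $P$ into the middle factor (invertible via the antipode of $\cK$ using coassociativity and \eqref{Eq:penultima}), and $c:P\tensor{B}P\to \cH\tensor{A}P$, $p'\tensor{B}p\mapsto \mathscr{S}(p_{(-1)})\tensor{A}p_{(0)}p'$ is, up to the tensor flip, the canonical map of the opposite right principal bundle $P^{\co}$ of Remark \ref{rema:PB}; bijectivity of $c$ is equivalent to bijectivity of $\can{\cH}{P}$, and \eqref{Eq:bicomod} guarantees that the composite reproduces \eqref{Eq:MSB}.

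Statement $(ii)$ is entirely symmetric: one applies Proposition \ref{lemma:trivial bibundle} to $\B{\alpha}$, checks that $A\to \cH\tensor{\alpha}P$ is faithfully flat (now using that $\alpha$ is faithfully flat because $P$ is right principal, together with $\Sf{s}:A\to\cH$ faithfully flat), and proves that \eqref{Eq:LSB} is bijective by the same factorisation argument, with the right canonical map $\can{P}{\cK}$ and the right translation map $v\mapsto v^+\tensor{A}v^-$ replacing their left counterparts. Finally, statement $(iii)$ is immediate: a principal bibundle is both left and right principal, so $(i)$ and $(ii)$ both apply and $\B{\alpha}$, $\B{\beta}$ are weak equivalences; the resulting cospan $(A,\cH)\xrightarrow{\B{\alpha}}(P,\cH\lJoin P\rJoin\cK)\xleftarrow{\B{\beta}}(B,\cK)$ of weak equivalences is exactly the datum required by Definition \ref{quellala1} for $(A,\cH)$ and $(B,\cK)$ to be weakly equivalent.

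I expect the bijectivity of \eqref{Eq:MSB} (and dually of \eqref{Eq:LSB}) to be the only genuine obstacle, since it is the single place where the principality hypothesis is actually consumed. The delicate part is controlling the simultaneous interaction of the left $\cH$- and right $\cK$-coactions on $P$ through the translation map, which requires careful use of the compatibility identities \eqref{casadiaugusto1}, \eqref{Eq:Su}, and \eqref{Eq:S+-}. Everything else—identifying \eqref{Eq:MSB}/\eqref{Eq:LSB} with the canonical morphisms of Proposition \ref{lemma:trivial bibundle}, the faithful-flatness verifications, and the assembly of the cospan—is formal.
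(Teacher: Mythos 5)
Your global architecture is exactly the paper's own proof: reduce via Proposition \ref{lemma:trivial bibundle} applied to $\B{\beta}$, identify the canonical Hopf algebroid morphism of the trivial bundle $\cK\tensor{\beta}P$ with the map \eqref{Eq:MSB} (resp.\ \eqref{Eq:LSB} for $\B{\alpha}$), check that $B\to\cK\tensor{B}P$ is faithfully flat, get {\em (ii)} by symmetry and {\em (iii)} by combining {\em (i)} and {\em (ii)}. The only place you deviate is the bijectivity of \eqref{Eq:MSB}: the paper does not factorise but writes down the inverse $u\tensor{A}p\tensor{B}w\mapsto pu_{+}\tensor{B}\mathscr{S}(u_{-(1)})w\tensor{B}u_{-(0)}$ and verifies both composites directly, using \eqref{Eq:Su}, \eqref{zucchero}, \eqref{Eq:p-+}, and \eqref{maxxi}.

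Your factorisation, however, has a genuine gap as written: the map you call $c\tensor{B}\cK$ does not exist on the space where you apply it. After your untwisting endomorphism $\nu$ of $P\tensor{B}\cK\tensor{B}P$, the second map would have to be
$$
\mu: P\tensor{B}\cK\tensor{B}P \longrightarrow \cH\lJoin P\rJoin \cK,\qquad q'\tensor{B}v\tensor{B}q\;\longmapsto\; \mathscr{S}(q_{(-1)})\tensor{A}q_{(0)}q'\tensor{B}v,
$$
and $\mu$ is not well defined: the defining relation $q'\tensor{B}v\Sf{t}(b)\tensor{B}q=q'\tensor{B}v\tensor{B}\beta(b)q$ of the domain is mapped on the one hand to $\mathscr{S}(q_{(-1)})\tensor{A}q_{(0)}q'\tensor{B}v\Sf{t}(b)$, and on the other hand --- using that $\lcoaction{\cH}{P}$ is right $B$-linear, that $P$ is commutative, and that in $\cH\lJoin P\rJoin\cK$ the factor $\cK$ is balanced through $\Sf{s}$ --- to $\mathscr{S}(q_{(-1)})\tensor{A}q_{(0)}q'\tensor{B}\Sf{s}(b)v$. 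These disagree in general, because the middle $\cK$ of your domain is balanced through $\Sf{t}$ while the $\cK$ of the codomain is balanced through $\Sf{s}$; for the same reason the inverse you propose for $\nu$, namely $q'\tensor{B}v\tensor{B}q\mapsto q'\tensor{B}\mathscr{S}(q_{(1)})v\tensor{B}q_{(0)}$, fails to descend to the quotient. The repair is to let the untwisting map land in the differently balanced product $P\tensor{B}P\tensor{B}\cK$, where the middle tensor product is taken with respect to multiplication by $\beta$ on the second $P$-factor and the source $\Sf{s}$ on $\cK$: the map $\nu': p'\tensor{B}w\tensor{B}p\mapsto p'\tensor{B}p_{(0)}\tensor{B}p_{(1)}w$ is then well defined and invertible (with the antipode formula above as inverse), the map $c\tensor{B}\cK$ is well defined and bijective on that domain since $c$ is bijective and right $B$-linear for the $\beta$-actions, and \eqref{Eq:bicomod} indeed gives \eqref{Eq:MSB}$\,=(c\tensor{B}\cK)\circ\nu'$. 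Note that once repaired your argument is the paper's computation in disguise: $\nu'^{-1}\circ(c^{-1}\tensor{B}\cK)$ is precisely the explicit inverse $u\tensor{A}p\tensor{B}w\mapsto pu_{+}\tensor{B}\mathscr{S}(u_{-(1)})w\tensor{B}u_{-(0)}$ displayed in the paper, so the factorisation repackages, rather than replaces, that verification.
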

\begin{proof}
Part {\em (iii)} is clearly derived from {\em (i)} and {\em (ii)}. We only prove {\em (i)} since 
{\em (ii)} is obtained {\em mutatis mutandum}. 
Using Proposition \ref{lemma:trivial bibundle}, we need 
to check that the map $B \to \cK\tensor{B}P$ is faithfully flat, which is clear from the assumptions, and that the map in 
Eq.~\eqref{Eq:MSB} is bijective. Denote this map by $\tilde{\boldsymbol{\gb}}$
and by 
$\tilde{\boldsymbol{\gb}}'$ 
what is going to be its inverse,
given by
$$
\tilde{\boldsymbol{\gb}}': \cH\lJoin P\rJoin \cK \to P\tensor{B}\cK\tensor{B}P, 
\quad 
u\tensor{A}p\tensor{B}w \mapsto pu_{{+}} \tensor{B} \mathscr{S}(u_{{-(1)}}) w\tensor{B}u_{{-(0)}}.
$$
We compute from one hand 
\begin{eqnarray*}
\td{\B{\beta}} \circ \tilde{\boldsymbol{\gb}}' (u\tensor{A}p\tensor{B}w) 
&=& \td{\B{\beta}}( pu_{{+}} \tensor{B} \mathscr{S}(u_{{-(1)}}) w\tensor{B}u_{{-(0)}}) \\ 
&=&  \mathscr{S}(u_{{-(-1)}}) \tensor{A} u_{{-(0)}} u_{{+}}p \tensor{B} u_{{-(1)}} \mathscr{S}(u_{{-(2)}}) w \\ 
&=& \mathscr{S}(u_{{-(-1)}}) \tensor{A} u_{{-(0)}} u_{{+}}p \tensor{B}  w \\ 
&\overset{\eqref{Eq:Su}}{=}&  u\tensor{A}p\tensor{B}w.
\end{eqnarray*}
From the other hand, to check that also $\td{\B{\beta}}' \! \circ \td{\B{\beta}} = \id$, 
we first deduce from Eq.~\rmref{Eq:p-+}
\begin{equation}
\label{waitingagain}
p_{(0)} \otimes_\behhe p_{(1)} \otimes_\behhe p_{(2)} \otimes_\behhe 1_\behhe = 
p_{(-1)+(0)} \otimes_\behhe p_{(-1)+(1)} \otimes_\behhe p_{(1)} \otimes_\behhe p_{(-1)-}  p_{(0)},  
\end{equation}
which we use to see that
\begin{eqnarray*}
 \td{\B{\beta}}' \! \circ \td{\B{\beta}} ( p'\tensor{B}w\tensor{B}p) 
&=&  \td{\B{\beta}}'(\mathscr{S}(p_{{(-1)}}) \tensor{B} p_{{(0)}}p' \tensor{B}p_{{(1)}} w) \\ 
&\overset{\eqref{zucchero}}{=}& 
p_{(0)} p_{(-1)-} p' \tensor{B} \mathscr{S}(p_{(-1)+(1)}) p_{(1)} w \tensor{B} p_{(-1)+(0)} \\ 
&\overset{\eqref{waitingagain}}{=}& 
p' \tensor{B} \mathsf{t}(\gve(p_{(1)})) w \tensor{B} p_{(0)} \\
&\overset{\eqref{maxxi}}{=}& 
p'\tensor{B}w\tensor{B}p,
\end{eqnarray*}
and this concludes the proof.
\end{proof}

\begin{corollary}
\label{coro:mpb}
Let $\fk{f}:(P,\alpha,\beta) \to (P',\alpha',\beta')$ be a morphism in $\mathsf{PB}^\ell(\cH,\cK)$. Then the associated morphism 
$$
(\fk{f}, \cH\tensor{A}\fk{f}\tensor{B}\cK): (P,\cH\lJoin P\rJoin \cK) \to (P',\cH\lJoin P'\rJoin \cK)
$$ 
between the two-sided translation  Hopf algebroids (see \S\ref{ssec:bicomodalg})
is an isomorphism of Hopf algebroids and therefore a  weak equivalence. 
\end{corollary}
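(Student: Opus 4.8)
The crucial observation is that, by Lemma \ref{lemma:ISO}, the morphism $\fk{f}$ is automatically an isomorphism of left principal $(\cH,\cK)$-bundles. In particular, $\fk{f}: P \to P'$ is a bijective morphism of $(\cH,\cK)$-bicomodule algebras whose inverse $\fk{f}^{-1}: P' \to P$ is again a morphism of $(\cH,\cK)$-bicomodule algebras. Since the formation of the two-sided translation Hopf algebroid is functorial in the bicomodule algebra---this is precisely the content of the commutative diagram \eqref{Eq:qseramana} following Lemma \ref{lemma:bisemidirect}---we already know that $(\fk{f}, \cH\tensor{A}\fk{f}\tensor{B}\cK)$ is a morphism of Hopf algebroids, so that the only thing left to verify is its invertibility.

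For this, recall that a morphism of Hopf algebroids is an isomorphism exactly when both of its components are bijective. The first component $\fk{f}$ is bijective by Lemma \ref{lemma:ISO}. For the second, I would apply the same functoriality to $\fk{f}^{-1}$: this produces a morphism of Hopf algebroids $(\fk{f}^{-1}, \cH\tensor{A}\fk{f}^{-1}\tensor{B}\cK): (P',\cH\lJoin P'\rJoin \cK) \to (P,\cH\lJoin P\rJoin \cK)$. Because $\cH\tensor{A}(-)\tensor{B}\cK$ is a functor, it preserves composition and identities, whence $\cH\tensor{A}\fk{f}^{-1}\tensor{B}\cK$ is a two-sided inverse of $\cH\tensor{A}\fk{f}\tensor{B}\cK$. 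Thus both components are bijective, and $(\fk{f}, \cH\tensor{A}\fk{f}\tensor{B}\cK)$ is an isomorphism of Hopf algebroids with inverse $(\fk{f}^{-1}, \cH\tensor{A}\fk{f}^{-1}\tensor{B}\cK)$.

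Finally, any isomorphism of Hopf algebroids is a weak equivalence in the sense of Definition \ref{quellala1}: the associated induction functor is then an isomorphism of categories, and hence in particular an equivalence. I expect no genuine obstacle here, as the whole argument is formal once Lemma \ref{lemma:ISO} is invoked; the only substantive ingredient---the automatic invertibility of morphisms of left principal bundles---has already been established, and the rest is mere functoriality of $\cH\tensor{A}(-)\tensor{B}\cK$.
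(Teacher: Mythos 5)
Your proposal is correct and follows essentially the same route as the paper: the paper's proof also reduces everything to Lemma \ref{lemma:ISO} (invertibility of bundle morphisms) combined with the functoriality recorded in diagram \eqref{Eq:qseramana}, and then notes that an isomorphism of flat Hopf algebroids is trivially a weak equivalence. You merely spell out the formal details (the inverse morphism and preservation of composition) that the paper leaves implicit.
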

\begin{proof}
This directly follows from Lemma  \ref{lemma:ISO}. That this morphism is a weak equivalence can also be deduced from Proposition \ref{prop:Cielitomissyoutoo} {\em (i)} and the commutative diagram \eqref{Eq:qseramana}.
\end{proof}

\begin{rem}
As mentioned in \S\ref{ssec:ltpb}, in the Lie groupoid context it is well-known that any morphism between principal bundles is an isomorphism \cite[p.~165]{MoeMrc:LGSAC}, and hence induces  an isomorphism between the associated two-sided translation  groupoids. 
Corollary \ref{coro:mpb} states an analogous result for the associated two-sided Hopf algebroids attached to flat Hopf algebroids.  
As a consequence, any two-stage zigzag of weak equivalences, as described in the isosceles triangle in the Introduction, is unique up to an isomorphism.
\end{rem}

\section{The bicategory of principal bundles as a universal solution}
\label{sec:bicat}
In this section, we introduce the cotensor product of two  principal bundles in the Hopf algebroid context, which is the analogue  of the tensor product of  principal bundles in the framework of Lie groupoids \cite[p.~166]{MoeMrc:LGSAC}, where it is defined as the orbit space of the fibred product of the underlying bundles, see also Remark \ref{rem:tensorG} for abstract groupoids. In the case of Hopf algebroids, the cotensor product leads to the orbit space (which is the coinvariant subalgebra as mentioned in \S\ref{ssec:pcomalg}) of the tensor product of the underlying comodule algebras.
With this product,  principal bundles can be shown to form a bicategory. It turns out that  trivial bundles constitute a $2$-functor from the canonical $2$-category of flat Hopf algebroids to this bicategory, which yields a certain universal solution (or a calculus of fractions with respect to weak equivalences).  
\vspace{-0,3cm}

\subsection{The cotensor product of  principal bundles}
\label{ssec:ppb}

Consider  three flat Hopf algebroids $(A, \cH)$, $(B, \cK)$, and $(C, \cJ)$, and let 
$(P, \alpha, \beta)$ be a left principal $(\cH, \cK)$-bundle and $(Q, \sigma, \theta)$ a left   
principal $(\cK,\cJ)$-bundle.
Recall from \rmref{Eq:bicotensor} that $P\cotensor{\cK}Q$ carries the structure of an $(\cH, \cJ)$-bicomodule. 
Moreover, it is clear from the definition of a comodule algebra  
that this is simultaneously  an $A$-algebra and $C$-algebra via the following commutative diagram 
\begin{equation}\label{Eq:earthquake}
\xymatrix@R=15pt@C=30pt{ A \ar@{->}^-{\alpha}[r]  \ar@/_2pc/@{-->}_-{\td{\alpha}}[rrd] & P \ar@{->}^-{- \tensor{B} 1_ {\scriptscriptstyle{Q}}}[rr] & &  P\tensor{B}Q  \\   & & P\cotensor{\cK}Q \ar@{->}^-{}[ru] &  \\  & 0 \ar@{->}^-{}[ru] &   & Q \ar@{->}_-{1_ {\scriptscriptstyle{P}} \tensor{B} -}[uu]   \\    &  & & C.  \ar@/^2pc/@{-->}^-{\td{\theta}}[luu] \ar@{->}_-{\theta}[u]  }
\end{equation}
This structure converts the triple $(P\cotensor{\cK}Q,\td{\alpha}, \td{\theta})$ into an $(\cH,\cJ)$-bicomodule algebra. In the subsequent lemma we show that this gives in particular a left principal bundle:

\begin{lemma}
\label{lemma:cotensorpb}\noindent
\begin{enumerate}
\item The correspondence 
\begin{eqnarray*}   
\mathsf{PB}^\ell(\cH,\cK) \times  \mathsf{PB}^\ell(\cK,\cJ) & \longrightarrow & \mathsf{PB}^\ell(\cH,\cJ), \\ 
\big( (P, \alpha, \beta), (Q, \sigma, \theta) \big) & \longmapsto &   (P\cotensor{\cK}Q,\td{\alpha}, \td{\theta}), \\ 
(F,G)  & \longmapsto & F \cotensor{\cK} G 
\end{eqnarray*}
gives a well-defined functor.
\item
The canonical algebra extension $P\cotensor{\cK}Q \hookrightarrow P\tensor{B}Q$ is faithfully flat.
\end{enumerate}
\end{lemma}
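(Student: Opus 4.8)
The plan is to prove part \emph{(ii)} first, as it feeds directly into the bundle property in part \emph{(i)}, and to let everything be driven by the isomorphism of Lemma \ref{levissima}~\emph{(i)}. Applied to the left principal $(\cK,\cJ)$-bundle $Q$ and to $P$ viewed as a right $\cK$-comodule, that lemma provides an isomorphism
$$
\zeta_P\colon (P\cotensor{\cK}Q)\tensor{C}Q \,\xrightarrow{\ \cong\ }\, P\tensor{B}Q, \qquad (p\cotensor{\cK}q)\tensor{C}q' \longmapsto p\tensor{B}qq',
$$
of right $\cJ$-comodules. Writing $E:=P\cotensor{\cK}Q$, I would first record that $\zeta_P$ is $E$-linear (with $E$ acting on the source through its first tensor factor and on $P\tensor{B}Q$ through the canonical inclusion $E\hookrightarrow P\tensor{B}Q$) and $Q$-linear in the last slot; both are immediate from the displayed formula. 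Since $\theta\colon C\to Q$ is faithfully flat, the base change $E\tensor{C}Q$ is a faithfully flat $E$-module, and $\zeta_P$ transports this structure to $P\tensor{B}Q$; this is exactly the assertion of part \emph{(ii)}.

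For part \emph{(i)} I must check the two conditions of Definition \ref{def:PB} for $(E,\td\alpha,\td\theta)$, the bicomodule algebra structure being already in place by \eqref{Eq:earthquake}. \textbf{Faithful flatness of $\td\theta$:} reading $\zeta_P$ as an isomorphism of $Q$-modules, $E\tensor{C}Q\cong P\tensor{B}Q$ is faithfully flat over $Q$ (base change of the faithfully flat $\beta\colon B\to P$); since $C\to Q$ is faithfully flat, faithfully flat descent of faithful flatness yields that $\td\theta\colon C\to E$ is faithfully flat. \textbf{Bijectivity of the canonical map:} the canonical map $\can{\cH}{E}\colon E\tensor{C}E\to\cH\tensor{A}E$ is right $E$-linear, and by part \emph{(ii)} the $E$-module $N:=P\tensor{B}Q$ is faithfully flat, so it suffices to show that $\can{\cH}{E}\tensor{E}N$ is bijective. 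After the identifications $(E\tensor{C}E)\tensor{E}N\cong E\tensor{C}N$ and $(\cH\tensor{A}E)\tensor{E}N\cong\cH\tensor{A}N$, this base-changed map takes the explicit form
$$
g\colon E\tensor{C}N\to\cH\tensor{A}N,\qquad (p\cotensor{\cK}q)\tensor{C}(p'\tensor{B}q')\longmapsto p_{(-1)}\tensor{A}\,p_{(0)}p'\tensor{B}qq'.
$$

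The crux is to recognise $g$ as a base change of the canonical map of $P$. I would construct an isomorphism
$$
\Xi\colon E\tensor{C}(P\tensor{B}Q)\,\xrightarrow{\ \cong\ }\, P\tensor{B}P\tensor{B}Q,\qquad (p\cotensor{\cK}q)\tensor{C}(p'\tensor{B}q')\longmapsto p\tensor{B}p'\tensor{B}qq',
$$
by composing the symmetry isomorphism $P\tensor{B}Q\cong Q\tensor{B}P$ in the second slot, the isomorphism $\zeta_P\tensor{B}P$, and a final reordering; being a composite of isomorphisms, $\Xi$ is one. A direct comparison of formulas then gives $g=(\can{\cH}{P}\tensor{B}Q)\circ\Xi$, and since $\can{\cH}{P}$ is bijective by the definition of the principal bundle $P$, so is its base change $\can{\cH}{P}\tensor{B}Q$; hence $g$, and therefore $\can{\cH}{E}$, is bijective. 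I expect this identification to be the main obstacle, since it is the only point where the cotensor constraint and the two coactions genuinely interact; everything else is formal descent.

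Finally, functoriality is routine. For morphisms $F$ in $\lPB{\cH}{\cK}$ and $G$ in $\lPB{\cK}{\cJ}$, the map $F\cotensor{\cK}G$ is well defined as the restriction of $F\tensor{A}G$ by functoriality of the cotensor bifunctor \eqref{Eq:bicotensor}; it is simultaneously an $A$-algebra, a $C$-algebra, and an $(\cH,\cJ)$-bicomodule morphism because $F$ and $G$ are such for their respective structures, and it preserves identities and composition by the corresponding properties of $\cotensor{\cK}$. By Lemma \ref{lemma:ISO} it is automatically an isomorphism, so the assignment indeed defines the claimed functor.
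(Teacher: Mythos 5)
Your proof is correct, but at the decisive step it takes a genuinely different route from the paper's. For part \emph{(ii)} and the faithful flatness of $\td{\theta}$ you and the paper do essentially the same thing: the isomorphism $\zeta_P\colon (P\cotensor{\cK}Q)\tensor{C}Q \cong P\tensor{B}Q$ that you extract from Lemma \ref{levissima} \emph{(i)} is precisely the chain \eqref{Eq:Paciencia} in the paper's proof, and both arguments then descend (faithful) flatness along the faithfully flat extension $C \to Q$; your reordering (proving \emph{(ii)} before \emph{(i)}) is harmless, since \emph{(ii)} does not depend on \emph{(i)}. The divergence is in the bijectivity of $\can{\cH}{P\cotensor{\cK}Q}$: the paper exhibits an explicit inverse
$$
u\tensor{A}(p\cotensor{\cK} q)  \longmapsto
(u_{+(0)} \cotensor{\cK} u_{+ (1)[+]} ) \tensor{C} (p u_{-} \cotensor{\cK} q u_{+ (1)[-]} ),
$$
built from the translation maps of \emph{both} $P$ and $Q$, and verifies the compositions by Sweedler-type computations using \eqref{Eq:p-+} and \eqref{bolognacentrale}; you instead base-change along $E:=P\cotensor{\cK}Q \hookrightarrow P\tensor{B}Q$, which part \emph{(ii)} has just shown to be faithfully flat, identify the base-changed canonical map with $\can{\cH}{P}\tensor{B}Q$ via your isomorphism $\Xi$, and descend bijectivity. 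Both are sound: your $\Xi$ is well defined as a composite of isomorphisms, the identification $g=(\can{\cH}{P}\tensor{B}Q)\circ\Xi$ holds on generators, and the right $E$-linearity of the canonical map makes the descent legitimate. Your route is shorter, avoids all translation-map gymnastics, and is very much in the spirit of arguments the paper itself uses elsewhere (e.g., in Proposition \ref{lemma:trivial bibundle} and Theorem \ref{aromanflower}, where maps are shown to be isomorphisms only after a faithfully flat base change via $\zeta$); what it gives up is the explicit formula for the inverse, i.e., for the translation map of the composite bundle, which the paper's computation produces as a by-product. Two small slips, neither damaging: in the functoriality part, $F\cotensor{\cK}G$ is the restriction of $F\tensor{B}G$, not $F\tensor{A}G$, since $P\cotensor{\cK}Q\subseteq P\tensor{B}Q$; and invoking Lemma \ref{lemma:ISO} at the end is a pleasant observation but is not needed to establish the functor.
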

\begin{proof}
Part {\em (i)}:
as we have seen before, the obvious algebra map $\theta': C \to Q \to P\tensor{B}Q$ factors through 
\begin{equation*}
\label{Eq:triangle}
\xymatrix{ C \ar@{->}^-{\td{\theta}}[r] \ar@{->}_-{{\theta'}}[rd] &  P\cotensor{\cK}Q \ar@{_{(}->}[d] \\ & P\tensor{B}Q,}
\end{equation*} 
and  $\theta'$ is a faithfully flat extension since $\beta$ and $\theta$ are so. The faithfully flatness of the map $\td{\theta}: C \to P \cotensor{\cK} Q$ is seen as follows: one has a chain of $C$-module isomorphisms
\begin{equation}
\label{Eq:Paciencia}
\xymatrix@C=40pt@R=0pt{(P\cotensor{\cK}Q)\tensor{C}Q \ar@{->}^-{\cong}[r] &  P\cotensor{\cK} (Q\tensor{C}Q) \ar@{->}_-{\cong}^-{P\cotensor{\cK}\Sf{can}}[r] &  
P\cotensor{\cK}(\cK\tensor{B}Q) \ar@{->}^-{\cong}[r] & P\tensor{B}Q \\ 
(p\cotensor{\cK}q)\tensor{C}q' \ar@{|->}[r] & p\cotensor{\cK}(q\tensor{C}q')\ar@{|->}[r] &  p\cotensor{\cK}(q_{\scriptscriptstyle{(-1)}}\tensor{B}q_{\scriptscriptstyle{(0)}}q') \ar@{|->}[r] & p\tensor{B} qq', }
\end{equation}
hence $ (P \cotensor{\cK} Q)\otimes_\cehhe Q $ is also faithfully flat over $C$, and since by assumption $Q$ is so over $C$, we deduce that 
$P \cotensor{\cK} Q$ is faithfully flat over $C$.
For better distinction, let us denote the involved translation maps as
$$ 
\tauup_\pehhe: \cH \to P\tensor{B}P,\quad u \mapsto u_{+}\tensor{B}u_{-},
\qquad \tauup_\quhhu: \cK \to Q\tensor{C}Q, \quad w \mapsto w_{[+]} \tensor{C} w_{[-]}.
$$
The canonical map that turns the cotensor product into a bundle is given as
$$
\mathsf{can}: 
(P\cotensor{\cK}Q ) \tensor{C} (P\cotensor{\cK}Q) \to \cH\tensor{A} (P\cotensor{\cK}Q),\quad  (p\cotensor{\cK}q) \tensor{C}(p'\cotensor{\cK}q') \mapsto p_{(-1)}\tensor{A} (p_{(0)}p'\cotensor{\cK} qq'), 
$$
and what is going to be its inverse is defined by 
$$
\tilde{\mathsf{can}}: 
\cH\tensor{A} ( P\cotensor{\cK}Q ) \to   (P\cotensor{\cK}Q ) \tensor{C} (P\cotensor{\cK}Q),
\quad u\tensor{A}(p\cotensor{\cK} q)  \mapsto 
(u_{+(0)} \cotensor{\cK} u_{+ (1)[+]} ) \tensor{C} (p u_{-} \cotensor{\cK} q u_{+ (1)[-]} ),
$$
which are well-defined maps by the $A$-linearity of the coaction as well as using \rmref{bolognacentrale}.
We then compute
\begin{equation*}
\begin{split}
(\tilde{\mathsf{can}} \circ \mathsf{can})\big( (p\cotensor{\cK}q) \tensor{C}(p'\cotensor{\cK}q')\big) &=  \tilde{\mathsf{can}}\big(p_{(-1)}\tensor{A} (p_{(0)}p'\cotensor{\cK} qq' ) \big) \\
&=  (p_{(-1) + (0)} \cotensor{\cK} p_{(-1) + (1)[+]} ) \tensor{C} (p_{(0)} p_{(-1)-}  p' \cotensor{\cK} qq' p_{(-1) + (1)[-]} )   \\
&\overset{\eqref{Eq:p-+} }{=} 
(p_{(0)}\cotensor{\cK} p_{(1)[+]} ) \tensor{C} (p' \cotensor{\cK} qq' p_{(1)[-]}) \\
&= (p\cotensor{\cK} q_{(-1)[+]} ) \tensor{C} ( p' \cotensor{\cK} q' q_{(0)} q_{(-1)[-]} )  \\
&\overset{\eqref{Eq:p-+}}{=}  (p\cotensor{\cK} q) \otimes_\cehhe (p' \cotensor{\cK} q'), \\
\end{split}
\end{equation*}
where we used the definition of the cotensor product in the fourth step. The opposite verification is left to the reader. To prove part {\em (ii)}, 
consider the isomorphism of Eq.~\eqref{Eq:Paciencia}. It is clear that this is an isomorphism of left $P\cotensor{\cK}Q$-modules; since $Q$ is a faithfully flat $C$-module, $(P\cotensor{\cK}Q)\tensor{C}Q \cong P\tensor{B}Q$ is a faithfully flat $P\cotensor{\cK}Q$-module as well.
%
\end{proof}

\begin{rem}
\label{jumpingwindow}
Of course, the construction of the functor 
in Lemma \ref{lemma:cotensorpb} can be adapted
{\em mutatis mutandum} for right principal bundles as well as for principal bibundles.
\end{rem}

An example of the cotensor product construction above arises from the following proposition,

\begin{proposition}\label{prop:1out2}
Let $(A,\cH)$ and $(C_i,\cJ_i)$, $i=1,2$, be  flat Hopf algebroids. Then any diagram of weak equivalences
\begin{small}
$$
\xymatrix@R=15pt{ (C_1,\cJ_1) & & (C_2,\cJ_2)  \\ & \ar@{->}^-{\B{\theta}_1}[lu] (A,\cH) \ar@{->}_-{\B{\theta}_2}[ru] &}
$$
\end{small}
can be completed to the following diagram 
\begin{small}
\begin{equation}
\label{kontoauszug}
\xymatrix@R=15pt{ & \big(P_1^{\co}\cotensor{\cH}P_2, \cJ_1\lJoin \big(P_1^{\co}\cotensor{\cH}P_2\big) \rJoin \cJ_2\big) & \\ (C_1,\cJ_1) \ar@{->}^-{\B{\zeta}_1}[ru] & & (C_2,\cJ_2), \ar@{->}_-{\B{\zeta}_2}[lu]  \\ & \ar@{->}^-{\B{\theta}_1}[lu] (A,\cH) \ar@{->}_-{\B{\theta}_2}[ru] &}
\end{equation}
\end{small}
of weak equivalences,
where $P_i=\cH\tensor{\theta_i}C_i$, $i=1,2$, are the respective associated trivial bundles. 
\end{proposition}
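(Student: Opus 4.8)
The plan is to realise the apex of the diamond as the two-sided translation Hopf algebroid of a principal bibundle built by a cotensor product, and then to read off from Proposition~\ref{prop:Cielitomissyoutoo} that the two new edges are weak equivalences; the commutativity up to a $2$-cell will come from an explicit natural transformation.

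First I would upgrade the given trivial bundles to bibundles. Since each $\B{\theta}_i$ is a weak equivalence, the implication $(iii)\Rightarrow(i)$ of Proposition~\ref{lemma:trivial bibundle} shows that the associated trivial left principal bundle $P_i=\cH\tensor{\theta_i}C_i$ of Example~\ref{exam:HopfMorph} is in fact a principal $(\cH,\cJ_i)$-bibundle. By the left--right symmetry of Remark~\ref{rema:PB}~(ii), the opposite bundle $P_1^{\co}=C_1\tensor{\theta_1}\cH$ is then a principal $(\cJ_1,\cH)$-bibundle, and in particular a left principal $(\cJ_1,\cH)$-bundle. Applying Lemma~\ref{lemma:cotensorpb} (in its bibundle version, see Remark~\ref{jumpingwindow}) to $P_1^{\co}\in\lPB{\cJ_1}{\cH}$ and $P_2\in\lPB{\cH}{\cJ_2}$, the cotensor product
$$
\mathbf{P}:=P_1^{\co}\cotensor{\cH}P_2
$$
is a principal $(\cJ_1,\cJ_2)$-bibundle. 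Computing the defining equaliser exactly as in the commutative diagram in the proof of Proposition~\ref{lemma:trivial bibundle} moreover identifies $\mathbf{P}$ with $C_1\tensor{A}\cH\tensor{A}C_2$, which is the bicomodule algebra of Example~\ref{exm:bhb} and makes the structure maps below explicit.

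Next I would complete the diamond. By Lemma~\ref{lemma:bisemidirect}, the bibundle $\mathbf{P}$ produces the flat two-sided translation Hopf algebroid $(\mathbf{P},\cJ_1\lJoin\mathbf{P}\rJoin\cJ_2)$ together with the two attached morphisms
$$
\B{\zeta}_1:(C_1,\cJ_1)\to(\mathbf{P},\cJ_1\lJoin\mathbf{P}\rJoin\cJ_2)\leftarrow(C_2,\cJ_2):\B{\zeta}_2,
$$
that is, the maps $\B{\alpha}$ and $\B{\beta}$ of the bicomodule-algebra datum $(\mathbf{P},\td{\alpha},\td{\theta})$. As $\mathbf{P}$ is a principal bibundle, Proposition~\ref{prop:Cielitomissyoutoo}~(iii) yields that both $\B{\zeta}_1$ and $\B{\zeta}_2$ are weak equivalences; together with the hypothesis on $\B{\theta}_1,\B{\theta}_2$ this exhibits~\eqref{kontoauszug} as a diamond of weak equivalences. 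Finally, under the identification $\mathbf{P}\cong C_1\tensor{A}\cH\tensor{A}C_2$, the algebra map $h:\cH\to\mathbf{P},\ u\mapsto 1_{C_1}\tensor{A}u\tensor{A}1_{C_2}$ satisfies $h\circ\Sf{s}=(\B{\zeta}_1\circ\B{\theta}_1)_{\scriptscriptstyle{0}}$ and $h\circ\Sf{t}=(\B{\zeta}_2\circ\B{\theta}_2)_{\scriptscriptstyle{0}}$ on the base algebras; one then checks that $h$ is a natural transformation $\B{\zeta}_1\circ\B{\theta}_1\Rightarrow\B{\zeta}_2\circ\B{\theta}_2$, which is automatically invertible since arrows in the associated groupoids are invertible through the antipode. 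This is the $2$-isomorphism dual to condition (BF3).

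Almost every ingredient is a direct appeal to results already established, so the proof is mostly an assembly. The two points that require genuine (but routine) checking are the well-definedness and the inverse of the canonical map for $\mathbf{P}$, which is exactly what Lemma~\ref{lemma:cotensorpb} provides, and the naturality of the comparison $2$-cell $h$ in the last step; I expect the latter---verifying compatibility with the comultiplications of $\cH$ and of $\cJ_1\lJoin\mathbf{P}\rJoin\cJ_2$---to be the main, though unsurprising, obstacle.
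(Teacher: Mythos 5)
Your proposal is correct and follows essentially the same route as the paper: upgrade each $P_i$ to a principal $(\cH,\cJ_i)$-bibundle via Proposition \ref{lemma:trivial bibundle}, form the bibundle $P_1^{\co}\cotensor{\cH}P_2$ via Lemma \ref{lemma:cotensorpb} and Remark \ref{jumpingwindow}, and conclude with Proposition \ref{prop:Cielitomissyoutoo}~\emph{(iii)}. Your final paragraph on the $2$-isomorphism $h:u\mapsto 1\tensor{A}u\tensor{A}1$ (with inverse through the antipode) is not needed for the statement itself but correctly anticipates the content and proof of the paper's subsequent Lemma \ref{lema:1out2}.
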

\begin{proof}
Since $\theta_i$ is a weak equivalence, 
$P_i$ is a  principal $(\cH,\cJ_i)$-bibundle by Proposition \ref{lemma:trivial bibundle}. 
Therefore, by Lemma \ref{lemma:cotensorpb} (and its right hand side version, see Remark \ref{jumpingwindow}),   
the cotensor product $P^{\co}_1\cotensor{\cH}P_2$ is a principal $(\cJ_1,\cJ_2)$-bibundle as well and the proof is completed using Proposition \ref{prop:Cielitomissyoutoo} {\em (iii)}.
\end{proof}

\begin{example}\label{exm:HS}
A particular situation of  Proposition \ref{prop:1out2} is the one considered in Example \ref{exm:bhb}:  let  $\phi: B \rightarrow  A \leftarrow B': \psi$  be a diagram of commutative algebras. Assume that $\alpha:A \to P:=\cH\tensor{\phi}B$, $a \mapsto \Sf{s}(a)\tensor{A}1_{\scriptscriptstyle{B}}$, and  $\alpha':A \to P':=\cH\tensor{\psi}B'$ are faithfully flat extensions.  This, in particular, means that $B$ and $B'$ are Landweber exact. 
Consider the algebra $C:=B\tensor{A}\cH\tensor{A}B'$ along with the scalar extension Hopf algebroids $(C,\cH_{\scriptscriptstyle{\phiup}})$ and  $(C,\cH_{\scriptscriptstyle{\psiup}})$, where  $\phiup: A \rightarrow C \leftarrow A: \psiup$ are the obvious maps constructed from $\phi$ resp.\ $\psi$ as in Example \ref{exm:bhb}. Now 
$(B,\cH_{\scriptscriptstyle{\phi}}) \overset{\scriptscriptstyle{\B{\alpha}}}{\longleftarrow}  (A,\cH)  \overset{\scriptscriptstyle{\B{\alpha'}}}{\longrightarrow} (B',\cH_{\scriptscriptstyle{\psi}})$ is a diagram of weak equivalences by Proposition \ref{lemma:trivial bibundle}. Applying 
Proposition \ref{prop:1out2}, we get a diagram 
$$
(B,\cH_{\scriptscriptstyle{\phi}}) \longrightarrow  (C,\cH{\scriptscriptstyle{\phiup}}) \,\cong\, (C,\cH{\scriptscriptstyle{\phi}} \lJoin C \rJoin \cH{\scriptscriptstyle{\psi}})  \,\cong\,  (C,\cH{\scriptscriptstyle{\phiup}}) \longleftarrow (B',\cH_{\scriptscriptstyle{\psi}})\
$$
of weak equivalences, where the middle isomorphisms are as in Example  \ref{exm:bhb}.
This, in fact,  is part of the proof given in \cite[Theorem 6.5]{HovStr:CALEHT}.
\end{example}

\subsection{The bicategory of principal bundles}
\label{ssec:bicat}
In particular, the constructions in the preceding subsection allow for the main observation in this section:
\begin{prop}
\label{piazzadellorologio}
The data given by
\begin{center}
\begin{enumerate}[\quad \raisebox{1pt}{$\scriptstyle{\bullet}$}]
\item
flat Hopf algebroids (as $0$-cells),
\item
left principal bundles (as $1$-cells), 
\item
as well as morphisms of left principal bundles (as $2$-cells)
\end{enumerate}
\end{center}
define a bicategory.
\end{prop}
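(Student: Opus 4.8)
The plan is to produce all the data of a bicategory and then verify the coherence axioms. As $0$-cells I take the flat Hopf algebroids; as the hom-category between $(A,\cH)$ and $(B,\cK)$ I take $\lPB{\cH}{\cK}$, which is a category by definition and in fact a groupoid by Lemma~\ref{lemma:ISO}; as horizontal composition I take the cotensor product functor $\cotensor{\cK}\colon \lPB{\cH}{\cK}\times\lPB{\cK}{\cJ}\to\lPB{\cH}{\cJ}$ of Lemma~\ref{lemma:cotensorpb}; and as the identity $1$-cell on $(A,\cH)$ the unit bundle $\mathscr{U}(\cH)=\cH$ of Example~\ref{Exam:H}. Lemma~\ref{lemma:cotensorpb}(i) already guarantees that horizontal composition is a functor in both arguments, which simultaneously encodes the action on $2$-cells and the interchange law. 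What remains is to construct the unit and associativity constraints and to check the triangle and pentagon identities.

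For the unitors I would use the canonical isomorphisms $\cH\cotensor{\cH}N\cong N$ and $N\cong N\cotensor{\cK}\cK$ recalled in \S\ref{ssec:bicomod}. Specialised to a left principal $(\cH,\cK)$-bundle $P$ these yield $\lambda_P\colon \mathscr{U}(\cH)\cotensor{\cH}P\xrightarrow{\ \cong\ }P$ and $\rho_P\colon P\cotensor{\cK}\mathscr{U}(\cK)\xrightarrow{\ \cong\ }P$; a direct check shows that both are morphisms of $(\cH,\cK)$-bicomodule algebras and natural in $P$, hence invertible $2$-cells (their invertibility is in any case automatic from Lemma~\ref{lemma:ISO}).

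The heart of the argument, and the step I expect to be the main obstacle, is the associator. For left principal bundles $P$, $Q$, $R$ over $(\cH,\cK)$, $(\cK,\cJ)$, $(\cJ,\cL)$, iterating Lemma~\ref{lemma:cotensorpb} shows that both $(P\cotensor{\cK}Q)\cotensor{\cJ}R$ and $P\cotensor{\cK}(Q\cotensor{\cJ}R)$ are left principal $(\cH,\cL)$-bundles. The subtlety is that the naive associativity of cotensor products recalled in \S\ref{ssec:bicomod} would require flatness of $R$ over the base $C$ of $\cJ$, which a left principal bundle need not possess. I would circumvent this as follows: the module associativity isomorphism $(P\tensor{B}Q)\tensor{C}R\cong P\tensor{B}(Q\tensor{C}R)$ restricts to a natural candidate map between the two iterated cotensor products, and to see that it is well defined I would use that, by Lemma~\ref{lemma:cotensorpb}(ii), $P\tensor{B}Q$ is faithfully flat over $P\cotensor{\cK}Q$, so that the inclusion $P\cotensor{\cK}Q\hookrightarrow P\tensor{B}Q$ is a pure monomorphism and therefore remains injective after applying $-\tensor{C}R$ for an arbitrary $C$-module $R$. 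This purity is precisely what forces the comparison map to land in the correct cotensor subspace. Once the comparison map is exhibited as a morphism of left principal $(\cH,\cL)$-bundles, Lemma~\ref{lemma:ISO} upgrades it to an isomorphism at no extra cost, so that no separate inverse need be constructed; naturality in $P,Q,R$ is then immediate from naturality of the module associator.

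Finally, I would verify the triangle and pentagon coherence identities by observing that both the associators and the unitors are restrictions of the corresponding canonical unit and associativity isomorphisms for the ordinary tensor product of modules, which satisfy Mac\,Lane coherence on the nose. Since all the comparison $2$-cells in sight are induced from these strictly coherent module-level isomorphisms and are monomorphisms into the ambient triple and quadruple tensor products (again by the purity argument above), the two coherence diagrams commute after composing with the faithful inclusions, and hence commute. The same bookkeeping applies \emph{mutatis mutandis} to right principal bundles and to bibundles (Remark~\ref{jumpingwindow}), giving the bicategories $\mathsf{PB}^{\scriptscriptstyle{r}}$ and $\mathsf{PB}^{\scriptscriptstyle{b}}$ as well.
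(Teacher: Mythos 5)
Your proposal is correct and follows essentially the same route as the paper: identity $1$-cells are the unit bundles, horizontal composition is the cotensor product of Lemma~\ref{lemma:cotensorpb}, and the associator is obtained by embedding both iterated cotensor products into $P\tensor{B}Q\tensor{C}R$, using flatness of $P$ over $B$ on one side and the faithful flatness (hence purity) of the extension $P\cotensor{\cK}Q \hookrightarrow P\tensor{B}Q$ from Lemma~\ref{lemma:cotensorpb}~(ii) on the other. The only cosmetic differences are that you invoke Lemma~\ref{lemma:ISO} to obtain invertibility of the comparison map where the paper identifies the two sides as the same subobject via the universal property of kernels, and that you sketch the coherence axioms, which the paper leaves to the reader.
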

\begin{proof}
The unit $0$-cells in this bicategory are the unit bundles of the form $\mathscr{U}(\cH)$ as in Example \ref{Exam:H}.
The multiplication of two principal bundles ({\em i.e.}, their cotensor product) and of their morphisms is given as in Lemma \ref{lemma:cotensorpb}. 
The associativity of the cotensor product is not obvious in this case as it does not follow directly from the flatness of the involved Hopf algebroids: let $(A, \cH)$, $(B, \cK)$, $(C, \cJ)$, and $(D, \cI)$ be flat Hopf algebroids, as well as $(P, \ga, \gb)$, $(Q, \gs, \theta)$,  and $(S, \gamma, \gd)$ be left principal 
$(\cH, \cK)$-, $(\cK, \cJ)$-, resp.\ $(\cJ, \cI)$-bundles. First of all, 
we have the following diagram
\begin{small}
$$
\xymatrix@R=10pt{ P\cotensor{\cK} (Q \cotensor{\cJ} S)   \ar@{^{(}->}^-{}[r] &  P\tensor{B} (Q \cotensor{\cJ} S) \; \ar@{^{(}->}^-{}[rd]  & \\ &  &   P\tensor{B}Q\tensor{C}S, \\ 
(P \cotensor{\cK} Q) \cotensor{\cJ} S \ar@{^{(}->}^-{}[r]  &   (P \cotensor{\cK} Q) \tensor{C} S\;  \ar@{^{(}->}^-{}[ru]  &
}
$$
\end{small}
where the upper injections result from definitions and the flatness of $P$ over $B$. The second map of the lower injections 
follows from the fact that, as in Lemma \ref{lemma:cotensorpb} {\em (ii)}, the injection $P \cotensor{\cK} Q \hookrightarrow P \otimes_\behhe Q$ is faithfully flat. Using the universal property of kernels, we deduce the desired natural  isomorphism
$$
(P \cotensor{\cK} Q) \cotensor{\cJ} S \overset{\simeq}{\lra} P \cotensor{\cK} (Q \cotensor{\cJ} S).
$$
The remaining axioms to be verified in a bicategory are left to the reader.
\end{proof}

We denote this bicategory by ${\mathsf{PB}}^\ell$  and refer to it as the \emph{bicategory of (left) principal bundles}. The category of $1$- and $2$-cells from $(A, \cH)$ to $(B, \cK)$  then is the category $\lPB{\cH}{\cK}$, see \S\ref{ssec:gpb}. 

Similarly, we can introduce the \emph{bicategory of right principal bundles} $\mathsf{PB}^{\scriptscriptstyle{r}}$ and also the  \emph{bicategory of principal bibundles} $\mathsf{PB}^{\scriptscriptstyle{b}}$
as mentioned in Remark \ref{jumpingwindow}. On the other hand,
by Remark \ref{rema:PB} {\em (ii)} there is an isomorphism 
${\mathsf{PB}}^\ell \cong ({\mathsf{PB}}^{\scriptscriptstyle{r}})^\op$ of bicategories, using B\'enabou's terminology \cite[\S3]{Benabou}: for a bicategory $\mathscr{B}$, denote by $\mathscr{B}^\op$ its \emph{transpose bicategory}, obtained from $\mathscr{B}$ by reversing $1$-cells. On the other hand, its \emph{conjugate bicategory} $\mathscr{B}^{\co}$ is obtained by reversing $2$-cells. We will call a  morphism between two bicategories in the sense of \cite[\S4]{Benabou} a  \emph{$2$-functor}. 

\subsection{Invertible $1$-cells}\label{ssec:1cells}

Recall that an \emph{internal equivalence} between two $0$-cells $(A,\cH)$ and $(B,\cK)$ in ${\mathsf{PB}}^\ell$ is given by two $1$-cells $(P,\alpha,\beta)$ and $(Q,\sigma,\theta)$ in $\lPB{\cH}{\cK}$ resp.\ $\lPB{\cK}{\cH}$, such that
$$
P\cotensor{\cK}Q \cong \mathscr{U}(\cH),\quad  Q\cotensor{\cH}P \cong \mathscr{U}(\cK),
$$
holds
as $1$-cells, respectively, in  $\bPB{\cH}{\cH}$ and $\bPB{\cK}{\cK}$. 
Here we are implicitly assuming the triangle property, that is, 
we assume  the following diagrams
\begin{small}
\begin{equation}\label{Eq:trianglI}
\xymatrix@R=13pt{  & Q \cotensor{\cH} \cH  \ar@{->}^-{\cong}[r] & Q\cotensor{\cH}\big(P\cotensor{\cK}Q\big)\;\; \ar@{^{(}->}^-{}[rd] \ar@{->}^-{\cong}[dd]  & \\  Q \ar@{->}^-{\cong}[ru] \ar@{->}_-{\cong}[rd] & & & Q\tensor{A}P\tensor{B}Q \\ & \cK \cotensor{\cK} Q \ar@{->}^-{\cong}[r] & \big(Q\cotensor{\cH}P\big)\cotensor{\cK}Q \;\; \ar@{^{(}->}^-{}[ru]  &  }
\end{equation}
\end{small}
and 
\begin{small}
\begin{equation}\label{Eq:trianglII}
\xymatrix@R=13pt{  & P \cotensor{\cK} \cK  \ar@{->}^-{\cong}[r] & P\cotensor{\cK}\big(Q\cotensor{\cH}P\big) \;\; \ar@{^{(}->}^-{}[rd] \ar@{->}^-{\cong}[dd]  & \\  P \ar@{->}^-{\cong}[ru] \ar@{->}_-{\cong}[rd] & & & P\tensor{B}Q\tensor{A}P \\ & \cH \cotensor{\cH} P \ar@{->}^-{\cong}[r] & \big(P\cotensor{\cK}Q\big)\cotensor{\cH}P \;\; \ar@{^{(}->}^-{}[ru] &  }
\end{equation}
\end{small}
to be commutative. In this case, we also say that $(A, \cH)$ and $(B, \cK)$ are \emph{internally equivalent in $\mathsf{PB}^\ell$}.  Internal equivalences are, up to $2$-isomorphisms,  uniquely determined. More precisely, given a $1$-cell $P$ in $\mathsf{PB}^\ell$, if we assume that there exists $Q$ and $Q'$ in $\mathsf{PB}^\ell$ such   that 
$$
Q\cotensor{\cH}P \cong \mathscr{U}(\cK),\quad P\cotensor{\cK}Q \cong \mathscr{U}(\cH),
$$ and 
$$
Q'\cotensor{\cH}P \cong \mathscr{U}(\cK),\quad P\cotensor{\cK}Q' \cong \mathscr{U}(\cH),
$$  then we have $Q\cong Q'$ as $1$-cells. As in the general case, this  is an  easy consequence  of the associativity of the cotensor product in $\mathsf{PB}^\ell$. Such a $P$ is called an \emph{invertible left principal bundle}.  

Examples of invertible left principal bundles are typically obtained by bibundles:
\begin{prop}
\label{fororomano} 
Let $(P,\alpha, \beta)$ be a left principal $(\cH, \cK)$-bundle and let $(Q, \sigma, \gamma)$ be a right principal $(\cK, \cH)$-bundle.
\begin{enumerate}
\item
The translation map 
$\tauup: \cH \to P\tensor{B}P$ 
factors through the map 
$$
\tauup': \cH \to    P\cotensor{\cK}P^{\co}. 
$$ 
Analogously, the translation map
$\nuup: \cK \to Q\tensor{A}Q$ factors through
$$
\nuup': \cK \to Q \cotensor{\cH} Q^{\co}.
$$
\item
Assume moreover that $(P,\alpha, \beta)$ is a  
principal $(\cH,\cK)$-bibundle. Then $(P^{\co}, \beta,\alpha)$ is a principal $(\cK, \cH)$-bibundle and the translation maps induce isomorphisms
$$
\mathscr{U}(\cH) \overset{\simeq}{\longrightarrow}  P\cotensor{\cK}P^{\co}, 
\quad \mathscr{U}(\cK)  \overset{\simeq}{\longrightarrow}  P^{\co} \cotensor{\cH} P
$$
of 
principal $(\cH, \cH)$-bibundles resp.\ of principal $(\cK, \cK)$-bibundles. Furthermore, $(P,\alpha, \beta)$ is an invertible $1$-cell in $\mathsf{PB}^\ell(\cH,\cK)$.
\end{enumerate}
\end{prop}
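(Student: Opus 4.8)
The plan is to dispatch part (i) as a direct check of the equaliser condition defining the cotensor product, and then to reduce all the isomorphism claims in part (ii) to Lemma \ref{lemma:ISO}. For part (i), recall that $\tauup$ is by definition $\can{\cH}{P}^{-1}(-\otimes_\ahha 1_\pehhe)$, and that by Lemma \ref{lemma:comdalg} {\em (iv)} the canonical map — hence its inverse — is a morphism of right $\cK$-comodules. To see that $\tauup'$ lands in $P\cotensor{\cK}P^{\co}$, I would verify that $u_+\tensor{B}u_-$ satisfies the equaliser condition, namely that the right $\cK$-coaction on the first leg coincides with the opposite left $\cK$-coaction on the second leg; concretely, this amounts to \eqref{casadiaugusto1} after applying the antipode to one $\cK$-leg and transporting it across the tensor factors. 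The statement for $\nuup$ is the left–right mirror image and follows from the isomorphism of categories $\lPB{\cH}{\cK}\cong\rPB{\cK}{\cH}$ of Remark \ref{rema:PB} {\em (ii)} together with the analogous relation \eqref{casadiaugusto2}.

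For part (ii), I would first record that $(P^{\co},\beta,\alpha)$ is a principal $(\cK,\cH)$-bibundle: applying Remark \ref{rema:PB} {\em (ii)} to the left principal structure of $P$ produces the right principal structure of $P^{\co}$, and applying it to the right principal structure of $P$ produces the left principal structure of $P^{\co}$, so $P^{\co}$ is principal on both sides. Consequently, by Lemma \ref{lemma:cotensorpb} in its bibundle form (Remark \ref{jumpingwindow}), $P\cotensor{\cK}P^{\co}$ is a principal $(\cH,\cH)$-bibundle, and so is $\mathscr{U}(\cH)$ by Example \ref{Exam:H}.

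The heart of the matter is to check that the factored translation map $\tauup'\colon \mathscr{U}(\cH)=\cH \to P\cotensor{\cK}P^{\co}$ is a morphism of $(\cH,\cH)$-bicomodule algebras. It is multiplicative by \eqref{ceuta} together with the commutativity of $P$ (which rewrites $v_-u_-$ as $u_-v_-$); it intertwines the source and target maps by \eqref{bolognacentrale}; and it is left- resp.\ right-$\cH$-colinear by \eqref{japan} resp.\ \eqref{Eq:S+-}. Once this is in place, Lemma \ref{lemma:ISO} immediately promotes $\tauup'$ to an isomorphism of principal bundles. The second isomorphism $\mathscr{U}(\cK)\overset{\simeq}{\to}P^{\co}\cotensor{\cH}P$ then costs nothing extra: it is obtained by running exactly the same argument for the bibundle $P^{\co}$ (using $(P^{\co})^{\co}=P$).

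Finally, these two isomorphisms exhibit $P^{\co}$ as a two-sided inverse of $P$ for the cotensor product, so that $P$ is an invertible $1$-cell in $\mathsf{PB}^\ell(\cH,\cK)$; here I would invoke the associativity of $\cotensor{}$ established in Proposition \ref{piazzadellorologio} to match the coherence (triangle) data of \eqref{Eq:trianglI}--\eqref{Eq:trianglII}. I expect the main obstacle to be precisely the colinearity verifications for $\tauup'$: unlike multiplicativity and the unit compatibilities, these intertwine the two $\cK$-legs of the cotensor product with the $\cH$-comodule structures on either side and are not a formal consequence of $\tauup$ being the inverse of a comodule map — they require a careful use of \eqref{japan} and \eqref{Eq:S+-}, and implicitly the fact that the antipode is an anti-coring morphism.
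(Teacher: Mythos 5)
Your part (i) and the bulk of part (ii) are correct and coincide with the paper's own argument: the equaliser condition for $\tauup'$ is exactly Eq.~\eqref{casadiaugusto1} transported across the tensor factors by the antipode, multiplicativity is \eqref{ceuta} plus commutativity of $P$, the unit compatibilities are \eqref{bolognacentrale}, left and right $\cH$-colinearity are \eqref{japan} and \eqref{Eq:leftright} combined with \eqref{Eq:S+-}, and Lemma \ref{lemma:ISO} then upgrades $\tauup'$ (and, \emph{mutatis mutandis}, the corresponding map for $P^{\co}$) to an isomorphism of principal bibundles.

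The gap is in your final step. By the definition in \S\ref{ssec:1cells}, being an invertible $1$-cell in $\mathsf{PB}^\ell(\cH,\cK)$ is \emph{not} merely the existence of isomorphisms $P\cotensor{\cK}P^{\co}\cong\mathscr{U}(\cH)$ and $P^{\co}\cotensor{\cH}P\cong\mathscr{U}(\cK)$: the triangle diagrams \eqref{Eq:trianglI}--\eqref{Eq:trianglII} must commute for these specific $2$-isomorphisms. Associativity of the cotensor product (Proposition \ref{piazzadellorologio}) only makes those diagrams well posed; it cannot make them commute, because commutativity is a genuine compatibility between the two isomorphisms you constructed, i.e.\ between the left translation map $u\mapsto u_+\tensor{B}u_-$ and the right translation map $v\mapsto v^-\tensor{A}v^+$ of the bibundle $P$. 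Concretely (and this is how the paper concludes; note that with $Q=P^{\co}$ there is only one diagram to check), the triangle property amounts to the identity
$$
p_{(0)} \tensor{B} p_{(1)}{}^{-} \tensor{A}  p_{(1)}{}^{+} \;=\;
p_{(-1)+} \tensor{B} p_{(-1)-} \tensor{A}  p_{(0)} \quad \in P \tensor{B} P \tensor{A} P,
$$
for every $p\in P$, which is verified by applying the bijection $P\tensor{B}\can{P}{\cK}$ to both sides and using Eq.~\eqref{Eq:p-+} to obtain the same element $p_{(0)}\tensor{B}1_{\scriptscriptstyle{P}}\tensor{B}p_{(1)}$ of $P\tensor{B}P\tensor{B}\cK$. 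Without this computation (or, alternatively, an appeal to the general bicategorical fact that a pseudo-inverse can be modified so as to satisfy the triangle identities --- which you did not invoke, and which moreover would not yield the triangles for the translation-map isomorphisms themselves, as is needed later, e.g.\ in the proof of Proposition \ref{prop:el parte}), the invertibility claim is not established.
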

\begin{proof}
Part {\em (i)}: to show that the image of the map $\tauup: u \mapsto u_+ \otimes_\behhe u_-$ lands for every $u \in \cH$ in the cotensor product $P\cotensor{\cK}P^{\co}$, we need to show that 
$$
u_{+(0)} \otimes_\behhe u_{+(1)} \otimes_\behhe u_- = u_{+} \otimes_\behhe \mathscr{S}(u_{-(1)}) \otimes_\behhe u_{-(0)} \quad \in P \otimes_\behhe \cK \otimes_\behhe P,
$$
where we used the coopposite comodule structure given in \rmref{Eq:leftright}. This is done by applying the map
$$
 P \otimes_\behhe P \otimes_\behhe \cK \to P \otimes_\behhe \cK \otimes_\behhe P, \quad  p' \otimes_\behhe p \otimes_\behhe w \mapsto  p' \otimes_\behhe w\mathscr{S}(p_{(1)}) \otimes_\behhe p_{(0)} 
$$
to both sides of Eq.~\rmref{casadiaugusto1}. The situation for right bundles is proven {\em mutatis mutandum}.

Part {\em (ii)}: by Lemma \ref{lemma:cotensorpb} the cotensor product carries the structure of a principal bundle. It is furthermore clear that $\tauup'$ is compatible with the source and target maps of $\cH$. The fact that $\tauup'$ is left $\cH$-colinear follows directly from \rmref{japan}. To show that this map is also right $\cH$-colinear one uses \rmref{Eq:leftright} along with \rmref{Eq:S+-}. 
To prove that $\tauup'$ is an isomorphism then follows from Lemma \ref{lemma:ISO} as it is, by Eq.~\rmref{ceuta}, a morphism of principal (bi)bundles.
%
To check the last statement, one only needs to show the triangle property \eqref{Eq:trianglI} (notice that here there is, in fact, only one diagram).  Using the notation of \S\ref{ssec:gpb}, the commutativity of \eqref{Eq:trianglI} reads in this case:
$$
p_{(0)} \tensor{B} p_{(1)}{}^{-} \tensor{A}  p_{(1)}{}^{+} \, = \,  
p_{(-1) +} \tensor{B} p_{(-1) -} \tensor{A}  p_{(0)} \quad \in   P \tensor{B} P \tensor{A} P,
$$ 
for every $p \in P$. To verify this, one  first  applies the map $P\tensor{B}\can{P}{\cK}^{-1}$ to both terms and then uses Eq.~\eqref{Eq:p-+} in order to obtain the same element $p_{(0)} \tensor{B}  1_{\scriptscriptstyle{P}} \tensor{B} p_{(1)}$ in $P\tensor{B}P\tensor{B}\cK$.
\end{proof}

\begin{proposition}
\label{prop:el parte}\noindent
\begin{enumerate}
\item
Let $(P, \alpha, \beta)$ be a left principal $(\cH,\cK)$-bundle. Assume moreover that $(P, \alpha, \beta)$ is an invertible $1$-cell in ${\mathsf{PB}}^\ell$ with inverse $(Q,  \theta,\sigma) \in {\mathsf{PB}}^\ell(\cK,\cH)$. Then $(P, \alpha, \beta)$ is a principal $(\cH,\cK)$-bibundle and $(Q, \theta, \sigma)$ is a principal $(\cK,\cH)$-bibundle. Furthermore, we have an isomorphism 
$$
Q \cong P^{\co}
$$
of principal bundles.
\item 
Let $\B{\phi}: (A, \cH) \to (B, \cK)$ be a morphism of flat Hopf algebroids. Then $\B{\phi}$ is a weak equivalence if and only if the trivial bundle $P=\cH \otimes_{\scriptscriptstyle{\phi}} B$ is an invertible $1$-cell in $\lPB{\cH}{\cK}$.
\end{enumerate}
\end{proposition}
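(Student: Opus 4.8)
The plan is to handle the two parts in series, reducing \textit{(ii)} to \textit{(i)}. For part \textit{(ii)}: if $\B{\phi}$ is a weak equivalence, then $P=\cH\tensor{\phi}B$ is a principal $(\cH,\cK)$-bibundle by the implication $(iii)\Rightarrow(i)$ of Proposition \ref{lemma:trivial bibundle}, hence an invertible $1$-cell by Proposition \ref{fororomano} \textit{(ii)}; conversely, if $P$ is invertible, then part \textit{(i)} makes $P$ a principal bibundle, and $(i)\Rightarrow(iii)$ of Proposition \ref{lemma:trivial bibundle} returns that $\B{\phi}$ is a weak equivalence. So all the content sits in part \textit{(i)}, and there the single statement I would aim at is
$$
Q \,\cong\, P^{\co}
$$
as $(\cK,\cH)$-bicomodule algebras. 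Once this is granted, the left principality of $Q$ transports to $P^{\co}$, so $P^{\co}$ is a left principal $(\cK,\cH)$-bundle; by the left--right symmetry of Remark \ref{rema:PB} \textit{(ii)} this is equivalent to $P$ being a right principal $(\cH,\cK)$-bundle, whence, together with the assumed left principality, $P$ is a principal bibundle (so both $\ga$ is faithfully flat and $\can{P}{\cK}$ is bijective). Running the same argument with the roles of $P$ and $Q$ exchanged shows $Q$ is a bibundle as well, completing \textit{(i)}.

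To produce the isomorphism $Q\cong P^{\co}$, I would first note that invertibility turns $-\cotensor{\cH}P$ and $-\cotensor{\cK}Q$ into mutually quasi-inverse equivalences between $\rcomod{\cH}$ and $\rcomod{\cK}$, using the associativity of the cotensor product in $\mathsf{PB}^\ell$ (Proposition \ref{piazzadellorologio}) and the defining $2$-isomorphisms $P\cotensor{\cK}Q\cong\mathscr{U}(\cH)$, $Q\cotensor{\cH}P\cong\mathscr{U}(\cK)$ subject to the triangle identities \eqref{Eq:trianglI}--\eqref{Eq:trianglII}. The candidate map is then assembled from the translation map $\tauup'\colon\cH\to P\cotensor{\cK}P^{\co}$ of Proposition \ref{fororomano} \textit{(i)} and the structure isomorphism $\nuup\colon Q\cotensor{\cH}P\cong\cK$, namely
$$
Q \,\cong\, Q\cotensor{\cH}\cH \xrightarrow{\;Q\cotensor{\cH}\tauup'\;} Q\cotensor{\cH}\bigl(P\cotensor{\cK}P^{\co}\bigr) \,\cong\, \bigl(Q\cotensor{\cH}P\bigr)\cotensor{\cK}P^{\co} \xrightarrow{\;\nuup\cotensor{\cK}P^{\co}\;} \cK\cotensor{\cK}P^{\co} \,\cong\, P^{\co},
$$
each arrow being a morphism of $(\cK,\cH)$-bicomodules, and the composite being multiplicative because $\tauup'$ and $\nuup$ respect the algebra structures. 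By the symmetric construction, with the roles of the two bundles interchanged and using the isomorphism $\mu\colon P\cotensor{\cK}Q\cong\cH$, one obtains a map $P^{\co}\to Q$ in the opposite direction.

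The main obstacle is precisely to show that these two maps are mutually inverse. One cannot simply invoke Lemma \ref{lemma:ISO}, since that statement presupposes that \emph{both} ends are left (or both right) principal bundles, whereas the right principality of $P^{\co}$ (equivalently of $P$) is exactly what remains to be proved; appealing to it here would be circular. Instead I would verify $f\circ g=\id$ and $g\circ f=\id$ by hand, reducing everything to the translation-map identities \eqref{ceuta}, \eqref{japan}, \eqref{Eq:p-+}, \eqref{casadiaugusto1}, \eqref{Eq:S+-}, the right-bundle identities \eqref{Eq:colosseo}--\eqref{casadiaugusto2}, and the triangle identities of the internal equivalence; the computation is of the same flavour as, but longer than, the verification of $\Psi\circ\can{P}{\cK}=\id$ in the proof of Proposition \ref{lemma:trivial bibundle}. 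Faithful flatness of $\ga$ can then be read off either from $Q\cong P^{\co}$ and the faithful flatness of the structure map $A\to Q$, or independently from the chain of isomorphisms $P\tensor{B}Q\cong\bigl(P\cotensor{\cK}Q\bigr)\tensor{A}Q\cong\cH\tensor{A}Q$ of \eqref{Eq:Paciencia} together with faithfully flat descent along $\beta$. Finally, once $P$ is known to be a bibundle, Proposition \ref{fororomano} \textit{(ii)} exhibits $P^{\co}$ as an inverse $1$-cell, and the uniqueness of inverses recorded in \S\ref{ssec:1cells} reconfirms $Q\cong P^{\co}$, closing the argument.
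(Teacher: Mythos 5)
Your treatment of part \textit{(ii)} coincides with the paper's, and your plan for part \textit{(i)} --- build an explicit isomorphism $Q \cong P^{\co}$ of $(\cK,\cH)$-bicomodule algebras first, and only then transport the left principality of $Q$ through the symmetry of Remark \ref{rema:PB} \textit{(ii)} to get right principality of $P$ --- is a coherent alternative ordering; indeed, written out in elements your map $f$ is exactly the auxiliary map $\mathsf{g}: Q \to P$, $q \mapsto \beta\big(\varepsilon(\zeta^{-1}(q_{(0)}\cotensor{\cH}q_{(1)+}))\big)q_{(1)-}$, that the paper introduces. The genuine gap is that the entire content of the proposition is concentrated in the step you leave unexecuted, namely that $f$ and $g$ are mutually inverse, and the toolkit you propose for that computation is partly illegitimate: the ``right-bundle identities'' \eqref{Eq:colosseo}--\eqref{casadiaugusto2} are properties of the inverse of $\can{P}{\cK}$ (respectively of $\can{Q}{\cH}$), so they presuppose precisely the right principality that part \textit{(i)} is supposed to establish. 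This is the same circularity you rightly flag when excluding Lemma \ref{lemma:ISO}; with those identities struck from the list, your proposal contains no actual argument for the key step.

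The paper's proof is organized so that a two-sided inverse verification at the level of $f,g$ is never needed. It uses $\mathsf{g}$ only through the two one-sided relations \eqref{Eq:G}, $p^{u}\mathsf{g}(q^{u})=\alpha(\varepsilon(u))$ and $\mathsf{g}(q^{w})p^{w}=\beta(\varepsilon(w))$, derived from the triangle identities \eqref{Eq:chizeta}, the colinearity of $\chi$ and $\zeta$, and the left-bundle identities alone; these relations already suffice to exhibit $\td{\mathsf{can}}(p\tensor{B}w)=p\,\mathsf{g}(q^{w})\tensor{A}p^{w}$ as a two-sided inverse of $\can{P}{\cK}$. Faithful flatness of $\alpha$ is proved separately, from the bicomodule isomorphism \eqref{Eq:Gamma}, $P\tensor{B}Q\cong\cH\tensor{A}Q$, combined with the bijectivity of $\can{\cK}{Q}$; note that your alternative route ``by faithfully flat descent along $\beta$'' is also insufficient as stated, because $Q$ is not known to be flat over $B$ (its faithfully flat leg is $\sigma: A\to Q$), so exactness and faithfulness of $(-\tensor{A}P)\tensor{B}Q$ do not by themselves descend to $-\tensor{A}P$. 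Only after $P$ is known to be a bibundle does the paper conclude $Q\cong P^{\co}$, at which point Lemma \ref{lemma:ISO} (or a faithful-flatness comparison) becomes legitimately available. To salvage your order of argument you would have to prove the mutual-inverse identities using nothing beyond \eqref{Eq:chizeta} and the left-bundle identities --- in effect re-deriving \eqref{Eq:G} and more --- so the paper's reorganization is not cosmetic but is the device that makes the proof close.
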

\begin{proof}
For better orientation, we recall here that the algebra diagrams defining $P$ and $Q$ are
 $$
\xymatrix{A\ar@{->}^-{\alpha}[r] & P & B \ar@{->}_-{\beta}[l]}, \qquad \xymatrix{A\ar@{->}^-{\sigma}[r] & Q & B, \ar@{->}_-{\theta}[l]} 
$$ 
where $\beta$ and $\sigma$ are faithfully flat, and also that the canonical maps $\can{\cH}{P}$ and $\can{\cK}{Q}$ are bijective.  

Part {\em (i)}:  by assumption, we have the following  $2$-isomorphisms 
$$
\chi: \cH \overset{\simeq}{\lra} P \cotensor{\cK} Q, \quad
u \longmapsto p^{\scriptscriptstyle{u}} \cotensor{\cK}  q^{\scriptscriptstyle{u}}, \qquad \mbox{and} \qquad
\zeta: \cK \overset{\simeq}{\lra} Q \cotensor{\cH} P, \quad
 w \longmapsto q^{\scriptscriptstyle{w}} \cotensor{\cH}  p^{\scriptscriptstyle{w}},
$$
where $\chi$ is, in particular, a morphism of $\cH$-bicomodules and $\zeta$ is a morphism of $\cK$-bicomodules. 
The triangle properties then say that we have, up to a canonical isomorphism,  
\begin{equation}
\label{Eq:chizeta}
\begin{array}{rcll}
\chi(p_{{(-1)}}) \cotensor{\cH} p_{(0)} &=& p_{(0)} \cotensor{\cK} \zeta(p_{{(1)}})   & \in P\tensor{B}Q\tensor{A}P, \\
\zeta(q_{{(-1)}}) \cotensor{\cK} q_{(0)} &=& q_{(0)} \cotensor{\cH} \chi(q_{(1)}) & \in Q\tensor{B}P\tensor{A}Q, 
\end{array}
\end{equation}
for all  $p \in P$, $q \in Q$. 
On the other hand, we also have an isomorphism 
\begin{equation}\label{Eq:Gamma}
\xymatrix@C=35pt{ P\tensor{B}Q  \ar@{->}^-{\cong}[r] & (P\cotensor{\cK}Q)\tensor{A}Q \ar@{->}^-{\chi^{-1}\tensor{A}Q}[r] &  
  \cH\tensor{A}Q}
\end{equation}
of $(\cH,\cK)$-bicomodules,
where the first isomorphism is the natural transformation of Eq.~\eqref{giotto1}. 
Using this  isomorphism, we can easily check that $\alpha$ is a faithful extension. 
Indeed, take a morphism $f$ such that $f\tensor{A}P=0$; 
then $f\tensor{A}\cH \tensor{A} Q = 0$ which yields $f=0$ since $\due \cH \ahha {}$ and $\due Q \ahha {}$ are faithfully flat.   
Now for a monomorphism $i: X \to X'$ of $A$-modules, we obtain,  using again the isomorphism \eqref{Eq:Gamma}, that $\ker(i\tensor{A}P)\tensor{B}Q = 0$, which by the bijectivity of the canonical map $\can{\cK}{Q}$ implies that $\ker(i\tensor{A}P)=0$ 
since $\due \cK \behhe {}$ and $\due Q \ahha {}$ are faithfully flat.  
This shows that $\alpha$ is a faithfully flat extension. 

We still need to check that the canonical map $\mathsf{can} : P\tensor{A}P \to P\tensor{B}\cK$ is bijective. To this end, we define what is going to be its inverse as
$$
\td{\mathsf{can}}: P\tensor{B}\cK \to P\tensor{A}P, \quad 
p\tensor{B}w \mapsto p \mathsf{g}(q^{\scriptscriptstyle{w}})\tensor{A}p^{\scriptscriptstyle{w}},
$$
where $\mathsf{g}$ is simultaneously the $A$-algebra and $B$-algebra map given explicitly by 
$$
\mathsf{g}:Q \to P, \quad q \mapsto  \beta \big(\varepsilon\big( \zeta^{-1}(q_{{(0)}}\cotensor{\cH}q_{(1)+})\big)\big) q_{(1)-}.
$$
This map satisfies
\begin{equation}
\label{Eq:G}
p^{{u}} \mathsf{g}(q^{{u}}) = \alpha\big(\varepsilon(u) \big), \qquad  
\mathsf{g}(q^{\scriptscriptstyle{w}}) p^{\scriptscriptstyle{w}} = \beta\big(\varepsilon(w) \big), 
\end{equation}
for every $u \in \cH, w \in \cK$, which is seen as follows: 
as for the second one, we have for $w \in \cK$ 
\begin{eqnarray*}
\mathsf{g}(q^{\scriptscriptstyle{w}}) p^{\scriptscriptstyle{w}} &=& \beta \big(\varepsilon\big( \zeta^{-1}(q^{\scriptscriptstyle{w}}{}_{{(0)}}\cotensor{\cH}q^{\scriptscriptstyle{w}}{}_{(1)+})\big) \big) q^{\scriptscriptstyle{w}}{}_{(1)-} p^{\scriptscriptstyle{w}} \\ 
&{=}&  \beta\big(\varepsilon\big( \zeta^{-1}(q^{\scriptscriptstyle{w}}\cotensor{\cH}p^{\scriptscriptstyle{w}}{}_{(-1)+})\big) \big) p^{\scriptscriptstyle{w}}{}_{(-1)-} p^{\scriptscriptstyle{w}}{}_{(0)} \\ 
&\overset{\eqref{Eq:u-+}}{=}&  \beta\big(\varepsilon\big( \zeta^{-1}(q^{\scriptscriptstyle{w}}\cotensor{\cH}p^{\scriptscriptstyle{w}})\big) \big) \\ 
&=&  \beta\big(\varepsilon(w) \big).
\end{eqnarray*}
As for the first equation in \eqref{Eq:G}, by the right $\cH$-colinearity of $\chi$ and Eq.~\eqref{japan}  
$$
p^{\scriptscriptstyle{u}} \tensor{B} (q^{\scriptscriptstyle{u}}{}_{(0)}\cotensor{\cH} q^{\scriptscriptstyle{u}}{}_{(1)+})
\tensor{B}q^{\scriptscriptstyle{u}}{}_{(1)-} 
\,=\, p^{\scriptscriptstyle{u_{+ (-1)}}} \tensor{B} (q^{\scriptscriptstyle{u_{+ (-1)}}}\cotensor{\cH}u_{+(0)}) \tensor{B} u_{-} \quad \in P\tensor{B}\big(Q\cotensor{\cH}P\big)\tensor{B}P,
$$ 
holds for any $u \in \cH$, an equation which can be seen in $P\tensor{B}Q\tensor{A}P\tensor{B}P$ since $P_\behhe$ is flat. Therefore, 
$$
p^{\scriptscriptstyle{u}} \mathsf{g}(q^{\scriptscriptstyle{u}}) \,=\, 
p^{\scriptscriptstyle{u_{+ (-1)}}} \beta\big( \varepsilon \zeta^{-1}\big( q^{\scriptscriptstyle{u_{+ (-1)}}}\cotensor{\cH}u_{+(0)}\big) \big) u_{-}.
$$
On the other hand, by the first equality of Eq.~\eqref{Eq:chizeta},  
$$
(p^{\scriptscriptstyle{u_{+ (-1)}}} \cotensor{\cK} q^{\scriptscriptstyle{u_{+ (-1)}}} ) \cotensor{\cH} u_{+(0)}\tensor{B} u_{-} 
\, = \, \big( \chi(u_{+ (-1)})\cotensor{\cH}u_{+(0)}\big)\tensor{B} u_{-}  \,=\,  \big( u_{+ (0)}\cotensor{\cK}\zeta(u_{+(1)})\big)\tensor{B} u_{-},
$$
which implies that
$$
p^{\scriptscriptstyle{u_{+ (-1)}}} \tensor{B} ( q^{\scriptscriptstyle{u_{+ (-1)}}}\cotensor{\cH}u_{+(0)} ) \tensor{B} u_{-} \, = \, u_{+(0)}\tensor{B}\zeta(u_{+ (1)})\tensor{B} u_{-},
$$ 
from which, in turn, we obtain that 
$$
p^{\scriptscriptstyle{u}} \mathsf{g}(q^{\scriptscriptstyle{u}}) \, =\, 
p^{\scriptscriptstyle{u_{+\,(-1)}}} \beta\big( \varepsilon \zeta^{-1}\big( q^{\scriptscriptstyle{u_{+(-1)}}}\cotensor{\cH}u_{+(0)}\big) \big) u_{-}
\, = \, u_{+(0)}\beta\big(\varepsilon(u_{+(1)})\big) u_{-}\, = \, u_{+}u_{-} \, \overset{\eqref{Eq:eps+-}}{=} \, \alpha(\varepsilon(u)),
$$ 
as claimed. Using Eqs.~\eqref{Eq:G}, we now compute from one hand,
\begin{eqnarray*}
\mathsf{can} \circ \td{\mathsf{can}} (p\tensor{B}w) & = & \mathsf{can} ( p\,\mathsf{g}(q^{\scriptscriptstyle{w}})\tensor{A}p^{\scriptscriptstyle{w}}) \\
& = &  p\,\mathsf{g}(q^{\scriptscriptstyle{w}})p^{\scriptscriptstyle{w}}{}_{{(0)}}\tensor{B} p^{\scriptscriptstyle{w}}{}_{{(1)}} \\
& = &  p\,\mathsf{g}(q^{{w_{(1)}}})p^{{w_{(1)}}}\tensor{B} w_{{(2)}} \\
& = &  p\,\beta(\varepsilon(w_{(1)}))\tensor{B} w_{(2)} \\ &=& p\tensor{B}w,
\end{eqnarray*}
and from the other side, 
\begin{eqnarray*}
\td{\mathsf{can}} \circ \mathsf{can}(p'\tensor{A}p) &=& \td{\mathsf{can}}(p'p_{{(0)}}\tensor{B}p_{{(1)}}) \\ 
& =& p'p_{{(0)}}
\mathsf{g}(q^{\scriptscriptstyle{p_{(1)}}}) \tensor{A}p^{\scriptscriptstyle{p_{(1)}}} \\
&\overset{\eqref{Eq:chizeta}}{ =} & p'p^{\scriptscriptstyle{p_{(-1)}}}
\mathsf{g}(q^{\scriptscriptstyle{p_{(-1)}}}) \tensor{A}p_{{(0)}} \\
& =& p'\alpha\big( \varepsilon( p_{{(-1)}})\big) 
 \tensor{A}p_{{(0)}} \\ &=& p'\tensor{A}p,
\end{eqnarray*}
which gives the desired bijection, and so $(P,\alpha,\beta)$ is a principal bibundle.  Similarly, one checks that $(Q,\theta,\sigma)$ is so as well.

To complete the proof of the first part, we also need to check that $Q$ is the opposite bundle of $P$. For this, we use the following chain of isomorphisms of $\Bbbk$-modules 
$$
 P\tensor{A}P   \cong  \cK\tensor{B}P \cong (Q\cotensor{\cH}P) \tensor{B}P \cong Q\tensor{A}P,
$$ where the last isomorphism is given by Eq.~\eqref{giotto1},
which leads to an isomorphism $P\cong Q$ of  $A$-modules since $P$ is faithfully flat over $A$ 
(alternatively, one can try to check that $\mathsf{g}: P \to Q$ is a bundle map and hence an isomorphism by Lemma \ref{lemma:ISO}).  
In the same way, using the faithfully flatness of $P$ over $B$, one shows 
that this is also an isomorphism of $B$-modules, and thus that $Q$ is the opposite bundle of $P$.

To prove {\em (ii)}, assume first that $\B{\phi}$ is a weak equivalence.
Then $P$ is a right principal $(\cH,\cK)$-bundle by Proposition \ref{lemma:trivial bibundle}, along with the fact that $- \cotensor{\cH} P$ 
defines an equivalence of categories with inverse
$
- \cotensor{\cK} P^{\co}.
$
From this it is clear that
$ 
P \cotensor{\cK} P^{\co} \simeq \mathscr{U}(\cH)
$ 
and 
$P^{\co}  \cotensor{\cH} P\simeq \mathscr{U}(\cK)$, see Example \ref{Exam:H} for notation.
To prove the converse, using Proposition \ref{lemma:trivial bibundle} again, we only have to show that $P = \cH \otimes_{\scriptscriptstyle{\phi}} B$ is a bibundle, which is a direct consequence of {\em (i)}. 
\end{proof}

Recall that a {\em bigroupoid} (see, {\em e.g.}, \cite{Noo:NOTGTGACM})
is a bicategory in which every $1$-cell and every $2$-cell has an inverse (not necessarily in the strict sense for $1$-cells).

\begin{corollary}\label{coro:twoJapanese}
For two $0$-cells $(A,\cH)$ and $(B,\cK)$ (that is, flat Hopf algebroids), the full subcategory of invertible $1$-cells in $\mathsf{PB}^\ell(\cH,\cK)$ coincides with the full subcategory  $\mathsf{PB}^{\scriptscriptstyle{b}}(\cH,\cK)$ of principal bibundles. In particular, the bicategory $\mathsf{PB}^\ell$ is a bigroupoid.
\end{corollary}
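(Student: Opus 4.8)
The plan is to obtain the statement as a formal consequence of Propositions~\ref{fororomano} and~\ref{prop:el parte} together with Lemma~\ref{lemma:ISO}: essentially no new computation is required, the point being to read off the correspondence ``invertible $1$-cell $\Leftrightarrow$ principal bibundle'' from what has already been established and then to repackage it as a bigroupoid structure. First I would fix the two $0$-cells $(A,\cH)$ and $(B,\cK)$ and compare the two full subcategories of $\lPB{\cH}{\cK}$ in question.

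For the inclusion of invertible $1$-cells into bibundles, I take an invertible $1$-cell $(P,\alpha,\beta)$ in $\lPB{\cH}{\cK}$ with quasi-inverse $(Q,\theta,\sigma)$ in $\lPB{\cK}{\cH}$; Proposition~\ref{prop:el parte}\,(i) then asserts precisely that $(P,\alpha,\beta)$ is a principal $(\cH,\cK)$-bibundle (and, as a bonus, that $Q\cong P^{\co}$). Conversely, starting from an object of $\bPB{\cH}{\cK}$, Proposition~\ref{fororomano}\,(ii) supplies the opposite bibundle $(P^{\co},\beta,\alpha)$ together with the two $2$-isomorphisms $\mathscr{U}(\cH)\overset{\simeq}{\longrightarrow}P\cotensor{\cK}P^{\co}$ and $\mathscr{U}(\cK)\overset{\simeq}{\longrightarrow}P^{\co}\cotensor{\cH}P$, which by construction satisfy the triangle condition~\eqref{Eq:trianglI} and therefore exhibit $P$ as an internal equivalence, that is, as an invertible $1$-cell. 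Since both subcategories are full and share the same objects, they coincide, which is the first assertion.

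For the bigroupoid statement I would argue on the two levels of cells separately. On $2$-cells, Lemma~\ref{lemma:ISO} already guarantees that every morphism of left principal bundles is an isomorphism, so a fortiori every morphism of bibundles is invertible. On $1$-cells, the first part identifies the invertible $1$-cells of ${\mathsf{PB}}^\ell$ with exactly the principal bibundles, while Proposition~\ref{fororomano}\,(ii) exhibits, for each such $P$, an explicit quasi-inverse $P^{\co}$ that is again a bibundle. Hence the sub-bicategory of ${\mathsf{PB}}^\ell$ spanned by the invertible $1$-cells is precisely $\mathsf{PB}^{\scriptscriptstyle{b}}$, and in it every $1$-cell and every $2$-cell is invertible; this is the asserted bigroupoid. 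The only delicate point---and where I expect a careful reader to press---is that being an internal equivalence requires more than an unstructured pair of $2$-isomorphisms: one must know that the witnessing isomorphisms can be chosen coherently, i.e.\ so as to fulfil the triangle identities~\eqref{Eq:trianglI}--\eqref{Eq:trianglII}. This coherence is, however, exactly what is verified inside Propositions~\ref{fororomano}\,(ii) and~\ref{prop:el parte}\,(i) (notably via the auxiliary map $\mathsf{g}$ and the identities~\eqref{Eq:chizeta}), so here I would only need to cite it and to make explicit that the passage from adjoint-equivalence data to a genuine internal equivalence has already been carried out.
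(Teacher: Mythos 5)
Your proposal is correct and follows exactly the paper's own route: the identification of invertible $1$-cells with bibundles is read off from Proposition~\ref{prop:el parte}\,(i) and Proposition~\ref{fororomano}\,(ii) (where the triangle coherence is indeed verified), and the bigroupoid claim is reduced to Lemma~\ref{lemma:ISO} for the invertibility of $2$-cells. No discrepancy with the paper's argument.
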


The last statement follows from Lemma \ref{lemma:ISO}

\subsection{The  $2$-functor $\mathscr{P}$ and principal bundles as universal solution}
\label{ssec:2F}
It is well-known that  groupoids, functors, and natural transformations form a $2$-category. 
Adapting this to  Hopf algebroids, one can construct a $2$-category as observed in \cite[\S3.1]{Naumann:07}.  
Here,  $0$-cells are Hopf algebroids (or even flat ones), $1$-cells are morphisms of Hopf algebroids, and for 
two $1$-cells $(\zeta_{{0}},\zeta_{{1}}), (\theta_{{0}},\theta_{{1}}): (A, \cH) \to (B, \cK)$, a $2$-cell $\fk{c}:  (\zeta_{{0}},\zeta_{{1}}) \to (\theta_{{0}},\theta_{{1}})$ is defined to be an algebra map $\fk{c}: \cH \to B$ 
that makes the diagrams
\begin{equation}\label{Eq:2cells}
\xymatrix{ \cH \ar@{->}^-{\fk{c}}[r] & B \\ A \ar@{->}_-{\zeta_{\scriptscriptstyle{0}}}[ru] \ar@{->}^-{\sf{s}}[u] & }\qquad  \xymatrix{ \cH \ar@{->}^-{\fk{c}}[r] & B \\ A \ar@{->}_-{\theta_{\scriptscriptstyle{0}}}[ru] \ar@{->}^-{\sf{t}}[u] & } \qquad \xymatrix{ \cH \ar@{->}^-{\Delta}[rr] \ar@{->}_-{\Delta}[d] &  & \cH\tensor{A}\cH \ar@{->}^-{m_\ckppa (\zeta_{\scriptscriptstyle{1}}\tensor{A}\sf{t}\fk{c})}[d]  \\ \cH\tensor{A}\cH \ar@{->}_-{m_\ckppa(\sf{s}{\fk{c}}\,\tensor{A}\theta_{\scriptscriptstyle{1}})}[rr]  & & \cK }
\end{equation}
commutative, where $m_\ckppa$ denotes the multiplication in $\cK$.
The identity $2$-cell for $(\zeta_{{0}},\zeta_{{1}})$ is given by $1_{\scriptscriptstyle{\zeta}}:=  \zeta_{{0}} \circ \varepsilon$.
The tensor product (or vertical composition) of $2$-cells  
is given as
$$
\xymatrix{  \fk{c}' \circ \fk{c}:  (\zeta_{{0}},\zeta_{{1}}) \ar@{->}^-{\fk{c}}[r] & (\theta_{{0}},\theta_{{1}}) \ar@{->}^-{\fk{c}'}[r] & (\xi_{{0}},\xi_{{1}}),}
$$
which yields a map
\begin{equation}
\label{Eq:verticalComp}
\fk{c}' \B{\circ} \fk{c}: \cH  \to B, \quad u \mapsto \fk{c}(u_{{(1)}}) \fk{c}'(u_{{(2)}}).
\end{equation}
We denote by  $2\text{-}\mathsf{HAlgd}$ 
the $2$-category whose $0$-cells are flat Hopf algebroids. 
Examples of $2$-cells in this $2$-category are described by the following lemma:

\begin{lemma}\label{lemma:volveranotro}
Let $\B{\phi}: (A,\cH) \to (B,\cK)$ be a morphism of flat Hopf algebroids. As in Example \ref{exam:HopfMorph}, consider its associated trivial left principal $(\cH,\cK)$-bundle $(P :=\cH\tensor{\scriptscriptstyle{\phi}}B,\alpha,\beta)$ together with the diagram 
$$
\xymatrix@R=15pt{ & (P,\cH\lJoin P\rJoin\cK) & \\ (A,\cH) \ar@{->}^-{{\B{\alpha}=(\alpha, \,\alpha_1)}\;\;}[ur] \ar@{->}_-{\B{\phi}}[rr]& & \ar@{->}_-{\;\;{\B{\beta}=(\beta, \,\beta_1)}}[ul] (B,\cK)}
$$ 
of Hopf algebroids,
where the top is the two-sided translation Hopf algebroid defined in Lemma \ref{lemma:bisemidirect}. 
Then there is a $2$-isomorphism 
$\B{\alpha} \cong \B{\beta}\circ \B{\phi}$, that is, the above diagram is commutative up to an isomorphism.
\end{lemma}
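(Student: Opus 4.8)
The plan is to exhibit the $2$-isomorphism explicitly. Both $1$-cells $\boldsymbol{\alpha}$ and $\boldsymbol{\beta}\circ\boldsymbol{\phi}$ run from $(A,\cH)$ to the two-sided translation Hopf algebroid $\mathcal{T}:=\cH\lJoin P\rJoin\cK$, so by \eqref{Eq:2cells} a $2$-cell between them is an algebra map $\fk{c}:\cH\to P$ compatible with the source and target of $\cH$ and satisfying the hexagonal identity, where now the r\^ole of $m_\ckppa$, $\Sf{s}$, $\Sf{t}$ is played by the multiplication and the source and target maps $\Sf{s}_{\mathcal{T}},\Sf{t}_{\mathcal{T}}:P\to\mathcal{T}$ of Lemma \ref{lemma:bisemidirect}. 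The obvious candidate is the canonical map
$$
\fk{c}:\cH\to P=\cH\tensor{A}B,\qquad u\mapsto u\tensor{A}1_\behhe,
$$
which is an algebra map since $P$ is the tensor product $A$-algebra $\cH\tensor{A}B$. First I would record the components: $\boldsymbol{\alpha}$ has $\zeta_{\scriptscriptstyle{0}}=\alpha$, $\zeta_{\scriptscriptstyle{1}}=\alpha_1$, while $\boldsymbol{\beta}\circ\boldsymbol{\phi}$ has $\theta_{\scriptscriptstyle{0}}=\beta\circ\phi_{\scriptscriptstyle{0}}$, $\theta_{\scriptscriptstyle{1}}=\beta_1\circ\phi_{\scriptscriptstyle{1}}$.

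The first two conditions in \eqref{Eq:2cells} are immediate: $\fk{c}(\Sf{s}(a))=\Sf{s}(a)\tensor{A}1_\behhe=\alpha(a)$, while the defining relation $\Sf{t}(a)u\tensor{A}b=u\tensor{A}\phi_{\scriptscriptstyle{0}}(a)b$ of $P$ in \eqref{klaipeda} gives $\fk{c}(\Sf{t}(a))=\Sf{t}(a)\tensor{A}1_\behhe=1_{\scriptscriptstyle{\cH}}\tensor{A}\phi_{\scriptscriptstyle{0}}(a)=\beta(\phi_{\scriptscriptstyle{0}}(a))$. The substance is the hexagonal identity, which in our situation reads
$$
\alpha_1(u_{(1)})\,\Sf{t}_{\mathcal{T}}(\fk{c}(u_{(2)}))\,=\,\Sf{s}_{\mathcal{T}}(\fk{c}(u_{(1)}))\,(\beta_1\circ\phi_{\scriptscriptstyle{1}})(u_{(2)}).
$$
The right-hand side evaluates directly to $1_{\scriptscriptstyle{\cH}}\tensor{A}(u_{(1)}\tensor{A}1_\behhe)\tensor{B}\phi_{\scriptscriptstyle{1}}(u_{(2)})$ using the componentwise multiplication of $\mathcal{T}=\shopf{\cH}\tensor{A}P\tensor{B}\shopf{\cK}$. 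For the left-hand side, the key computation is $\Sf{t}_{\mathcal{T}}\circ\fk{c}$: from the bicomodule structure of $P$ (Example \ref{exam:HopfMorph}) one finds the full coaction on $v\tensor{A}1_\behhe$ to be $v_{(1)}\tensor{A}(v_{(2)}\tensor{A}1_\behhe)\tensor{B}\phi_{\scriptscriptstyle{1}}(v_{(3)})$, whence $\Sf{t}_{\mathcal{T}}(\fk{c}(v))=\mathscr{S}(v_{(1)})\tensor{A}(v_{(2)}\tensor{A}1_\behhe)\tensor{B}\phi_{\scriptscriptstyle{1}}(v_{(3)})$. Multiplying by $\alpha_1(u_{(1)})$ produces the factor $u_{(1)}\mathscr{S}(u_{(2)})=\Sf{s}(\varepsilon(u_{(1)}))$ by \eqref{maxxi}; absorbing this across the first leg via $\Sf{s}(a)h\tensor{A}p=h\tensor{A}\alpha(a)p$ and then applying the counit of $\cH$ collapses the expression to the same $1_{\scriptscriptstyle{\cH}}\tensor{A}(u_{(1)}\tensor{A}1_\behhe)\tensor{B}\phi_{\scriptscriptstyle{1}}(u_{(2)})$, establishing the identity.

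Finally, to upgrade $\fk{c}$ to a $2$-isomorphism I would exhibit its inverse under the vertical composition \eqref{Eq:verticalComp}, namely
$$
\fk{c}^{-1}:\cH\to P,\qquad u\mapsto\mathscr{S}(u)\tensor{A}1_\behhe,
$$
and check both convolution identities: $\fk{c}^{-1}\B{\circ}\fk{c}$ sends $u$ to $u_{(1)}\mathscr{S}(u_{(2)})\tensor{A}1_\behhe=\Sf{s}(\varepsilon(u))\tensor{A}1_\behhe=\alpha(\varepsilon(u))$, which is the identity $2$-cell $1_{\boldsymbol{\alpha}}$, while $\fk{c}\B{\circ}\fk{c}^{-1}$ sends $u$ to $\mathscr{S}(u_{(1)})u_{(2)}\tensor{A}1_\behhe=\Sf{t}(\varepsilon(u))\tensor{A}1_\behhe=1_{\scriptscriptstyle{\cH}}\tensor{A}\phi_{\scriptscriptstyle{0}}(\varepsilon(u))=\beta(\phi_{\scriptscriptstyle{0}}(\varepsilon(u)))$, the identity $2$-cell $1_{\boldsymbol{\beta}\boldsymbol{\phi}}$, again by \eqref{maxxi} and the defining relation of $P$. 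The main obstacle is the bookkeeping in the hexagonal identity: one must track the three module structures glued in $\mathcal{T}$ and correctly combine the antipode--counit relation \eqref{maxxi} with the counit of $\cH$; once the explicit form of $\Sf{t}_{\mathcal{T}}\circ\fk{c}$ is in hand, everything reduces to a short Sweedler computation.
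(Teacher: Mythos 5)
Your proposal is correct and is essentially the paper's own proof: the same two candidate maps $\fk{c}(u)=u\tensor{A}1_\behhe$ and $\fk{c}^{-1}(u)=\mathscr{S}(u)\tensor{A}1_\behhe$, the same verification of the triangles and of the rectangle in \eqref{Eq:2cells} via a Sweedler computation with the structure maps of $\cH\lJoin P\rJoin\cK$ and \eqref{maxxi}, and the same convolution identities $\fk{c}^{-1}\B{\circ}\fk{c}=\alpha\circ\varepsilon$, $\fk{c}\B{\circ}\fk{c}^{-1}=(\beta\phi_{\scriptscriptstyle{0}})\circ\varepsilon$. The only (cosmetic) differences are that the paper carries out the rectangle computation for $\fk{c}^{-1}$ (calling it $\fk{c}'$) and declares the case of $\fk{c}$ analogous, whereas you do the opposite; for completeness you should also record, as the paper does, that $\fk{c}^{-1}$ itself satisfies the conditions \eqref{Eq:2cells} as a $2$-cell $\B{\beta}\circ\B{\phi}\to\B{\alpha}$, which is the same routine check.
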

\begin{proof}
Consider the following two algebra maps 
$$
\fk{c}: \cH \to P, \quad u \mapsto u\tensor{\scriptscriptstyle{\phi}}1_{ \scriptscriptstyle{B}}, \quad \mbox{and} \quad \fk{c}': \cH \to P, \quad u \mapsto \mathscr{S}(u)\tensor{\scriptscriptstyle{\phi}}1_{ \scriptscriptstyle{B}}.
$$
Let us check that $\fk{c}: \B{\alpha} \to \B{\beta} \circ \B{\phi}$ and $\fk{c'}: \B{\beta} \circ \B{\phi} \to  \B{\alpha}$ are $2$-cells in $2\text{-}\mathsf{HAlgd}$. To this end, we need to show the commutativity of the diagrams in Eq.~\eqref{Eq:2cells}, corresponding to $\fk{c}$ and $\fk{c'}$. 
By definition, it is clear that the triangles
$$
\xymatrix{ \cH \ar@{->}^-{\fk{c}}[r] & P \\ A \ar@{->}_-{\alpha}[ru] \ar@{->}^-{\sf{s}}[u] & }\qquad  \xymatrix{ \cH \ar@{->}^-{\fk{c}}[r] & P \\ A \ar@{->}_-{\beta\phi_{\scriptscriptstyle{0}}}[ru] \ar@{->}^-{\sf{t}}[u] & } \qquad \qquad  
\xymatrix{ \cH \ar@{->}^-{\fk{c}'}[r] & P \\ A \ar@{->}_-{\beta\phi_{\scriptscriptstyle{0}}}[ru] \ar@{->}^-{\sf{s}}[u] & }\qquad  \xymatrix{ \cH \ar@{->}^-{\fk{c}'}[r] & P \\ A \ar@{->}_-{\alpha}[ru] \ar@{->}^-{\sf{t}}[u] & }
$$
commute.
We only show the rectangle in \eqref{Eq:2cells} for $\fk{c'}$ since an analogous proof works for $\fk{c}$. 
Thus, we want to show that $m_{\scriptscriptstyle{\cH\lJoin P\rJoin\cK}}\circ \big( (\beta_1 \circ \phi_{\scriptscriptstyle{1}})\tensor{A}(\Sf{t} \circ \fk{c'})\big) \circ \Delta \,=\,  m_{\scriptscriptstyle{\cH\lJoin P\rJoin\cK}} \circ  \big( (\Sf{s} \circ \fk{c'})\tensor{A}\alpha_{\scriptscriptstyle{1}}\big) \circ \Delta$, where the target and source $\Sf{t}, \Sf{s}$ are those of $\cH\lJoin P\rJoin\cK$.
Taking into account the structure maps of Lemma  \ref{lemma:bisemidirect}, we compute for $u \in \cH$   
\begin{eqnarray*}
m_{\scriptscriptstyle{\cH\lJoin P\rJoin\cK}}\circ \big( (\beta_1 \circ \phi_{\scriptscriptstyle{1}})\tensor{A}(\Sf{t} \circ \fk{c'})\big) \circ \Delta(u) &=& \big( 1_{ \scriptscriptstyle{\cH}}\tensor{A}1_{\scriptscriptstyle{P}}\tensor{B} \phi_{\scriptscriptstyle{1}}(u_{(1)})\big) \Sf{t}\big(\mathscr{S}(u_{(2)})\tensor{\scriptscriptstyle{\phi}}1_{\scriptscriptstyle{B}} \big) \\ 
&=& \big( 1_{ \scriptscriptstyle{\cH}}\tensor{A}1_{\scriptscriptstyle{P}}\tensor{B} \phi_{\scriptscriptstyle{1}}(u_{(1)})\big) \big(u_{(4)} \tensor{A} (\mathscr{S}(u_{(3)}) \tensor{\scriptscriptstyle{\phi}}1_{\scriptscriptstyle{B}}) \tensor{B} \phi_{\scriptscriptstyle{1}}(\mathscr{S}(u_{(2)})) \big) \\ 
&=&  u_{(4)} \tensor{A} (\mathscr{S}(u_{(3)})\tensor{\scriptscriptstyle{\phi}}1_{\scriptscriptstyle{B}}) \tensor{B} \phi_{\scriptscriptstyle{1}}(u_{(1)})\phi_{\scriptscriptstyle{1}}(\mathscr{S}(u_{(2)})) \\ 
&=&  u_{(3)}\tensor{A} (\mathscr{S}(u_{(2)})\tensor{\scriptscriptstyle{\phi}}1_{\scriptscriptstyle{B}}) \tensor{B} \Sf{s}(\phi_{\scriptscriptstyle{0}}(\varepsilon(u_{(1)}))) \\ 
&=& u_{(3)}\tensor{A} \big( \mathscr{S}(u_{(2)})\tensor{\scriptscriptstyle{\phi}}\phi_{\scriptscriptstyle{0}}(\varepsilon(u_{(1)})) \big)\tensor{B} 1_{ \scriptscriptstyle{\cK}} \\ 
&=&  u_{(2)}\tensor{A} \big( \mathscr{S}(u_{(1)})\tensor{\scriptscriptstyle{\phi}} 1_{ \scriptscriptstyle{B}} \big)\tensor{B} 1_{ \scriptscriptstyle{\cK}} \\ 
&=& m_{\scriptscriptstyle{\cH\lJoin P\rJoin\cK}}\circ \big( (\Sf{s}\circ \fk{c'})\tensor{A}\alpha_1\big) \circ \Delta(u).
\end{eqnarray*}
Finally, using the vertical composition as defined in \eqref{Eq:verticalComp}, one can easily check that $\fk{c} \B{\circ} \fk{c'}= (\beta \phi_{\scriptscriptstyle{0}}) \circ \varepsilon$ and that  $\fk{c'} \B{\circ} \fk{c}= \alpha \circ \varepsilon$. Therefore  $\fk{c} \B{\circ} \fk{c'}=1_{\scriptscriptstyle{\B{\beta}\circ \B{\phi}}}$ and $\fk{c'} \B{\circ} \fk{c} =1_{\scriptscriptstyle{\alpha}}$, and this completes the proof.
\end{proof}

For a non necessarily trivial  bundle, one has the following property:
\begin{lemma}\label{lemma:paciencia}
Let $(P,\alpha, \beta)$ be a $1$-cell in $\mathsf{PB}^\ell(\cH,\cK)$, and denote by $(P,\cH\lJoin  P\rJoin \cK)$ the two-sided translation Hopf algebroid, together with the diagram 
$$
\xymatrix@R=15pt{ & (P,\cH\lJoin  P\rJoin \cK) & \\ (A,\cH) \ar@{->}^-{{\B{\alpha}=(\alpha, \,\alpha_1)}\;\;}[ur] & & \ar@{->}_-{\;\;{\B{\beta}=(\beta, \,\beta_1)}}[ul] (B,\cK) }
$$ 
of flat Hopf algebroids.
Consider the trivial bundles $\B{\alpha}^*\big( \mathscr{U}(\cH)\big)=\cH\tensor{\B{\alpha}}P$ and $\B{\beta}^*\big( \mathscr{U}(\cK)\big)=\cK\tensor{\B{\beta}}P$.
Then the map
$$
\fk{h}:(P,\alpha,\beta) \longrightarrow \Big(\B{\alpha}^*\big( \mathscr{U}(\cH)\big)  \cotensor{\cH\lJoin  P\rJoin \cK}  \B{\beta}^*\big( \mathscr{U}(\cK)\big)^{\co},\td{\alpha}, \td{\beta}\Big), \quad p \longmapsto \big(p_{(-1)}\tensor{\B{\alpha}}p_{(0)} \big) \, \cotensor{\cH\lJoin  P\rJoin \cK}\, \big(1_{\scriptscriptstyle{P}}\tensor{\B{\beta}}p_{(1)} \big)
$$
defines an isomorphism of left principal $(\cH,\cK)$-bundles.
\end{lemma}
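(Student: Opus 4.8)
The plan is to recognise $\fk{h}$ as a morphism in the groupoid $\lPB{\cH}{\cK}$ and then invoke Lemma \ref{lemma:ISO}, which forces any such morphism to be an isomorphism; this disposes of bijectivity for free, so the real content is that $\fk{h}$ is a well-defined morphism of $(\cH,\cK)$-bicomodule algebras. Write $\cT:=\cH\lJoin P\rJoin\cK$. First I would verify that the codomain genuinely lies in $\lPB{\cH}{\cK}$. On one side, $\B{\alpha}^*(\mathscr{U}(\cH))=\cH\tensor{\B{\alpha}}P$ is a left principal $(\cH,\cT)$-bundle by Examples \ref{exam:HopfMorph} and \ref{Exam:Ind}. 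On the other side, by Proposition \ref{prop:Cielitomissyoutoo}~{\em (i)} the morphism $\B{\beta}$ is a weak equivalence, so Proposition \ref{lemma:trivial bibundle} upgrades $\B{\beta}^*(\mathscr{U}(\cK))=\cK\tensor{\B{\beta}}P$ to a principal $(\cK,\cT)$-\emph{bibundle}; by Proposition \ref{fororomano}~{\em (ii)} its opposite $\B{\beta}^*(\mathscr{U}(\cK))^{\co}$ is a principal $(\cT,\cK)$-bibundle, in particular a left principal $(\cT,\cK)$-bundle. Lemma \ref{lemma:cotensorpb} then guarantees that $\B{\alpha}^*(\mathscr{U}(\cH))\cotensor{\cT}\B{\beta}^*(\mathscr{U}(\cK))^{\co}$ is a left principal $(\cH,\cK)$-bundle, so both source and target of $\fk{h}$ are $1$-cells of $\lPB{\cH}{\cK}$.

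Next I would check that $\fk{h}$ is well defined, i.e.\ that $\fk{h}(p)$ actually lands in the cotensor product. Spelling out the right $\cT$-coaction on $\cH\tensor{\B{\alpha}}P$ via the trivial-bundle formula of Example \ref{exam:HopfMorph} and the left $\cT$-coaction on $\B{\beta}^*(\mathscr{U}(\cK))^{\co}$ via the source and target of $\cT$ in \eqref{jugendohnegott}, the equaliser condition becomes a single identity between the two images of $\fk{h}(p)$ inside $\big(\cH\tensor{A}P\big)\tensor{P}\cT\tensor{P}\big(\cK\tensor{B}P\big)^{\co}$. This identity reduces, after tracking the $\cH$-, $P$-, and $\cK$-legs, to the bicomodule compatibility \eqref{Eq:bicomod} of $P$ together with the antipode relation \eqref{maxxi}, in the same spirit as the detecting-map argument used for Proposition \ref{fororomano}~{\em (i)}. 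I expect this to be the main obstacle: the coactions of the large Hopf algebroid $\cT$ entangle the source/target and the antipode of \emph{both} $\cH$ and $\cK$ through $P$, and keeping the coassociative book-keeping of the several comodule legs honest is the delicate point.

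It then remains to confirm that $\fk{h}$ is a morphism of $(\cH,\cK)$-bicomodule algebras. Multiplicativity and unitality are immediate, since the multiplication on the cotensor product is componentwise while $\lcoaction{\cH}{P}$ and $\rcoaction{\cK}{P}$ are algebra maps ($P$ being a bicomodule algebra), whence $\fk{h}(pq)=\fk{h}(p)\fk{h}(q)$ and $\fk{h}(1_\pehhe)$ is the unit; compatibility with the structure maps $\td{\alpha},\td{\beta}$ of \eqref{Eq:earthquake} follows by evaluating $\fk{h}$ on $\alpha(a)$ and $\beta(b)$ and applying the counit axioms. For colinearity, the left $\cH$-comodule structure of the cotensor product is carried by $\Delta_\cH\tensor{A}P$ on the first factor, so left $\cH$-colinearity of $\fk{h}$ is exactly coassociativity of $\lcoaction{\cH}{P}$; likewise right $\cK$-colinearity follows from coassociativity of $\rcoaction{\cK}{P}$, now reading the right $\cK$-coaction on the second factor through the opposite-comodule twist \eqref{Eq:leftright}.

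Having established that $\fk{h}$ is a morphism between the two left principal $(\cH,\cK)$-bundles $P$ and $\B{\alpha}^*(\mathscr{U}(\cH))\cotensor{\cT}\B{\beta}^*(\mathscr{U}(\cK))^{\co}$, Lemma \ref{lemma:ISO} concludes that $\fk{h}$ is an isomorphism of left principal $(\cH,\cK)$-bundles, which finishes the proof.
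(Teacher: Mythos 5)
Your proposal is correct and follows essentially the same route as the paper: show $\fk{h}$ lands in the cotensor product, check it is a morphism of $(\cH,\cK)$-bicomodule algebras, and then conclude via Lemma \ref{lemma:ISO} that any morphism of left principal bundles is an isomorphism. The only differences are cosmetic: you spell out why the codomain is a $1$-cell of $\lPB{\cH}{\cK}$ (which the paper leaves implicit in its earlier constructions), while for the well-definedness step you give a sketch naming exactly the ingredients --- the coactions of $\cH\lJoin P\rJoin\cK$ from \eqref{jugendohnegott}, the bicomodule compatibility \eqref{Eq:bicomod}, and the antipode relation \eqref{maxxi}/\eqref{Eq:penultima} --- that the paper's explicit Sweedler computation actually uses.
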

\begin{proof}
Recall that a generic element of the form $(u\tensor{\B{\alpha}}p) \tensor{P} (p'\tensor{\B{\beta}}w) \in \B{\alpha}^*\big( \mathscr{U}(\cH)\big)\, \tensor{P}\, \B{\beta}^*\big( \mathscr{U}(\cK)\big)^{\co}$ belongs to the cotensor product $\B{\alpha}^*\big( \mathscr{U}(\cH)\big)\, \cotensor{\cH\lJoin  P\rJoin \cK}\, \B{\beta}^*\big( \mathscr{U}(\cK)\big)^{\co}$ if and only if
\begin{equation}
\label{firenzesmn}
(u_{(1)}\tensor{\B{\alpha}} p)  \tensor{P} \big( u_{(2)}\tensor{A}1_{\scriptscriptstyle{P}}\tensor{B} 1_{\scriptscriptstyle{\cK}}\big) \tensor{P} (p'\tensor{\B{\beta}}w)  =  (u\tensor{\B{\alpha}}1_{\scriptscriptstyle{P}})  \tensor{P} \big( 1_{\scriptscriptstyle{\cH}}\tensor{A}pp'\tensor{B} w_{(1)}\big) \tensor{P} ( 1_{\scriptscriptstyle{P}}\tensor{\B{\beta}}w_{(2)})
\end{equation}
holds true in $\B{\alpha}^*\big( \mathscr{U}(\cH)\big)\tensor{P} (\cH\lJoin  P\rJoin \cK) \tensor{P} \B{\beta}^*\big( \mathscr{U}(\cK)\big)^{\co}$. Hence, in order to check that $\fk{h}$ is well-defined, one needs to show this equality for $\fk{h}(p)$,  for all $ p \in P$.  The left hand side in \rmref{firenzesmn} for $\fk{h}(p)$ reads as
$$
(p_{(-2)}\tensor{\B{\alpha}}1_{\scriptscriptstyle{P}})  \tensor{P} \big( p_{(-1)}\tensor{A}1_{\scriptscriptstyle{P}}\tensor{B} 1_{\scriptscriptstyle{\cK}}\big) \tensor{P} ( p_{(0)}\tensor{\B{\beta}}p_{(1)}),
$$
while the right hand side becomes
$$
(p_{(-1)}\tensor{\B{\alpha}}1_{\scriptscriptstyle{P}})  \tensor{P} \big( 1_{\scriptscriptstyle{\cH}}\tensor{A}p_{(0)}\tensor{B} p_{(1)}\big) \tensor{P} ( 1_{\scriptscriptstyle{P}}\tensor{\B{\beta}}p_{(2)}).
$$
Using the expression of the target map of $\cH\lJoin  P\rJoin \cK$ given in Lemma \ref{lemma:bisemidirect}, we have that 
\begin{eqnarray*}
& & (p_{(-2)}\tensor{\B{\alpha}}1_{\scriptscriptstyle{P}})  \tensor{P} \big( p_{(-1)}\tensor{A}1_{\scriptscriptstyle{P}}\tensor{B} 1_{\scriptscriptstyle{\cK}}\big) \tensor{P} ( p_{(0)}\tensor{\B{\beta}}p_{(1)})  \\ &= & (p_{(-2)}\tensor{\B{\alpha}}1_{\scriptscriptstyle{P}})  \tensor{P} \big( p_{(-1)}\tensor{A} 1_{\scriptscriptstyle{P}} \tensor{B}  1_{\scriptscriptstyle{\cK}} \big) \Sf{t}(p_{(0)}) \tensor{P} ( 1_{\scriptscriptstyle{P}}\tensor{\B{\beta}}p_{(1)})\\ &= & (p_{(-3)}\tensor{\B{\alpha}}1_{\scriptscriptstyle{P}})  \tensor{P} \big( p_{(-2)}\mathscr{S}(p_{(-1)})\tensor{A}p_{(0)}\tensor{B} p_{(1)}\big) \tensor{P} ( 1_{\scriptscriptstyle{P}}\tensor{\B{\beta}}p_{(2)})  \\ &\overset{\eqref{Eq:penultima}}{=} & (p_{(-2)}\tensor{\B{\alpha}}1_{\scriptscriptstyle{P}})  \tensor{P} \big( \Sf{s}(\varepsilon(p_{(-1)}))\tensor{A}p_{(0)}\tensor{B} p_{(1)}\big) \tensor{P} ( 1_{\scriptscriptstyle{P}}\tensor{\B{\beta}}p_{(2)}) \\ & = & (p_{(-1)}\tensor{\B{\alpha}}1_{\scriptscriptstyle{P}})  \tensor{P} \big( 1_{\scriptscriptstyle{\cH}}\tensor{A}p_{(0)}\tensor{B} p_{(1)}\big) \tensor{P} ( 1_{\scriptscriptstyle{P}}\tensor{\B{\beta}}p_{(2)}),
\end{eqnarray*} which shows that $\fk{h}$ is a well-defined map. 
Recall now that the algebra maps $\td{\alpha}$ and $\td{\beta}$ are given by 
$$
\td{\alpha}(a)= (\Sf{s}(a) \tensor{\B{\alpha}}1_{\scriptscriptstyle{P}})\cotensor{\cH\lJoin  P\rJoin \cK} (1_{\scriptscriptstyle{P}}\tensor{\B{\beta}}1_{\scriptscriptstyle{\cK}}); \qquad \td{\beta}(b)= (1_{\scriptscriptstyle{\cH}} \tensor{\B{\alpha}}1_{\scriptscriptstyle{P}})\cotensor{\cH\lJoin  P\rJoin \cK} (1_{\scriptscriptstyle{P}}\tensor{\B{\beta}}\Sf{t}(b)).
$$
Clearly, $\fk{h}$ is  simultaneously an $A$-algebra and a $B$-algebra map, and the fact that $\fk{h}$ is an $(\cH, \cK)$-bicomodule map is also clear from the definitions. 
Thus, $\fk{h}$ is a morphism of left principal bundles,
and so an isomorphism by Lemma \ref{lemma:ISO}.
\end{proof}

Next we give a further property of  the Diagram \rmref{kontoauszug} that appeared in Proposition \ref{prop:1out2}.

\begin{lemma}\label{lema:1out2}
Let $\theta_i:(A,\cH) \to (C_i,\cJ_i)$, $i=1,2$, be two weak equivalences. Then the 
diagram of weak equivalences  \rmref{kontoauszug} constructed in Proposition \ref{prop:1out2} is commutative up to a $2$-isomorphism.
\end{lemma}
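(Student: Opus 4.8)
The diagram \rmref{kontoauszug} $2$-commutes precisely when the two composites $\B{\zeta}_1\circ\B{\theta}_1$ and $\B{\zeta}_2\circ\B{\theta}_2$, both $1$-cells $(A,\cH)\to(E,\cT)$ with $E:=P_1^{\co}\cotensor{\cH}P_2$ and $\cT:=\cJ_1\lJoin E\rJoin\cJ_2$, are connected by an invertible $2$-cell in $2\text{-}\Sf{HAlgd}$; here $\B{\zeta}_1=\B{\alpha}=(\td\alpha,\alpha_1)$ and $\B{\zeta}_2=\B{\beta}=(\td\beta,\beta_1)$ are the canonical legs of the two-sided translation Hopf algebroid of Lemma \ref{lemma:bisemidirect}. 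By \eqref{Eq:2cells} such a $2$-cell is an algebra map $\fk{c}\colon\cH\to E$ satisfying two triangle identities and one rectangle identity. This mirrors the trivial-bundle situation of Lemma \ref{lemma:volveranotro}, and the plan is to write down $\fk{c}$ explicitly as the ``cotensor'' of the two $2$-cells produced there for $\B\theta_1$ and $\B\theta_2$.

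\textbf{The candidate and the triangles.} First I would propose
\begin{equation*}
\fk{c}\colon\cH\longrightarrow E,\qquad u\longmapsto\big(\mathscr{S}(u_{(1)})\tensor{A}1_{C_1}\big)\cotensor{\cH}\big(u_{(2)}\tensor{A}1_{C_2}\big),
\end{equation*}
and check it is well defined, i.e.\ that its image lies in $P_1^{\co}\cotensor{\cH}P_2\subseteq P_1^{\co}\tensor{A}P_2$: applying the right $\cH$-coaction of $P_1^{\co}$ (obtained from \rmref{Eq:leftright}) to the left factor and the left $\cH$-coaction of $P_2$ to the right factor both yield $(\mathscr{S}(u_{(1)})\tensor{A}1)\tensor{A}u_{(2)}\tensor{A}(u_{(3)}\tensor{A}1)$, the agreement resting on $\mathscr{S}^2=\id$ and the anti-coring property of $\mathscr{S}$. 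Since $\Delta$ and $\mathscr{S}$ are algebra maps and all algebras are commutative, $\fk{c}$ is an algebra map. The two triangle identities then reduce to short computations: using $\Delta(\Sf{s}(a))=\Sf{s}(a)\tensor{A}1$, $\Delta(\Sf{t}(a))=1\tensor{A}\Sf{t}(a)$, the relation $\mathscr{S}\circ\Sf{s}=\Sf{t}$, and the defining identification $\Sf{t}(a)\tensor{A}1_{C_i}=1_\cH\tensor{A}\theta_{i,0}(a)$ in $P_i=\cH\tensor{A}C_i$ from \rmref{klaipeda}, one finds $\fk{c}(\Sf{s}(a))=\td\alpha(\theta_{1,0}(a))$ and $\fk{c}(\Sf{t}(a))=\td\beta(\theta_{2,0}(a))$, which are exactly the two algebra-map conditions on the source and target legs of $\B{\zeta}_1\circ\B{\theta}_1$ and $\B{\zeta}_2\circ\B{\theta}_2$.

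\textbf{The rectangle, the inverse, and the main obstacle.} The remaining, and principal, task is the rectangle identity of \eqref{Eq:2cells}, namely that for all $u\in\cH$
\begin{equation*}
\big(\theta_{1,1}(u_{(1)})\tensor{C_1}1_E\tensor{C_2}1_{\cJ_2}\big)\cdot\Sf{t}\big(\fk{c}(u_{(2)})\big)=\Sf{s}\big(\fk{c}(u_{(1)})\big)\cdot\big(1_{\cJ_1}\tensor{C_1}1_E\tensor{C_2}\theta_{2,1}(u_{(2)})\big)
\end{equation*}
holds in $\cT$, where $\Sf{s},\Sf{t}$ are the source and target of the two-sided translation algebroid as in Lemma \ref{lemma:bisemidirect}. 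This is where the bookkeeping concentrates: unwinding $\Sf{t}(\fk{c}(u_{(2)}))=\mathscr{S}(\fk{c}(u_{(2)})_{(-1)})\tensor{C_1}\fk{c}(u_{(2)})_{(0)}\tensor{C_2}\fk{c}(u_{(2)})_{(1)}$ requires computing the induced $(\cJ_1,\cJ_2)$-bicomodule structure of $E$ on the image of $\fk{c}$ from the coactions of $P_1^{\co}$ and $P_2$ in Example \ref{exam:HopfMorph}, and then collapsing the products in $\cT$ via coassociativity and the antipode relations \eqref{maxxi}, \eqref{Eq:es}; I expect this to be the only genuinely laborious step. Finally I would verify invertibility by exhibiting $\fk{c}'(u):=(u_{(2)}\tensor{A}1_{C_1})\cotensor{\cH}(\mathscr{S}(u_{(1)})\tensor{A}1_{C_2})$ as a $2$-cell in the opposite direction (the same $\mathscr{S}^2=\id$ argument shows it lands in the cotensor), and checking through the vertical composition formula \eqref{Eq:verticalComp} that $\fk{c}\,\B{\circ}\,\fk{c}'$ and $\fk{c}'\,\B{\circ}\,\fk{c}$ reduce, by the counit and by $\mathscr{S}(u_{(1)})u_{(2)}=\Sf{t}(\varepsilon(u))$, to the identity $2$-cells $(\td\beta\,\theta_{2,0})\circ\varepsilon$ and $(\td\alpha\,\theta_{1,0})\circ\varepsilon$. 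This yields the desired $2$-isomorphism $\B{\zeta}_1\circ\B{\theta}_1\cong\B{\zeta}_2\circ\B{\theta}_2$.
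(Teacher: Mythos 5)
Your proposal is correct and takes essentially the same route as the paper: under the canonical identification $P_1^{\co}\cotensor{\cH}P_2 \cong C_1\tensor{A}\cH\tensor{A}C_2$ (cf.\ Examples \ref{exm:bhb} and \ref{exm:HS}), your $\fk{c}$ and $\fk{c}'$ are exactly the paper's $2$-cells $u \mapsto 1\tensor{A}u\tensor{A}1$ and $u \mapsto 1\tensor{A}\mathscr{S}(u)\tensor{A}1$ transported into cotensor coordinates. The only difference is presentational: the paper invokes that isomorphism first, which collapses the well-definedness, triangle, and rectangle verifications you flag as laborious into short routine computations (which the paper likewise leaves to the reader).
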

\begin{proof}
Denote by $P_i := \cH\tensor{\theta_i}C_i$, $i=1,2$ the respective associated trivial bibundles of $\theta_i$. Up to a canonical isomorphism, the bundle $Q := P_1^{\co}\cotensor{\cH}P_2$ is of the form $Q=C_1\tensor{A}\cH\tensor{A}C_2$. So,  considering the obvious algebra map $\fk{c}: \cH \to Q$, $u \mapsto 1\tensor{A}u \tensor{A}1$ and writing $\B{\phi}:= \B{\zeta}_1 \circ \B{\theta}_1$ and $\B{\psi}:=\B{\zeta}_2 \circ \B{\theta}_2$, one can use the definition of the maps $\B{\zeta}_i$ in Lemma \ref{lemma:bisemidirect} to show that the diagrams in \eqref{Eq:2cells} are commutative, and that hence $\fk{c}: \B{\phi} \to \B{\psi}$ is a $1$-cell in $2\text{-}\mathsf{HAlgd}$. Its inverse is $\fk{c}^{-1}: \cH \to Q$ which sends $u \mapsto 1\tensor{A}\mathscr{S}(u)\tensor{A}1$.
\end{proof}

Denote by $ \mathsf{PB}^{\ell \, \co}$ the conjugate  bicategory of $\mathsf{PB}^\ell$, defined by reversing $2$-cells. 

\begin{proposition}\label{prop:P}
There is a $2$-functor 
$$
\mathscr{P}: 2\text{-}\mathsf{HAlgd} \longrightarrow \mathsf{PB}^{\ell \, \co},
$$
which sends any $1$-cell $\B{\phi}: (A,\cH) \to (B, \cK)$ to its associated trivial left principal bundle  $P=\cH\tensor{\scriptscriptstyle{\phi}}B$. Moreover, a $1$-cell $\B{\phi}$ in $2\text{-}\mathsf{HAlgd}$ is a weak equivalence if and only if $\mathscr{P}(\B{\phi})$ is an invertible $1$-cell in $\mathsf{PB}^{\ell \, \co}$.
\end{proposition}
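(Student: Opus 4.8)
The plan is to construct $\mathscr{P}$ cell by cell and then read off the ``moreover'' part directly from Proposition \ref{prop:el parte}. On $0$-cells, $\mathscr{P}$ is the identity, and on a $1$-cell $\B{\phi}=(\phi_{\scriptscriptstyle{0}},\phi_{\scriptscriptstyle{1}}):(A,\cH)\to(B,\cK)$ I set $\mathscr{P}(\B{\phi})=\cH\tensor{\scriptscriptstyle{\phi}}B$, the associated trivial left principal bundle of Example \ref{exam:HopfMorph}. For a $2$-cell $\fk{c}:\B{\zeta}\to\B{\theta}$ between $1$-cells $\B{\zeta},\B{\theta}:(A,\cH)\to(B,\cK)$, that is, an algebra map $\fk{c}:\cH\to B$ as in \eqref{Eq:2cells}, I would define $\mathscr{P}(\fk{c})$ to be the map
\[
G_{\fk{c}}:\cH\tensor{\scriptscriptstyle{\theta}}B\longrightarrow \cH\tensor{\scriptscriptstyle{\zeta}}B,\qquad u\tensor{A}b\longmapsto u_{(1)}\tensor{A}\fk{c}(u_{(2)})b.
\]
The identities $\fk{c}\circ\Sf{t}=\theta_{\scriptscriptstyle{0}}$ and $\fk{c}\circ\Sf{s}=\zeta_{\scriptscriptstyle{0}}$ make $G_{\fk{c}}$ well defined and simultaneously an $A$- and a $B$-algebra map; left $\cH$-colinearity is immediate since $G_{\fk{c}}$ only alters the later Sweedler legs, and right $\cK$-colinearity is precisely the commutativity of the rectangle in \eqref{Eq:2cells}. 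By Lemma \ref{lemma:ISO} the map $G_{\fk{c}}$ is then automatically an isomorphism of left principal bundles.

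The crucial point is the \emph{direction}, which is what forces the target to be the conjugate bicategory. As a morphism of $\mathsf{PB}^{\scriptscriptstyle{\ell}}$, $G_{\fk{c}}$ runs $\mathscr{P}(\B{\theta})\to\mathscr{P}(\B{\zeta})$, hence it is a $2$-cell $\mathscr{P}(\B{\zeta})\to\mathscr{P}(\B{\theta})$ in $\mathsf{PB}^{\ell\,\co}$, which is the covariant direction required of a $2$-functor. To see that vertical composition is preserved, I would compute, for $\fk{c}:\B{\zeta}\to\B{\theta}$ and $\fk{c}':\B{\theta}\to\B{\xi}$, that $G_{\fk{c}}\circ G_{\fk{c}'}=G_{\fk{c}'\B{\circ}\fk{c}}$ in $\mathsf{PB}^{\scriptscriptstyle{\ell}}$: both send $u\tensor{A}b$ to $u_{(1)}\tensor{A}\fk{c}(u_{(2)})\fk{c}'(u_{(3)})b$, using commutativity of $B$ and the leg-ordering of the vertical composition \eqref{Eq:verticalComp}. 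Since composition of the underlying $\mathsf{PB}^{\scriptscriptstyle{\ell}}$-morphisms is reversed when read in $\mathsf{PB}^{\ell\,\co}$, this is exactly strict preservation of vertical composition in the conjugate, and the identity $2$-cell $1_{\scriptscriptstyle{\zeta}}=\zeta_{\scriptscriptstyle{0}}\circ\varepsilon$ is sent to the identity. Thus $\mathscr{P}$ restricts to a functor $2\text{-}\mathsf{HAlgd}\big((A,\cH),(B,\cK)\big)\to\mathsf{PB}^{\ell\,\co}(\cH,\cK)$ on each hom-category.

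It remains to supply the structure isomorphisms of a morphism of bicategories and verify their coherence. The unit constraint is the canonical isomorphism $\mathscr{P}(\id_{(A,\cH)})=\cH\tensor{A}A\cong\cH=\mathscr{U}(\cH)$. For the compositor, given $\B{\phi}:(A,\cH)\to(B,\cK)$ and $\B{\psi}:(B,\cK)\to(C,\cJ)$, I would use
\[
\mathscr{P}(\B{\phi})\cotensor{\cK}\mathscr{P}(\B{\psi})=(\cH\tensor{\scriptscriptstyle{\phi}}B)\cotensor{\cK}(\cK\tensor{\scriptscriptstyle{\psi}}C)\cong(\cH\tensor{\scriptscriptstyle{\phi}}B)\tensor{B}C\cong\cH\tensor{\scriptscriptstyle{\psi\phi}}C=\mathscr{P}(\B{\psi}\circ\B{\phi}),
\]
where the middle isomorphism expresses that cotensoring with the unit-pullback bundle $\cK\tensor{\scriptscriptstyle{\psi}}C$ is the pull-back functor of Example \ref{Exam:Ind} (one only needs $C$ flat over $B$, which holds), and where $\cK\tensor{\scriptscriptstyle{\psi}}C\cong\mathscr{U}(\cK)\tensor{B}C$ together with the associativity of the cotensor product established in Proposition \ref{piazzadellorologio} does the bookkeeping. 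Naturality of this isomorphism in the $2$-cells, combined again with associativity of the cotensor product, should yield the pentagon and the two triangle coherence axioms; these are routine once the explicit formulas above are fixed.

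The second assertion is then almost immediate: since conjugation reverses only $2$-cells, a $1$-cell is part of an internal equivalence in $\mathsf{PB}^{\ell\,\co}$ if and only if it is so in $\mathsf{PB}^{\scriptscriptstyle{\ell}}$, the defining invertible $2$-cells and their triangle coherences merely changing direction. Hence $\mathscr{P}(\B{\phi})=\cH\tensor{\scriptscriptstyle{\phi}}B$ is an invertible $1$-cell in $\mathsf{PB}^{\ell\,\co}$ exactly when it is invertible in $\lPB{\cH}{\cK}$, which by Proposition \ref{prop:el parte} {\em (ii)} happens precisely when $\B{\phi}$ is a weak equivalence. I expect the main obstacle to be not any single computation but the consistent bookkeeping of sources, targets, and the two composition laws: one must pin down that the bundle map attached to $\fk{c}$ runs from $\mathscr{P}(\B{\theta})$ to $\mathscr{P}(\B{\zeta})$ and that its composition law is reversed, which is exactly what dictates the conjugate $\mathsf{PB}^{\ell\,\co}$ rather than $\mathsf{PB}^{\scriptscriptstyle{\ell}}$ as the target; by contrast, well-definedness and (co)linearity of $G_{\fk{c}}$ reduce cleanly to the defining diagrams \eqref{Eq:2cells} and to Lemma \ref{lemma:ISO}.
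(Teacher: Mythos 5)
Your proposal is correct and takes essentially the same route as the paper: the identical formula for $\mathscr{P}$ on $2$-cells (with the direction reversal that forces the conjugate target $\mathsf{PB}^{\ell\,\co}$), the identical compositor isomorphism $\mathscr{P}(\B{\psi}\circ\B{\phi})\cong\mathscr{P}(\B{\phi})\cotensor{\cK}\mathscr{P}(\B{\psi})$, and the identical appeal to Proposition \ref{prop:el parte} {\em (ii)} for the weak-equivalence statement; in fact you supply more of the verifications than the paper, which leaves them to the reader. One small correction: the identification $(\cH\tensor{\phi}B)\cotensor{\cK}(\cK\tensor{\psi}C)\cong(\cH\tensor{\phi}B)\tensor{B}C$ requires no flatness of $C$ over $B$ (which is not assumed anywhere, since $\psi_{\scriptscriptstyle{0}}$ is an arbitrary algebra map) --- it follows from the standard counit argument giving $M\cotensor{\cK}(\cK\tensor{B}C)\cong M\tensor{B}C$ for any right $\cK$-comodule $M$.
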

\begin{proof}
Let $\fk{c}: \B{\phi} \to \B{\psi}$ be a $2$-cell in $2\text{-}\mathsf{HAlgd}$. Then its  image by $\mathscr{P}$ is given by  
$$
\mathscr{P}(\fk{c}):  \cH\tensor{\psi}B  \to \cH\tensor{\phi}B, \quad u\tensor{\psi}b  \mapsto u_{(1)}\tensor{\phi}\fk{c}(u_{(2)})b,  
$$ 
which is easily shown to be a morphism of left principal bundles.  The remaining axioms which $\mathscr{P}$ is required to fulfil are also easily shown and therefore left to the reader. Nevertheless, notice that for two composable $1$-cells $\B{\phi}:(A,\cH) \to (B, \cK)$ and $\B{\phi}': (B, \cK) \to (C,\cJ)$ one has
$$
\mathscr{P}(\B{\phi}' \circ \B{\phi})\,\, \cong\,\, \mathscr{P}(\B{\phi})\cotensor{\cK} \mathscr{P}(\B{\phi}'),
$$
that is, $\mathscr{P}$ is contravariant. The last statement is a direct consequence of  Proposition \ref{prop:el parte} {\em (ii)}.
\end{proof}

The following  theorem is Theorem \ref{thm:Ramadan}  in  the Introduction and is our second main result:

\begin{thm}\label{thm:elevenyearsandsevenmonths}
Let $\mathscr{F}: 2\text{-}\Sf{HAlgd} \to \mathscr{B}$ be a 2-functor which sends weak equivalences to invertible $1$-cells. Then,  up to isomorphism (of $2$-functors), there is a unique $2$-functor $\td{\mathscr{F}}$ such that the following  diagram 
\begin{equation}
\label{tiburtina}
\xymatrix@R=20pt{ 2\text{-}\Sf{HAlgd} \ar@{->}_-{\mathscr{F}}[rrd]   \ar@{->}^-{\mathscr{P}}[rr] & & \mathsf{PB}^{\ell \, \co} \ar@{.>}^-{\td{\mathscr{F}}}[d]  \\ & & \mathscr{B} }
\end{equation}
commutes up to an isomorphism of $2$-functors. 
\end{thm}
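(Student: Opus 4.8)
The plan is to exhibit $(\mathsf{PB}^{\ell \, \co}, \mathscr{P})$ as the localisation of $2\text{-}\Sf{HAlgd}$ at the class $\mathsf{W}$ of weak equivalences, so that the value of $\td{\mathscr{F}}$ is \emph{forced} by a fraction (roof) presentation of the $1$-cells of $\mathsf{PB}^{\ell}$. Since $\mathscr{P}$ is the identity on $0$-cells, I am compelled to set $\td{\mathscr{F}}(A,\cH):=\mathscr{F}(A,\cH)$. For a $1$-cell, that is, a left principal $(\cH,\cK)$-bundle $(P,\alpha,\beta)$, I would pass to the two-sided translation Hopf algebroid $(P,\cH\lJoin P\rJoin\cK)$ of Lemma \ref{lemma:bisemidirect} together with its two legs $\B{\alpha}$ and $\B{\beta}$. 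By Proposition \ref{prop:Cielitomissyoutoo}\,{\em (i)} the leg $\B{\beta}$ is a weak equivalence, so $\mathscr{F}(\B{\beta})$ is an internal equivalence in $\mathscr{B}$; fixing a quasi-inverse $\mathscr{F}(\B{\beta})^{-1}$, I define $\td{\mathscr{F}}(P)$ to be the appropriate composite of $\mathscr{F}(\B{\alpha})$ and $\mathscr{F}(\B{\beta})^{-1}$, with the order and variance dictated by the conjugate bicategory $\mathsf{PB}^{\ell \, \co}$. Independence of the choice of quasi-inverse, up to a coherent $2$-isomorphism, is automatic.

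The conceptual justification that this is the \emph{only} admissible value, and hence the source of \emph{uniqueness}, comes from Lemma \ref{lemma:paciencia}: it identifies $P$ with $\B{\alpha}^*(\mathscr{U}(\cH))\cotensor{\cH\lJoin P\rJoin\cK}\B{\beta}^*(\mathscr{U}(\cK))^{\co}$, i.e.\ with the composite $\mathscr{P}(\B{\alpha})\cotensor{}\mathscr{P}(\B{\beta})^{\co}$. Because $\B{\beta}$ is a weak equivalence, $\mathscr{P}(\B{\beta})$ is an invertible $1$-cell whose inverse is its opposite bundle (Propositions \ref{prop:P} and \ref{prop:el parte}), so this is genuinely the fraction $\mathscr{P}(\B{\alpha})\cotensor{}\mathscr{P}(\B{\beta})^{-1}$. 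Any $2$-functor $\td{\mathscr{F}}$ with $\td{\mathscr{F}}\circ\mathscr{P}\cong\mathscr{F}$ must then send $P$, up to isomorphism, to the composite of $\mathscr{F}(\B{\alpha})$ with $\mathscr{F}(\B{\beta})^{-1}$ (a $2$-functor preserving invertible $1$-cells and their inverses), which simultaneously pins down the definition above and proves it unique up to isomorphism of $2$-functors.

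On $2$-cells I would use that every morphism $\fk{f}\colon P\to P'$ of bundles is invertible (Lemma \ref{lemma:ISO}) and that, by Corollary \ref{coro:mpb}, the induced map on two-sided translation Hopf algebroids is an isomorphism fitting into the commutative diagram \eqref{Eq:qseramana}. Applying $\mathscr{F}$ to \eqref{Eq:qseramana} and inserting the chosen quasi-inverses yields a canonical invertible $2$-cell $\td{\mathscr{F}}(\fk{f})$ in the correct (conjugate) direction. The commutativity of \eqref{tiburtina} is then read off from Lemma \ref{lemma:volveranotro}: for a trivial bundle $P=\mathscr{P}(\B{\phi})$ one has $\B{\alpha}\cong\B{\beta}\circ\B{\phi}$, whence $\mathscr{F}(\B{\alpha})\cong\mathscr{F}(\B{\beta})\circ\mathscr{F}(\B{\phi})$ and therefore $\td{\mathscr{F}}(\mathscr{P}(\B{\phi}))\cong\mathscr{F}(\B{\phi})$, naturally in $\B{\phi}$.

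The step I expect to be the genuine obstacle is verifying that $\td{\mathscr{F}}$ is an honest $2$-functor, namely producing the coherent \emph{compositor} relating $\td{\mathscr{F}}(P\cotensor{\cK}Q)$ to the composite of $\td{\mathscr{F}}(P)$ and $\td{\mathscr{F}}(Q)$, and checking B\'enabou's unit and associativity coherences \cite[\S4]{Benabou}. The difficulty is that the two-sided translation Hopf algebroid of a cotensor product $P\cotensor{\cK}Q$ is not literally assembled from those of $P$ and $Q$; bridging them requires the completion of spans of weak equivalences furnished by Proposition \ref{prop:1out2}, together with the fact recorded in Lemma \ref{lema:1out2} that this completion is commutative up to a $2$-isomorphism, and repeated use of the associativity of $\cotensor{}$ from Proposition \ref{piazzadellorologio}. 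Equivalently, one checks that $\mathsf{W}$ admits a bicalculus of fractions in the sense ``dual'' to \cite{Pro:EASABOF}, the requisite axioms being supplied by Propositions \ref{prop:Cielitomissyoutoo} and \ref{prop:1out2}, Corollary \ref{coro:mpb}, and Lemma \ref{lema:1out2}; the universal property \eqref{tiburtina} is then precisely the associated localisation statement.
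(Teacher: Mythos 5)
Your construction agrees step for step with the paper's own proof in four of its five components: $\td{\mathscr{F}}$ is the identity on $0$-cells; a $1$-cell $(P,\alpha,\beta)$ goes to the composite $\mathscr{F}(\B{\beta})^{-1}\circ\mathscr{F}(\B{\alpha})$ built from the two-sided translation Hopf algebroid and Proposition \ref{prop:Cielitomissyoutoo}; $2$-cells are handled through Lemma \ref{lemma:ISO} and Diagram \eqref{Eq:qseramana}; uniqueness comes from Lemma \ref{lemma:paciencia}; and commutativity of \eqref{tiburtina} comes from Lemma \ref{lemma:volveranotro}. So far this is exactly the paper's argument.

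However, the step you defer as ``the genuine obstacle'' is the heart of the proof, and the tools you propose for it would fail. Proposition \ref{prop:1out2} and Lemma \ref{lema:1out2} complete a span consisting of \emph{two weak equivalences}. When composing general $1$-cells $(P,\alpha,\beta)\in\lPB{\cH}{\cK}$ and $(Q,\sigma,\theta)\in\lPB{\cK}{\cJ}$, the span sitting at $(B,\cK)$ has legs $\B{\beta}:(B,\cK)\to(P,\cH\lJoin P\rJoin\cK)$, which is a weak equivalence, and $\B{\sigma}:(B,\cK)\to(Q,\cK\lJoin Q\rJoin\cJ)$, which in general is \emph{not} one, because $Q$ is merely a left bundle (the paper states this explicitly in its proof). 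Hence the fraction-swapping needed to identify $\mathscr{F}(\B{\theta})^{-1}\circ\mathscr{F}(\B{\sigma})\circ\mathscr{F}(\B{\beta})^{-1}\circ\mathscr{F}(\B{\alpha})$ with $\mathscr{F}(\B{\td{\theta}})^{-1}\circ\mathscr{F}(\B{\td{\alpha}})$ requires a dual BF3-type axiom in the sense of \cite{Pro:EASABOF} --- completing a span with only \emph{one} leg a weak equivalence --- which neither of your cited results provides. The paper supplies precisely this missing ingredient by an explicit bridge: it shows that $P\tensor{B}\cK\tensor{B}Q\cong\B{\beta}^*(\mathscr{U}(\cK))^{\co}\cotensor{\cK}\B{\sigma}^*(\mathscr{U}(\cK))$ is a left principal bundle over the pair $\big(\cH\lJoin P\rJoin\cK,\ \cK\lJoin Q\rJoin\cJ\big)$, constructs a morphism $\B{\muup}$ of Hopf algebroids from $\big(P\cotensor{\cK}Q,\,\cH\lJoin(P\cotensor{\cK}Q)\rJoin\cJ\big)$ into the two-sided translation Hopf algebroid of this bridge, and assembles a commutative diagram whose legs $\B{\beta},\B{\delta},\B{\theta},\B{\td{\theta}}$ are all weak equivalences by Proposition \ref{prop:Cielitomissyoutoo}; applying $\mathscr{F}$ to that diagram yields the compositor identity $\td{\mathscr{F}}(P\cotensor{\cK}Q,\td{\alpha},\td{\theta})=\td{\mathscr{F}}(Q,\sigma,\theta)\circ\td{\mathscr{F}}(P,\alpha,\beta)$. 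Without this construction, or an independent verification of the BF3-type axiom, your argument does not establish that $\td{\mathscr{F}}$ respects horizontal composition, so the existence half of the theorem remains unproved.
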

\begin{proof}
For two $0$-cells $(A,\cH)$ and $(B,\cK)$ and a $1$-cell  $(P,\alpha,\beta)$ in 
$\mathsf{PB}^{\ell \, \co}(\cH,\cK)$, from Proposition \ref{prop:Cielitomissyoutoo} one obtains that $\B{\beta}:(B,\cK) \to (P,\cH \lJoin P \rJoin \cK)$ is a weak equivalence. Then, by assumption, $\mathscr{F}(\B{\beta})$ is an invertible $1$-cell in $\mathscr{B}\big(\mathscr{F}(A,\cH), \mathscr{F}(B,\cK)\big)$; denote by $\mathscr{F}(\B{\beta})^{-1} \in \mathscr{B}\big(\mathscr{F}(B,\cK), \mathscr{F}(A,\cH)\big)$ its inverse. Define furthermore 
$$
\td{\mathscr{F}}(P,\alpha,\beta) :=  \mathscr{F}(\B{\beta})^{-1} \circ \mathscr{F}(\B{\alpha}),
$$ 
which gives a $1$-cell in  $\mathscr{B}\big(\mathscr{F}(A,\cH), \mathscr{F}(B,\cK)\big)$. In particular, the image of the unit bundle $(\mathscr{U}(\cH), \bf{\Sf{s}}, \bf{\Sf{t}})$  then is, by using Lemma  \ref{lemma:volveranotro}, of the form
$$
\td{\mathscr{F}}\big( \mathscr{U}(\cH)\big) \cong \mathscr{F}(\id_{\scriptscriptstyle{(A,\, \cH)}}) = \bf{1}_{\scriptscriptstyle{\mathscr{F}(A,\, \cH)}}, 
$$
the identity $1$-cell of the monoidal  category $\mathscr{B}\big(\mathscr{F}(A,\cH), \mathscr{F}(A,\cH) \big)$. Now, the image of a $2$-cell $\fk{f}: (P',\alpha',\beta') \to (P,\alpha,\beta)$ in $\mathsf{PB}^{\ell \, \co}(\cH,\cK)$ by $ \td{\mathscr{F}}$ is going to be a $2$-isomorphism: define 
$$
\td{\mathscr{F}}(\fk{f}):  \td{\mathscr{F}}(P',\alpha',\beta') =  \mathscr{F}(\B{\beta'})^{-1} \circ \mathscr{F}(\B{\alpha'}) \longrightarrow   \mathscr{F}(\B{\beta})^{-1} \circ \mathscr{F}(\B{\alpha}) = \td{\mathscr{F}}(P,\alpha,\beta) 
$$ 
as  the unique isomorphism in $\mathscr{B}\big(\mathscr{F}(A,\cH), \mathscr{F}(B,\cK)\big)$
satisfying 
$$
\mathscr{F}(\B{\beta'}) \circ \td{\mathscr{F}}(\fk{f}) \,=\, 
1_{\scriptscriptstyle{\mathscr{F}(\B{\alpha'})}} \,=\, 1_{\scriptscriptstyle{\mathscr{F}(\fk{f}) \circ \mathscr{F}(\B{\alpha})}}
$$
since from Diagram \eqref{Eq:qseramana} follows that $\fk{f} \circ \B{\alpha} = \B{\alpha'}$ and $\fk{f} \circ \B{\beta}=\B{\beta'}$ as $2$-cells in $2\text{-}\Sf{HAlgd}$, where, by abuse of notation, we did not distinguish between the vertical and horizontal composition in $\mathscr{B}$. 

The fact that $\td{\mathscr{F}}$ is compatible with both vertical and horizontal  compositions of $\mathsf{PB}^{\ell \, \co}$ is shown as follows:
first, as seen above, $\td{\mathscr{F}}\big( \mathscr{U}(\cH)\big) \cong  \bf{1}_{\scriptscriptstyle{\mathscr{F}(A,\, \cH)}}$ for every $0$-cell $(A,\cH)$. 
Second, for $(P, \alpha, \beta) \in \lPB{\cH}{\cK}$ and $(Q,\sigma,\theta) \in \lPB{\cK}{\cJ}$ consider their product  $$(P\cotensor{\cK}Q, \td{\alpha},\td{\theta}) \in \lPB{\cH \lJoin P \rJoin \cK}{\cK \lJoin Q \rJoin \cJ},$$ where $\td{\alpha}$ and $\td{\theta}$ are as in Diagram   \eqref{Eq:earthquake}.
Consider the morphism $\B{\sigma}:(B,\cK) \to (Q,\cK\lJoin Q \rJoin \cJ)$  of Hopf algebroids as in Lemma \ref{lemma:bisemidirect}.
From the trivial bundles $\B{\sigma}^*(\mathscr{U}(\cK)) \in \lPB{\cK}{\cK \lJoin Q\rJoin \cJ}$ and $\B{\beta}^*(\mathscr{U}(\cK)) \in \lPB{\cK}{\cH \lJoin P\rJoin \cK}$ we can construct their product 
$\B{\beta}^*(\mathscr{U}(\cK))^{\co} \cotensor{\cK} \B{\sigma}^*(\mathscr{U}(\cK))$, which  belongs to $\lPB{\cH \lJoin P\rJoin \cK}{\cK\lJoin Q\rJoin \cJ}$. On the other hand, an easy verification shows that $(P\tensor{B}\cK\tensor{B}Q, \gamma, \delta)$ 
is also a principal bundle in $\lPB{\cH\lJoin P\rJoin \cK}{\cK\lJoin Q\rJoin \cJ}$, where 
$$
\gamma: P \to P\tensor{B}\cK \tensor{B}Q, \quad p \mapsto p\tensor{B}1_{\scriptscriptstyle{\cK}}\tensor{B}1_{\scriptscriptstyle{Q}}; \qquad \delta: Q \to P\tensor{B}\cK \tensor{B}Q, \quad q \mapsto 1_{\scriptscriptstyle{P}}\tensor{B}1_{\scriptscriptstyle{\cK}} \tensor{B}q,
$$
and using the canonical bicomodule structure given by the coaction
\begin{small}
$$
P\tensor{B}\cK\tensor{B}Q \to (\cH\lJoin P\rJoin \cK) \tensor{P}(P\tensor{B}\cK\tensor{B}Q), \;\; p\tensor{B}w \tensor{B}q \mapsto (1_{\scriptscriptstyle{\cH}}\tensor{A}p\tensor{B}w_{(1)})\tensor{P}(1_{\scriptscriptstyle{P}}\tensor{B}w_{(2)}\tensor{B}q)
$$
\end{small}
as well as
\begin{small}
$$
P\tensor{B}\cK\tensor{B}Q \to (P\tensor{B}\cK\tensor{B}Q)  \tensor{Q} (\cK\lJoin Q\rJoin \cJ), \;\;  p\tensor{B}w \tensor{B}q \mapsto (p\tensor{B}w_{(1)} \tensor{B}1_{\scriptscriptstyle{Q}})\tensor{Q}(w_{(2)}\mathscr{S}(q_{(-1)})\tensor{B}q_{(0)}\tensor{C}q_{(1)}).
$$
\end{small}
Taking into account the canonical isomorphism 
$$
\B{\beta}^*(\mathscr{U}(\cK))^{\co} \cotensor{\cK} \B{\sigma}^*(\mathscr{U}(\cK)) \,=\, \big( P\tensor{\scriptscriptstyle{\beta}}\cK\big) \cotensor{\cK} \big(\cK \tensor{\scriptscriptstyle{\sigma}}Q \big) \cong P\tensor{B}\cK\tensor{B}Q
$$ 
of bicomodule algebras, we can then identify both principal bundles.  The two-sided translation Hopf algebroids associated to $(P\cotensor{\cK}Q, \td{\alpha},\td{\theta})$ resp.\ $(P\tensor{B}\cK\tensor{B}Q, \gamma, \delta)$ are now related via the morphism
$$
\B{\muup}: \Big(P\cotensor{\cK}Q,\, \cH\lJoin (P\cotensor{\cK}Q)\rJoin \cJ \Big) \to \Big(P\tensor{B}\cK\tensor{B}Q,\, (\cH \lJoin P\rJoin \cK) \lJoin(P\tensor{B}\cK\tensor{B}Q)\rJoin ( \cK\lJoin Q \rJoin \cJ)\Big)
$$ 
of Hopf algebroids,
sending 
$$
(p'\cotensor{\cK}q', u\tensor{A}(p\cotensor{\cK}q) \tensor{B} j)  \mapsto \big( p'_{(0)}\tensor{B}p'_{(1)}\tensor{B}q',\, \B{\td{\alpha}}(u) \tensor{P} (p_{(0)}\tensor{B}p_{(1)}\tensor{B}q) \tensor{Q} \B{\td{\theta}}(j) \big),
$$
where $\B{\td{\alpha}}$ and $\B{\td{\theta}}$ are the associated maps to $\td{\alpha}$ and $\td{\theta}$ as in Lemma \ref{lemma:bisemidirect}, and from which we deduce the following commutative diagram:
\begin{small}
$$
\xymatrix@R=15pt{   & &  \scriptstyle{\Big(P\cotensor{\cK}Q,\; \cH \lJoin (P\cotensor{\cK}Q) \rJoin \cJ \Big)} \ar@{->}^-{\scriptstyle{\B{\muup}}}[d]  & & \\  & &  \scriptstyle{\Big(P\tensor{B}\cK\tensor{B}Q,\; (\cH\lJoin P\rJoin \cK) \lJoin (P\tensor{B}\cK\tensor{B}Q) \rJoin (\cK\lJoin Q\rJoin \cJ)\Big)} & & \\  & \scriptstyle{(P,\cH\lJoin P\rJoin \cK)}  \ar@{->}^-{\scriptstyle{\B{\gamma}}}[ur] & & \scriptstyle{(Q, \cK \lJoin Q\rJoin \cJ)}  \ar@{->}_-{\scriptstyle{\B{\delta}}}[ul] & \\  \scriptstyle{(A,\cH)} \ar@/^4pc/^-{\scriptstyle{\B{\td{\alpha}}}}[uuurr]  \ar@{->}_-{\scriptstyle{\B{\alpha}}}[ur] & &  \ar@{->}^-{\scriptstyle{\B{\beta}}}[ul] \scriptstyle{(B,\cK)}  \ar@{->}_-{\scriptstyle{\B{\sigma}}}[ur]  & & \scriptstyle{(C,\cJ)}. \ar@{->}^-{\scriptstyle{\B{\theta}}}[ul]  \ar@/_4pc/_-{\scriptstyle{\B{\td{\theta}}}}[uuull]  } 
$$
\end{small}
Applying the functor $\mathscr{F}$ to this diagram and taking into account that $\B{\beta}, \B{\delta}, \B{\theta}$, and $\B{\td{\theta}}$ are weak equivalences by Proposition \ref{prop:Cielitomissyoutoo} (while $\B{\alpha}$ and $\B{\sigma}$ are not necessarily so since $P$ and $Q$ are just left bundles),  we obtain the equality
$$
\mathscr{F}(\B{\td{\theta}})^{-1} \circ \mathscr{F}(\B{\td{\alpha}}) \,=\, \mathscr{F}(\B{\theta})^{-1} \circ \mathscr{F}(\B{\sigma}) \circ  \mathscr{F}(\B{\beta})^{-1} \circ \mathscr{F}(\B{\alpha}),
$$ 
which means that 
$$
\td{\mathscr{F}}(Q,\sigma,\theta) \circ  \td{\mathscr{F}}(P,\alpha,\beta)  \,=\, \td{\mathscr{F}}(P\cotensor{\cK}Q, \td{\alpha},\td{\theta}), 
$$ 
that is, $\td{\mathscr{F}}$ is contravariant (in the proof of Proposition  \ref{prop:P} we saw that $\mathscr{P}$ is also contravariant, hence $\td{\mathscr{F}} \circ \mathscr{P}$ is covariant). To show that $\td{\mathscr{F}}$ is unique up to isomorphism, one uses Lemma  
\ref{lemma:paciencia}. Finally, to check that the Diagram \rmref{tiburtina} is commutative up to $2$-isomorphism, one makes use of Lemma \ref{lemma:volveranotro}. 
\end{proof}

\section{Principal bibundles and  Morita equivalences of categories of comodules}
In this section, which contains one of our main results (Theorem \ref{processocivile} in the Introduction), we explore the relationship between bibundles and Morita theory 
motivated by Theorem \ref{thm:AG}.   We remind the reader that, as in Definition \ref{quellala2}, two flat Hopf algebroids are said to be \emph{Morita equivalent} if their categories of (right) comodules are equivalent as symmetric monoidal categories.

\subsection{Principal bibundles versus monoidal equivalence}
The result we want to prove first and which will be part of the main theorem reads as follows:
\begin{thm}
\label{aromanflower}
Let $(A, \cH)$ and $(B, \cK)$ be two flat Hopf algebroids and 
$(P, \ga, \gb)$ be a principal $(\cH, \cK)$-bibundle. Then the functor
$$
- \cotensor{\cH} P: \rcomod{\cH} \lra \rcomod{\cK}
$$
induces a symmetric monoidal equivalence of categories.
\end{thm}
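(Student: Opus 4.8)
The plan is to exhibit $G := -\cotensor{\cK}P^{\co}\colon \rcomod{\cK}\to\rcomod{\cH}$ as a quasi-inverse of $F := -\cotensor{\cH}P$ and then to promote the resulting equivalence to a symmetric monoidal one. First I would assemble the ingredients already at hand. Since $(P,\alpha,\beta)$ is a principal bibundle, Proposition \ref{fororomano}~(ii) shows that $(P^{\co},\beta^{\co},\alpha^{\co})$ is a principal $(\cK,\cH)$-bibundle with $(P^{\co})^{\co}=P$, so $G$ is well-defined by flatness; moreover all of $\alpha,\beta$ are faithfully flat, and since the underlying algebra is unchanged one has $\alpha^{\co}=\alpha$ and $\beta^{\co}=\beta$, so that $P^{\co}=P$ both as $A$-modules and as $B$-modules. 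I will use throughout the trivialisation isomorphism $\zeta_M$ of \eqref{giotto1} attached to $P$, its analogue for the bundle $P^{\co}$, and the comparison morphisms $\eta_M$ of \eqref{giotto2}.

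The heart of the argument is to prove that $\eta_M\colon M\to (M\cotensor{\cH}P)\cotensor{\cK}P^{\co}=GF(M)$ of Lemma \ref{levissima}~(ii) is a natural isomorphism. As $\eta_M$ is right $\cH$-colinear, hence $A$-linear for the $A$-module structure induced by $\alpha^{\co}=\alpha$, and as $\alpha$ is faithfully flat, it suffices to show that $\eta_M\tensor{A}P^{\co}$ is bijective. Writing $W:=M\cotensor{\cH}P$, the $\zeta$-isomorphism attached to the left principal $(\cK,\cH)$-bundle $P^{\co}$ identifies the codomain $GF(M)\tensor{A}P^{\co}=(W\cotensor{\cK}P^{\co})\tensor{A}P^{\co}$ with $W\tensor{B}P^{\co}$; using $P^{\co}=P$ as $B$-modules and then $\zeta_M$, this is further identified with $M\tensor{A}P$, which is exactly the domain $M\tensor{A}P^{\co}$ of $\eta_M\tensor{A}P^{\co}$. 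I would then check, by chasing the translation map $u\mapsto u_{+}\tensor{B}u_{-}$ through these two identifications and invoking \eqref{japan}, \eqref{Eq:p-+} and \eqref{Eq:u-+}, that under them $\eta_M\tensor{A}P^{\co}$ becomes the identity of $M\tensor{A}P$; faithful flatness of $\alpha$ then gives that $\eta_M$ is an isomorphism. Running the very same argument on the bibundle $P^{\co}$ (and $(P^{\co})^{\co}=P$) shows that $\eta'_N\colon N\to(N\cotensor{\cK}P^{\co})\cotensor{\cH}P=FG(N)$ is a natural isomorphism as well. Hence $F$ and $G$ are mutually quasi-inverse and $F$ is an equivalence of categories.

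It remains to make $F$ symmetric monoidal. The unit is preserved, since $A\cotensor{\cH}P\cong P^{\coinv_{\cH}}\cong B$ by Remark \ref{rema:PB}~(iii), so $F$ carries the unit $(A,\Sf{t})$ of $\rcomod{\cH}$ to the unit $(B,\Sf{t})$ of $\rcomod{\cK}$. The comparison transformation is the map $(M\cotensor{\cH}P)\otimes^\behhe(M'\cotensor{\cH}P)\to(M\otimes^\ahha M')\cotensor{\cH}P$ sending $(m\cotensor{\cH}p)\tensor{B}(m'\cotensor{\cH}p')\mapsto(m\otimes^\ahha m')\cotensor{\cH}pp'$, which is well-defined and colinear precisely because $P$ is a commutative $\cH$-comodule algebra and the coaction on a tensor product is the codiagonal one of \eqref{Eq:tensor}. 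I would prove it bijective by the same device as above, applying $-\tensor{A}P^{\co}$, trivialising both sides by the $\zeta$-isomorphisms, and concluding by faithful flatness; compatibility with the associativity and unit constraints is then formal, while compatibility with the symmetry follows from the commutativity of $P$ together with the fact that the symmetry on both sides is induced by the tensor flip. Altogether $F$ is a strong symmetric monoidal equivalence, as claimed.

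The step I expect to be the main obstacle is the element-level verification in the second paragraph that, after the two $\zeta$-trivialisations, $\eta_M\tensor{A}P^{\co}$ really reduces to the identity of $M\tensor{A}P$: this is where the precise interplay of the translation maps of $P$ with the canonical map of $P^{\co}$ must be tracked, and where the relations \eqref{japan}, \eqref{Eq:p-+}, \eqref{Eq:u-+} (and, for the monoidal comparison, \eqref{Eq:eps+-}) are indispensable. A secondary but persistent subtlety is the bookkeeping of the several coincidences of module structures ($P=P^{\co}$ as $A$- and $B$-modules, $\alpha^{\co}=\alpha$, $\beta^{\co}=\beta$), which must be kept straight so that each faithful flatness reduction is performed over the correct base.
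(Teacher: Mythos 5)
Your proposal is correct and follows essentially the same route as the paper's proof: both arguments hinge on Lemma \ref{levissima}, proving $\eta_M$ invertible by applying $-\otimes_A P$ and trivialising through the two $\zeta$-isomorphisms of \eqref{giotto1} (the paper exhibits $\zeta_M\circ\zeta_{M\cotensor{\cH}P}$ as the explicit inverse of $\eta_M\otimes_A P$, which is the same as your claim that under the identifications it becomes the identity), then descending along the faithfully flat $\alpha$, and both establish monoidality via the same comparison map $\delta$ with unit $A\cotensor{\cH}P\cong B$. One slip to correct: $\delta$ is a morphism of right $\cK$-comodules, hence of $B$-modules, so the faithfully flat descent there must be performed by applying $-\otimes_B P$ (faithful flatness of $\beta$), as the paper does, and not $-\otimes_A P^{\co}$ as you wrote; with that repair your argument matches the paper's step for step, the only remaining difference being the cosmetic one of treating the equivalence before the monoidal structure rather than after.
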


\begin{proof}
Let us first check that the functor is symmetric monoidal: 
by Remark \ref{rema:PB} {\em (ii)}, there is an algebra isomorphism
$$
A \cotensor{\cH} P \simeq P^{\coinv} \simeq B
$$
as $\gb$ is injective. Second, for two right $\cH$-comodules $M$ and $N$ define the map
$$
\gd: (M \cotensor{\cH} P) \otimes^\behhe (N \cotensor{\cH} P) \to (M \otimes^\ahha N)\cotensor{\cH} P, \quad 
(m \cotensor{\cH} p) \otimes^\behhe (n \cotensor{\cH} p') \mapsto (m \otimes^\ahha n) \cotensor{\cH} pp',
$$
which is a morphism of right $\cK$-comodules, where the tensor products are those of comodules as explained in Remark \ref{mondragone}.
In order to show that $\gd$ is an isomorphism, we proceed similarly as before
and show that $\gd \otimes_\behhe \id_\pehhe$ is an isomorphism since $P$ is faithfully flat over $B$. Now a straightforward verification proves that the composition 
$$
\zeta_{\scriptscriptstyle{M \otimes_\ahha N}} \circ  (\gd \otimes_\behhe \id_\pehhe): \big((M \cotensor{\cH} P) \otimes_\behhe (N \cotensor{\cH} P)\big)\otimes_\behhe P \to  (M \otimes_\ahha N) \otimes_\ahha P,
$$ 
using the natural transformation $\zeta$ from \rmref{giotto1}, coincides with the following chain 
\begin{equation*}
\begin{split}
\big((M \cotensor{\cH} P) \otimes_\behhe (N \cotensor{\cH} P)\big) \otimes_\behhe P &\overset{\id \otimes_\behhe \zeta_\enne}{ \relbar\joinrel\relbar\joinrel\longrightarrow}  (M \cotensor{\cH} P) \otimes_\behhe (N \otimes_\ahha P) \overset{\simeq}{\lra}  \big((M \cotensor{\cH} P) \otimes_\behhe N\big) \otimes_\ahha P  \\
&\overset{\zeta_\enne \otimes_\ahha \id_\enne}{\relbar\joinrel\relbar\joinrel\longrightarrow} (M \otimes_\ahha P) \otimes_\ahha N  \overset{\simeq}{\lra} (M \otimes_\ahha N) \otimes_\ahha P
\end{split}
\end{equation*}
of isomorphisms,
where the last step simply uses the tensor flip and the associativity of the tensor product. Clearly, $\gd$ is a natural transformation and compatible with the symmetry of the tensor product of comodules.

Now we check that $- \cotensor{\cH} P$ is an equivalence of categories, using the natural transformation
$$
\eta_\emme: M \to (M \cotensor{\cH} P) \cotensor{\cK} P^{\co},
\quad m \mapsto (m_{(0)} \cotensor{\cH} m_{(1)+}) \cotensor{\cK} m_{(1)-}
$$
for any right $\cH$-comodule $M$ from \rmref{giotto2}. As above, one shows that $\eta_\emme \otimes_\ahha P$ is an isomorphism by using the natural transformation $\zeta_{-}$ from \rmref{giotto1}. Explicitly, the inverse of $\eta_\emme \otimes_\ahha P$ is given by 
$$
\xymatrix@C=30pt{\big((M \cotensor{\cH} P) \cotensor{\cK} P^{\co}\big)\tensor{A}P \ar@{->}^-{\zeta_{\scriptscriptstyle{M \cotensor{\cH} P}}}[r] &  \ar@{->}^-{\zeta_{\scriptscriptstyle{M }}}[r] (M \cotensor{\cH} P) \tensor{B} P & M\tensor{A}P, } 
$$ where the first $\zeta$ corresponds to the left principal bundle $P^{\co}$ while the second one corresponds to $P$. 
One therefore has a natural isomorphism
$$
( - \cotensor{\cK} P^{\co}) \circ ( - \cotensor{\cH} P) \overset{\simeq}{\lra} \id_{\rcomod{\cH}}.
$$
Analogously, one obtains a natural isomorphism 
$
( - \cotensor{\cH} P) \circ ( - \cotensor{\cK} P^{\co}) \to \id_{\rcomod{\cK}},
$
which concludes the proof.
\end{proof}

The converse of Theorem \ref{aromanflower} will be investigated in  the next section; however, we give here a partial  answer when two Hopf algebroids are weakly equivalent.

\begin{proposition}
\label{prop:PHPKP}
Two flat Hopf algebroids $(A,\cH)$ and $(B,\cK)$ are weakly equivalent if and only if there is a principal bibundle connecting them.
\end{proposition}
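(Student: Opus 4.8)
The plan is to prove the two implications separately, since each turns out to be essentially a repackaging of results already at hand: Proposition \ref{prop:Cielitomissyoutoo} for one direction, and Proposition \ref{lemma:trivial bibundle} together with the cotensor product of Lemma \ref{lemma:cotensorpb} for the other. I would first dispose of the implication that a connecting principal bibundle forces weak equivalence. Starting from a principal $(\cH,\cK)$-bibundle $(P,\alpha,\beta)$, Proposition \ref{prop:Cielitomissyoutoo} {\em (iii)} states precisely that the two structural morphisms
$$
(A,\cH) \overset{\B{\alpha}}{\lra} (P,\cH\lJoin P\rJoin \cK) \overset{\B{\beta}}{\lla} (B,\cK)
$$
into the associated two-sided translation Hopf algebroid are weak equivalences. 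This is already a cospan of weak equivalences in the sense of Definition \ref{quellala1}, so $(A,\cH)$ and $(B,\cK)$ are weakly equivalent and nothing further is needed here.

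For the converse, I would unfold the definition of weak equivalence to obtain a flat Hopf algebroid $(C,\cJ)$ together with weak equivalences $\B{\phi}: (A,\cH)\to(C,\cJ)$ and $\B{\psi}: (B,\cK)\to(C,\cJ)$, and then manufacture the connecting bibundle as a cotensor product of the associated trivial bundles. By Proposition \ref{lemma:trivial bibundle}, the trivial bundles $P:=\cH\tensor{\phi}C$ and $Q:=\cK\tensor{\psi}C$ are principal $(\cH,\cJ)$- resp.\ $(\cK,\cJ)$-bibundles (this is exactly the equivalence {\em (iii)}$\Leftrightarrow${\em (i)} applied to $\B{\phi}$ and $\B{\psi}$). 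Passing to the opposite bundle via Remark \ref{rema:PB} {\em (ii)} turns $Q$ into a principal $(\cJ,\cK)$-bibundle $Q^{\co}$, and the cotensor product $P\cotensor{\cJ}Q^{\co}$ — formed using Lemma \ref{lemma:cotensorpb} in its bibundle version (Remark \ref{jumpingwindow}) — is then a principal $(\cH,\cK)$-bibundle connecting the two Hopf algebroids.

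The only point requiring care, though it is already covered by the cited machinery, is the bookkeeping of comodule sides in the cotensor product: one must confirm that cotensoring the right $\cJ$-coaction of $P$ against the left $\cJ$-coaction of $Q^{\co}$ lands in $\bicomod{\cH}{\cK}$, and that both canonical maps of $P\cotensor{\cJ}Q^{\co}$ remain bijective. Since Lemma \ref{lemma:cotensorpb} furnishes the explicit inverse of the canonical map of a cotensor product of principal bundles and the bibundle case follows {\em mutatis mutandis}, this reduces to the index matching $(\cH,\cJ)\cotensor{\cJ}(\cJ,\cK)=(\cH,\cK)$; no new computation is required, and the substantive content is merely recognising that the hypothesis that $\B{\phi}$ and $\B{\psi}$ are weak equivalences is exactly what Proposition \ref{lemma:trivial bibundle} consumes.
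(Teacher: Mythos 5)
Your proposal is correct and follows essentially the same route as the paper: the backward implication via Proposition \ref{prop:Cielitomissyoutoo} \emph{(iii)}, and the forward implication by cotensoring (over the middle Hopf algebroid) the trivial bibundles supplied by Proposition \ref{lemma:trivial bibundle} and Remark \ref{jumpingwindow}, with opposites taken via Remark \ref{rema:PB} \emph{(ii)}. The only difference is cosmetic — you apply the opposite-bundle operation to one factor before cotensoring, whereas the paper forms $(\cK\tensor{\omega}C)\cotensor{\cJ}(C\tensor{\varphi}\cH)$ and takes the opposite of the result — and these yield isomorphic bibundles.
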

\begin{proof}
The implication $(\Leftarrow)$  directly follows from part {\em (iii)} of Proposition \ref{prop:Cielitomissyoutoo}. As for the opposite direction $(\Rightarrow)$, assume that there is a diagram 
$$
\xymatrix@R=15pt{ & (C,\cJ)  & \\ (A,\cH) \ar@{->}^-{\B{\varphiup}}[ur] & & \ar@{->}_-{\B{\omegaup}}[ul] (B,\cK) }
$$
of flat Hopf algebroids,
where $\B{\varphi}$ and $\B{\omegaup}$ are weak equivalences.  Denote the associated trivial bundles
by $P:=\cK\tensor{\omega}C$ and $Q:=C\tensor{\varphi}\cH$. 
As shown in Proposition \ref{lemma:trivial bibundle} and explained in Remark \ref{jumpingwindow},
$P \in \bPB{\cK}{\cJ}$ and $Q \in \bPB{\cJ}{\cH}$ are trivial bibundles, and we can form the bundle $P\cotensor{\cJ}Q$, which is an object in $\bPB{\cK}{\cH}$, or equivalently $(P\cotensor{\cJ}Q)^{\co} \in \bPB{\cH}{\cK}$, and this finishes the proof.
\end{proof}

\subsection{Symmetric monoidal equivalence versus principal bibundles}\label{sec:tesoro} Starting with two Morita equivalent flat Hopf algebroids, the aim of this subsection is to extract from these data a principal bibundle.
To this end, let us first recall some basic facts on monoidal functors, restricting ourselves to the case of monoidal categories of comodules over flat Hopf algebroids. 

Let $(A, \cH)$ and $(B,\cK)$ be two flat Hopf algebroids, and assume 
that there is a symmetric monoidal equivalence 
$$
\mathcal{F}: \rcomod{\cH} \longrightarrow \rcomod{\cK}
$$
with inverse $\mathcal{G}$ in what follows. In particular, this means that there is a  natural isomorphism
\begin{equation}\label{Eq:Phi}
\phiup^{\scriptscriptstyle{1}}_{-,-}: \mathcal{F}( -{\comdtensor{A}}-)\, \longrightarrow\, \mathcal{F}(-){\comdtensor{B}} \mathcal{F}(-),  
\qquad \phiup^{\scriptscriptstyle{0}}: B \overset{\cong}{\longrightarrow} \mathcal{F}(A),
\end{equation}
where the latter is an algebra isomorphism, and the notation $-{\comdtensor{A}}-$ stands for the tensor product of comodules as was explained in Remark \ref{mondragone}. Both $\phiup^{\scriptscriptstyle{1}}$ and $\phiup^{\scriptscriptstyle{0}}$  should be compatible in a coherent way with the associativity, the commutativity ({\em i.e.}, the symmetries), and the unitary property of the tensor products  of both $\rcomod{\cH}$ and $\rcomod{\cK}$. Notice that, in this case, there also exists a symmetric monoidal equivalence between {\em left} comodules. 

The inverse natural transformation of $\phiup$ will be denoted by $\psiup$. It is known  that the functor $\mathcal{G}$ is also a symmetric monoidal functor; its associated natural isomorphism can be computed  from  that of $\mathcal{F}$ by using the natural transformation defining the equivalence.

Now, let $M \in \bicomod{T}{\cH}$, where $T$ is any commutative algebra, {\em i.e.}, $M$ is a $(T,A)$-bimodule and right $\cH$-comodule with left $T$-linear  coaction.   
Then,  we have an algebra map
$$
\lambda_{\Sf{l}}: T \to {\rcomod{\cH}}(M, M), \quad t \mapsto \{ m \mapsto  tm \},
$$
which is used to get a new algebra map
$$
\xymatrix{A \ar@{->}^-{\lambda_{\Sf{l}}}[r] &  {\rcomod{\cH}}(M, M) \ar@{->}^-{\mathcal{F}}[r] &   {\rcomod{\cK}}\big(\mathcal{F}(M), \mathcal{F}(M)\big), }
$$
from which we obtain that $\mathcal{F}(M)$ is a $(T,B)$-bimodule and that its right coaction $\rcoaction{\cK}{\mathcal{F}(M)}$ is left $T$-linear, that is, $\mathscr{F}(M) \in \bicomod{T}{\cK}$. Moreover,  $\mathscr{F}$ is restricted to the functor $$\mathscr{F}: \bicomod{T}{\cH} \to \bicomod{T}{\cK}.$$

Following \cite[\S23 \& \S39.3]{BrzWis:CAC}, since $\mathcal{F} $ is right exact and commutes with inductive limits, there is a natural isomorphism over (right) modules $\Sf{Mod}_{T}$
\begin{equation}\label{Eq:Upsilon}
\Upsilon_{-,M}: \mathcal{F}(-\tensor{T}M)  \longrightarrow -\tensor{T}\mathcal{F}(M), 
\end{equation}
which is natural on $M$ as well, and where the functor $-\tensor{T}M: \rmod{T} \to \rcomod{\cH}$ is defined as in  \eqref{Eq:RM}.  Furthermore, $\Upsilon$ defines morphisms of right $\cK$-comodules.  Notice that $\Upsilon_{\scriptscriptstyle{T,\, M}}: \mathcal{F}(M) \to T\tensor{T}\mathcal{F}(M)$ is just the canonical map sending $x \mapsto 1_{\scriptscriptstyle{T}}\tensor{T} x$.

For instance, in case $M:=\cH$ with left $A$-action given by the source $\Sf{s}$, we obtain  an algebra map 
$$
\lambda_{\Sf{s}}: A \to {\rcomod{\cH}}(\cH, \cH), \quad a \mapsto \{ u \mapsto \Sf{s}(a)u \}.
$$
The composition 
$$
\xymatrix{A \ar@{->}^-{\lambda_{\Sf{s}}}[r] &  {\rcomod{\cH}}(\cH, \cH) \ar@{->}^-{\mathcal{F}}[r] &   {\rcomod{\cK}}\big(\mathcal{F}(\cH), \mathcal{F}(\cH)\big) }
$$
induces on  $\mathcal{F}(\cH)$ an $(A,B)$-bimodule structure with a left $A$-linear  right coaction $\rcoaction{\cK}{\mathcal{F}(\cH)}$.  In fact, $\cF(\cH)$ becomes  an $(\cH,\cK)$-bicomodule with these actions as follows.
The structure of a left $\cH$-comodule  
 is given by
\begin{equation}\label{Eq:lcoact}
\xymatrix{ \lcoaction{\cH}{\mathcal{F}(\cH)}: \mathcal{F}(\cH) \ar@{->}^-{\mathcal{F}(\Delta)}[r] &  \mathcal{F}(\cH\tensor{A}\cH) \ar@{->}_-{\cong}^-{\Upsilon}[r] &  \cH \tensor{A}\mathcal{F}(\cH), }
\end{equation}
using the natural isomorphism of Eq.~\eqref{Eq:Upsilon}, which can be shown to be a morphism of right $\cK$-comodules. Similar arguments hold true for $\cG$. Furthermore, we have  natural isomorphisms
\begin{equation}\label{Eq:FG}
\cF\,\cong \, -\cotensor{\cH}\cF(\cH), \quad \cG \,\cong \, -\cotensor{\cK}\cG(\cK).
\end{equation}

Since $\cH$ is a monoid in $\Sf{Comod}_{\cH}$, it follows that $\mathcal{F}(\cH)$ is a monoid in $\Sf{Comod}_{\cK}$. Thus, $\mathcal{F}(\cH)$ is a right $\cK$-comodule algebra with respect to the underlying algebra map  
\begin{equation}\label{Eq:betaFH}
\beta:  B \overset{\phiup^{\scriptscriptstyle{0}}}{\cong} \mathcal{F}(A) \overset{\mathcal{F}(\Sf{t})}{\longrightarrow} \mathcal{F}(\cH).  
\end{equation}
Explicitly, the multiplication in $\mathcal{F}(\cH)$ is given by
\begin{equation}\label{Eq:mFH}
\xymatrix@C=30pt{ \textsl{m}_{\scriptscriptstyle{\mathcal{F}(\cH)}}: \mathcal{F}(\cH)\comdtensor{B}  \mathcal{F}(\cH) \ar@{->}^-{\psiup_{\scriptscriptstyle{\cH,\, \cH}}}[r] & \mathcal{F}(\cH \comdtensor{A}\cH) \ar@{->}^-{\mathcal{F}(\textsl{m}_{\scriptscriptstyle{\cH}})}[r] & \mathcal{F}(\cH).  }
\end{equation}
Note that $\mathcal{F}(\cH)$ is commutative since $\phiup$ is so (preserves the symmetries) as well as $\cH$. 

Next, we want to endow $\mathscr{F}(\cH)$ with the structure of a left $\cH$-comodule algebra using the left comodule structure of Eq.~\eqref{Eq:lcoact}.
The $A$-algebra structure on $\mathcal{F}(\cH)$ is given by  the linear map
\begin{equation}\label{Eq:alpha}
\alpha: A \to \mathcal{F}(\cH), \quad a \mapsto \mathcal{F}(\lambda_{\Sf{s}}(a)) (1_{\scriptscriptstyle{\mathcal{F}(\cH)}})=a. 1_{\scriptscriptstyle{\mathcal{F}(\cH)}},
\end{equation}
where $1_{\scriptscriptstyle{\mathcal{F}(\cH)}}$ is just the identity element of the right $\cK$-comodule algebra $\mathcal{F}(\cH)$,  which can be identified with $\mathcal{F}(\Sf{t}) \circ \phiup^{\scriptscriptstyle{0}}(1_{\scriptscriptstyle{B}})= \mathcal{F}(\Sf{t})(1_{\scriptscriptstyle{\mathcal{F}(A)}})$. 
We have:

\begin{lemma}\label{lemma:mAFH}
The map $\alpha$ of Eq.~\eqref{Eq:alpha} is an algebra map.  That is, there exists a map which makes the diagram 
$$
\xymatrix{ \scriptstyle{\mathcal{F}(\cH)\tensor{}\mathcal{F}(\cH)} \ar@{->}[r]  \ar@{->}[d] & \scriptstyle{\mathcal{F}(\cH) \tensor{A} \mathcal{F}(\cH)} \ar@{-->}[rr] & & \scriptstyle{\mathcal{F}(\cH)}  \\ \scriptstyle{\mathcal{F}(\cH)\comdtensor{B} \mathcal{F}(\cH)} \ar@{->}^-{\scriptstyle{\psiup}}[r] & \scriptstyle{\mathcal{F}(\cH \comdtensor{A} \cH)} \ar@{->}_-{\scriptstyle{\mathcal{F}(\textsl{m}_{\scriptscriptstyle{\cH}})}}[rru] &  &  }
$$
commutative.
\end{lemma}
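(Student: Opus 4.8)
The plan is to strip the statement down to a formal property of commutative monoids in a symmetric monoidal category and then transport it along the equivalence $\mathcal{F}$. Recall that $\cH$, carrying its comultiplication $\Delta$ as coaction (an algebra map by \eqref{Eq:demul}), is a commutative monoid in $(\rcomod{\cH}, \comdtensor{A})$ with multiplication $\textsl{m}_{\scriptscriptstyle{\cH}}$ and unit $1_{\scriptscriptstyle{\cH}}$, and that $\lambda_{\Sf{s}}(a)$ is precisely left multiplication by $\Sf{s}(a)=\lambda_{\Sf{s}}(a)(1_{\scriptscriptstyle{\cH}})$ in this monoid. The first step I would record is the identity of right $\cH$-comodule morphisms
\begin{equation*}
\lambda_{\Sf{s}}(a) \circ \textsl{m}_{\scriptscriptstyle{\cH}} \,=\, \textsl{m}_{\scriptscriptstyle{\cH}} \circ \big(\lambda_{\Sf{s}}(a) \comdtensor{A} \id_{\scriptscriptstyle{\cH}}\big),
\end{equation*}
that is, the fact that $\lambda_{\Sf{s}}(a)$ is \emph{left $\textsl{m}_{\scriptscriptstyle{\cH}}$-linear}, which is immediate from commutativity and associativity of $\cH$. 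Combined with the unit axiom, this is the abstract tautology that any left-module endomorphism of a monoid is left multiplication by its value on the unit.

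Next I would apply the symmetric monoidal functor $\mathcal{F}$. Since $\textsl{m}_{\scriptscriptstyle{\mathcal{F}(\cH)}} = \mathcal{F}(\textsl{m}_{\scriptscriptstyle{\cH}}) \circ \psiup_{\scriptscriptstyle{\cH,\cH}}$ by \eqref{Eq:mFH}, and $\psiup$ is natural, the naturality square
\begin{equation*}
\psiup_{\scriptscriptstyle{\cH,\cH}} \circ \big(\mathcal{F}(\lambda_{\Sf{s}}(a)) \comdtensor{B} \id\big) \,=\, \mathcal{F}\big(\lambda_{\Sf{s}}(a)\comdtensor{A}\id\big)\circ \psiup_{\scriptscriptstyle{\cH,\cH}}
\end{equation*}
lets me push the module-linearity of $\lambda_{\Sf{s}}(a)$ through $\mathcal{F}$ and conclude that $\mathcal{F}(\lambda_{\Sf{s}}(a))$ is left $\textsl{m}_{\scriptscriptstyle{\mathcal{F}(\cH)}}$-linear. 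Invoking the unit axiom for the monoid $\mathcal{F}(\cH)$ (whose unit $1_{\scriptscriptstyle{\mathcal{F}(\cH)}}$ comes from \eqref{Eq:betaFH}) together with the defining equation \eqref{Eq:alpha}, $\alpha(a) = \mathcal{F}(\lambda_{\Sf{s}}(a))(1_{\scriptscriptstyle{\mathcal{F}(\cH)}})$, I then obtain the key identity
\begin{equation*}
\mathcal{F}(\lambda_{\Sf{s}}(a)) \,=\, \textsl{m}_{\scriptscriptstyle{\mathcal{F}(\cH)}} \circ \big(\alpha(a) \comdtensor{B} \id\big),
\end{equation*}
i.e.\ the $A$-action on $\mathcal{F}(\cH)$ is exactly multiplication by $\alpha(a)$ in the $B$-algebra $\mathcal{F}(\cH)$.

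From this the lemma follows at once. Because $\lambda_{\Sf{s}}$ is an algebra map $A \to \rcomod{\cH}(\cH,\cH)$ and $\mathcal{F}$ induces an algebra map on endomorphism rings, $a \mapsto \mathcal{F}(\lambda_{\Sf{s}}(a))$ is multiplicative; evaluating at $1_{\scriptscriptstyle{\mathcal{F}(\cH)}}$ and using the key identity yields $\alpha(aa') = \alpha(a)\alpha(a')$ and $\alpha(1_{\scriptscriptstyle{A}}) = 1_{\scriptscriptstyle{\mathcal{F}(\cH)}}$, so $\alpha$ is an algebra map. Equivalently, writing $a\cdot x := \mathcal{F}(\lambda_{\Sf{s}}(a))(x)$ for the $A$-action, the key identity gives $(a\cdot x)\,y = (\alpha(a)x)\,y = x\,(\alpha(a)y) = x\,(a\cdot y)$ in the commutative algebra $\mathcal{F}(\cH)$, so $\textsl{m}_{\scriptscriptstyle{\mathcal{F}(\cH)}}$ coequalises the two $A$-actions and hence descends along the projection $\mathcal{F}(\cH)\tensor{}\mathcal{F}(\cH) \to \mathcal{F}(\cH)\tensor{A}\mathcal{F}(\cH)$ to the desired dashed arrow. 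The only genuinely non-formal point — and the step I expect to require the most care — is the passage through $\mathcal{F}$: it rests entirely on the naturality and coherence of the monoidal constraint $\psiup$ (equivalently $\phiup^{\scriptscriptstyle{1}}$), and on keeping straight which tensor product ($\comdtensor{A}$ versus $\comdtensor{B}$) carries which coaction; everything else reduces to the unit and associativity axioms.
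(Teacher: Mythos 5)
Your proof is correct, and it runs on the same engine as the paper's own argument: transport a module-linearity identity for $\lambda_{\Sf{s}}$ from $\rcomod{\cH}$ to $\rcomod{\cK}$ by combining functoriality of $\mathcal{F}$ with naturality of $\psiup$, then evaluate at the unit. The difference lies in the decomposition. The paper proves multiplicativity of $\alpha$ in a single chain: it expands $\textsl{m}_{\scriptscriptstyle{\mathcal{F}(\cH)}}\big(\alpha(a)\tensor{B}\alpha(a')\big)$, pushes $\mathcal{F}(\lambda_{\Sf{s}}(a))\comdtensor{B}\mathcal{F}(\lambda_{\Sf{s}}(a'))$ through $\psiup$, applies the \emph{two-variable} identity $\textsl{m}_{\scriptscriptstyle{\cH}}\circ(\lambda_{\Sf{s}}(a)\comdtensor{A}\lambda_{\Sf{s}}(a'))=\lambda_{\Sf{s}}(aa')\circ\textsl{m}_{\scriptscriptstyle{\cH}}$, and evaluates at $1\tensor{B}1$; the existence of the dashed arrow is then dismissed with ``the last statement is obvious''. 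You instead transport the \emph{one-variable} identity $\lambda_{\Sf{s}}(a)\circ\textsl{m}_{\scriptscriptstyle{\cH}}=\textsl{m}_{\scriptscriptstyle{\cH}}\circ(\lambda_{\Sf{s}}(a)\comdtensor{A}\id)$ and evaluate at $1\tensor{B}x$, which yields the strictly stronger intermediate fact that the $A$-action on $\mathcal{F}(\cH)$ is multiplication by $\alpha(a)$. This buys two things: multiplicativity of $\alpha$ then falls out of the multiplicativity of $a\mapsto\mathcal{F}(\lambda_{\Sf{s}}(a))$, which the paper already has by construction of the $(A,B)$-bimodule structure; and, more importantly, the factorization of the multiplication through $\mathcal{F}(\cH)\tensor{A}\mathcal{F}(\cH)$ --- the actual dashed arrow, i.e.\ the second assertion of the lemma --- becomes a one-line consequence of commutativity rather than an unargued claim. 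What the paper's version buys is brevity for the multiplicativity statement alone; yours is the more complete justification of the lemma as stated, and the only point needing care (which you correctly flag) is that the unit law for the monoid $\mathcal{F}(\cH)$ rests on the coherence of $\phiup^{\scriptscriptstyle{0}}$ and $\phiup^{\scriptscriptstyle{1}}$ with the unit constraints, which the paper's standing assumptions on the symmetric monoidal equivalence provide.
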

\begin{proof}
It is clear that $\alpha(1_{\scriptscriptstyle{A}}) =1_{\scriptscriptstyle{\mathcal{F}(\cH)}}$ since $\mathcal{F}(\lambda_{\Sf{s}}(1_{\scriptscriptstyle{A}}))=id_{\scriptscriptstyle{\mathcal{F}(\cH)}}$. Now, for $a, a' \in A$ compute 
\begin{eqnarray*}
\textsl{m}_{\scriptscriptstyle{\mathcal{F}(\cH)}}\big( \alpha(a)\tensor{B}\alpha(a') \big) &=& \mathcal{F}(\textsl{m}_{\scriptscriptstyle{\cH}}) \circ \psiup_{\scriptscriptstyle{\cH,\, \cH}}\, \Big(   \mathcal{F}(\lambda_{\Sf{s}}(a)) (1_{\scriptscriptstyle{\mathcal{F}(\cH)}}) \tensor{B} \mathcal{F}(\lambda_{\Sf{s}}(a')) (1_{\scriptscriptstyle{\mathcal{F}(\cH)}}) \Big)
\\ &=& \mathcal{F}(\textsl{m}_{\scriptscriptstyle{\cH}}) \circ \psiup_{\scriptscriptstyle{\cH,\, \cH}}\circ \big( \mathcal{F}(\lambda_{\Sf{s}}(a)) \comdtensor{B} \mathcal{F}(\lambda_{\Sf{s}}(a')) \big) \, \big(1_{\scriptscriptstyle{\mathcal{F}(\cH)}}  \tensor{B} 1_{\scriptscriptstyle{\mathcal{F}(\cH)}} \big)
\\ &=& \mathcal{F}(\textsl{m}_{\scriptscriptstyle{\cH}}) \circ   \mathcal{F}\big(\lambda_{\Sf{s}}(a) \comdtensor{A} \lambda_{\Sf{s}}(a')\big) \circ  \psiup_{\scriptscriptstyle{\cH,\, \cH}} \, \big(1_{\scriptscriptstyle{\mathcal{F}(\cH)}}  \tensor{B} 1_{\scriptscriptstyle{\mathcal{F}(\cH)}} \big)
\\ &=& \mathcal{F}\big( \textsl{m}_{\scriptscriptstyle{\cH}} \circ   (\lambda_{\Sf{s}}(a) \comdtensor{A} \lambda_{\Sf{s}}(a')) \big) \circ  \psiup_{\scriptscriptstyle{\cH,\, \cH}}  \, \big(1_{\scriptscriptstyle{\mathcal{F}(\cH)}}  \tensor{B} 1_{\scriptscriptstyle{\mathcal{F}(\cH)}} \big)
\\ &=& \mathcal{F}\big(  \lambda_{\Sf{s}}(aa') \circ   \textsl{m}_{\scriptscriptstyle{\cH}} \big) \circ  \psiup_{\scriptscriptstyle{\cH,\, \cH}}  \, \big(1_{\scriptscriptstyle{\mathcal{F}(\cH)}}  \tensor{B} 1_{\scriptscriptstyle{\mathcal{F}(\cH)}} \big)
\\ &=& \mathcal{F}( \lambda_{\Sf{s}}(aa'))  \circ \mathcal{F}(   \textsl{m}_{\scriptscriptstyle{\cH}}) \circ  \psiup_{\scriptscriptstyle{\cH,\, \cH}}  \, \big(1_{\scriptscriptstyle{\mathcal{F}(\cH)}}  \tensor{B} 1_{\scriptscriptstyle{\mathcal{F}(\cH)}} \big)
\\ &=& \mathcal{F}( \lambda_{\Sf{s}}(aa'))  \circ    \textsl{m}_{\scriptscriptstyle{\mathcal{F}(\cH)}} \big(1_{\scriptscriptstyle{\mathcal{F}(\cH)}}  \tensor{B} 1_{\scriptscriptstyle{\mathcal{F}(\cH)}} \big)
\\ &=& \mathcal{F}( \lambda_{\Sf{s}}(aa'))   \big( 1_{\scriptscriptstyle{\mathcal{F}(\cH)}} \big) \,\,=\,\, \alpha(aa').
\end{eqnarray*}
As  the last statement is obvious, this finishes the proof.
\end{proof}

In order to show that the coaction \eqref{Eq:lcoact} is an algebra map with respect to $\alpha$, we  need to introduce the following natural transformations:
\begin{equation}\label{Eq:peleroja}
\begin{array}{rcl}
\B{\Omega}_{X,Y}: \big(X\tensor{A}\mathcal{F}(\cH)\big) \comdtensor{B}\big(Y\tensor{A}\mathcal{F}(\cH)\big) & \longrightarrow &   (X\tensor{A}Y)\tensor{A}\mathcal{F}(\cH),\quad \\  (x\tensor{A}p) \comdtensor{B}(y\tensor{A}q) &\longmapsto& (x\tensor{A}y) \tensor{A} pq,
\end{array}
\end{equation}  
\begin{equation}\label{Eq:pelerojasera}
\begin{array}{rcl}
\B{\nabla}_{X,Y}: \big(X\tensor{A}\cH\big) \comdtensor{A}\big(Y\tensor{A}\cH\big) & \longrightarrow&  (X\tensor{A}Y)\tensor{A}\cH, \quad \\ (x\tensor{A}u) \comdtensor{A}(y\tensor{A}v) &\longmapsto& (x\tensor{A}y) \tensor{A} uv,
\end{array}
\end{equation}
where $X$ and $Y$ are $A$-modules and where we used the multiplication in $\mathcal{F}(\cH)$. 
Using a functor similar to the one in \eqref{Eq:RM}, one sees that $\B{\Omega}$ defines  morphisms of right $\cK$-comodules
since the right $\cK$-coaction of $\mathcal{F}(\cH)$ is left $A$-linear (with respect to the $A$-action given by $\alpha$). Analogously, $\B{\nabla}$ defines morphisms of right $\cH$-comodules. These natural transformations are compatible in the following way:

\begin{proposition}\label{prop:Dstar}
The diagram 
\begin{small}
$$
\xymatrix@R=18pt@C=18pt{  & \scriptstyle{\mathcal{F}\big((X\tensor{A}\cH) \comdtensor{A} (Y\tensor{A}\cH) \big)} \ar@{->}^-{\scriptstyle{\mathcal{F}(\B{\nabla})}}[rr] 
 & &  \scriptstyle{\mathcal{F}((X\tensor{A}Y)\tensor{A}\cH)} \ar@{->}^-{\scriptstyle{\Upsilon}}[dd] \\  \scriptstyle{\mathcal{F}(X\tensor{A}\cH) \comdtensor{B} \mathcal{F}(Y\tensor{A}\cH)}  \ar@{->}_-{ \scriptstyle{\Upsilon\comdtensor{B} \Upsilon}}[dr]  \ar@{->}^-{\scriptstyle{\psiup}}[ru] & & & \\ & \scriptstyle{\big(X\tensor{A}\mathcal{F}(\cH)\big) \comdtensor{B}\big(Y\tensor{A}\mathcal{F}(\cH)\big)}   \ar@{->}^-{\scriptstyle{\B{\Omega}}}[rr] & & \scriptstyle{(X\tensor{A}Y)\tensor{A}\mathcal{F}(\cH)}   }
$$
\end{small}
of right $\cK$-comodules commutes.
\end{proposition}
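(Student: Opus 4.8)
The plan is to recognise both the clockwise and the counter-clockwise composites in the diagram as natural transformations in the pair of $A$-modules $(X,Y)$ between the two functors
$$
(X,Y) \longmapsto \mathcal{F}(X\tensor{A}\cH)\comdtensor{B}\mathcal{F}(Y\tensor{A}\cH) \quad\text{and}\quad (X,Y)\longmapsto (X\tensor{A}Y)\tensor{A}\mathcal{F}(\cH)
$$
from $\rmod{A}\times\rmod{A}$ to $\rcomod{\cK}$, and then to reduce the verification to the single value $X=Y=A$. Naturality in $(X,Y)$ is immediate, since each of $\psiup$, $\Upsilon$ (from \eqref{Eq:Upsilon}), $\mathcal{F}(\B{\nabla})$, $\B{\Omega}$, and $\Upsilon\comdtensor{B}\Upsilon$ is natural in its module arguments, and the two composites share the same source and target functor.

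The reduction rests on the observation that the symbols $\comdtensor{A}$ and $\comdtensor{B}$ occurring here denote the \emph{comodule} tensor products of Remark \ref{mondragone} (with codiagonal coaction), whose underlying functors are the ordinary tensor products $\tensor{A}$ resp.\ $\tensor{B}$, and hence are right exact and preserve coproducts. Consequently both displayed functors preserve coproducts and cokernels in each variable $X$, $Y$ \emph{separately}: indeed $-\tensor{A}\cH$, $-\tensor{A}\mathcal{F}(\cH)$, $-\tensor{A}Y$ and $-\tensor{B}-$ are right exact and additive, while $\mathcal{F}$, being half of an equivalence, preserves all colimits. Since every $A$-module is the cokernel of a map between free modules and every free module is a coproduct of copies of $A$, two natural transformations between such functors that agree at $(A,A)$ must agree at $(X,A)$ for all $X$, and then at every $(X,Y)$; equality may be tested on underlying $B$-modules, the forgetful functor $\rcomod{\cK}\to\rmod{B}$ being faithful.

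It therefore remains to check commutativity at $X=Y=A$. Under the canonical unit identifications $A\tensor{A}\cH\cong\cH$ and $A\tensor{A}\mathcal{F}(\cH)\cong\mathcal{F}(\cH)$ one has $\B{\nabla}_{A,A}=\textsl{m}_{\scriptscriptstyle{\cH}}$ and $\B{\Omega}_{A,A}=\textsl{m}_{\scriptscriptstyle{\mathcal{F}(\cH)}}$ by \eqref{Eq:pelerojasera} and \eqref{Eq:peleroja}, while $\Upsilon_{A,-}$ reduces to the canonical map $x\mapsto 1_{\scriptscriptstyle{A}}\tensor{A}x$. The counter-clockwise composite then collapses to $\B{\Omega}_{A,A}\circ(\Upsilon_{A,\cH}\comdtensor{B}\Upsilon_{A,\cH})=\textsl{m}_{\scriptscriptstyle{\mathcal{F}(\cH)}}$, whereas the clockwise composite collapses to $\Upsilon_{A,\cH}\circ\mathcal{F}(\textsl{m}_{\scriptscriptstyle{\cH}})\circ\psiup_{\scriptscriptstyle{\cH,\,\cH}}=\mathcal{F}(\textsl{m}_{\scriptscriptstyle{\cH}})\circ\psiup_{\scriptscriptstyle{\cH,\,\cH}}$, which is exactly the definition \eqref{Eq:mFH} of $\textsl{m}_{\scriptscriptstyle{\mathcal{F}(\cH)}}$. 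Thus both composites equal $\textsl{m}_{\scriptscriptstyle{\mathcal{F}(\cH)}}$ at $(A,A)$, and the proof is complete.

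The main obstacle is bookkeeping rather than conceptual. One must make sure that all functors in play really are right exact and additive in each variable, which is precisely why it matters that $\comdtensor{A},\comdtensor{B}$ are the (right exact) comodule tensor products and not the left exact cotensor products $\cotensor{\cH}$; and one must check that the canonical unit identifications used at $(A,A)$ are morphisms of $\cK$-comodules, so that the collapse to \eqref{Eq:mFH} takes place in $\rcomod{\cK}$ and not merely on underlying modules. A more computational alternative, avoiding the colimit argument, would factor $\B{\nabla}$ and $\B{\Omega}$ through the evident reshuffling isomorphisms pulling the $X\tensor{A}Y$ factor to the front, and then invoke the naturality of $\Upsilon$ at the comodule morphism $\textsl{m}_{\scriptscriptstyle{\cH}}\colon \cH\comdtensor{A}\cH\to\cH$ together with \eqref{Eq:mFH}; but the generators-and-right-exactness route above is cleaner.
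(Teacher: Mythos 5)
Your proposal is correct and takes essentially the same route as the paper's own proof: both recognise the two composites as natural transformations in $(X,Y)$, reduce via naturality, preservation of coproducts, and right exactness (free presentations) to the case $X=Y=A$, and there observe that the diagram commutes because, under the canonical identifications $A\tensor{A}\cH \cong \cH$ and $A\tensor{A}\mathcal{F}(\cH)\cong\mathcal{F}(\cH)$, it is precisely the definition \eqref{Eq:mFH} of the multiplication $\textsl{m}_{\scriptscriptstyle{\mathcal{F}(\cH)}}$. Your write-up only makes explicit some bookkeeping the paper leaves implicit (that $\comdtensor{A}$, $\comdtensor{B}$ are the right-exact comodule tensor products rather than cotensor products, the variable-by-variable extension from $(A,A)$, and that equality can be tested on underlying $B$-modules), which is a welcome but not essentially different elaboration.
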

\begin{proof}
First, notice that both $\Upsilon \circ \mathcal{F}(\B{\nabla})  \circ \psiup$ and $\B{\Omega} \circ  (\Upsilon\comdtensor{B} \Upsilon)$ are natural transformations on $(X,Y)$. Now, up to the canonical isomorphisms $A\tensor{A}\cH \cong \cH$ and $A\tensor{A}\mathcal{F}(\cH) \cong \mathcal{F}(\cH) $, we see that the diagram commutes for $X:=A$ and $Y:=A$ as this is just the definition of the multiplication $\textsl{m}_{\scriptscriptstyle{\mathcal{F}(\cH)}}$  defined in \eqref{Eq:mFH}. Using the naturality of both paths in the diagram, one can also show that the diagram  commutes when $X$ and $Y$ are free $A$-modules of finite rank. Since the
involved functors commute  with direct sums, the same holds true when $X$ and $Y$ are free $A$-modules. Lastly, since all involved functors 
are right exact, one can use  free representations of any $A$-module to complete the proof.  
\end{proof}

\begin{proposition}\label{prop:FHcomdalg}
The pair $(\mathcal{F}(\cH), \alpha)$ is a left $\cH$-comodule algebra with respect to the coaction \eqref{Eq:lcoact}.
\end{proposition}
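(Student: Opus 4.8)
The plan is to verify directly that the left coaction $\lcoaction{\cH}{\mathcal{F}(\cH)}$ of \eqref{Eq:lcoact} is a morphism of algebras from $\mathcal{F}(\cH)$ into the tensor-product algebra $\cH\tensor{A}\mathcal{F}(\cH)$; by the reformulation following \eqref{Eq:comodalg}, this is precisely what it means for $(\mathcal{F}(\cH),\alpha)$ to be a left $\cH$-comodule algebra. Since $\mathcal{F}(\cH)$ is already a left $\cH$-comodule (coassociativity and counitality having been recorded after \eqref{Eq:lcoact}) and a commutative algebra for which $\alpha$ is an algebra map by Lemma \ref{lemma:mAFH}, only the two identities in \eqref{Eq:comodalg} remain to be checked. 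The unit identity $\lcoaction{\cH}{\mathcal{F}(\cH)}(1_{\mathcal{F}(\cH)})=1_{\cH}\tensor{A}1_{\mathcal{F}(\cH)}$ will follow at once from $\Delta(1_{\cH})=1_{\cH}\tensor{A}1_{\cH}$, the monoidality of $\mathcal{F}$, and the identification $1_{\mathcal{F}(\cH)}=\alpha(1_{A})$, so the entire substance lies in the multiplicativity of the coaction.

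For the latter, the main point I would isolate is that both relevant multiplications factor through the natural transformations of \eqref{Eq:peleroja} and \eqref{Eq:pelerojasera}: evaluating at $X=Y=\cH$ one has $\textsl{m}_{\cH\tensor{A}\cH}=(\textsl{m}_{\scriptscriptstyle{\cH}}\tensor{A}\id_{\cH})\circ\B{\nabla}_{\cH,\cH}$ for the tensor-square algebra, and $\textsl{m}_{\cH\tensor{A}\mathcal{F}(\cH)}=(\textsl{m}_{\scriptscriptstyle{\cH}}\tensor{A}\id_{\mathcal{F}(\cH)})\circ\B{\Omega}_{\cH,\cH}$ for the target algebra $\cH\tensor{A}\mathcal{F}(\cH)$. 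I would then start from \eqref{Eq:mFH} and compute $\lcoaction{\cH}{\mathcal{F}(\cH)}\circ\textsl{m}_{\scriptscriptstyle{\mathcal{F}(\cH)}}=\Upsilon\circ\mathcal{F}(\Delta)\circ\mathcal{F}(\textsl{m}_{\scriptscriptstyle{\cH}})\circ\psiup=\Upsilon\circ\mathcal{F}(\Delta\circ\textsl{m}_{\scriptscriptstyle{\cH}})\circ\psiup$; using that $\Delta$ is an algebra map, see \eqref{Eq:demul}, this becomes $\Upsilon\circ\mathcal{F}(\textsl{m}_{\cH\tensor{A}\cH})\circ\mathcal{F}(\Delta\comdtensor{A}\Delta)\circ\psiup$.

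Next I would push the functorial pieces past the two coherence isomorphisms. By naturality of $\psiup$ in the pair $(\Delta,\Delta)$ one replaces $\mathcal{F}(\Delta\comdtensor{A}\Delta)\circ\psiup$ by $\psiup\circ(\mathcal{F}(\Delta)\comdtensor{B}\mathcal{F}(\Delta))$, and by naturality of $\Upsilon$ in its module variable (applied to $\textsl{m}_{\scriptscriptstyle{\cH}}\colon\cH\tensor{A}\cH\to\cH$) one replaces $\Upsilon\circ\mathcal{F}(\textsl{m}_{\scriptscriptstyle{\cH}}\tensor{A}\id)$ by $(\textsl{m}_{\scriptscriptstyle{\cH}}\tensor{A}\id)\circ\Upsilon$. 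After these two moves the middle of the composite is exactly $\Upsilon_{\cH\tensor{A}\cH,\cH}\circ\mathcal{F}(\B{\nabla}_{\cH,\cH})\circ\psiup_{\cH\tensor{A}\cH,\cH\tensor{A}\cH}$, which Proposition \ref{prop:Dstar} (with $X=Y=\cH$) identifies with $\B{\Omega}_{\cH,\cH}\circ(\Upsilon\comdtensor{B}\Upsilon)$. Recognising $(\Upsilon\comdtensor{B}\Upsilon)\circ(\mathcal{F}(\Delta)\comdtensor{B}\mathcal{F}(\Delta))=\lcoaction{\cH}{\mathcal{F}(\cH)}\comdtensor{B}\lcoaction{\cH}{\mathcal{F}(\cH)}$ and $(\textsl{m}_{\scriptscriptstyle{\cH}}\tensor{A}\id)\circ\B{\Omega}_{\cH,\cH}=\textsl{m}_{\cH\tensor{A}\mathcal{F}(\cH)}$ then yields $\lcoaction{\cH}{\mathcal{F}(\cH)}\circ\textsl{m}_{\scriptscriptstyle{\mathcal{F}(\cH)}}=\textsl{m}_{\cH\tensor{A}\mathcal{F}(\cH)}\circ(\lcoaction{\cH}{\mathcal{F}(\cH)}\comdtensor{B}\lcoaction{\cH}{\mathcal{F}(\cH)})$, i.e.\ the desired multiplicativity.

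The conceptual work is done entirely by Proposition \ref{prop:Dstar}; everything else is the naturality of $\psiup$ and $\Upsilon$ together with the fact that $\Delta$ is an algebra morphism. Consequently the main obstacle I anticipate is bookkeeping rather than mathematical: keeping the subscripts of the four natural transformations $\psiup$, $\Upsilon$, $\B{\nabla}$, $\B{\Omega}$ aligned through the substitutions, and checking that each intermediate arrow really is a morphism of right $\cK$-comodules so that all the composites genuinely factor through the comodule tensor products $\comdtensor{B}$ and $\comdtensor{A}$. Once the two factorisations of the multiplications are set up correctly, the chain of equalities closes without further computation.
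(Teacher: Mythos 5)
Your proof is correct and follows essentially the same route as the paper: the paper's argument for multiplicativity is precisely the large commutative diagram that your chain of equalities linearises, resting on the same three ingredients---Lemma \ref{lemma:mAFH}, Proposition \ref{prop:Dstar} applied at $X=Y=\cH$, and the naturality of $\psiup$ and $\Upsilon$ combined with $\Delta$ being an algebra map. The only (minor) difference is in the unitality step, where the paper spells out the naturality square for $\Upsilon$ against $\mathcal{F}(\Sf{t})$ rather than invoking monoidality ``at once,'' but the content is the same.
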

\begin{proof}
We need to check that the map 
$\B{\lambda}= \Upsilon \circ \mathcal{F}(\Delta)$ 
in \eqref{Eq:lcoact}
is an algebra map. First, we prove unitality, that is,  $\B{\lambda}(1_{\scriptscriptstyle{\mathcal{F}(\cH)}})= \B{\lambda}(\alpha(1_{\scriptscriptstyle{A}})) =1_{\scriptscriptstyle{\cH}} \tensor{A} 1_{\scriptscriptstyle{\mathcal{F}(\cH)}}$: this follows from the commutative diagram 
\begin{small}
$$
\xymatrix{ \scriptstyle{\mathcal{F}(A)} \ar@{->}^-{\mathcal{F}(\Sf{t})}[rr]   \ar@{->}_-{\mathcal{F}(\Sf{t})}[d]  & &  \scriptstyle{\mathcal{F}(A)}  \ar@{->}^-{\mathcal{F}(\Delta)}[d] \\  \scriptstyle{\mathcal{F}(\cH)} \ar@{->}^-{\mathcal{F}(\Sf{t}\tensor{A}\cH)}[rr]  \ar@{->}_-{\Upsilon}[d] & & \scriptstyle{\mathcal{F}(\cH\tensor{A}\cH)}  \ar@{->}^-{\Upsilon}[d]  
 \\ \scriptstyle{A\tensor{A} \mathcal{F}(\cH)} \ar@{->}^-{\Sf{t}\tensor{A}\mathcal{F}(\cH)}[rr]  & & \scriptstyle{\cH\tensor{A}\mathcal{F}(\cH)}, } 
$$
\end{small}
where the left hand side $\Upsilon$ is just the canonical map $y \mapsto 1_{\scriptscriptstyle{A}}\tensor{A}y$. 

Now we proceed to check that $\B{\lambda}$ is multiplicative. To this end, we show that the diagram 
\begin{small}
$$
\xymatrix@C=45pt{  \scriptstyle{\mathcal{F}(\cH)\tensor{} \mathcal{F}(\cH)} \ar@{->}_-{\scriptstyle{}}[d] \ar@{->}^-{\scriptstyle{}}[r]  & \scriptstyle{\mathcal{F}(\cH)\comdtensor{A}\mathcal{F}(\cH)} \ar@{->}^-{\scriptscriptstyle{\textsl{m}}}[rr] & & \scriptstyle{\mathcal{F}(\cH)} \ar@{->}^-{\scriptscriptstyle{\mathcal{F}(\Delta)}}[dd] \\  \scriptstyle{\mathcal{F}(\cH)\comdtensor{B}\mathcal{F}(\cH)}   \ar@{->}|-{\scriptscriptstyle{\mathcal{F}(\Delta) \comdtensor{B}\mathcal{F}(\Delta) }}[dd] \ar@{->}^-{\scriptscriptstyle{\psiup}}[r] & \scriptstyle{\mathcal{F}(\cH\comdtensor{A}\cH)}  \ar@{->}|-{\scriptscriptstyle{\mathcal{F}(\Delta\comdtensor{A}\Delta) }}[dd]  \ar@{->}|-{\scriptscriptstyle{\mathcal{F}(\B{\rho})}}[rd]  \ar@{->}|-{\scriptscriptstyle{\mathcal{F}(\textsl{m})}}[rru]  &  & \\ & &  \scriptstyle{\mathcal{F}\big( (\cH\comdtensor{A}\cH) \tensor{A}\cH\big)} \ar@{->}^-{\scriptscriptstyle{\Upsilon}}[ddd] \ar@{->}^-{\scriptscriptstyle{\mathcal{F}(\textsl{m}\,\tensor{A}\cH) }}[r]   &  \scriptstyle{\mathcal{F}(\cH\comdtensor{A}\cH)}   \ar@{->}^-{\scriptscriptstyle{\Upsilon}}[ddd]  \\  \scriptstyle{\mathcal{F}(\cH\tensor{A}\cH) \comdtensor{B}  \mathcal{F}(\cH\tensor{A}\cH)}  \ar@{->}|-{\scriptscriptstyle{\Upsilon \comdtensor{B}\Upsilon }}[dd]  \ar@{->}^-{\scriptscriptstyle{\psiup}}[r]  & \scriptstyle{ \mathcal{F}\big( (\cH\tensor{A}\cH) \comdtensor{A} (\cH\tensor{A}\cH)\big) }  \ar@{->}|-{\scriptscriptstyle{\mathcal{F}(\B{\nabla})}}[ru]  & &  \\  & & & \\ \scriptstyle{\big(\cH\tensor{A}\mathcal{F}(\cH)\big) \comdtensor{B}\big(\cH\tensor{A}\mathcal{F}(\cH)\big) }  \ar@{->}^-{\scriptscriptstyle{\B{\Omega}}}[rr]   & &  \scriptstyle{ (\cH\comdtensor{A}\cH)\tensor{A}\mathcal{F}(\cH) } \ar@{->}^-{\scriptscriptstyle{\textsl{m}\,\tensor{A}\mathcal{F}(\cH) }}[r]  &  \scriptstyle{\cH\tensor{A}\mathcal{F}(\cH)}}
$$
\end{small}
is commutative, which  follows from Lemma \ref{lemma:mAFH}, Proposition \ref{prop:Dstar}, as well as from the very definitions of all involved maps and natural transformations.
\end{proof}

Our next aim is to show that $\mathcal{F}(\cH)$ is a principal left $(\cH, \cK)$-bundle with respect to $\alpha$ and $\beta$. As a start, the subsequent lemma concerns the faithfully flatness. 

\begin{lemma}\label{lemma:fplano}
Assume that there is a symmetric monoidal equivalence $$\mathcal{F}:  \Sf{Comod}_{\cH} \to \Sf{Comod}_{\cK}$$ with inverse $\mathcal{G}$. Then, for every right $\cH$-comodule $M$ whose underlying $A$-module is faithfully flat, 
$\mathcal{F}(M)$ is a faithfully flat $B$-module.
\end{lemma}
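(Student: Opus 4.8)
The plan is to promote the statement to the level of endofunctors of the comodule categories, where faithful flatness of the underlying module admits a clean characterisation that the symmetric monoidal equivalence can transport back and forth.

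First I would record the elementary facts that the forgetful functor $U_\cK\colon\rcomod{\cK}\to\rmod{B}$ (and likewise $U_\cH$) is exact, faithful and conservative, so that a sequence of $\cK$-comodules is exact precisely when its underlying sequence of $B$-modules is; recall also that the unit object of $\rcomod{\cH}$ is the base $A$ and that $\mathcal F(A)\cong B$. With this, I claim that faithful flatness of $M$ over $A$ forces the endofunctor $-\comdtensor{A}M\colon\rcomod{\cH}\to\rcomod{\cH}$ to be exact and faithful: on underlying modules $-\comdtensor{A}M$ is $-\tensor{A}M$, so exactness is tested after applying $U_\cH$, and faithfulness is read off from $M$ being a faithful $A$-module. (Only this implication is needed.)

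Next I transport this through $\mathcal F$. Since $\mathcal F$ is symmetric monoidal with quasi-inverse $\mathcal G$, the coherence isomorphism $\phiup^{\scriptscriptstyle 1}$ yields a natural isomorphism $-\comdtensor{B}\mathcal F(M)\cong\mathcal F\circ(-\comdtensor{A}M)\circ\mathcal G$. As $\mathcal F$ and $\mathcal G$ are equivalences they are exact and faithful, and $-\comdtensor{A}M$ is exact and faithful by the previous step; hence $-\comdtensor{B}\mathcal F(M)$ is an exact and faithful endofunctor of $\rcomod{\cK}$, and being faithful it reflects zero objects.

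It remains to descend from this functorial statement to faithful flatness of the $B$-module $W:=U_\cK(\mathcal F(M))$, which is the delicate point. For a $B$-module $X$ form the cofree comodule $X\tensor{B}\cK$; the fundamental theorem of Hopf modules gives a natural isomorphism $(X\tensor{B}\cK)\comdtensor{B}\mathcal F(M)\cong (X\tensor{B}W)\tensor{B}\cK$ of $\cK$-comodules, the codiagonal comodule on the left being cofree on $X\tensor{B}W$ via $n\tensor{B}w\mapsto n_{(0)}\tensor{B}\mathscr S(n_{(1)})w$ together with its inverse. Applying $U_\cK$, the composite $X\mapsto X\tensor{B}(W\tensor{B}\cK)$ is exact (cofree is exact, $-\comdtensor{B}\mathcal F(M)$ is exact, $U_\cK$ is exact) and reflects zero objects (each factor does, using that $\cK$ is faithfully flat over $B$); thus $W\tensor{B}\cK$ is a faithfully flat $B$-module. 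Finally I deduce faithful flatness of $W$ itself: because $\varepsilon\circ\Sf{s}=\id_B$ splits the unit map $B\to\cK$, the $B$-module $W$ is a direct summand of $W\tensor{B}\cK$ and hence flat; and if $X\tensor{B}W=0$ for some $B$-module $X$, then $X\tensor{B}(W\tensor{B}\cK)=(X\tensor{B}W)\tensor{B}\cK=0$ forces $X=0$ by faithful flatness of $W\tensor{B}\cK$, so $W$ is faithful as well.

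The main obstacle is exactly this last descent: the two $B$-module structures on $\cK$ (through $\Sf{s}$ and $\Sf{t}$) are twisted against one another, so one cannot directly read off $X\tensor{B}\cK\tensor{B}W$ from $X\tensor{B}W$; it is the fundamental theorem of Hopf modules that untwists the cofree comodule and lets the vanishing of $X\tensor{B}W$ propagate. I would double-check the colinearity of the untwisting isomorphism and the bookkeeping of the left/right $B$-actions, but no genuinely new idea is required beyond this step.
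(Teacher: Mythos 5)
Your proposal is correct and follows essentially the same route as the paper: the paper's (very terse) proof consists of the natural isomorphism $\mathcal{O}_{\scriptscriptstyle{\cK}}(-)\tensor{B}\mathcal{F}(M)\cong \mathcal{F}\big(\mathcal{G}(-)\comdtensor{A}M\big)$ --- your transport of the faithful exact endofunctor $-\comdtensor{A}M$ through the monoidal equivalence --- followed by the one-line remark that the comodule structure carried by $\mathcal{F}(M)$ upgrades faithful exactness from comodules to all $B$-modules, which is precisely the cofree-comodule untwisting and descent that you spell out. One formula needs repair: as written, $n\tensor{B}w\mapsto n_{(0)}\tensor{B}\mathscr{S}(n_{(1)})w$ does not typecheck when $n$ lies in the codiagonal side $(X\tensor{B}\cK)\comdtensor{B}\mathcal{F}(M)$, because $\cK$ does not act on $W=\mathcal{F}(M)$; the untwisting must absorb the coaction of $W$ itself, $(x\tensor{B}k)\tensor{B}w\mapsto (x\tensor{B}w_{(0)})\tensor{B}k\,w_{(1)}$, with inverse $(x\tensor{B}w)\tensor{B}k\mapsto \big(x\tensor{B}k\,\mathscr{S}(w_{(1)})\big)\tensor{B}w_{(0)}$ (your expression is exactly this inverse, read with $n\in X\tensor{B}W$ carrying the coaction induced from $W$). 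With that correction the rest of your descent --- the identification of the composite with $X\mapsto (X\tensor{B}W)\tensor{B}\cK$, the counit splitting giving flatness of $W$, and the propagation of $X\tensor{B}W=0$ --- is sound, and it is what the paper's final sentence leaves implicit.
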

\begin{proof}
One can easily check that there is a natural isomorphism 
$$
\mathcal{O}_{\scriptscriptstyle{\cK}}(-) \tensor{B} \mathcal{F}(M) \overset{\cong}{\longrightarrow}  \mathcal{F}\big( \mathcal{G}(-)\otimes^\ahha M \big),
$$
where $\mathcal{O}_{\scriptscriptstyle{\cK}}: \rcomod{\cK} \to \rmod{B}$ denotes the forgetful functor.  
Hence, $\mathcal{O}_{\scriptscriptstyle{\cK}}(-) \tensor{B} \mathcal{F}(M)$ is a faithful and exact functor. Using the fact that $\mathcal{F}(M)$ carries the structure of a left $\cK$-comodule (in fact its opposite comodule), we see that $-\tensor{B}\mathcal{F}(M)$ is a faithful and exact functor.
\end{proof}

With the help of this lemma we can state:

\begin{proposition}\label{prop:FH}
The triple $(\mathcal{F}(\cH), \alpha,\beta)$ forms a left principal $(\cH,\cK)$-bundle. 
\end{proposition}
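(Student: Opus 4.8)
The goal is to verify the two defining conditions of a left principal $(\cH,\cK)$-bundle for the triple $(\mathcal{F}(\cH),\alpha,\beta)$: first, that $\beta$ is a faithfully flat extension, and second, that the canonical map
$$
\can{\cH}{\mathcal{F}(\cH)}: \mathcal{F}(\cH)\tensor{B}\mathcal{F}(\cH) \to \cH\tensor{A}\mathcal{F}(\cH)
$$
is bijective. By Propositions \ref{prop:FHcomdalg} and \ref{lemma:mAFH} we already know that $\mathcal{F}(\cH)$ is an $(\cH,\cK)$-bicomodule algebra with respect to $\alpha$ and $\beta$, so only these two conditions remain.

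\textbf{Faithful flatness of $\beta$.} The plan is to apply Lemma \ref{lemma:fplano} to the comodule $M:=\cH$ whose underlying $A$-module (via the source map $\Sf{s}$, used throughout this subsection) is faithfully flat by the standing assumption that all our Hopf algebroids are flat over the base ring. Lemma \ref{lemma:fplano} then yields that $\mathcal{F}(\cH)$ is faithfully flat as a $B$-module. Since $\beta$ is the structure map of the right $\cK$-comodule algebra $\mathcal{F}(\cH)$ defined in \eqref{Eq:betaFH}, this is precisely the statement that $\beta: B \to \mathcal{F}(\cH)$ is a faithfully flat extension.

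\textbf{Bijectivity of the canonical map.} Here I would exploit the already established machinery. Since $\beta$ is faithfully flat, it suffices to check that $\can{\cH}{\mathcal{F}(\cH)}\tensor{B}\mathcal{F}(\cH)$ is an isomorphism, exactly as in the proof of Lemma \ref{lemma:ISO}. I expect the cleanest route is to compare the canonical map with the natural isomorphisms $\Upsilon$ of \eqref{Eq:Upsilon}, the left coaction description \eqref{Eq:lcoact}, and the multiplication \eqref{Eq:mFH}: one writes $\can{\cH}{\mathcal{F}(\cH)}$ in terms of $\mathcal{F}(\can{\cH}{\cH})$, where $\can{\cH}{\cH}$ is the canonical map of the \emph{unit} bundle $\mathscr{U}(\cH)$ of Example \ref{Exam:H}, which is bijective with the explicit inverse $u\tensor{A}v \mapsto u_{(1)}\tensor{A}\mathscr{S}(u_{(2)})v$. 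Concretely, using the natural transformations $\Upsilon$ together with $\psiup$ (the inverse of the monoidal structure $\phiup$) one factors $\can{\cH}{\mathcal{F}(\cH)}$ as $\mathcal{F}$ applied to $\can{\cH}{\cH}$ composed with suitable instances of $\Upsilon$; since $\mathcal{F}$ is an equivalence (hence preserves isomorphisms) and each $\Upsilon$ is invertible, the composite is an isomorphism. The compatibility needed to carry out this identification is exactly what Proposition \ref{prop:Dstar} provides, applied to $X:=\cH$ and $Y:=\cH$.

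\textbf{Main obstacle.} The faithful flatness step is essentially immediate from Lemma \ref{lemma:fplano}. The genuine work lies in the bijectivity of the canonical map, and specifically in correctly bookkeeping the various natural isomorphisms $\Upsilon$, $\psiup$, and the comodule structure maps so that $\can{\cH}{\mathcal{F}(\cH)}$ is recognised as the image (up to these isomorphisms) of the bijective unit-bundle canonical map $\can{\cH}{\cH}$ under the equivalence $\mathcal{F}$. The diagrammatic compatibilities of Propositions \ref{prop:Dstar} and \ref{prop:FHcomdalg} are the technical heart, but are already in place; the proof should therefore reduce to stitching these together and invoking faithful flatness of $\mathcal{F}(\cH)$ over $B$ to descend the isomorphism.
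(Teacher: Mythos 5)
Your proposal is correct and follows essentially the same route as the paper's proof: faithful flatness of $\beta$ via Lemma \ref{lemma:fplano} applied to $M:=\cH$, and bijectivity of the canonical map via the factorisation $\can{\cH}{\mathcal{F}(\cH)} = \Upsilon \circ \mathcal{F}(\can{\cH}{\cH}) \circ \psiup$, where $\can{\cH}{\cH}$ is the (bijective) canonical map of the unit bundle $\mathscr{U}(\cH)$. Two harmless imprecisions: the underlying $A$-module of $\cH$ as a right $\cH$-comodule is $\thopf{\cH}$ (via $\Sf{t}$, not $\Sf{s}$), and the instance of Proposition \ref{prop:Dstar} that yields the required compatibility is $X:=\cH$, $Y:=A$ (combined with naturality of $\psiup$), not $X=Y=\cH$ --- the paper instead verifies the corresponding rectangle directly from the naturality and uniqueness properties of $\Upsilon$.
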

\begin{proof}
From Proposition \ref{prop:FHcomdalg} follows that $(\mathcal{F}(\cH), \alpha)$ is a left $\cH$-comodule algebra. Therefore,  $(\mathcal{F}(\cH), \alpha,\beta)$ is an $(\cH,\cK)$-bicomodule algebra since $(\mathcal{F}(\cH),\beta)$ is a right $\cK$-comodule algebra.

As $\thopf{\cH}$ is faithfully flat, $\mathcal{F}(\cH)_B$ is, using Lemma  \ref{lemma:fplano},  also faithfully flat and therefore $\beta$ is a faithfully flat extension. 
To complete the proof, we  need  to check that the canonical map 
$$ 
\xymatrix@C=40pt{\can{\cH}{\mathcal{F}(\cH)}: \mathcal{F}(\cH)\comdtensor{B}\mathcal{F}(\cH) \ar@{->}^-{\scriptstyle{\B{\lambda}\comdtensor{B}\mathcal{F}(\cH)}}[r] & \cH \tensor{A} \mathcal{F}(\cH) \comdtensor{B} \mathcal{F}(\cH) \ar@{->}^-{\scriptstyle{\cH \tensor{A} \, \textsl{m}}}[r] &  \cH \tensor{A} \mathcal{F}(\cH)  }
$$ 
is bijective. To this end, using Eqs.~ \eqref{Eq:lcoact} and \eqref{Eq:mFH} to express the coaction and the multiplication in $\cF(\cH)$, we write down the map $\can{\scriptscriptstyle{\cH}}{\scriptscriptstyle{\mathcal{F}(\cH)}}$  in  the diagram 
$$
\xymatrix@R=30pt@C=40pt{ \scriptstyle{\mathcal{F}(\cH)\comdtensor{B} \mathcal{F}(\cH)} \ar@{->}_-{\scriptscriptstyle{\psiup}}[d] \ar@{->}^-{\scriptscriptstyle{\mathcal{F}(\Delta)\comdtensor{B} \mathcal{F}(\cH)}}[rr]  &  & \scriptstyle{\mathcal{F}(\cH\tensor{A}\cH)\comdtensor{B} \mathcal{F}(\cH)} \ar@{->}^-{\scriptscriptstyle{\Upsilon\comdtensor{B} \mathcal{F}(\cH)}}[rr]   \ar@{->}_-{\scriptscriptstyle{\psiup}}[d]  & & \scriptstyle{\cH\tensor{A}\mathcal{F}(\cH)\comdtensor{B} \mathcal{F}(\cH)}  \ar@{->}_-{\scriptscriptstyle{\cH\tensor{A}\psiup}}[d]   \\ \scriptstyle{\mathcal{F}(\cH\comdtensor{A} \cH)}   \ar@{->}^-{\scriptscriptstyle{\mathcal{F}(\Delta\tensor{A}\cH)}}[rr] \ar@/_1pc/_-{\scriptscriptstyle{\mathcal{F}(\can{\cH}{\cH})}}[rrd]  & & \scriptstyle{\mathcal{F}\big( (\cH\tensor{A}\cH)\comdtensor{A} \cH\big) = \mathcal{F}\big( \cH\tensor{A}(\cH\comdtensor{A} \cH)\big) }  \ar@{->}^-{\scriptscriptstyle{\Upsilon}}[rr]  \ar@{->}_-{\scriptscriptstyle{\mathcal{F}(\cH\tensor{A}\,\textsl{m})}}[d] & & \scriptstyle{\cH\tensor{A}\mathcal{F}(\cH\comdtensor{A} \cH)}  \ar@{->}_-{\scriptscriptstyle{\cH\tensor{A}\mathcal{F}(\textsl{m})}}[d]  \\  & & \scriptstyle{\mathcal{F}(\cH\tensor{A} \cH)}  \ar@{->}^-{\scriptscriptstyle{\Upsilon}}[rr] & &  \scriptstyle{\cH\tensor{A} \mathcal{F}(\cH)}.  }
$$

Once shown that this diagram is commutative, it follows that the canonical map for $\cF(\cH)$  is bijective 
as $\can{\cH}{\mathcal{F}(\cH)} = \Upsilon \circ \mathcal{F}(\can{\cH}{\cH}) \circ \psiup$, where $\can{\cH}{\cH}$ is bijective being 
the canonical map of the unit bundle $\mathscr{U}(\cH)$.
To check that the above diagram is commutative, one only needs to show the commutativity of the  rectangle in the upper right.  This, in fact, forms  part of the well-known properties of the natural transformation $\Upsilon$; for the sake of completeness, we explain how this works: to start with, denote by $\mathcal{T}, \mathcal{S}: \rmod{A} \to \rcomod{\cK}$ the functors 
$$
\mathcal{T}(X)\,=\, \mathcal{F}(X\tensor{A}\cH)\comdtensor{B} \mathcal{F}(\cH),\quad \mathcal{S}(X) \,=\, \mathcal{F}\big( X\tensor{A}(\cH\comdtensor{A}\cH)\big).
$$   
Clearly, $\psiup_{(-\tensor{A}\cH),\,\cH}: \mathcal{T} \to \mathcal{S}$ is a natural transformation. Since $\mathcal{T}$ and $\mathcal{S}$ commute with direct limits, 
we have for every $A$-module $X$: 
$$ (X\tensor{A}\psiup_{(A\tensor{A}\cH),\,\cH}) \circ \Upsilon^{\scriptscriptstyle{\mathcal{T}}}_X \,\, =\,\, \Upsilon^{\scriptscriptstyle{\mathcal{S}}}_X \circ \psiup_{(X\tensor{A}\cH),\,\cH}.
$$
Using this equality for $X:=A$, we deduce the claim since $\Upsilon^{\scriptscriptstyle{\mathcal{T}}}_X=
\Upsilon^{\scriptscriptstyle{\mathcal{F}}}_X\comdtensor{B}\mathcal{F}(\cH)$ holds.
\end{proof}

\begin{corollary}
\label{coro:FHviaPo}
Let $(D, \cI)$ be another flat Hopf algebroid. Then the functor $\mathcal{F}$ restricts to a functor 
$$
\mathcal{F}: \lPB{\cI}{\cH} \longrightarrow \lPB{\cI}{\cK}.
$$
\end{corollary}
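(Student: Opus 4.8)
The plan is to mimic, for a general bundle $(P,\alpha,\beta) \in \lPB{\cI}{\cH}$ with structure maps $\alpha: D \to P$ and $\beta: A \to P$, the construction carried out for the unit bundle in Propositions \ref{prop:FHcomdalg} and \ref{prop:FH}, simply reading $P$ in place of $\cH$ and the left coaction $\lcoaction{\cI}{P}$ in place of the comultiplication $\Delta$. First I would record the right $\cK$-comodule algebra structure on $\mathcal{F}(P)$: since $\mathcal{F}$ is symmetric monoidal and $(P,\beta)$ is a commutative monoid in $\rcomod{\cH}$, the object $\mathcal{F}(P)$ is a commutative monoid in $\rcomod{\cK}$, with unit $\td{\beta}: B \overset{\phiup^{\scriptscriptstyle{0}}}{\cong} \mathcal{F}(A) \xrightarrow{\mathcal{F}(\beta)} \mathcal{F}(P)$ and multiplication $\mathcal{F}(\textsl{m}_{\scriptscriptstyle{P}}) \circ \psiup_{\scriptscriptstyle{P,P}}$, exactly as in Eqs.~\eqref{Eq:betaFH}--\eqref{Eq:mFH}. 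Since $P$ is a $(D,A)$-bimodule via $\alpha$ and $\beta$, the restriction $\mathcal{F}: \bicomod{D}{\cH} \to \bicomod{D}{\cK}$ turns $\mathcal{F}(P)$ into a $(D,B)$-bimodule and provides the $D$-algebra map $\td{\alpha}: D \to \mathcal{F}(P)$ as in Eq.~\eqref{Eq:alpha}.

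Next I would transport the left $\cI$-comodule structure. Recalling that $\lcoaction{\cI}{P}: P \to \cI \tensor{D} P$ is a morphism of right $\cH$-comodules (this is part of the $(\cI,\cH)$-bicomodule condition \eqref{Eq:bicomod}), I define
\[
\lcoaction{\cI}{\mathcal{F}(P)} := \Upsilon_{\cI, P} \circ \mathcal{F}(\lcoaction{\cI}{P}): \mathcal{F}(P) \lra \cI \tensor{D} \mathcal{F}(P),
\]
using the natural isomorphism $\Upsilon$ of \eqref{Eq:Upsilon} with $T:=D$ and $M:=P$; this is the literal analogue of \eqref{Eq:lcoact}. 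Coassociativity and counitality of $\lcoaction{\cI}{\mathcal{F}(P)}$ follow from those of $\lcoaction{\cI}{P}$ together with the naturality of $\Upsilon$, and the verification that $\lcoaction{\cI}{\mathcal{F}(P)}$ is an algebra map with respect to $\td{\alpha}$ is word for word that of Lemma \ref{lemma:mAFH} and Proposition \ref{prop:FHcomdalg}: the analogues of the natural transformations \eqref{Eq:peleroja}--\eqref{Eq:pelerojasera} and of Proposition \ref{prop:Dstar}, obtained by substituting $P$ and $\mathcal{F}(P)$ for $\cH$ and $\mathcal{F}(\cH)$, only rely on the comodule-algebra axioms of $P$ and hence go through verbatim. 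The crucial point is that the $(\cI,\cK)$-bicomodule compatibility between $\lcoaction{\cI}{\mathcal{F}(P)}$ and $\rcoaction{\cK}{\mathcal{F}(P)}$ is automatic: $\mathcal{F}(\lcoaction{\cI}{P})$ is right $\cK$-colinear because $\mathcal{F}$ is a functor on comodule categories and $\lcoaction{\cI}{P}$ is right $\cH$-colinear, while $\Upsilon$ is right $\cK$-colinear by construction; thus $\lcoaction{\cI}{\mathcal{F}(P)}$ is a morphism of right $\cK$-comodules, which is precisely condition \eqref{Eq:bicomod}. Therefore $(\mathcal{F}(P), \td{\alpha}, \td{\beta})$ is an $(\cI,\cK)$-bicomodule algebra.

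It then remains to check the two principality conditions. Faithful flatness of $\td{\beta}$ is immediate: since $\beta$ is faithfully flat, $P$ is a faithfully flat $A$-module, so by Lemma \ref{lemma:fplano} the module $\mathcal{F}(P)$ is faithfully flat over $B$. For the canonical map I would argue exactly as in Proposition \ref{prop:FH}: unwinding the definitions of $\lcoaction{\cI}{\mathcal{F}(P)}$ and of the multiplication of $\mathcal{F}(P)$, and using the standard compatibility of $\Upsilon$ with $\psiup$ (the commutative rectangle appearing in the proof of Proposition \ref{prop:FH}), one identifies
\[
\can{\cI}{\mathcal{F}(P)} \,=\, \Upsilon_{\cI, P} \circ \mathcal{F}\big(\can{\cI}{P}\big) \circ \psiup_{\scriptscriptstyle{P,P}}.
\]
As $\can{\cI}{P}$ is bijective ($P$ being a left principal bundle) and $\Upsilon_{\cI,P}$, $\psiup_{\scriptscriptstyle{P,P}}$ are natural isomorphisms, $\can{\cI}{\mathcal{F}(P)}$ is bijective. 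Hence $(\mathcal{F}(P), \td{\alpha}, \td{\beta}) \in \lPB{\cI}{\cK}$.

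Finally, for a morphism $\fk{f}: (P, \alpha, \beta) \to (P', \alpha', \beta')$ in $\lPB{\cI}{\cH}$, the map $\mathcal{F}(\fk{f})$ is a right $\cK$-comodule morphism by functoriality, a $B$- and $D$-algebra map because $\mathcal{F}$ preserves monoids and the bimodule structures are functorial, and a left $\cI$-comodule morphism by the naturality of $\Upsilon$ in $M$; thus it is a morphism of $(\cI,\cK)$-bicomodule algebras, i.e.\ of left principal bundles (automatically an isomorphism by Lemma \ref{lemma:ISO}). Since $\mathcal{F}$ preserves identities and composition, this produces the desired functor $\mathcal{F}: \lPB{\cI}{\cH} \to \lPB{\cI}{\cK}$. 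I expect the only genuinely delicate step to be the identification of $\can{\cI}{\mathcal{F}(P)}$ with $\Upsilon \circ \mathcal{F}(\can{\cI}{P}) \circ \psiup$, but this is exactly the same bookkeeping with the coherence of $\Upsilon$ already performed for the unit bundle, so no new phenomenon arises.
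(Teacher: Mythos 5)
Your proof is correct, but it takes a genuinely different route from the paper. You rebuild the whole apparatus of Lemma \ref{lemma:mAFH}, Propositions \ref{prop:Dstar}, \ref{prop:FHcomdalg}, and \ref{prop:FH} with a general bundle $P$ in place of $\cH$ and the coaction $\lcoaction{\cI}{P}$ in place of $\Delta$: transporting the monoid structure via $\psiup$, the left $\cI$-coaction via $\Upsilon_{\cI,P}\circ \mathcal{F}(\lcoaction{\cI}{P})$, faithful flatness via Lemma \ref{lemma:fplano}, and identifying $\can{\cI}{\mathcal{F}(P)}$ with $\Upsilon\circ\mathcal{F}(\can{\cI}{P})\circ\psiup$. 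The paper instead gives a two-line reduction to the case already done: for $(R,\delta,\omega)\in\lPB{\cI}{\cH}$ it forms the cotensor product $R\cotensor{\cH}\mathcal{F}(\cH)$, which is a left principal $(\cI,\cK)$-bundle by Proposition \ref{prop:FH} together with Lemma \ref{lemma:cotensorpb}, and then invokes the representability isomorphism $\mathcal{F}\cong -\cotensor{\cH}\mathcal{F}(\cH)$ of Eq.~\eqref{Eq:FG} to conclude $\mathcal{F}(R)\cong R\cotensor{\cH}\mathcal{F}(\cH)$, so that $\mathcal{F}(R)$ inherits the bundle structure. What the paper's argument buys is brevity: all coherence checking is concentrated once, in the unit-bundle case, and the bicategorical composition of bundles does the rest. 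What your argument buys is explicitness: you obtain the bundle structure on $\mathcal{F}(P)$ directly in terms of $\Upsilon$ and $\psiup$ (rather than transported along an isomorphism), you make functoriality on morphisms transparent via the naturality of $\Upsilon$ in $M$, and you avoid the point the paper leaves implicit, namely that the isomorphism $\mathcal{F}(R)\cong R\cotensor{\cH}\mathcal{F}(\cH)$ is compatible with the algebra structures, not just the bicomodule ones. The cost is that your ``verbatim'' steps are really a systematic re-typing of several pages of diagrams with $(D,\cI,P)$ in place of $(A,\cH,\cH)$ --- structurally identical, and all the needed inputs (the comodule-algebra axioms for $P$, the bicomodule condition \eqref{Eq:bicomod}, the colinearity of $\can{\cI}{P}$ from Lemma \ref{lemma:comdalg}) do hold, but it is not literally zero extra work.
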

\begin{proof}
By Proposition \ref{prop:FH}, the triple $(\cF(\cH), \alpha,\beta)$
defines a principal left $(\cH, \cK)$-bundle; the cotensor product
$(R\cotensor{\cH}\mathcal{F}(\cH), \td{\delta}, \td{\beta})$, where $(R, \delta, \omega)$ is a principal left $(\cI, \cH)$-bundle, yields as in Lemma  \ref{lemma:cotensorpb} a principal left $(\cI, \cK)$-bundle. 
Then,  the first natural isomorphism of Eq.~\eqref{Eq:FG} leads to $R\cotensor{\cH}\mathcal{F}(\cH) \cong \mathcal{F}(R)$, which is an isomorphism of $(\cI, \cK)$-bicomodules, and this proves the claim.
\end{proof}

The following proposition (mentioned in Figure \ref{ahaaha}
in the Introduction) shows that two Morita equivalent Hopf algebroids are connected by a principal bibundle.

\begin{proposition}
\label{prop:llegaras}
Let $(A,\cH)$ and $(B,\cK)$ be two flat Hopf algebroids. 
Assume that there is a symmetric monoidal equivalence of categories $\mathcal{F}:  \Sf{Comod}_{\cH} \to \Sf{Comod}_{\cK}$ with inverse $\mathcal{G}$.  Then $(\mathcal{F}(\cH),\alpha,\beta)$ is a   principal  $(\cH,\cK)$-bibundle whose opposite bundle  is $\mathcal{G}(\cK)$.
\end{proposition}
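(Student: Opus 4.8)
The plan is to reduce everything to a single structural statement: that $(\cF(\cH),\alpha,\beta)$ is an \emph{invertible} $1$-cell in the bicategory $\mathsf{PB}^\ell(\cH,\cK)$, with inverse $\cG(\cK)$. Once this is established, Proposition \ref{prop:el parte} \emph{(i)} does all the remaining work in one stroke: it upgrades the left principal bundle of Proposition \ref{prop:FH} to a principal $(\cH,\cK)$-bibundle, shows that $\cG(\cK)$ is a principal $(\cK,\cH)$-bibundle, and identifies the inverse with the opposite bundle, giving precisely $\cG(\cK)\cong\cF(\cH)^{\co}$, which is the assertion. So the entire task is to exhibit the two internal-equivalence isomorphisms together with their coherence (the triangle diagrams \eqref{Eq:trianglI} and \eqref{Eq:trianglII}).

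First I would record, by the evident left-right symmetry of the construction in \S\ref{sec:tesoro} (applying Proposition \ref{prop:FH} with the roles of $(A,\cH)$ and $(B,\cK)$ interchanged and $\cF$ replaced by its inverse $\cG$), that $(\cG(\cK),\sigma,\gamma)$ is a left principal $(\cK,\cH)$-bundle. Next I would produce the two cotensor isomorphisms using Corollary \ref{coro:FHviaPo}. Since $\cF(\cH)$ is a left principal $(\cH,\cK)$-bundle, the $\cG$-version of that corollary (take $\cI:=\cH$ and the bundle $\cF(\cH)$) yields an isomorphism of left principal $(\cH,\cH)$-bundles
$$
\cF(\cH)\cotensor{\cK}\cG(\cK)\;\cong\;\cG\big(\cF(\cH)\big),
$$
while the symmetric-monoidal natural isomorphism $\cG\circ\cF\cong\id_{\rcomod{\cH}}$ gives $\cG(\cF(\cH))\cong\cH=\mathscr{U}(\cH)$ as comodule algebras, whence $\cF(\cH)\cotensor{\cK}\cG(\cK)\cong\mathscr{U}(\cH)$. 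In the same way, $\cF\circ\cG\cong\id_{\rcomod{\cK}}$ and the $\cF$-version of Corollary \ref{coro:FHviaPo} give $\cG(\cK)\cotensor{\cH}\cF(\cH)\cong\cF(\cG(\cK))\cong\mathscr{U}(\cK)$. A useful economy here is Lemma \ref{lemma:ISO}: both sides of each of these isomorphisms are left principal bundles (by Lemma \ref{lemma:cotensorpb} and Example \ref{Exam:H}), so it suffices to check that the maps produced above are \emph{morphisms} of principal bundles, after which their bijectivity is automatic; this reduces the verification to compatibility with the algebra and bicomodule structures rather than an explicit construction of inverses.

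The main obstacle I anticipate is exactly this passage from category-level statements to $1$-cell statements in $\mathsf{PB}^\ell$, namely checking that the abstract comodule isomorphism coming from $\cG\circ\cF\cong\id$ really respects all the bundle data attached to $\cF(\cH)$ in \S\ref{sec:tesoro} (the multiplication from \eqref{Eq:mFH}, the algebra map $\alpha$ of \eqref{Eq:alpha}, and the left coaction $\Upsilon\circ\cF(\Delta)$ of \eqref{Eq:lcoact}), and, above all, that the internal-equivalence coherence diagrams \eqref{Eq:trianglI} and \eqref{Eq:trianglII} commute. For the latter I would, without loss of generality, arrange the symmetric monoidal equivalence $\cF\dashv\cG$ to be an \emph{adjoint} equivalence; its two triangle identities, transported through the natural isomorphisms \eqref{Eq:FG} and the associativity isomorphisms of the cotensor product established in Proposition \ref{piazzadellorologio}, are precisely what make \eqref{Eq:trianglI} and \eqref{Eq:trianglII} commute. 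With the two isomorphisms and their coherence in hand, $\cF(\cH)$ is an internal equivalence in $\mathsf{PB}^\ell$ with inverse $\cG(\cK)$, and Proposition \ref{prop:el parte} \emph{(i)} completes the proof, simultaneously yielding the bibundle property and the identification $\cG(\cK)\cong\cF(\cH)^{\co}$.
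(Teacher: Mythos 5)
Your proposal is correct and follows essentially the same route as the paper's proof: obtain the two left principal bundles from Proposition \ref{prop:FH} (and its $\cF\leftrightarrow\cG$ symmetric version), use the natural isomorphisms \eqref{Eq:FG} to identify $\cF(\cH)\cotensor{\cK}\cG(\cK)\cong\mathscr{U}(\cH)$ and $\cG(\cK)\cotensor{\cH}\cF(\cH)\cong\mathscr{U}(\cK)$, and conclude that $\cF(\cH)$ is an invertible $1$-cell in $\mathsf{PB}^\ell$ so that Proposition \ref{prop:el parte} \emph{(i)} finishes the argument. Your only deviations are cosmetic or beneficial: you route the cotensor isomorphisms through Corollary \ref{coro:FHviaPo} (which is itself proved from \eqref{Eq:FG}), you invoke Lemma \ref{lemma:ISO} to avoid constructing inverses by hand, and you make explicit, via passage to an adjoint equivalence, the verification of the triangle properties \eqref{Eq:trianglI}--\eqref{Eq:trianglII}, a point the paper merely asserts.
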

\begin{proof}
Set $P:=\mathcal{F}(\cH)$ and $Q:=\mathcal{G}(\cK)$. From Proposition \ref{prop:FH} follows that $(P,\alpha, \beta)$ is a left principal $(\cH,\cK)$-bundle.  Interchanging $\mathcal{F}$ with $\mathcal{G}$, we also obtain that $(\mathcal{G}, \sigma, \theta)$ is  
a left principal $(\cK,\cH)$-bundle, where $\theta: A\cong \mathcal{G}(B) \to \mathcal{G}(\cK)$, and $\sigma$ is constructed in  the same way as was $\alpha$.  

On the other hand, using the equivalences $\mathcal{F}$ and $\mathcal{G}$ together with the natural transformations 
$$
\mathcal{F} \, \cong\, -\cotensor{\cH}P, \quad 
\mathcal{G} \, \cong\, -\cotensor{\cK}Q,
$$
of Eq.~\eqref{Eq:FG},  we obtain  the isomorphisms 
$$ 
P\cotensor{\cK} Q \cong \mathscr{U}(\cH),\qquad   Q\cotensor{\cH} P \cong \mathscr{U}(\cK)
$$
of $\cH$ and $\cK$-bicomodules, respectively, which fulfil the triangle properties \eqref{Eq:trianglI} and \eqref{Eq:trianglII}. 
This implies that $(P,\alpha,\beta)$ is an invertible $1$-cell in the category $\mathsf{PB}^\ell(\cH,\cK)$  of principal left bundles.
Now, conclude the proof by making use of Proposition \ref{prop:el parte} {\em (i)}.
\end{proof}

To sum up, we can state the main theorem of this article motivated by Theorem \ref{thm:AG} in the groupoid case:

\begin{theorem}
\label{thm:MAIN}
Let $(A,\cH)$ and $(B,\cK)$ be two flat Hopf algebroids. The following are equivalent:
\begin{enumerate}[(a)]
\item $(A,\cH)$ and $(B,\cK)$ are Morita equivalent.
\item There is a principal bibundle connecting $(A,\cH)$ and $(B,\cK)$.
\item $(A,\cH)$ and $(B,\cK)$ are weakly equivalent.
\end{enumerate}
\end{theorem}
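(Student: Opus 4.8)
The plan is to assemble the theorem from the implications already prepared in the preceding sections, arranging them into a cycle that closes $(a)\Leftrightarrow(b)\Leftrightarrow(c)$; with all the heavy lifting done earlier, this reduces to bookkeeping. Concretely I would establish $(b)\Rightarrow(a)$, $(a)\Rightarrow(b)$, and $(b)\Leftrightarrow(c)$, which together suffice. For $(b)\Rightarrow(a)$: given a principal $(\cH,\cK)$-bibundle $(P,\alpha,\beta)$, Theorem \ref{aromanflower} shows that the cotensor functor $-\cotensor{\cH}P\colon\rcomod{\cH}\to\rcomod{\cK}$ is a symmetric monoidal equivalence with quasi-inverse $-\cotensor{\cK}P^{\co}$; by Definition \ref{quellala2} this is precisely a Morita equivalence.

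For $(a)\Rightarrow(b)$: starting from a symmetric monoidal equivalence $\mathcal{F}\colon\rcomod{\cH}\to\rcomod{\cK}$ with inverse $\mathcal{G}$, I would invoke Proposition \ref{prop:llegaras}, which produces the principal $(\cH,\cK)$-bibundle $(\mathcal{F}(\cH),\alpha,\beta)$, with opposite bundle $\mathcal{G}(\cK)$, by transporting the comodule-algebra structure of the unit bundle $\mathscr{U}(\cH)$ through $\mathcal{F}$.

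Finally, $(b)\Leftrightarrow(c)$ is exactly the content of Proposition \ref{prop:PHPKP}: a principal bibundle connecting $(A,\cH)$ and $(B,\cK)$ exists if and only if the two Hopf algebroids are weakly equivalent, the forward direction passing through the two-sided translation Hopf algebroid of Proposition \ref{prop:Cielitomissyoutoo}~(iii) and the backward direction through the cotensor product of the trivial bibundles attached to a spanning pair of weak equivalences. As an independent consistency check, $(c)\Rightarrow(a)$ is immediate: every weak equivalence is by definition a symmetric monoidal equivalence, so composing the two legs of a zig-zag yields a symmetric monoidal equivalence, \emph{i.e.}\ a Morita equivalence.

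The main obstacle, were one to argue from scratch instead of citing the prepared statements, is $(a)\Rightarrow(b)$. The difficulty is that the raw equivalence $\mathcal{F}$ carries no a priori geometric content, so one must manufacture an honest comodule algebra from $\mathcal{F}(\cH)$: endow it with a left $\cH$-coaction via $\Upsilon\circ\mathcal{F}(\Delta)$, verify multiplicativity of this coaction and of the unit map $\alpha$ (Propositions \ref{prop:FHcomdalg} and \ref{prop:FH}), and then check that the associated canonical map is bijective and that $\beta$ is faithfully flat (Lemma \ref{lemma:fplano}). Recognizing $\mathcal{F}(\cH)$ as an invertible $1$-cell and then extracting \emph{bibundle}-ness from invertibility by means of Proposition \ref{prop:el parte}~(i) is the conceptual heart of the matter; everything else is formal assembly of the cycle above.
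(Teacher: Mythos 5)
Your proposal is correct and follows essentially the same route as the paper: the paper's proof assembles the minimal cycle $(a)\Rightarrow(b)$ via Proposition \ref{prop:llegaras}, $(b)\Rightarrow(c)$ via Proposition \ref{prop:PHPKP}, and $(c)\Rightarrow(a)$ from the definitions, while you select the slightly redundant but equally valid set of edges $(a)\Leftrightarrow(b)$ (using Theorem \ref{aromanflower} for the converse) together with $(b)\Leftrightarrow(c)$. All the implications you invoke are exactly those recorded in Figure \ref{ahaaha} and in the remark following the theorem, so this is bookkeeping over the same prepared results rather than a different argument.
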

\begin{proof}
The implication $(a) \Rightarrow (b)$ is Proposition \ref{prop:llegaras}, whereas the implication $(b) \Rightarrow (c)$ is contained in Proposition \ref{prop:PHPKP}. Finally, the step $(c) \Rightarrow (a)$ is obvious from the very definitions.
\end{proof}

\begin{rem}
As mentioned in Figure \ref{ahaaha}
in the Introduction, Theorem \ref{aromanflower} also states the implication $(b) \Rightarrow (a)$, whereas Proposition \ref{prop:PHPKP} moreover yields $(c) \Rightarrow (b)$. 
\end{rem}


\subsection{The categorical group of monoidal symmetric auto-equivalences}
\label{ssec:autoequi}
In this subsection,
we combine the results of Theorems \ref{thm:MAIN} and  \ref{aromanflower} by taking a single flat Hopf algebroid. More precisely, we show that all symmetric monoidal auto-equivalences of the category of right $\cH$-comodules form a categorical group with morphisms given by natural tensor transformations, 
and conclude that
this group
is equivalent to the categorical group of principal bibundles. 

Denote by $\Sf{Aut}^{\scriptscriptstyle{\otimes}}(A,\cH)$ the category of monoidal symmetric auto-equivalences of 
the category of (right) comodules $\rcomod{\cH}$ over a  flat Hopf algebroid $(A,\cH)$. 
Morphisms in this category are \emph{natural tensor transformations}, that is, natural transformations $\theta: \Ff \to \Ff'$ such that the diagrams
\begin{equation}
\label{Eq:BlackandWhite}
\xymatrix@C=45pt{ \Ff\big(X\comdtensor{A} Y\big)  \ar@{->}^-{\Theta_{\Sscript{X\comdtensor{A} Y}}}[r] \ar@{->}^-{\cong}_-{\phiup^{\Sscript{1,\, \Ff}}}[d] &  \Ff'\big(X\comdtensor{A}Y\big) \ar@{->}^-{\phiup^{\Sscript{1,\, \Ff'}}}_-{\cong}[d] \\ 
\Ff(X)\comdtensor{B} \Ff(Y)  \ar@{->}^-{\Theta_{\Sscript{X}} \comdtensor{B} \Theta_{\Sscript{Y}}}[r] &  \Ff'(X)\comdtensor{B}\Ff'(Y)  } \qquad  \xymatrix@C=25pt{ \Ff(A)  \ar@{->}^-{\Theta_{\Sscript{A}}}[rr]  &  &  \Ff'(A)  \\ 
& B \ar@{->}^-{\cong}_-{\phiup^{\Sscript{0,\, \Ff}}}[ur]  \ar@{->}^-{\phiup^{\Sscript{0,\,\Ff'}}}_-{\cong}[ul] &    }
\end{equation}
commute. 
Note that this gives a sets-category (in the sense that homomorphisms between two objects form a set) as $\rcomod{\cH}$ is a Grothendieck category and the involved functors preserve inductive limits.  The category $\Sf{Aut}^{\scriptscriptstyle{\otimes}}(A,\cH)$ is itself 
a monoidal category with multiplication given by 
the composition of functors and identity object given 
by the identity equivalence $\id_{\scriptscriptstyle{\rcomod{\cH}}}$. 
On the other hand, as in Subsection \ref{ssec:ppb}, we are interested in the monoidal category $\big(\bPB{\cH}{\cH},\cotensor{\cH}, \mathscr{U}(\cH) \big)$. Both categories are in fact \emph{categorical groups} (more precisely, a \emph{$2$-group} and a \emph{bigroup}) 
and are equivalent as such.

\begin{proposition}
\label{prop:pbmaps}
Let $(A,\cH)$ and $(B,\cK)$ be two flat Hopf algebroids, $\Ff,\Ff': \rcomod{\cH} \to \rcomod{\cK}$ two symmetric monoidal equivalences, and 
$\Theta: \Ff \to \Ff'$ a natural tensor transformation.  
Then $\Theta_{\cH}: \Ff(\cH) \to \Ff'(\cH)$ is a morphism of principal $(\cH,\cK)$-bibundles. In particular, $\Theta$ is a natural isomorphism and consequently $\big( \Sf{Aut}^{\scriptscriptstyle{\otimes}}(A,\cH), \B{\circ},\id_{\scriptscriptstyle{\rcomod{\cH}}}\big)$ is a categorical group.
\end{proposition}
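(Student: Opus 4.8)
The plan is to show that the component $\Theta_{\cH}\colon \Ff(\cH) \to \Ff'(\cH)$ is simultaneously a morphism of $A$-algebras, $B$-algebras, and $(\cH,\cK)$-bicomodules, from which Lemma \ref{lemma:ISO} immediately forces it to be an isomorphism. The faithful flatness of $\beta$ (guaranteed by Proposition \ref{prop:FH} applied to both $\Ff$ and $\Ff'$) means that both $\Ff(\cH)$ and $\Ff'(\cH)$ are genuine left principal $(\cH,\cK)$-bundles, and in fact bibundles by Proposition \ref{prop:llegaras}; so once $\Theta_{\cH}$ is checked to be an equivariant algebra map, Lemma \ref{lemma:ISO} (any morphism of left principal bundles is an isomorphism) applies.

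First I would verify that $\Theta_{\cH}$ respects the right $\cK$-comodule structure: this is automatic since $\Theta$ is by hypothesis a natural transformation of functors \emph{into} $\rcomod{\cK}$, so every component is $\cK$-colinear. Next I would check compatibility with the $B$-algebra structure $\beta$ of \eqref{Eq:betaFH}: because $\beta$ factors as $\Ff(\Sf{t}) \circ \phiup^{\scriptscriptstyle{0,\Ff}}$ and the right-hand square of \eqref{Eq:BlackandWhite} equates $\Theta_A \circ \phiup^{\scriptscriptstyle{0,\Ff}} = \phiup^{\scriptscriptstyle{0,\Ff'}}$, naturality of $\Theta$ at the map $\Sf{t}\colon A \to \cH$ gives $\Theta_{\cH}\circ \Ff(\Sf{t}) = \Ff'(\Sf{t})\circ \Theta_A$, and the two facts combine to yield $\Theta_{\cH}\circ \beta = \beta'$. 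The multiplicativity of $\Theta_{\cH}$ (and hence its compatibility with $\alpha$) follows from the left-hand square of \eqref{Eq:BlackandWhite}: the multiplication $\textsl{m}_{\scriptscriptstyle{\Ff(\cH)}}$ is built from $\psiup_{\cH,\cH}$ and $\Ff(\textsl{m}_{\cH})$ as in \eqref{Eq:mFH}, and the commuting tensor square relates $\psiup^{\Ff}$ to $\psiup^{\Ff'}$ via the components $\Theta_X$, so that $\Theta_{\cH}$ intertwines the two multiplications; together with $\Theta_{\cH}(1_{\scriptscriptstyle{\Ff(\cH)}}) = 1_{\scriptscriptstyle{\Ff'(\cH)}}$ and the description \eqref{Eq:alpha} of $\alpha$, this gives that $\Theta_{\cH}$ is an $A$-algebra map. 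The left $\cH$-colinearity then follows from naturality applied to $\Delta$ together with the compatibility of $\Theta$ with the natural transformation $\Upsilon$ of \eqref{Eq:Upsilon} used to define the coaction \eqref{Eq:lcoact}.

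I expect the main obstacle to be the multiplicativity step, since $\textsl{m}_{\scriptscriptstyle{\Ff(\cH)}}$ is defined through the monoidal structure maps $\psiup$ rather than directly, so one must carefully track how the coherence square of a natural tensor transformation in \eqref{Eq:BlackandWhite} transports $\psiup^{\Ff}$ to $\psiup^{\Ff'}$ — this is where the hypothesis that $\Theta$ is a \emph{tensor} transformation (not merely a natural transformation) is essential. Once $\Theta_{\cH}$ is recognised as an equivariant algebra map, Lemma \ref{lemma:ISO} delivers that it is an isomorphism, whence $\Theta$ itself is a natural isomorphism via the natural isomorphisms $\Ff \cong -\cotensor{\cH}\Ff(\cH)$ and $\Ff' \cong -\cotensor{\cH}\Ff'(\cH)$ of \eqref{Eq:FG}. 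This shows every morphism in $\Sf{Aut}^{\scriptscriptstyle{\otimes}}(A,\cH)$ is invertible; combined with the fact that $\B{\circ}$ gives inverses up to natural isomorphism (each auto-equivalence has a quasi-inverse that is again symmetric monoidal), we conclude that $\big(\Sf{Aut}^{\scriptscriptstyle{\otimes}}(A,\cH), \B{\circ}, \id_{\scriptscriptstyle{\rcomod{\cH}}}\big)$ is a categorical group.
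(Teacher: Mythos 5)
Your proposal is correct and follows essentially the same route as the paper: you show $\Theta_{\cH}$ is a morphism of $(\cH,\cK)$-bicomodule algebras (the right $\cK$-side from naturality and the coherence squares \eqref{Eq:BlackandWhite}, the left $\cH$-side from naturality at $\Delta$ together with compatibility with $\Upsilon$, and the $A$-algebra compatibility from the description \eqref{Eq:alpha} plus unit preservation), then invoke Lemma \ref{lemma:ISO} and the isomorphisms \eqref{Eq:FG} to conclude that $\Theta$ is a natural isomorphism and hence that $\Sf{Aut}^{\scriptscriptstyle{\otimes}}(A,\cH)$ is a categorical group. The only difference is cosmetic: you spell out the right $\cK$-comodule-algebra compatibility (via naturality at $\Sf{t}$ and the unit triangle) which the paper records as holding by definition.
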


\begin{proof}
By definition, $\Theta_{\cH}$ is a  morphism of right $\cK$-comodule algebras. Let us check that it is also a morphism of left $\cH$-comodule algebras. Recall that the respective left comodule algebra structure of both $\Ff(\cH)$ and $\Ff'(\cH)$ is given as in Proposition  \ref{prop:FHcomdalg}. 
That $\Theta_{\cH}$ is left $\cH$-colinear follows from the following diagram: 
\begin{small}
$$
\xymatrix@C=40pt@R=13pt{ &  \Ff\big(\cH\tensor{A} \cH\big)  \ar@{->}^-{\Ff(\Delta)}[rr] \ar@{->}^-{\Theta_{\Sscript{\cH\tensor{A}\cH}}}[ddd] & &  \Ff(\cH) \ar@{->}^-{\Theta_{\Sscript{\cH}}}[ddd] \\  \cH\tensor{A}\Ff(\cH) \ar@/_1pc/@{->}^-{\B{\lambda}}[rrru] \ar@{->}_-{\cong}^-{\Upsilon^{\Sscript{\Ff}}}[ru]  \ar@{->}_-{\cH\tensor{A}\Theta_{\Sscript{\cH}}}[ddd] & & & & \\  & & & &  \\ 
& \Ff'(\cH\tensor{A}\cH)  \ar@{->}^-{\Ff'(\Delta)}[rr] &  & \Ff'(\cH) \\  \cH\tensor{A}\Ff'(\cH) \ar@/_1pc/@{->}^-{\B{\lambda}}[rrru] \ar@{->}_-{\cong}^-{\Upsilon^{\Sscript{\Ff'}}}[ru] & & & &  } 
$$
\end{small}
where the left hand square is commutative by the universal property of the natural isomorphism $\Upsilon$.  The $A$-algebra structure of $\Ff(\cH)$ is given by the algebra map  $\alpha^{\Sscript{\Ff(\cH)}}: A \to \Ff(\cH)$, $a \mapsto \Ff(\lambda_{\Sscript{\Sf{s}}}(a))(1_{\Sscript{\Ff(\cH)}})$, and similarly for  $\Ff'(\cH)$, see Eq.~\eqref{Eq:alpha}. 
Thus, for any $a \in A$, we have  
$$ 
\Theta_{\Sscript{\cH}} \circ \alpha^{\Sscript{\Ff(\cH)}}(a)\,=\,  \Theta_{\Sscript{\cH}} \circ \Ff(\lambda_{\Sscript{\Sf{s}}}(a))(1_{\Sscript{\Ff(\cH)}}) \,=\, \Ff'(\lambda_{\Sscript{\Sf{s}}}(a)) \circ \Theta_{\Sscript{\cH}}(1_{\Sscript{\Ff(\cH)}}) \,=\, \Ff'(\lambda_{\Sscript{\Sf{s}}}(a))(1_{\Sscript{\Ff'(\cH)}})\,=\, \alpha^{\Sscript{\Ff'(\cH)}}(a) 
$$ 
since $\Theta_{\Sscript{\cH}}$ is a $B$-algebra map. 
Therefore, $\Theta_{\Sscript{\cH}}$ is an $A$-algebra map as it is multiplicative, and this finishes the proof of the first statement. 
Now, by Lemma \ref{lemma:ISO}, $\Theta_{\Sscript{\cH}}$ is an isomorphism and this suffices to show that $\Theta$ is a natural isomorphism: 
using the natural isomorphisms given in Eqs.~\eqref {Eq:Upsilon} and \eqref{Eq:FG}, one can see that the diagram 
$$
\xymatrix@C=40pt{ \Ff \ar@{->}^-{\Theta}[rr] \ar@{->}_-{\cong}[d] & & \Ff' \ar@{->}^-{\cong}[d]  \\ -\cotensor{\cH}\Ff(\cH) \ar@{->}_-{-\cotensor{\cH}\Theta_{\Sscript{\cH}}}[rr]  & &   -\cotensor{\cH}\Ff'(\cH) }
$$
of natural transformations
commutes, which means that $\Theta$ is a natural isomorphism.
\end{proof}

The following is Theorem \ref{thm:B} in the Introduction:

\begin{theorem}
\label{them:catggrp}
The functors 
\begin{equation*}
\begin{array}{rclrcl}
\big(\Sf{Aut}^{\scriptscriptstyle{\otimes}}(A,\cH), \B{\circ},\id_{\scriptscriptstyle{\rcomod{\cH}}}\big) 
&\longrightarrow& 
\big(\bPB{\cH}{\cH},\cotensor{\cH}, \mathscr{U}(\cH) \big), &
 \Ff &\longmapsto& \Ff(\cH) 
\\  
\big(\bPB{\cH}{\cH},\cotensor{\cH}, \mathscr{U}(\cH) \big)   &\longrightarrow& 
\big(\Sf{Aut}^{\scriptscriptstyle{\otimes}}(A,\cH), \B{\circ},\id_{\scriptscriptstyle{\rcomod{\cH}}}\big), & (P,\alpha,\beta) &\longmapsto& -\cotensor{\cH}P   
\end{array}
\end{equation*}
establish a monoidal equivalence of categorical groups.  
\end{theorem}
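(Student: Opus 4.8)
The plan is to establish that the two displayed assignments are mutually quasi-inverse monoidal functors between categorical groups. I would organise the proof into three stages: first verifying each assignment is a well-defined functor, then checking monoidality, and finally exhibiting the natural isomorphisms that witness the equivalence.

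First I would treat the assignment $\Ff \mapsto \Ff(\cH)$. On objects, Proposition \ref{prop:FH} already shows that $\Ff(\cH)$ is a left principal $(\cH,\cH)$-bundle whenever $\Ff$ is a symmetric monoidal equivalence, and since $\Ff$ is an auto-equivalence it is invertible; by Corollary \ref{coro:twoJapanese} (or directly via Proposition \ref{prop:el parte} {\em (i)}) it is therefore a principal $(\cH,\cH)$-bibundle, hence a genuine object of $\bPB{\cH}{\cH}$. On morphisms, a natural tensor transformation $\Theta:\Ff\to\Ff'$ is sent to $\Theta_{\cH}:\Ff(\cH)\to\Ff'(\cH)$, which by Proposition \ref{prop:pbmaps} is a morphism of principal bibundles. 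Functoriality (compatibility with vertical composition of $2$-cells and with identities) is then routine from the naturality of the construction, and since every such $\Theta_{\cH}$ is an isomorphism by Lemma \ref{lemma:ISO}, this direction lands in the groupoid $\bPB{\cH}{\cH}$ as required.

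Next I would handle the reverse assignment $(P,\alpha,\beta)\mapsto -\cotensor{\cH}P$. By Theorem \ref{aromanflower}, the functor $-\cotensor{\cH}P$ is a symmetric monoidal equivalence $\rcomod{\cH}\to\rcomod{\cH}$, hence an object of $\Sf{Aut}^{\scriptscriptstyle{\otimes}}(A,\cH)$; on morphisms, an equivariant map $\fk{f}:P\to P'$ induces the natural tensor transformation $-\cotensor{\cH}\fk{f}$, and compatibility with the tensor structure follows from the explicit description of the monoidal constraint $\gd$ in the proof of Theorem \ref{aromanflower}. For the monoidal structure of both functors, the key computation is the natural isomorphism $-\cotensor{\cH}(P\cotensor{\cH}P')\cong(-\cotensor{\cH}P)\cotensor{\cH}P'$, which is exactly the associativity of the cotensor product established inside Proposition \ref{piazzadellorologio}, together with the unit identity $-\cotensor{\cH}\mathscr{U}(\cH)\cong\id_{\scriptscriptstyle{\rcomod{\cH}}}$ recorded in \S\ref{ssec:bicomod}; dually, the first functor carries composition $\Ff'\circ\Ff$ to $\Ff(\cH)\cotensor{\cH}\Ff'(\cH)$ via the natural isomorphism $\Ff\cong-\cotensor{\cH}\Ff(\cH)$ of Eq.~\eqref{Eq:FG}.

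Finally, I would produce the two natural isomorphisms exhibiting the equivalence. In one direction, for $(P,\alpha,\beta)\in\bPB{\cH}{\cH}$ the composite sends it to $(-\cotensor{\cH}P)(\cH)=\cH\cotensor{\cH}P\cong P$, which is the canonical identification and is manifestly natural and monoidal. In the other direction, for $\Ff\in\Sf{Aut}^{\scriptscriptstyle{\otimes}}(A,\cH)$ the composite yields $-\cotensor{\cH}\Ff(\cH)$, and the natural isomorphism $\Ff\cong-\cotensor{\cH}\Ff(\cH)$ of Eq.~\eqref{Eq:FG} provides the required $2$-isomorphism in $\Sf{Aut}^{\scriptscriptstyle{\otimes}}(A,\cH)$; I would check it is a natural tensor transformation, which again reduces to the coherence already built into $\Upsilon$ and $\phiup$. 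The main obstacle I anticipate is not any single computation but the careful bookkeeping of coherence: verifying that the monoidal constraints of the two functors are compatible with the associativity and unit isomorphisms of the two categorical groups, and that the comparison $2$-isomorphisms respect the tensor structure on both sides. This is where one must invoke the naturality statements of $\Upsilon$ (Eq.~\eqref{Eq:Upsilon}) and the compatibility encoded in Propositions \ref{prop:Dstar} and \ref{prop:FHcomdalg} most delicately, rather than relying on the object-level statements already proved.
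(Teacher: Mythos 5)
Your proposal is correct and follows essentially the same route as the paper: the paper's own proof is a one-line appeal to Proposition \ref{prop:pbmaps}, Theorems \ref{thm:MAIN} and \ref{aromanflower}, and Corollary \ref{coro:twoJapanese}, which are exactly the ingredients you assemble (your use of Proposition \ref{prop:FH} plus invertibility of $\Ff(\cH)$ as a $1$-cell is precisely the content of Proposition \ref{prop:llegaras} inside Theorem \ref{thm:MAIN}). Your extra bookkeeping---functoriality on morphisms, the monoidal constraints via associativity of the cotensor product, and the comparison isomorphisms $\cH\cotensor{\cH}P\cong P$ and $\Ff\cong -\cotensor{\cH}\Ff(\cH)$ from Eq.~\eqref{Eq:FG}---just makes explicit what the paper treats as routine.
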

\begin{proof}
This essentially follows from Proposition \ref{prop:pbmaps}, Theorems  \ref{thm:MAIN} and  \ref{aromanflower}, in combination with Corollary \ref{coro:twoJapanese}.
\end{proof}


\appendix

\section{Some observations on coinvariant subalgebras}
As our guideline was to mimic the theory of principal bundles in the Lie groupoid context, we include for sake of completeness two results dealing with coinvariant subalgebras. They correspond to the statement that for any $\Gg$-equivariant submersion $Q \to P$,  where $P$ is a principal $\Gg$-bundle and $Q$ a $\Gg$-manifold, $Q/\Gg$ is a manifold as well and the canonical projection $Q \to Q/\Gg$ yields  a principal $\Gg$-bundle, see \cite[Lemma 2.8]{MoeMrc:LGSAC}.

\begin{proposition}
\label{prop:PQ}
Let $(Q, \sigma)$ be  a left $\cH$-comodule algebra, $F : P \to Q$ be an $\cH$-colinear injective map of $A$-rings, and $(P, \alpha, \beta)$ a trivial left principal $(\cH, \cK)$-bundle of the form $P:=\cH\tensor{\phi}B$. Consider the algebra $Q\tensor{P}B$, defined by using the splitting of $\beta$ in the second factor and $F$ in the first one. Then 
\begin{enumerate}
\item 
there is  an algebra isomorphism 
$$
T:= Q^{\scriptscriptstyle{\coinv_{\cH}}} \,\, \cong \,\, Q\tensor{P}B,
$$ 
and the canonical monomorphism $\tau: Q^{\scriptscriptstyle{\coinv_{\cH}}}\hookrightarrow Q$ splits as an algebra map; 
\item the triple  $(Q, \gs, \tau)$ is a left principal $(\cH,T)$-bundle.

\end{enumerate}
\end{proposition}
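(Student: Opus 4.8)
The plan is to use the triviality of $P$ to exhibit $Q$ as a commutative cleft extension of its coinvariant subalgebra and to read off both statements from an explicit trivialisation. Since $P=\cH\tensor{\phi}B=\cH\tensor{A}B$, the map $u\mapsto u\tensor{A}1_{\scriptscriptstyle{B}}$ is an $\cH$-colinear algebra morphism $\cH\to P$, so I set
\[
j:=F(-\tensor{A}1_{\scriptscriptstyle{B}}):\cH\longrightarrow Q,\qquad k:=F\circ\beta:B\longrightarrow Q .
\]
Then $j$ is an $\cH$-colinear algebra map with $j(1_{\scriptscriptstyle{\cH}})=1_{\scriptscriptstyle{Q}}$, $j\circ\Sf{s}=\sigma$ (as $F\circ\alpha=\sigma$), and $j\circ\Sf{t}=k\circ\phi_{\scriptscriptstyle{0}}$, the latter coming from the balancing relation defining $\tensor{\phi}$ in \eqref{klaipeda}. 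Because $j$ is multiplicative, \eqref{maxxi} yields $j(u_{(1)})\,j(\mathscr{S}(u_{(2)}))=j(\Sf{s}(\varepsilon(u)))=\sigma(\varepsilon(u))$, so $j\circ\mathscr{S}$ is a convolution inverse of $j$. I then introduce
\[
E:Q\to Q,\qquad E(q):=j(\mathscr{S}(q_{(-1)}))\,q_{(0)} .
\]

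For (i), I would check that $E$ maps into $T:=Q^{\coinv_\cH}$, that $E|_T=\id$ (using $j(1_{\scriptscriptstyle{\cH}})=1_{\scriptscriptstyle{Q}}$), and --- this is where commutativity of $Q$ together with $j$ being an algebra map is essential --- that $E$ is itself an algebra map; hence $E$ is an algebra retraction, giving the required splitting of $\tau$. Writing $\gamma:P\to B$, $\gamma(u\tensor{A}b)=\phi_{\scriptscriptstyle{0}}(\varepsilon(u))b$, for the algebra splitting of $\beta$, the colinearity of $F$ gives $E\circ F=F\circ\beta\circ\gamma=k\circ\gamma$; this identity makes
\[
\Xi:Q\tensor{P}B\to T,\ \ q\tensor{P}b\mapsto E(q)\,k(b),\qquad \Psi:T\to Q\tensor{P}B,\ \ q\mapsto q\tensor{P}1_{\scriptscriptstyle{B}}
\]
well defined, and a short calculation with $E|_T=\id$ and the counit axiom shows they are mutually inverse algebra maps, proving $T\cong Q\tensor{P}B$.

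For (ii), the left $T$-linearity of the coaction is immediate, since elements of $T$ are coinvariant and the coaction is multiplicative. I would invert the canonical map $\can{\cH}{Q}\colon Q\tensor{T}Q\to\cH\tensor{A}Q$ by
\[
\cH\tensor{A}Q\to Q\tensor{T}Q,\qquad u\tensor{A}q\mapsto j(u_{(1)})\tensor{T}j(\mathscr{S}(u_{(2)}))\,q ;
\]
verifying both composites uses $\cH$-colinearity and multiplicativity of $j$, the relation \eqref{maxxi}, and the observation that $E(q_{(0)})\in T$ may be moved across $\tensor{T}$.

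The main obstacle is the faithful flatness of $\tau\colon T\to Q$, which does not follow formally and is precisely where triviality is used. Making $T$ an $A$-algebra through $\delta:=j\circ\Sf{t}=k\circ\phi_{\scriptscriptstyle{0}}$, I would show that
\[
\Theta:T\tensor{A}\cH\to Q,\qquad t\tensor{A}u\mapsto \tau(t)\,j(u)
\]
(with $\cH$ a left $A$-module via $\Sf{t}$) is an isomorphism of left $T$-modules, the inverse being $q\mapsto E(q_{(0)})\tensor{A}q_{(-1)}$; both composites collapse to the counit axiom once one uses $j(u_{(1)})j(\mathscr{S}(u_{(2)}))=\sigma(\varepsilon(u))$ and $E(j(u))=\delta(\varepsilon(u))$. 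Since $\cH$ is faithfully flat over $A$ via $\Sf{t}$, base change along $\delta$ shows that $T\tensor{A}\cH$, hence $Q$, is faithfully flat over $T$. Combined with the bijectivity of the canonical map, the $T$-linearity of the coaction, and Definition \ref{def:PB} in the form of Remark \ref{rema:PB}\,{\em (iv)}, this proves that $(Q,\sigma,\tau)$ is a left principal $(\cH,T)$-bundle; by Proposition \ref{prop:indPB} it is then automatically trivial.
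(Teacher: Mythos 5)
Your proof is correct and follows essentially the same route as the paper's: your retraction $E$ and the isomorphism $T\cong Q\tensor{P}B$ are the paper's maps $\omegaup$ and $\kappaup$ in slightly different packaging, and your inverse for the canonical map, $u\tensor{A}q\mapsto j(u_{(1)})\tensor{T}j(\mathscr{S}(u_{(2)}))q$, is literally the paper's $F\big(u_{(1)}\tensor{\phi}1_{\scriptscriptstyle{B}}\big)\tensor{T}F\big(\mathscr{S}(u_{(2)})\tensor{\phi}1_{\scriptscriptstyle{B}}\big)q$. The only cosmetic difference is in the faithful flatness of $\tau$: you exhibit the $T$-module isomorphism $Q\cong T\tensor{A}\thopf{\cH}$ and conclude by base change, whereas the paper proves the corresponding natural isomorphism $-\tensor{Q\tensor{P}B}Q\cong -\tensor{A}\thopf{\cH}$ (your isomorphism, evaluated at $X=T$) to obtain flatness and then invokes the splitting from part (i) for faithfulness.
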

\begin{proof}
Denote by 
$$
\gamma: P := \cH\tensor{\phi}B \to B, \quad u\tensor{\phi}b \mapsto \phi_{\scriptscriptstyle{0}}(\varepsilon(u))b, 
$$
the splitting of $\beta$,
see Example \ref{exam:HopfMorph}.
To prove {\em (i)}, define first
$$
\omegaup:  Q\tensor{P}B \to Q, \quad q\tensor{P}b \mapsto q_{(0)} F\big(\mathscr{S}(q_{(-1)})\tensor{\phi}b \big),
$$ 
which via the map $(q \otimes_\behhe b, q') \mapsto \go(q \otimes_\behhe b)q'$ yields a left $(Q\tensor{P}B)$-action on $Q$. One can easily check that $\go$ is well-defined and has
$$
\kappaup:  Q \to Q\tensor{P}B, \quad q \mapsto q_{(0)} \tensor{P}\phi_{\scriptscriptstyle{0}}\big(\varepsilon(q_{(-1)})\big)
$$ 
as a splitting,
that is, $\kappaup \circ \omegaup = \id_{\scriptstyle{Q\tensor{P}B}}$. 
Since the image of $\omegaup$ lands in $Q^{\scriptscriptstyle{\coinv_{\cH}}}$, we can use this splitting to establish an isomorphism 
$Q^{\scriptscriptstyle{\coinv_{\cH}}} \cong Q\tensor{P}B$ of algebras, which also shows that $\tau$ is a  split monomorphism. 

To prove {\em (ii)}, we already know by part {\em (i)} that $\tauup$ splits, so in order to prove that $\tauup$ is faithfully flat, we only need to check that $\tauup$ is flat or, equivalently, that this is true for $\omegaup$. 
To this end, we will check that there is a  natural isomorphism
$$ 
-\tensor{Q\tensor{P}B}Q \ \to \ -\tensor{A}\thopf{\cH}, 
$$ 
where we consider $Q\tensor{P}B$ as an $A$-algebra via the map $\phi_{\scriptscriptstyle{0}}$ in the second factor. This will be sufficient since $\thopf{\cH}$ is flat. Let $X$ be a $(Q\tensor{P}B)$-module and consider the map 
$$
\vartheta: X\tensor{Q\tensor{P}B}Q \to X\tensor{A}\thopf{\cH}, \quad x \tensor{Q\tensor{P}B}q \mapsto \big( x(q_{(0)}\tensor{P}1_{\scriptscriptstyle{B}}) \big)\tensor{A}q_{(-1)},
$$ 
which is well-defined as the following consideration shows:
from one hand, we have 
\begin{eqnarray*}
\vartheta\big( (x(q\tensor{P}b)) \tensor{}q'\big) & =& (x(qq'_{(0)}\tensor{P}b)) \tensor{A}q'_{(-1)}.
\end{eqnarray*}
On the other hand, 
\begin{eqnarray*}
\vartheta\big( (x\tensor{}\omegaup(q\tensor{P}b)q'\big) &=& \vartheta\big(x \tensor{}\big(q_{(0)}F(\mathscr{S}(q_{(-1)})\tensor{\phi}b)\big)q' \big) \\ 
&=& x\Big(\big[q_{(0)}F\big( \mathscr{S}(q_{(-1)})\tensor{\phi}b\big)q'_{(0)}\big]\tensor{P}1_{\scriptscriptstyle{B}} \Big) \tensor{A}q'_{(-1)} \\ 
&=&  x\Big((q_{(0)}q'_{(0)})\tensor{P} 
\gamma(\mathscr{S}(q_{(-1)}) \otimes_\phi b)\Big) \tensor{A}q'_{(-1)} \\ 
&=&  x\Big((q_{(0)}q'_{(0)})\tensor{P} 
\phi_{\scriptscriptstyle{0}}\big( \varepsilon(q_{(-1)})\big)b \Big) \tensor{A}q'_{(-1)} \\ 
&=&   x\Big((q_{(0)}q'_{(0)})\tensor{P} \gamma\alpha\big( \varepsilon(q_{(-1)})\big)b \Big) \tensor{A}q'_{(-1)} \\ &=&   x\Big((q_{(0)}q'_{(0)} F\big( \Sf{s}(\varepsilon(q_{(-1)})) \tensor{\phi}1_{\scriptscriptstyle{B}}\big))\tensor{P}b \Big) \tensor{A}q'_{(-1)} \\ &=&   x\Big((q_{(0)}q'_{(0)} \sigma(\varepsilon(q_{(-1)})) )\tensor{P}b \Big) \tensor{A}q'_{(-1)}  \\ &=&   x\Big((qq'_{(0)} \tensor{P}b) \Big) \tensor{A}q'_{(-1)}  \\ &=& 
\vartheta\big( (x(q\tensor{P}b)) \tensor{}q'\big), 
\end{eqnarray*} 
which shows the well-definedness of $\gvt$.
The inverse of $\vartheta$ is now given by 
$$
\vartheta^{-1}: X\tensor{A}\thopf{\cH} \to X\tensor{Q\tensor{P}B}Q, 
\quad x\tensor{A}u \mapsto x \tensor{Q \tensor{P} B}F(u\tensor{\phi}1_{\scriptscriptstyle{B}}),
$$ 
and the fact that $\vartheta$ is a natural transformation is easily checked from the definition. 
Let us finally check that the canonical map $\mathsf{can}: Q \tensor{T} Q \to \cH \tensor{A} Q$ is bijective; define
$$
\mathsf{can}^{-1}: \cH\tensor{A}Q \to Q \tensor{T} Q, \quad u\tensor{A}q \mapsto F\big( u_{(1)}\tensor{\phi}1_{\scriptscriptstyle{B}}\big) \tensor{T} F \big(\mathscr{S}(u_{(2)}) \tensor{\phi}1_{\scriptscriptstyle{B}}\big)q,
$$
and we leave it to the reader to check that this is the desired inverse, indeed.
\end{proof}

In case that $P$ is no longer trivial, we can make the following statement:

\begin{proposition}
\label{coro:PQ}
Let $(Q, \sigma)$, and $F : P \to Q$ be as in Proposition  \ref{prop:PQ} and  $(P, \alpha, \beta)$ any left principal $(\cH,\cK)$-bundle. 
Then the canonical map
$$
\mathsf{can}: Q \otimes_{\tehhe} Q \to \cH \otimes_\ahha Q, \quad q \otimes_{\tehhe} q' \mapsto q_{(-1)} \otimes_\ahha q_{(0)}q'
$$
 is bijective, where $\tau: Q^{\coinv_\cakka}=:T  \to Q$  is the canonical monomorphism. 
\end{proposition}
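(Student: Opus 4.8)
The plan is to reduce the general (non-trivial) case to the trivial case already handled in Proposition \ref{prop:PQ}, exploiting the fact that every left principal bundle is \emph{locally} trivial in the faithfully flat sense: since $\beta: B \to P$ is faithfully flat, we may perform faithfully flat descent along $\beta$, and over the base change the bundle $P$ becomes trivial. More precisely, I would first observe that the bijectivity of the canonical map $\can{\cH}{Q}: Q\tensor{T}Q \to \cH\tensor{A}Q$ is a statement that can be checked after a faithfully flat base change. Because $\beta$ is faithfully flat, it suffices to prove that $\can{\cH}{Q}$ becomes bijective after applying $-\tensor{B}P$ (or some equivalent base change that trivialises $P$), and then descend.

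The key steps, in order, are as follows. First I would record that $F: P \to Q$ being an $\cH$-colinear algebra monomorphism allows us to transport the principal structure: the translation map $\tauup_\pehhe: \cH \to P\tensor{B}P$ of $P$ pushes forward along $F$ to give elements $F(u_+)\tensor{} F(u_-)$ in $Q\tensor{}Q$ satisfying the analogues of \rmref{Eq:eps+-}, \rmref{Eq:p-+}, and \rmref{Eq:u-+}. Second, I would define the candidate inverse of $\can{\cH}{Q}$ directly in terms of this pushed-forward translation map, namely
$$
\can{\cH}{Q}^{-1}: \cH\tensor{A}Q \to Q\tensor{T}Q, \quad u\tensor{A}q \mapsto F(u_+)\tensor{T} F(u_-)\,q,
$$
mirroring the formula in the trivial case but with $F(u_+), F(u_-)$ replacing $u_{(1)}\tensor{\phi}1_B$ and $\mathscr{S}(u_{(2)})\tensor{\phi}1_B$. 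Third, I would verify the two composites $\can{\cH}{Q}\circ\can{\cH}{Q}^{-1}=\id$ and $\can{\cH}{Q}^{-1}\circ\can{\cH}{Q}=\id$ by the same computations as in Proposition \ref{prop:indPB} and Lemma \ref{lemma:cotensorpb}: the first composite collapses using \rmref{Eq:eps+-} and the coaction relations, while the second requires \rmref{Eq:p-+} together with the coinvariance that defines $T$, exactly as the well-definedness computation in Proposition \ref{prop:PQ} used $\go$ and $\kappaup$.

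The main obstacle I anticipate is \textbf{well-definedness of the candidate inverse over the tensor product $\tensor{T}$}. In the trivial case of Proposition \ref{prop:PQ}, one had an explicit splitting $\gamma: P \to B$ and an explicit description $T\cong Q\tensor{P}B$, which made it straightforward to check that the proposed formula respects the relations in $Q\tensor{T}Q$. Without triviality, $T=Q^{\coinv_\cakka}$ has no such concrete handle, so showing that $F(u_+)\tensor{T}F(u_-)q$ is independent of representatives — and in particular that the element lands correctly modulo the $T$-balancing — is the delicate point. The cleanest route is probably to reduce to the trivial case by base change: form $Q_\pehhe:=Q\tensor{T}(\text{something faithfully flat trivialising }P)$, or alternatively exploit that since $P$ is principal, the functor $-\tensor{B}P$ together with the canonical isomorphism $\zeta$ of \rmref{giotto1} lets one rewrite $Q\tensor{T}Q$ after a faithfully flat extension so that $F$ factors through a trivial bundle, at which point Proposition \ref{prop:PQ} applies verbatim. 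I would then invoke faithfully flat descent (as in Remark \ref{rema:PB} \emph{(iii)}) to conclude bijectivity of the original $\can{\cH}{Q}$.
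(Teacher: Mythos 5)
Your candidate inverse
$$
\tilde{\mathsf{can}}: \cH\tensor{A}Q \to Q\otimes_{\tehhe}Q, \quad u\tensor{A}q \mapsto F(u_+)\otimes_{\tehhe} F(u_-)\,q,
$$
is exactly the map the paper uses, and your ``key steps'' are precisely the paper's proof; the problem is that you abandon this direct route at the one point where a short computation finishes it, and replace it by an unworked-out descent argument. Moreover, you have slightly misplaced the difficulty: well-definedness of $\tilde{\mathsf{can}}$ is immediate, since $\beta(B)=P^{\coinv_{\cH}}$ (Remark \ref{rema:PB} \emph{(iii)}) and $F$ is $\cH$-colinear, whence $F(\beta(B))\subseteq Q^{\coinv_{\cH}}=T$ and the formula descends from $\otimes_{\behhe}$ to $\otimes_{\tehhe}$. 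The genuine subtlety sits in the verification $\tilde{\mathsf{can}}\circ\mathsf{can}=\id$, where one must slide $F(q_{(-1)-})q_{(0)}$ across $\otimes_{\tehhe}$. The paper legitimises this by proving that
$$
q_{(-1)+}\otimes_{\behhe} F(q_{(-1)-})\,q_{(0)} \ \in \ P\otimes_{\behhe}T \;=\; P\otimes_{\behhe}Q^{\coinv_{\cH}},
$$
a three-line computation: apply $\id_{\pehhe}\otimes_{\behhe}\B{\lambda}$, use the $\cH$-colinearity of $F$ together with \rmref{Eq:S+-} and \rmref{maxxi}, and use flatness of $P$ over $B$ to identify $P\otimes_{\behhe}T$ with the kernel of $\id_{\pehhe}\otimes_{\behhe}\bigl(\B{\lambda}-(1_{\scriptscriptstyle{\cH}}\otimes_{\ahha}-)\bigr)$; the final collapse then follows from \rmref{Eq:eps+-}. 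This is the missing idea in your proposal: no reduction to the trivial case occurs, and none is needed.

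Concerning the descent detour itself: your first variant (base change by ``something faithfully flat trivialising $P$'' formed over $T$) is a non-starter, because nothing is known about flatness of $\tau: T\to Q$ or of any extension of $T$ here -- indeed the remark immediately following this proposition in the paper points out that faithful flatness of $\tau$ is precisely what one cannot establish, which is why the conclusion is only bijectivity of $\mathsf{can}$ rather than principality of $(Q,\sigma,\tau)$. Your second variant (apply $-\tensor{B}P$, note that $P\tensor{B}P\cong\cH\tensor{A}P$ is a trivial bundle, invoke Proposition \ref{prop:PQ}, and descend along the faithfully flat $\beta$) can in principle be made to work, but it requires several identifications you never address: that $(Q\tensor{B}P)^{\coinv_{\cH}}\cong T\tensor{B}P$ (using $B$-flatness of $P$), that $(Q\tensor{B}P)\otimes_{\scriptscriptstyle{T\tensor{B}P}}(Q\tensor{B}P)\cong (Q\otimes_{\tehhe}Q)\tensor{B}P$, and that under these isomorphisms $\mathsf{can}\tensor{B}P$ becomes the canonical map attached to $F\tensor{B}P: P\tensor{B}P\to Q\tensor{B}P$ so that Proposition \ref{prop:PQ} actually applies. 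Carried out honestly, that argument is considerably longer than the direct verification you set aside.
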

\begin{proof}
Define a map
$$
\tilde{\mathsf{can}}: \cH \otimes_\ahha Q \to Q \otimes_\tehhe Q, \quad u \otimes_\ahha q \mapsto F(u_+) \otimes_\tehhe F(u_-)q,
$$
and we will explicitly compute that $\mathsf{can} \circ \tilde{\mathsf{can}} = \id_{\scriptscriptstyle{ \cH \otimes_\ahha Q}}$ along with $\tilde{\mathsf{can}} \circ \mathsf{can} = \id_{\scriptscriptstyle{Q \otimes_\tehhe Q}}$. Since $F$ is an $\cH$-colinear morphism of algebras, one sees that
\begin{equation*}
(\mathsf{can} \circ \tilde{\mathsf{can}})(u \otimes_\ahha q) =  (F(u_+))_{(-1)} \otimes_\ahha (F(u_+))_{(0)}F(u_-)q = u_{+(-1)} \otimes_\ahha F(u_{+(0)} u_-)q = u \otimes_\ahha q,
\end{equation*}
using \rmref{Eq:u-+}. On the other hand, 
\begin{equation*}
\begin{split}
(\tilde{\mathsf{can}} \circ {\mathsf{can}})(q \otimes_\tehhe q') 
&=   F(q_{(-1)+}) \otimes_\tehhe F(q_{(-1)-})q_{(0)}q' \\
&=  F(q_{(-1)+}) F(q_{(-1)-}) q_{(0)} \otimes_\tehhe q' \\
& = F(\ga(\varepsilon(q_{(-1)}))) q_{(0)} \otimes_\tehhe q'  = q \otimes_\tehhe q',
\end{split}
\end{equation*}
using \rmref{Eq:eps+-} in the third step, and where the second step is justified by the fact that an element of the form 
$q_{(-1)+} \otimes_\behhe F(q_{(-1)-}) q_{(0)} \in P \otimes_\behhe Q$ actually lies in $P \otimes_\behhe T = P \otimes_\behhe Q^{\coinv_\cH}$, which we show now: 
\begin{equation*}
\begin{split}
(\id_{\pehhe} \otimes_\behhe \B{\lambda})(q_{(-1)+} \otimes_\behhe F(q_{(-1)-}) q_{(0)}) & = q_{(-2)+} \otimes_\behhe  q_{(-2)-(-1)} q_{(-1)} \otimes_\ahha F(q_{(-2)-(0)}) q_{(0)} \\
& =  q_{(-3)+} \otimes_\behhe  \mathscr{S}(q_{(-2)}) q_{(-1)} \otimes_\ahha F(q_{(-3)-}) q_{(0)} \\
& =  q_{(-2)+} \otimes_\behhe  \mathsf{t}(\varepsilon(q_{(-1)})) \otimes_\ahha F(q_{(-2)-}) q_{(0)} \\
&= q_{(-1)+} \otimes_\behhe  1_\cH \otimes_\ahha F(q_{(-1)-}) q_{(0)}, 
\end{split}
\end{equation*}
where we used the $\cH$-colinearity of $F$ together with \rmref{Eq:S+-} and \rmref{Eq:eps+-}.
\end{proof}

\begin{rem}
If one were able to show that $\tauup$ is a faithfully flat extension, then the  triple $(Q, \gs, \tau)$ became a left principal $(\cH, T)$-bundle. 
\end{rem}

\providecommand{\bysame}{\leavevmode\hbox to3em{\hrulefill}\thinspace}
\providecommand{\MR}{\relax\ifhmode\unskip\space\fi MR }
\providecommand{\MRhref}[2]{%
  \href{http://www.ams.org/mathscinet-getitem?mr=#1}{#2}
}
\providecommand{\href}[2]{#2}

\end{document}